\documentclass[a4paper,10pt]{article}

\usepackage[dvipdfm,%
 bookmarks=true,%
 bookmarksnumbered=true,%
 colorlinks=true,%
 pdftitle={},%
 pdfauthor={Yokoyama},%
 pdfsubject={},%
 pdfkeywords={}]{hyperref}

\makeatletter
\def\blfootnote{\gdef\@thefnmark{}\@footnotetext}
\makeatother

\usepackage{soul}

\usepackage{amsthm}
\usepackage{latexsym}
\usepackage{amsmath}
\usepackage{amscd}
\usepackage{amssymb}
\usepackage{authblk}
\DeclareSymbolFont{bbold}{U}{bbold}{m}{n}
\DeclareMathSymbol{\bbomega}{\mathord}{bbold}{"7F}

\usepackage{url}
\usepackage[all]{xy}

\usepackage{color}

\newcommand\RCAo{\mathsf{RCA_0}}

\newcommand\ACAo{\mathsf{ACA_0}}

\newcommand\PCAo{\Pi^1_1\text{-}\mathsf{CA_0}}

\newcommand\A{\forall}
\newcommand\N{\mathbb{N}}

\newcommand\Q{\mathbb{Q}}
\newcommand\R{\mathbb{R}}

\newcommand\lh{\mathrm{lh}}

\newcommand\rest{{\upharpoonright}}

\newcommand{\mr}[1]{\mathrm{#1}}
\newcommand{\mc}[1]{\mathcal{#1}}

\renewcommand{\labelenumi}{$\arabic{enumi}.$}

\newcommand\fin{\mathrm{fin}}

\newcommand\UF{\mathrm{UF}}
\newcommand\MF{\mathrm{MF}}
\newcommand\NP{\mathrm{NP}}
\newcommand\upcl{\mathrm{upcl}}

\theoremstyle{plain}
\newtheorem{thm}{Theorem}[section]

\newtheorem*{theorem*}{Theorem}

\newtheorem*{claim*}{Claim}
\newtheorem{prop}[thm]{Proposition}
\newtheorem{lem}[thm]{Lemma}
\newtheorem{cor}[thm]{Corollary}

\theoremstyle{definition}
\newtheorem{defi}{Definition}[section]
\newtheorem{definition}[defi]{Definition}
\newtheorem{rem}[thm]{Remark}

\newtheorem{question}[defi]{Question}

\begin{document}
\newpage

\title{Quasi-Polish spaces and spaces of filters in second-order arithmetic}

\author[1]{Yuzuki Kaneko}
\author[2]{Keita Yokoyama}

\affil[1,2]{Mathematical Institute, Tohoku University}
\affil[1]{\sf{kaneko.yuzuki.q4@dc.tohoku.ac.jp}}
\affil[2]{\sf{keita.yokoyama.c2@tohoku.ac.jp}}

\date{}


\maketitle

\begin{abstract}
The class of quasi-Polish spaces admits several equivalent representations, including UF spaces, NP spaces, $\mathbf{\Pi}_2^0$ subspaces of $\mathcal{P}(\mathbb{N})$, and sober spaces of countably presented frames. In this paper, we formalize these structures within second-order arithmetic and conduct a systematic reverse mathematical analysis of the transitions between them.
\end{abstract}

\tableofcontents

\section{Introduction}

Polish spaces are central objects of study in classical descriptive set theory, characterized by several fundamental properties: they are exactly the $\mathbf{\Pi}_2^0$ subspaces of the Hilbert cube, they admit continuous surjections from the Baire space $\mathbb{N}^{\mathbb{N}}$, and they can be constructed as completions of countable dense sets \cite{kechris2012classical}. 
In the framework of second-order arithmetic, a Polish space is typically formalized as a pair $(A, d)$ consisting of a countable set $A$ and a pseudo-metric $d$, where points are represented as effective Cauchy sequences. This formalization has been extensively utilized in the reverse mathematics of various theorems in analysis \cite{SOSOA}.
Reverse mathematics is a program that aims to examine the logical strength of mathematical theorems. It is primarily conducted within second-order arithmetic, where Polish spaces formalized in this manner are widely used.
 For instance, Fern\'andez-Duque, Shafer and the second author analyzed the logical strength of Ekeland's variational principle \cite{fernandez2020ekeland, fernandez2023caristi}.

While the representation of points as equivalence classes of Cauchy sequences is a standard approach for separable metric spaces, it can be generalized to second-countable $T_0$ spaces by replacing the metric with a quasi-metric. In particular, the class of quasi-Polish spaces, whose structural properties were extensively investigated by de Brecht \cite{deBr2013}, serves as a common generalization of $\omega$-continuous domains and Polish spaces. Notably, quasi-Polish spaces possess characterizing properties analogous to those of Polish spaces; for instance, they are precisely the $\mathbf{\Pi}_2^0$ subspaces of the powerset space $\mathcal{P}(\mathbb{N})$ (identified with the countable power of the Sierpi\'{n}ski space) endowed with the Scott topology.

Another approach to formalizing non-metrizable spaces involves the use of poset spaces (poset is an abbreviation of preorder set). Mummert/Stephan \cite{mummert2010topological} investigated the properties of two types of poset spaces: UF spaces and MF spaces. See also \cite{mummert2006reverseMF, mummert2005reverse}
Regarding the former, it has been observed that the class of UF spaces is equivalent to that of quasi-Polish spaces by Honda \cite{honda-unpublished}. Furthermore, from the perspective of point-free topology, these spaces are known to be equivalent to sober spaces whose frames of open sets are countably presented (Heckmann \cite{heckmann2015spatiality}). Regarding their relationship with locales, it has been shown that there is also research showing that the functor from countably based sober spaces to locales restricted to the subcategory of quasi-Polish spaces is characterized by nice properties, including the preservation of finite products \cite{de2019note}.
Other studies for second-countable spaces are reverse mathematical analysis for second-countable Noetherian spaces via well-quasi-orders by Frittaion/Hendtlass/Marcone/Shafer/Van der Meeren \cite{frittaion2016noetherian},
studies of countable and second-countable spaces initiated by Dorais \cite{dorais2011compactCountable} and investigaed by several groups such as \cite{benham2024ginsburgSands, genovesi2026regularCSCS}, and second-countable spaces from the viewpoint of higher-order reverse mathematics by Sanders \cite{sanders2025secondCountable}.

Consequently, the class of quasi-Polish spaces admits several equivalent characterizations, including:
(1) quasi-Polish spaces, (2) UF spaces, (3) NP spaces, (4) $\mathbf{\Pi}_2^0$ subspaces of $\mathcal{P}(\mathbb{N})$, and (5) sober spaces associated with countably presented frames.

While these representations are mathematically equivalent in classical set theory, one often switches between them depending on the application, such as descriptive set-theoretic, computational, or point-free topological contexts. For instance, the quasi-metric framework is indispensable for establishing analytic results like the generalized Hurewicz theorem \cite{de_Brecht_2018}. In this study, we investigate the methods for switching between these representations within the framework of second-order arithmetic. Such translations are crucial, as they allow for the ``cross-pollination'' of results between diverse formalizations; for example, importing metrization theorems developed for poset spaces \cite{mummert2005reverse} into the theory of $\mathbf{\Pi}_2^0$ subspaces of $\mathcal{P}(\mathbb{N})$. However, as we demonstrate, the ability to perform these transitions is not always trivial and often requires significant axiomatic strength beyond $\mathsf{RCA}_0$.

In this paper, we introduce several formalizations of quasi-Polish spaces within second-order arithmetic and conduct a systematic reverse mathematical analysis of the transitions between these different representations. Our main results are summarized in the following theorems.

\begin{thm}\label{intro1}
The following implications hold over $\RCAo$:
\begin{enumerate}
\item quasi-Polish space $\implies$ UF space.
\item UF space $\implies$ $\mathbf{\Pi}_2^0$-subspace of $\mc P(\N)$.
\item $\mathbf{\Pi}_2^0$-subspace of $\mc P(\N)$ $\implies$ sober space of countably presented frame.
\item sober space of countably presented frame $\implies$ $\mathbf{\Pi}_2^0$-subspace of $\mc P(\N)$.
\end{enumerate}
\end{thm}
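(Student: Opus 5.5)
Each implication will be proved by an explicit, uniformly computable translation, so only $\Delta^0_1$-comprehension and $\Sigma^0_1$-induction are needed. Throughout, I fix the intended formalizations. A quasi-Polish space is a countable set $A$ with a complete quasi-metric $d$, whose points are (codes of) Cauchy sequences. A UF space is a countable poset $(P,\preceq)$ whose points are maximal filters, or equivalently unbounded filters in the sense of Mummert. A $\mathbf{\Pi}^0_2$-subspace of $\mc P(\N)$ is coded by a sequence of pairs of finite sets, each pair describing an open implication ``$\uparrow\! s\subseteq \bigcup_j \uparrow\! t_j$''. A countably presented frame is given by countably many generators and countably many relations of the form $\bigwedge s \le \bigvee_j \bigwedge t_j$, and its points are the completely prime filters, coded as subsets of the generators that respect the relations.

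\textbf{(1)} Let $P=A\times\Q^{+}$, ordered by formal balls: $(a,r)\preceq(b,s)$ iff $d(b,a)+r\le s$. This order is not decidable for real-valued $d$. I will therefore use the strict, $\Sigma^0_1$ version $d(b,a)+r<s$ and pass to a computable sub-relation via a standard $\Delta^0_1$ trick, padding radii by rational slack. A Cauchy sequence $(a_n)$ then yields the filter of balls eventually containing it, which is $\Sigma^0_1$-definable. To obtain a set I will restrict to balls $(a_n,2^{-n}+\epsilon)$ and take the upward closure, using rapid convergence. Conversely, a filter contains balls of arbitrarily small radius, so choosing them recursively gives a Cauchy sequence. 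The verification that these maps are mutually inverse and preserve topology uses only the triangle inequality.

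\textbf{(2)} Given $P$, identify $\mc P(\N)$ with $\mc P(P)$. The set of filters is then $\mathbf{\Pi}^0_2$: it is closed under upward closure, and for every $p,q$ in $F$ there is $r\in F$ below both, which is an implication of exactly the coded form. For maximality (unboundedness), I expect to need the characterization ``for every $p\in F$ there is a $q\in F$ such that every $q'\preceq q$ is compatible,'' or Mummert's definition in which a filter is unbounded if it contains no element of an associated countable family. I will pick whichever is $\Pi^0_2$; writing this condition in $\Pi^0_2$ form, and proving in $\RCAo$ that it agrees with the paper's definition of UF point, is the step I anticipate is the main obstacle.

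\textbf{(3)} Given a $\mathbf{\Pi}^0_2$ code, take generators $\N$ and relations $\bigwedge s\le\bigvee_j\bigwedge t_j$ from the code. Points of the presented frame are exactly subsets of $\N$ satisfying the code, which is immediate.

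\textbf{(4)} Conversely, given a presentation, add the relations saying that $x\mapsto\{n: g_n\in x\}$ gives a filter: $\top$ is in it, and $g_m\wedge g_n\le$ the generator coding $m\wedge n$, after first closing the generators under finite meets computably. Each relation then becomes one $\mathbf{\Pi}^0_2$ implication. Soberness, meaning that points correspond to completely prime filters, is by definition, so no choice principles are needed. The only delicate point is encoding arbitrary countable joins, which is done by enumerating the join indices as the $j$ in the code.
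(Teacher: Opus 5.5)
Your (3) and (4) follow the paper's route: Proposition~\ref{precspatial} and the corollary after it take the point space to be $\bigcap_{(u,v)\in R}\bigl(O_v\cup(\mc P(\N)\setminus O_u)\bigr)$, and read a $\mathbf{\Pi}^0_2$ code back as a presentation $\langle\N;R\rangle$. Closing the generators under meets in (4) is unnecessary, since points are already given by finite sets of generators. The gaps are in (1) and (2).

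\textbf{First gap: unbounded versus maximal.} The points of $\UF(P)$ are the \emph{unbounded} filters, and these are not the same as the maximal ones. Maximal implies unbounded, but not conversely. In $\mc P(\N)$, if $x\subsetneq y$ then $d(x,y)=0$, so every formal ball containing $x$ contains $y$. Hence the ball filter of $x$ is a proper subset of that of $y$: it is unbounded but not maximal. Maximal filters give $\MF$ spaces, which are $T_1$, so under your first reading (1) already fails for $\mc P(\N)$. This confusion is also why your (2) stalls. A $\Pi^0_2$ rendering of maximality would cut out $\MF(P)$, not $\UF(P)$. For set-coded filters the condition you need is immediate: $\forall r\,\exists p\in F\,(r\nless_P p)$, i.e.\ $\bigcap_{r}\bigcup_{p:\,r\nless_P p}N_p$, intersected with the $\mathbf{\Pi}^0_2$ filter axioms.

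\textbf{Second gap: set coding is not available in $\RCAo$.} Coding filters (and points of $\mc P(\N)$) as sets does not work in $\RCAo$, and your remedy in (1) does not fix it. The upward closure of $\{(a_n,2^{-n}+\epsilon)\}$ is again only $\Sigma^0_1$, and the existence of upward closures is equivalent to $\ACAo$ (Proposition~\ref{existence-upcl}). Concretely, for an injection $f$ let $A=\N\cup\{\ast\}$, with
$d(b,\ast)=d(\ast,b)=1-2^{-m-1}$ if $f(m)=b$,
$d(b,\ast)=d(\ast,b)=1$ if $b\notin\mathrm{rng}(f)$, and
$d(b,b')=d(b,\ast)+d(\ast,b')$ for $b\neq b'$.
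This space is complete, hence Smyth-complete, and the ball filter of the point $\ast$ contains $(b,1)$ iff $b\in\mathrm{rng}(f)$.

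The paper avoids this by coding filters as decreasing sequences. Then (1) is simply $(a_i)_i\mapsto\langle(a_i,2^{-i+1}):i\in\N\rangle$ with the strict order $s-d(b,a)>r$. It is inverted by recursively choosing elements of $F$ below the previous ones with radii $<2^{-n-1}$ and applying Smyth-completeness. In that coding, (2) cannot be the identity $F\mapsto F\subseteq P$. The paper first passes to a handy poset (Proposition~\ref{typicalization}). It then sends $\langle q_i\rangle$ to $\langle g(q_i)\rangle$ with $g(p)=\{q\in P: q\le_{\N}p\land p\le_P q\}$, which computably enumerates $\upcl(F)$ as a point of $\mc P(\N)$. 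Finally it describes the image as $\bigcap_n U_n\cap C$, where $C$ encodes the filter axioms and the open sets $U_n$ encode non-principality, which coincides with unboundedness for handy $P$.
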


\begin{thm}\label{intro2}
Over $\RCAo$, the following statements are equivalent:
\begin{enumerate}
\item $\PCAo$
\item $\mathbf{\Pi}_2^0$-subspace of $\mc P(\N)$ $\implies$ UF space.
\item UF space $\implies$ quasi-Polish space.
\item $\mathbf{\Pi}_2^0$-subspace of $\mc P(\N)$ $\implies$ quasi-Polish space.
\end{enumerate}
\end{thm}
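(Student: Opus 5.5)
We prove $1\Rightarrow 2$, $2\Rightarrow 4$, $1\Rightarrow 3$, $3\Rightarrow 4$ and $4\Rightarrow 1$; the last reversal carries the weight. By Theorem~\ref{intro1}, quasi-Polish $\Rightarrow$ UF and UF $\Rightarrow$ $\mathbf{\Pi}^0_2$ hold in $\RCAo$. Hence $2\wedge 3$ gives $4$ by composition, and $3\Rightarrow 4$ holds as well, since a $\mathbf{\Pi}^0_2$-subspace is first turned into a UF space by 2, which we get from 1. For $2\Rightarrow 4$ we compose with $3$, or more directly run the reversal below, which only uses $4$. The plan is therefore: prove $1\Rightarrow 2$ and $1\Rightarrow 3$ in $\PCAo$, and prove $4\Rightarrow 1$. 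For $2\Rightarrow 1$ we observe that the same coding example yields a UF space and then a quasi-Polish space via Theorem~\ref{intro1}(1) only in the wrong direction. So instead we design the reversal to already fail at the stage $\mathbf{\Pi}^0_2\Rightarrow$ UF, and then separately at UF $\Rightarrow$ quasi-Polish.

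\textbf{Forward directions in $\PCAo$.} Given a $\mathbf{\Pi}^0_2$ subspace $X\subseteq\mc P(\N)$ coded by a $\Pi^0_2$ formula, we take the poset of finite sets (basic opens $\uparrow F$) restricted to those $F$ with $\uparrow F\cap X\neq\emptyset$. This restriction is a $\Sigma^1_1$ condition, so it needs $\PCAo$ to form the set. We then show that points of $X$ correspond to unbounded filters, i.e.\ maximal-like filters given by $\{F\subseteq x\}$, and conversely that every UF filter determines a point of $X$. Forming the poset relation ``$\uparrow F\cap X\subseteq \uparrow G$'' is $\Pi^1_1$, so again $\PCAo$ suffices. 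For UF $\Rightarrow$ quasi-Polish we build a quasi-metric on the countable poset. This requires deciding which poset elements are realized by some filter, which is $\Sigma^1_1$. With that set available, the standard de Brecht construction, which uses a quasi-metric on a countable dense set of points chosen from nonempty basic sets, is arithmetic and goes through in $\ACAo$ relative to it.

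\textbf{Reversals.} We work in $\RCAo$ and derive $\PCAo$, or equivalently the existence of $\{e : T_e \text{ has a path}\}$ for a sequence of trees $T_e$. Given the trees, we define a $\mathbf{\Pi}^0_2$ subspace $X$ of $\mc P(\N)$ as the disjoint union, coordinatewise, of subspaces $X_e$. Each $X_e$ encodes paths through $T_e$ as $\Pi^0_2$ conditions, for example the set of $\langle e,\sigma\rangle$ with $\sigma$ an initial segment of a path, where ``infinitely extending'' is $\Pi^0_2$, together with a marker coordinate $e$. Then $X_e\neq\emptyset$ iff $T_e$ has a path. A quasi-Polish presentation requires a countable dense sequence of points, and a UF presentation requires its poset to determine nonemptiness of basic opens. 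In either case, the set $\{e : \text{the basic open } \uparrow\{e\} \text{ meets } X\}$ must be computable, or at least arithmetical, from the presentation together with the conversion map. This yields the $\Sigma^1_1$ set we need.

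\textbf{Main obstacle.} The delicate step is the UF case. UF posets may contain elements with no filter through them, so nonemptiness is not immediate from the poset. We will have to exploit the homeomorphism data, namely that each basic open of $X$ is mapped to an open of the UF space and points of $X$ correspond to filters. We plan to arrange the coding so that the image of $\uparrow\{e\}$ contains a specific poset element precisely when $X_e\neq\emptyset$, which is arithmetic in the given map. We also have to check that every step stays within $\RCAo$.
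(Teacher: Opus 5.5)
\paragraph{The cycle does not close.} You never derive $\PCAo$ from statement 3. Your justification of $3\Rightarrow4$ (``turn the $\mathbf{\Pi}^0_2$-subspace into a UF space by 2, which we get from 1'') assumes 1 while proving a consequence of 3. ``$2\Rightarrow4$ by composing with 3'' is circular in the same way.

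The implication that is actually free in $\RCAo$ is $4\Rightarrow2$, by composing with Theorem~\ref{qPtoUF}. So what you need are reversals from 2 and from 3. Your reversal only ever produces a $\mathbf{\Pi}^0_2$-subspace $X\subseteq\mc P(\N)$. That can be fed to 2 or 4, but not to 3, whose hypothesis is a UF space, and turning $X$ into a UF space inside $\RCAo$ is exactly statement 2.

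The paper supplies the missing reversal by coding trees directly into posets. It takes $P_i=T_i\cup\{a_n:n\in\N\}$ with $a_0>a_1>\cdots$, and places each dead-end node $\sigma$ of $T_i$ above $a_n$ for $n>\lh(\sigma)$. Then $\UF(P_i)=\{F_\infty\}$, with $F_\infty=\upcl(\{a_n:n\in\N\})$, exactly when $T_i$ has no path. A quasi-Polish presentation of $\UF(\bigsqcup_i P_i)$ then decides, via its countable dense set, which $T_i$ have paths.

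Also, in every reversal the set you extract is only $\Sigma^0_1$ or arithmetical in the codes, so $\ACAo$ must be obtained first. The paper does this separately: it arranges a set that is $\Sigma^0_1$ via the codes and $\Pi^0_1$ by construction, so that $\Delta^0_1$-comprehension applies.

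\paragraph{The forward directions and the $2\Rightarrow1$ step.} For $1\Rightarrow2$, finite sets $F$ with ${\uparrow}F\cap X\neq\emptyset$ under (semantic) reverse inclusion do not represent $X$. For $X=\mc P(\N)$ and finite $x$, the filter $\{F:F\subseteq x\}$ is bounded by $x\cup\{k\}$ with $k\notin x$. So the UF space of this poset consists only of the infinite sets. The paper instead uses levelled pairs $(n,q)$ with $q\in\bigcap_{i\le n}(A_i\cup B_i)$, ordered by $n>m\wedge q\supseteq r$. It needs $\PCAo$ to keep only the pairs that start an infinite chain, since dead ends would yield spurious principal unbounded filters.

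For $1\Rightarrow3$, points picked from the nonempty basic opens are only Scott-dense; the paper's example is $\{\N\}$ in $\mc P(\N)$. A code $(A,d)$ needs a $\hat d$-dense sequence, i.e.\ witnesses in $\mathbf{\Sigma}^0_2$ sets such as $\hat d$-balls. The paper obtains these in $\PCAo$ via $\Sigma^1_1$-choice (Proposition~\ref{analyseparable}).

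Your ``main obstacle'' in $2\Rightarrow1$ cannot be resolved by arranging the coding of $X$, since $P$ and the homeomorphism are handed to you by the hypothesis. Instead, re-present $\UF(P)$ over a handy poset (Proposition~\ref{typicalization}, provable in $\RCAo$). There each $p$ starts a computable strictly descending sequence, which generates a non-principal, hence unbounded, filter. So every $N_p$ is nonempty, and $\{e:{\uparrow}\{e\}\cap X\neq\emptyset\}$ is $\Sigma^0_1$ in the codes. This is what the nonemptiness step in Proposition~\ref{equi-overt-PCAo} needs.

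Your direct $4\Rightarrow1$ argument is fine as sketched, for the analogous reason: basic balls $B_d(a,r)$ centred at $a\in A$ are automatically nonempty.
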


Additionally, we introduce three distinct formalizations of countably presented frames. We show that two of these representations, together with NP spaces are mutually equivalent within $\mathsf{RCA}_0$, whereas the third is equivalent to the others only under the assumption of $\mathsf{ACA}_0$.

The remainder of this paper is organized as follows. In Section 2, we review the standard definitions and properties of quasi-Polish spaces, poset spaces, and countably presented frames. In Section 3, we discuss their formalization in second-order arithmetic and examine the axiomatic strength required for their equivalences.




\section{Poset spaces and quasi-Polish spaces}
This section provides the definitions (in usual mathematics) and establishes the equivalence between different representations, alongside the basic properties of the class of quasi-Polish spaces.
Although several of these definitions and results are already established in \cite{mummert2005reverse,mummert2010topological, deBr2013}, we recall the constructions explicitly because their proofs determine the formalizations and axiomatic bounds analyzed in Section 3.

\subsection{Poset spaces}

We first review the definitions of poset spaces studied in \cite{mummert2005reverse,mummert2010topological}.
Poset spaces exist as analogues of ideal spaces; they represent essentially the same structures, differing only in the direction of the partial order.
\begin{defi}[poset spaces]
Let $(P,\leq)$ be a pair of a set $P$ and a binary relation ${\leq}\subseteq P\times P$.
$(P,\leq)$ is a \textit{poset} if $\leq$ is the preorder on $P$.
A subset $F$ of a poset $P$ is called a \textit{filter} on $P$ if it satisfies the following conditions.
\begin{enumerate}
\renewcommand{\labelenumi}{(\roman{enumi})}
 \item$\forall p,p^{\prime}\in P((p\in F\land p\leq p^{\prime})\rightarrow p^{\prime}\in F)$ (upward closed)
 \item
$\forall p,p^{\prime}\in P((p,p^{\prime}\in F)\rightarrow\exists r\in F(r\leq p,p^{\prime}))$
\end{enumerate}
A space of filters on $P$ is denoted by $\mathbb{F}(P)$.
\begin{enumerate}
 \item A filter $F$ is \textit{unbounded} if $\lnot\exists r\in P(\forall p\in F(r<p))$. A UF space of $P$, denoted by $\UF(P)$ is the set of unbounded filters on $P$.
 \item A filter $F$ is \textit{maximal} if there is no filter which has $F$ as a proper subset, so $F$ satisfies $\forall r\in P((r\notin F)\rightarrow\exists p\in F(r\bot p))$. Here, two elements $r,p\in P$ are incompatible, denoted by $r\bot p$, if there is no $q\in P$ such that $q\leq p$ and $q\leq r$. A MF space of $P$, denoted by $\MF(P)$ is the set of maximal filters on $P$.
 \item A filter $F$ is \textit{non-principal} if $\forall p\in F\,\exists p^{\prime}\in F(p^{\prime}<p)$. A space of non-principal filters is denoted by $\NP(P)$.
\end{enumerate}
A space of filters is topologized by a basis $\{N_{p}:p\in P\}$, where $N_{p}=\{F\in \mathbb{F}(P):  p\in F\}$. The topologies on $\UF(P),\MF(P)$ are relative topologies of $\mathbb{F}(P)$.
\end{defi}

We sometimes denote the intersection of $\NP(P)$ and $\UF(P)$ by $\NP\UF(P)$. A poset $P$ is said to be \textit{handy} if $\UF(P)=\NP(P)$.

Throughout this paper, we primarily work with countable posets. We remark that every second-countable poset space is \textit{countably based} (it can be formed from a countable poset) \cite{mummert2010topological}. 

\begin{prop}\label{1-typicalization}
Let $P$ be a countable poset. 
\begin{enumerate}
 \item There exists a countable handy poset $P^{\prime}$ such that $\UF(P)\cong\UF(P^{\prime})$.
 \item There exists a countable handy poset $P^{\prime}$ such that $\mathbb{F}(P)\cong\UF(P^{\prime})$.
\item There exists a countable handy poset $P^{\prime}$ such that $\UF(P^{\prime})$ is isomorphic to $\NP\UF(P)$, where $\NP\UF(P)=\UF(P)\cap\NP(P)$.
\end{enumerate}
\end{prop}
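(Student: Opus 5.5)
All three parts use the same device. We replace $P$ by $P^{\prime}=P\times\N$, where the second coordinate is a ``stage'' that must strictly increase as we go down in $P^{\prime}$. Because of this, every filter of $P^{\prime}$ that has unboundedly many stages is automatically unbounded and non-principal in $P^{\prime}$, so $P^{\prime}$ is handy. The intended homeomorphism is $G\mapsto \pi(G)=\{p:\exists n\,(p,n)\in G\}$. Its inverse sends a filter $F$ on $P$ to the upward closure in $P^{\prime}$ of a suitable set of pairs $(p,n)$ with $p\in F$. Under this map the basic set $N_{(p,n)}$ corresponds to $N_p$ intersected with a condition that is open, so it is a homeomorphism. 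Fix an enumeration $(r_i)_{i\in\N}$ of $P$.

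For (2), order $P^{\prime}$ by $(p,n)\leq^{\prime}(q,m)$ iff $(p,n)=(q,m)$, or $p\leq q$ and $n>m$. This is a preorder. For a filter $F$ on $P$, the set $F\times\N$ is a filter: directedness uses a common lower bound $r\in F$ together with a larger stage. It is unbounded, because a strict lower bound $(r,k)$ would need $k>n$ for all $n$.

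Conversely, let $G$ be a filter on $P^{\prime}$. If the stages of $G$ are bounded, then directedness forces $G$ to be the principal upset of a single element $(p,n)$, and this upset is strictly bounded by $(p,n+1)$. So $G$ is neither unbounded nor non-principal. If the stages are unbounded, then from $(p,m)\in G$ and elements of arbitrarily high stage we get common lower bounds $(r,k^{\prime})$ with $r\leq p$ and $k^{\prime}$ arbitrarily large. Upward closure then puts every $(p,n)$ into $G$, so $G=\pi(G)\times\N$. This shows $\UF(P^{\prime})=\NP(P^{\prime})=\{F\times\N : F\in\mathbb{F}(P)\}$, and $N_{(p,n)}$ corresponds to $N_p$.

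For (1) and (3), unboundedness in $P$ must be encoded in the stages. Order $P^{\prime}$ by $(p,n)\leq^{\prime}(q,m)$ iff either $(p,n)=(q,m)$, or all of the following hold:
\begin{itemize}
\item $n>m$;
\item $p\leq q$ for (1), or $p<q$ for (3);
\item $\lnot(r_i<p)$ for all $i<n$.
\end{itemize}
Transitivity is immediate, since the side condition refers only to the lower element. The stage argument above again shows that $P^{\prime}$ is handy, and that every unbounded filter $G$ has unbounded stages. Then $\pi(G)$ is a filter on $P$. It is unbounded because for each $i$ it contains some $p$ with $\lnot(r_i<p)$. In case (3) it is also non-principal, because the order is strict.

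For the converse, let $F$ be unbounded (and, for (3), non-principal). Let $G_F$ be the upward closure of $\{(p,n): p\in F,\ \forall i<n\,\lnot(r_i<p)\}$. Directedness goes as follows. For each $i<n$, unboundedness gives some $p_i\in F$ with $\lnot(r_i<p_i)$. Take a common lower bound in $F$ of these $p_i$ and the given elements, and in case (3) pass to a strictly smaller element using non-principality. The property $\lnot(r_i<p)$ passes down along $\leq$: if $r_i<p\leq p_i$, then $r_i<p_i$.

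The step I expect to be the main obstacle is checking that $\pi(G_F)=F$ and that $G_{\pi(G)}=G$. Defining $G_F$ as an upward closure, rather than by a pointwise condition, is meant to fix upward closure, which fails for the naive set. The price is that one must verify $G$ is determined by its projection. I would first try to prove that every element of an unbounded filter $G$ lies above a ``witnessing'' element $(p,n)$ with $\lnot(r_i<p)$ for $i<n$. Such an element exists because $G$ has elements of arbitrarily high stage, and any element strictly above another satisfies the side condition at the lower one. If this fails, I would adjust the side condition to range over $i<m$, using the upper index instead.
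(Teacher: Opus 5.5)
Your parts (1) and (2) are essentially the paper's constructions. The only difference is that the paper builds the side condition $\forall m<n\,(p_m\not<p)$, resp.\ $i\le n$, into the underlying set rather than into the order. Both parts go through, with one repair in (1). There $(p,n+1)$ need not lie below $(p,n)$, because $p$ may violate the side condition at level $n+1$. In that case pick $j\le n$ with $r_j$ minimal among $\{r_i: i\le n,\ r_i<p\}$. Then $r_j$ satisfies the side condition, so $(r_j,n+1)<'(p,n)$. Your ``main obstacle'' is harmless: exactly as in (2), for $p\in\pi(G)$ and any $n$ there is $(r,k)\in G$ satisfying the side condition with $r\le p$ and $k>n$. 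Hence $(p,n)\in G$, so $G=\pi(G)\times\N$, and likewise $G_F=F\times\N$.

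Part (3), however, has a genuine gap. Because the order now demands $p<q$, nothing lies strictly below $(p,n)$ when $p$ is minimal in $P$. So ${\uparrow}(p,n)$ is an unbounded, principal filter of $P'$ with bounded stages. For $P=\{a\}$, the poset $P'$ is an antichain, $\UF(P')$ is an infinite discrete space, and $\NP(P')=\NP\UF(P)=\emptyset$. Thus $P'$ is not handy and $\UF(P')\not\cong\NP\UF(P)$. Deleting the pairs with minimal first coordinate does not help, since other elements then lose all strict lower bounds (take $P=\{a>b\}$).

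The missing ingredient is a pruning step, which is how the paper treats (3): transfinitely remove elements with no strictly smaller element. For your argument it must be applied to the stage poset. Let $P''$ be the set of $x\in P'$ that begin an infinite strictly $<'$-descending sequence. Then:
\begin{itemize}
\item $P''$ is upward closed in $P'$;
\item each element of $P''$ has a strictly smaller one in $P''$, so principal filters of $P''$ are bounded;
\item every filter of $P'$ with unbounded stages lies inside $P''$.
\end{itemize}
Hence $\UF(P'')=\NP(P'')$ is exactly your family of unbounded-stage filters, which $\pi$ identifies with $\NP\UF(P)$.

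Pruning $P$ itself, as the paper's text does, would not suffice. Take $P=\{p_0>p_1>\cdots\}\cup\{r\}$ with $r$ below every $p_j$. The pruned poset $\{p_j:j\in\N\}$ carries the unbounded filter $\{p_j:j\in\N\}$. But $\NP\UF(P)=\emptyset$, because in $P$ this filter is bounded by $r$.
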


Actually, there exists a countable handy poset $P^{\prime}$ such that $\UF(P^{\prime})$ is isomorphic to $\NP(P)$ by Corollary \ref{1-NP-typicalization}.

\begin{proof}
\begin{enumerate}
\item We enumerate $P$ as $\langle p_{i}:i\in\N \rangle$.  Let $P^{\prime}=\{(p_{i},n)\in P\times\N :\forall m<n(p_{m}\nless p_{i})\}$. The order on $P^{\prime}$ is defined by $(p_{i},n)<(p_{j},m)\Leftrightarrow p_{i}\leq p_{j}\land n> m$. This poset $P^{\prime}$ is handy since every element $(p_{i}, n)$ satisfies $(p_{i}, n)>(p_{i}, n+1)\lor(p_{i}, n)>(p_{n}, n+1)$, and for any non-principal filter $F$, $\forall n\exists i\, (p_{i}, n)\in F$. We define an isomorphism $f$ from $\UF(P)$ to $\UF(P^{\prime})$ as follows:
\[
f(F)=\{(p_i,n)\in P^{\prime}:p_i\in F\}
\]
for $F\in \UF(P)$. It is clear that $f(F)$ is upward closed. Furthermore, if $(p_{i}, n),(p_{j}, m)\in f(F)$, there exists $p_{k}\in F(p_{i},p_{j}>p_{k}\land\forall l<\text{max}\{m,n\}+1(p_{l}\nless p_{k}))$ because $F$ is unbounded. Thus, $(p_{i}, n),(p_{j}, m)>(p_{k},\text{max}\{m,n\}+1)\in f(F)$, showing that $f$ is well-defined. Clearly, $f$ is injective and continuous. 
 Moreover, since $((p_{i},n)\in G\land(p_{i},m)\in P^{\prime})\rightarrow(p_{i},m)\in G$ for every $G\in \UF(P^{\prime})$ as $\exists k((p_i,n)\leq(p_k,m+1)\in G)$, $f$ is surjective. Finally, $f(N_{p_{i}})=\bigcap_{n\in\N, (p_i,n)\in P^{\prime}}N_{(p_{i},n)}=\bigcup_{n\in\N,(p_i,n)\in P^{\prime}}N_{(p_{i},n)}$ is open in the relative topology of $\UF(P^{\prime})$.

\item We enumerate $P=\langle p_{i}:i\in\N \rangle$. Let $P^{\prime}=\{(p_{i},n)\in P\times\N :i\leq n\}$. The order on $P^{\prime}$ is defined by $(p_{i},n)<(p_{j},m)\Leftrightarrow p_{i}\leq p_{j}\land n> m$. This poset $P^{\prime}$ is handy because a filter $F$ is unbounded if and only if $\forall n\in\N\,\exists (p_{i},n)\in F$ for every filter $F$. We define an isomorphism $f$ from $\mathbb{F}(P)$ to $\UF(P^{\prime})$ as follows:
\[
f(F)=\{(p_i,n)\in P^{\prime}:p_i\in F\}
\]
where $F\in\mathbb{F}(P)$. Then $f$ is injective, and $f(F)$ is an unbounded filter. Furthermore, for any filter $G\in \UF(P^{\prime})$, the set $G^{\prime}:=\{p_{i}\in P:\exists n\in\N(p_{i},n)\in G\}=\{p_{i}\in P:(p_{i},i)\in G\}$ is a filter on $P$ such that $f(F)=G$, which shows that $f$ is surjective. Thus, there exists the inverse mapping $f^{-1}$ induced by the assignment $P^{\prime}\rightarrow P:(p_{i},i)\mapsto p_{i}$. It follows that $f$ is continuous. Moreover, since $((p_{i},n)\in G\land n\leq m)\rightarrow(p_{i},m)\in G$ holds for every $G\in \UF(P^{\prime})$, we have $f(N_{p_{i}})=\bigcap_{n\geq i}N_{(p_{i},n)}=N_{(p_i,i)}$. This set is open in the relative topology of $\UF(P^{\prime})$, which implies that $f$ is an open mapping.

\item Let $\hat{P}=\{p\in P: \exists q\in P(p>q)\}$ be a subposet of a poset $P$. We define a sequence of subposets by transfinite induction: $P_0=P$, $P_{\beta+1}=\hat{P_{\beta}}$ and $P_{\gamma}=\bigcap_{\beta<\gamma} P_{\beta}$ for limit ordinals $\gamma$. Since $P$ is countable and the sequence is non-increasing, there exists the least ordinal $\alpha<\omega_1$ such that $P_{\alpha}=P_{\alpha+1}$. The resulting poset $P^{\prime}=P_{\alpha}$ satisfies the required conditions.
\end{enumerate}
\end{proof}
 
\subsection{Quasi-Polish spaces}
Next, we review the definition of quasi-Polish spaces and gather some of their basic properties.

\begin{defi}[quasi-Polish spaces]

A \textit{quasi-metric} on a set $X$ is a function $d:X\times X\rightarrow \mathbb{R}$ that satisfies the following.
\[
\text{(i)}\forall x,y\in X(d(x,y)\geq0)
\]
\[
\text{(ii)}\forall x,y\in X(d(x,y)=d(y,x)=0\rightarrow x=y)
\]
\[
\text{(iii)}\forall x,y,z\in X(d(x,y)+d(y,z)\geq d(x,z))
\]

A quasi-metric $d$ generates a topology on $X$ with a basis $\mathcal{B}=\{B_{x,r} :x\in X, r\in\mathbb{R}_{>0}\}$, where $B_{x,r}:=\{y\in X:d(x,y)<r\}$.
A sequence $(a_{n})_{n}$ in a quasi-metric space $X$ is called \textit{left-Cauchy} if:
\[
\forall\varepsilon >0\,\exists N\in\mathbb{N}\,\forall m>n>N(d(a_{n},a_{m})<\varepsilon)
\]
We say that $(X,d)$ is \textit{Smyth-complete} if every left-Cauchy sequence in $X$ converges to a point $a\in X$ in the following sense: $\forall\varepsilon >0\,\exists N\in\mathbb{N}\,\forall n>N(d(a,a_{n})<\varepsilon\land d(a_{n},a)<\varepsilon)$. 

A quasi-metric $d$ induces a metric $\hat{d}$ defined by $\hat{d}(x,y)=\text{max}\{d(x,y),d(y,x)\}$. We say that $(X,d)$ is \textit{bicomplete} if every $\hat{d}$-Cauchy sequence in $X$ converges to a point $a\in X$. Every Smyth-completely quasi-metric space is bicomplete.

A \textit{quasi-Polish space} is a second-countable and Smyth-completely quasi-metrizable space. For comparison, recall that a \textit{Polish space} is a separable and completely metrizable space.
\end{defi}
De Brecht \cite{deBr2013} extensively studied the properties of quasi-Polish spaces.

\begin{defi}[Universal space]
We can show that $\mc P(\N)$, equipped with an appropriate topology, is both a UF space and a Smyth-completely quasi-metric space as follows.

Let $\mc P_{\fin}(\N)$ be the poset consisting of all finite subsets of $\N$, ordered by reverse inclusion: 
\[
p\geq q\Leftrightarrow p\subseteq q
\]
for $p,q\in\mc P_{\fin}(\N)$.
Then the space of filters $\mathbb{F}(\mc P_{\fin}(\N))$ can be identified with $\mc P(\N)$  by associating each filter $F$ with the set $\{n \in \N :\exists p\in F(n\in p)\}$. Under this identification, the topology on $\mc P(\N)$ is generated by the basis: $\{N_p:p\in\mc P_{\fin}(\N)\}$ where $N_p=\{F\in\mc P(\N):p\subseteq F\}$. 

With this notation, $\mc P(\N)$ is Smyth-completely quasi-metrizable with respect to the following quasi-metric:
\[
d(F,G)=2^{-j}
\]
 where $j$ is the least number such that $j\in F\land j\notin G$. 
\end{defi}
Every second-countable $T_0$ space is homeomorphic to a subspace of $\mc P(\N)$. Let $X$ be a second-countable $T_0$ space, where $\{ V_i:i\in\N\}$ is a countable basis of $X$. Then the map $\iota:X\rightarrow \mc P(\N)$ defined as $\iota(x)=\{i\in\N:x\in V_i\}$ is an embedding.

\begin{defi}[Borel sets, {\cite[Def.1]{deBr2013}}]
In a quasi-metric space, we can define open sets, closed sets, $F_{\sigma}$ sets, and $G_{\delta}$ sets, etc., in the usual manner.
However, unlike in metric spaces, an open set in a quasi-metric space is not necessarily an $F_{\sigma}$ set.
Similarly, a closed subset is not always a $G_{\delta}$ set in a quasi-metric space. Therefore, we define the Borel hierarchy of a topological space $(X,\tau)$ as follows:
\begin{enumerate}
\item$\mathbf{\Sigma}_{1}^{0}$ is the collection of open sets $\tau$.
\item$\mathbf{\Sigma}_{\alpha}^{0}=\{\bigcup_{i\in\omega}(B_{i}\backslash B^{\prime}_{i}):B_{i},B^{\prime}_{i}\in\bigcup_{\beta<\alpha}\mathbf{\Sigma}_{\beta}^{0}\}$.
\item$\mathbf{\Pi}_{\alpha}^{0}=\{X\backslash A:A\in\mathbf{\Sigma}_{\alpha}^{0}\}$.
\item$\mathbf{\Delta}_{\alpha}^{0}=\mathbf{\Sigma}_{\alpha}^{0}\bigcap\mathbf{\Pi}_{\alpha}^{0}$.
\end{enumerate}
for each ordinal $\alpha>0$.

\end{defi}

From this definition, we can derive results analogous to those found in the study of Polish spaces.
Just as $G_\delta$ subspaces of Polish spaces are Polish themselves, $\mathbf{\Pi}_2^0$ subspaces of quasi-Polish spaces are quasi-Polish spaces.
\begin{prop}[{\cite[Theorem 22]{deBr2013}}]\label{1-pi2-quasipolish}

Every $\mathbf{\Pi}_{2}^{0}$ subspace of a quasi-Polish space is quasi-Polish.
\end{prop}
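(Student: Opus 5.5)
We prove that a $\mathbf{\Pi}^0_2$ subset $A$ of a quasi-Polish space $(X,d)$, with $d$ a Smyth-complete quasi-metric, carries a Smyth-complete quasi-metric compatible with its relative topology. We follow the classical Kuratowski-style argument for $G_\delta$ subspaces of Polish spaces, adapted to the non-symmetric setting. We reduce in two steps.

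\emph{Step 1: sets of the form $U\cup C$.} Write $A=\bigcap_{i}(U_i\cup C_i)$ with $U_i$ open and $C_i$ closed. This is possible because the complement of $A$ is a countable union of sets $B_i\setminus B'_i$ with $B_i,B'_i$ open. We first show that each $A_i=U_i\cup C_i$ is quasi-Polish. On $A_i$ we define
\[
d_i(x,y)=d(x,y)+\Bigl|\tfrac{1}{\delta(x)}-\tfrac{1}{\delta(y)}\Bigr|^{+},
\]
where $\delta(x)=d(x,X\setminus U_i)$, truncated so that it is $0$ or $\infty$ on $C_i\setminus U_i$. The positive-part term $\max\{0,\,\cdot\,\}$ is taken in the direction that keeps balls open: it penalizes moving from a point deep inside $U_i$ toward its boundary.

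We then verify two things. First, the triangle inequality, which holds because $t\mapsto t^+$ is subadditive. Second, that $d_i$ generates the subspace topology, which needs $\delta$ to be suitably semicontinuous with respect to $d$. Finally we check Smyth completeness. A $d_i$-left-Cauchy sequence is $d$-left-Cauchy, so it has a limit $a\in X$. The extra term keeps $1/\delta(x_n)$ bounded from one side, and this forces $a\in U_i$ unless the sequence stays in the closed set $C_i$, in which case $a\in C_i$. Either way $a\in A_i$.

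\emph{Step 2: countable intersections.} Given Smyth-complete quasi-metrics $d_i$ on the $A_i$, bounded by $1$ after truncation, we embed $A$ diagonally into $\prod_i A_i$. There we use $\rho(x,y)=\sum_i 2^{-i}d_i(x,y)$, which is a quasi-metric on $A$ generating the subspace topology. A $\rho$-left-Cauchy sequence is $d_i$-left-Cauchy for every $i$, so it converges in each $A_i$. All these limits coincide, since they are limits in $X$ of the same sequence and $X$ is $T_0$/sober. Hence the common limit lies in $\bigcap_i A_i=A$. Second countability of $A$ is inherited from $X$.

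\emph{Main obstacle.} The main obstacle is Step 1, specifically the choice of $\delta$. In a quasi-metric space the distance-to-a-set function is not continuous in both directions. The asymmetry must therefore be arranged so that the modified balls are open in $A_i$, and so that the limit of a left-Cauchy sequence is forced into $U_i\cup C_i$. If this direct approach proves delicate, a cleaner alternative is de Brecht's characterization via countable $\mathbf{\Pi}^0_2$ subsets of $\mathcal P(\mathbb N)$ together with the universal embedding $\iota$. The plan would be to prove the result first inside $\mathcal P(\mathbb N)$, where the sets $U_i\cup C_i$ are combinatorially explicit and one can work with a UF-poset presentation as in Proposition~\ref{1-typicalization}, and then transfer it back to $X$.
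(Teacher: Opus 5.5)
Your proposal is correct and is essentially the paper's proof. You use the same asymmetric term $\max\{0,\,1/d(y,F_i)-1/d(x,F_i)\}$ with $F_i=X\setminus U_i$, truncated and given the jump value when passing from $U_i$ into the closed part. Compatibility rests on the same estimate $d(y,F_i)\geq d(x,F_i)-d(x,y)$, and Smyth-completeness on the same boundedness of $1/d(x_k,F_i)$ along a left-Cauchy sequence, which forces the limit into $U_i\cup C_i$.

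The only difference is organizational: you handle one $U_i\cup C_i$ at a time and then take a weighted sum, whereas the paper writes $d'=d+\sum_i d_i$ at once. One justification should be tightened: in Step 2 the limits coincide because $\hat d$-limits are unique ($\hat d$ is a metric), not because $X$ is $T_0$ or sober, since topological limits need not be unique in $T_0$ spaces.
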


\begin{proof}
For later reference, we repeat the proof below.
Let $Y\subseteq X$ be a $\mathbf{\Pi}^{0}_{2}$ subset represented as $Y=\bigcap_{i\in\omega}(U_{i}\cup A_{i})$, where $U_{i}$ is open, $A_{i}$ is closed,  and $U_{i}\cap A_{i}=\emptyset$ for all $i\in\omega$. Let $d$ be a compatible Smyth-complete quasi-metric on $X$. For each $i\in\omega$, let $F_{i}=X\backslash U_{i}$ and define $d_i:Y\times Y\to[0,\infty)$ as follows:
\[
d_{i}(x,y)=
 \begin{cases}
 \text{min}\{2^{-i},\text{max}\{0,\frac{1}{d(y,F_{i})}-\frac{1}{d(x,F_{i})}\}\}&x,y\in U_{i}\\
 2^{-i}&x\in U_{i},y\in A_{i}\\
 0&\text{otherwise}
 \end{cases}
\]
We shall prove $d^{\prime}=(d+\sum_{i\in\omega}d_{i})$ is a compatible quasi-metric on $Y$.

 \begin{itemize}
  \item \textbf{$d^{\prime}$ satisfies the quasi-metric axioms:} Clearly, $d_{i}(x,y)$ is non-negative for all $i$ and the sum $d^{\prime}(x,y)$ is well-defined as a real number, since it is bounded by $d(x,y)+2$. Since $d$ and each $d_i$ satisfy the triangle inequality, their sum $d^{\prime}$ also satisfies it. Thus, $d^{\prime}$ is a quasi-metric.
  \item \textbf{Compatibility of the topology:} The topology generated by $d^{\prime}$ is equivalent to the relative topology induced by $d$. Since $d^{\prime}(x,y)\geq d(x,y)$ for all $x,y\in Y$, the $d^{\prime}$-topology is at least as fine as the $d$-topology. To show they are equivalent, it suffices to show that for all sufficiently small $\varepsilon>0$ and $x\in Y$, there exists $\delta>0$ such that $\forall y\in Y(d(x,y)<\delta\Rightarrow d^{\prime}(x,y)<\varepsilon)$.
For each $F=F_{i}$ and a sufficiently small $\delta<d(x,F)$, $d(x,y)<\delta$ implies $d(y,F)\geq d(x,F)-\delta$ by the triangle inequality. Then:
\[
\frac{1}{d(y,F)}-\frac{1}{d(x,F)}=\frac{d(x,F)-d(y,F)}{d(x,F)d(y,F)}\leq\frac{\delta}{d(x,F)(d(x,F)-\delta)}
\]
By choosing $\delta$ small enough, this value can be made arbitrarily small. Since $d^{\prime}$ is a sum of $d$ and such terms $d_i$, the claim follows.
  \item \textbf{Completeness of $(Y,d^{\prime})$:} Let $(x_{k})_{k\in\omega}$ be a left-Cauchy sequence in $(Y,d^{\prime})$. Then $(x_{k})_k$ is also a left-Cauchy sequence in $(X,d)$. Since $(X,d)$ is Smyth-complete, $(x_k)_k$ converges to some point $x\in X$. If $x_k$ is in $A_n$ for infinitely many $k$, then $x\in A_n$ because $A_n$ is closed. Otherwise, $x_k\in U_n$ for all sufficiently large $k$. In this case, the left-Cauchy property of $(x_k)_k$ with respect to $d_n$ implies that $\frac{1}{d(x_k,F_n)}$ is bounded, so $d(x,F_{n})>0$, which means $x\in X\backslash F_{n}$. Thus $x\in\bigcap_{n\in\N}(U_n\cup A_n)=Y$.
Finally, we show that $(x_k)_k$ converges to $x$ with respect to $d^{\prime}$. For any $n\in\N$, we can choose $K\in\N$ large enough such that for all $k>K$ and $m\leq n$, the terms $d(x_k,x)$ and $d(x,x_k)$ are sufficiently small to satisfy the conditions derived in part (2). This ensures that $d^{\prime}(x_k,x)<2^{-n}$ and $d^{\prime}(x,x_k)<2^{-n}$ for large $k$. Thus, $(Y,d^{\prime})$ is Smyth-complete.
 \end{itemize}
\end{proof}
The converse is also true.
\begin{prop}[{\cite[Theorem 21]{deBr2013}}]\label{prop:qPisPi20}
Every quasi-Polish subspace of a second-countable quasi-metric space is a $\mathbf{\Pi}_2^0$-subspace. 
\end{prop}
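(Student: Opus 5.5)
Let $(X,d)$ be a second-countable quasi-metric space and $Y\subseteq X$ a subspace that is quasi-Polish, i.e.\ admits a compatible Smyth-complete quasi-metric $\rho$ (compatible with the relative topology from $d$). I will mimic the classical proof that a completely metrizable subspace of a metric space is $G_\delta$. The goal is to write
\[
Y=\bigcap_{n\in\N}(U_n\cup A_n),\qquad U_n \text{ open},\ A_n \text{ closed in } X .
\]
The natural choice is: a point of $X$ lies in $Y$ iff it is the limit of a $\rho$-left-Cauchy sequence of points of $Y$ that also converges to it in $X$.

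First I fix a countable basis $\{V_i\}_{i}$ of $X$. For each $n$ I consider pairs $(i,j)$ and say $V_i$ is \emph{$n$-small} if $V_i\cap Y$ has $\rho$-diameter-type smallness, concretely $V_i\cap Y\subseteq B^\rho_{y,2^{-n}}$ for some $y\in Y$ (one-sided balls, since $\rho$ is not symmetric). By compatibility of $\rho$ with the relative topology, every $y\in Y$ lies in an $n$-small basic open set of $X$. The first attempt would be $Y\subseteq \bigcap_n W_n$ with $W_n=\bigcup\{V_i: V_i\ n\text{-small}\}$. This alone only gives a $G_\delta$ superset, and in $T_0$ spaces points of $\overline{Y}$ need not be detected by open sets. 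So I would use the $\mathbf{\Pi}^0_2$ form instead: for each basic $V_i$ and each $n$, the set
\[
\{x\in X: x\in \overline{V_i\cap Y}\ \Rightarrow\ x\in \textstyle\bigcup\{V_k : V_k\subseteq V_i,\ V_k\ n\text{-small},\ V_k\cap Y\neq\emptyset\}\}
\]
is of the form $(X\setminus \overline{V_i\cap Y})\cup O$, i.e.\ ``open or complement of a closed'', hence a $\mathbf{\Pi}^0_2$ set (it is $\neg(\text{closed}\wedge\neg\text{open})$). Intersecting over $i,n$, together with $\overline{Y}$ itself (closed, hence $\mathbf{\Pi}^0_2$), gives a $\mathbf{\Pi}^0_2$ set $Z\supseteq Y$; I verify $Y\subseteq Z$ using that each $y\in V_i\cap Y$ has an $n$-small basic neighbourhood inside $V_i$.

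The main step is $Z\subseteq Y$. Given $x\in Z$, I build inductively basic sets $V_{k_0}\supseteq V_{k_1}\supseteq\cdots$, each containing $x$, meeting $Y$, $n$-small at stage $n$, and also shrinking along a fixed neighbourhood base of $x$ in $X$ (intersect with the $n$-th basic neighbourhood of $x$, using that $x\in\overline{Y}$ ensures nonemptiness of the traces). Pick $y_n\in V_{k_n}\cap Y$. The nested smallness should make $(y_n)$ $\rho$-left-Cauchy: for $m>n$, $y_m\in V_{k_n}\cap Y\subseteq B^\rho_{z_n,2^{-n}}$, and I need $\rho(y_n,y_m)$ small, which requires choosing smallness in the sense that $\rho(y,y')<2^{-n}$ for all $y,y'$ in the set — I would define $n$-small that way and check compatibility still yields such neighbourhoods (given $y$, $B^\rho_{y,2^{-n-1}}$ is open in $Y$, but the reverse inequality $\rho(y',y)$ is not controlled). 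This asymmetry is where I expect the real obstacle: I'd handle it by only requiring $\rho(y,y')<2^{-n}$ for $y$ the ``centre'' and $y'$ later points, i.e.\ choose the $y_n$ as centres so that $y_{n+1}\in B^\rho_{y_n,2^{-n}}$, which gives left-Cauchy via the triangle inequality and summing $2^{-n}$. Smyth-completeness then gives a $\rho$-limit $y\in Y$, which is also an $X$-limit; since $y_n\to x$ in $X$ too (they lie in shrinking neighbourhoods of $x$), I must conclude $x=y$. In a $T_0$ non-Hausdorff space limits are not unique, so I'd argue via specialization: every neighbourhood of $x$ contains $y$ eventually, giving $x\sqsubseteq y$-type relation, and the Smyth convergence (two-sided $\rho$-smallness) plus $x\in$ the chosen $V_{k_n}$ and a symmetric argument with neighbourhoods of $y$ (using the $\mathbf{\Pi}^0_2$ conditions at $y$'s basic sets) yields the other inequality, hence $x=y\in Y$ by $T_0$.
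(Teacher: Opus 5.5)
The paper gives no proof of this statement (it only cites de Brecht), so your argument has to stand on its own, and as written it does not. First, each set $(X\setminus\overline{V_i\cap Y})\cup O$ is simply open, so your $Z$ has the form $\overline{Y}\cap G_\delta$. Quasi-Polish subspaces need not have this form. In the three-point space $0<1<2$ whose open sets are the up-sets, $Y=\{0,2\}$ is a Sierpi\'nski space, yet the only closed set and the only open set containing it are $X$. So the argument has to break somewhere.

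It breaks at $Y\subseteq Z$. You check only points of $V_i\cap Y$, but the antecedent $x\in\overline{V_i\cap Y}$ also fires at boundary points, while the consequent forces $x\in V_i$. For $X=Y=\R$ and $V_i=(0,1)$, the point $0$ is expelled from $Z$. This part is repairable. Use the antecedent $x\in V_i$ instead, i.e.\ the $\mathbf{\Delta}^0_2$ sets $(X\setminus V_i)\cup O_{i,n}$, and require a centre $z\in V_k\cap Y$ with $V_k\cap Y\subseteq B^\rho_{z,2^{-n}}$. Then $Y\subseteq Z$ holds. Moreover, for $x\in Z$ your nested $V_{k_n}$ with centres $z_n$ satisfy $z_m\in V_{k_n}\cap Y\subseteq B^\rho_{z_n,2^{-n}}$ for $m>n$, so $(z_n)$ is $\rho$-left-Cauchy and has a limit $y\in Y$.

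The decisive gap is the final step: even after this repair, nothing in $Z$ forces $x=y$. Take the Sierpi\'nski space $X=\{0,1\}$ (with $\{1\}$ open) and $Y=\{1\}$. The point $0$ lies in $\overline{Y}$, and its only neighbourhood $X$ is $n$-small for every $n$ with centre $1$. So $0\in Z$ although $0\notin Y$, and your construction returns $y=1\neq 0$. The promised ``symmetric argument with $y$'s basic sets'' has nothing to draw on. Every condition defining $Z$ is triggered by a neighbourhood of $x$, never by a neighbourhood of $y$ that misses $x$.

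What is missing is an explicit identification condition. Let $W_j$ be the union of all basic $V_k$ admitting $z\in V_k\cap Y$ and $m$ with $V_k\cap Y\subseteq B^\rho_{z,2^{-m}}$ and $B^\rho_{z,2^{-m+1}}\subseteq V_j$. This is an open set with $W_j\cap Y=V_j\cap Y$, and it is the correct replacement for your $\overline{V_j\cap Y}$. Then:
\begin{itemize}
\item intersect $Z$ with the $\mathbf{\Delta}^0_2$ sets $(X\setminus W_j)\cup V_j$;
\item put the same slack into the refinement sets: radius $2^{-m}$ with $m\ge n$, and $B^\rho_{z,2^{-m+1}}\subseteq V_i$.
\end{itemize}
For the constructed limit one gets $\rho(z_n,y)\le 2^{-m_n}$, where $2^{-m_n}$ is the radius used at stage $n$. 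Hence $x\in V_j\Rightarrow y\in V_j$, and $y\in V_j\Rightarrow x\in W_j\Rightarrow x\in V_j$. So $x$ and $y$ have the same basic neighbourhoods, and $T_0$ gives $x=y\in Y$.

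This is the non-Hausdorff form of the classical proof: extend $\mathrm{id}_Y$ to a continuous $g$ on a $\mathbf{\Pi}^0_2$ superset of $Y$ and cut out $\{x: g(x)=x\}$. That set is $\mathbf{\Pi}^0_2$ because equality in a countably based $T_0$ space is.
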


Thus, every quasi-Polish space is homeomorphic to a $\mathbf{\Pi}_2^0$ subspace of $\mc P(\N)$. In addition, second-countability implies separability in the following sense:
\begin{prop}[{\cite[Proposition 13]{deBr2013}}]\label{prop:hat-d-dense}

A quasi-metric space $(X,d)$ is second-countable if and only if the associated metric space $(X,\hat{d})$ is separable.
\end{prop}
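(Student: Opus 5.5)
We prove the two directions separately. Throughout, $(X,d)$ is a quasi-metric space and $\hat{d}(x,y)=\max\{d(x,y),d(y,x)\}$ is the associated metric.

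\emph{Second-countable $\Rightarrow$ $\hat d$-separable.} Let $\{V_i:i\in\N\}$ be a countable basis. For each pair $(i,n)$ with $V_i$ nonempty, consider the set
\[
W_{i,n}=\{y\in V_i : V_i\subseteq B_{y,2^{-n}}\}.
\]
If $W_{i,n}\neq\emptyset$, choose one point $x_{i,n}\in W_{i,n}$ (countable choice). We claim $D=\{x_{i,n}\}$ is $\hat d$-dense.

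Fix $z\in X$ and $n\in\N$. Since $B_{z,2^{-n-1}}$ is an open neighbourhood of $z$, there is a basic $V_i$ with $z\in V_i\subseteq B_{z,2^{-n-1}}$. We next want $z\in W_{i,n}$, i.e.\ $d(z,w)<2^{-n}$ for all $w\in V_i$; this holds because $V_i\subseteq B_{z,2^{-n-1}}$. So $W_{i,n}$ is nonempty and $x=x_{i,n}$ exists.

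Then $x\in V_i$ and $x\in W_{i,n}$, which give the two needed bounds:
\begin{itemize}
\item $d(x,z)<2^{-n}$, since $z\in V_i\subseteq B_{x,2^{-n}}$;
\item $d(z,x)<2^{-n-1}$, since $x\in V_i\subseteq B_{z,2^{-n-1}}$.
\end{itemize}
Hence $\hat d(x,z)<2^{-n}$, so $D$ is dense. The main point is this trick of selecting from each basic set a point that is $d$-close from the left to the whole set, which yields closeness in both directions.

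\emph{$\hat d$-separable $\Rightarrow$ second-countable.} Let $D$ be countable and $\hat d$-dense. We claim $\{B_{x,2^{-n}}:x\in D,n\in\N\}$ is a basis.

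Take an open $U$ and $z\in U$, and choose $n$ with $B_{z,2^{-n}}\subseteq U$. Pick $x\in D$ with $\hat d(x,z)<2^{-n-2}$. Then $z\in B_{x,2^{-n-1}}$ because $d(x,z)<2^{-n-2}$.

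For $y\in B_{x,2^{-n-1}}$, the triangle inequality gives
\[
d(z,y)\le d(z,x)+d(x,y)<2^{-n-2}+2^{-n-1}<2^{-n},
\]
so $B_{x,2^{-n-1}}\subseteq B_{z,2^{-n}}\subseteq U$. Here we used the bound on $d(z,x)$, which the symmetric metric $\hat d$ supplies.

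Finally, the sets $B_{x,r}$ are open in the quasi-metric topology: for $y\in B_{x,r}$, the triangle inequality gives $B_{y,\,r-d(x,y)}\subseteq B_{x,r}$. So the countable family $\{B_{x,2^{-n}}\}$ is a basis, and $X$ is second-countable.

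Both directions are routine once these choices are fixed.
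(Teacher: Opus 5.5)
Your proof is correct in both directions. In the first direction, choosing $x_{i,n}\in W_{i,n}$ independently of $z$ is exactly what lets the two one-sided bounds $d(x,z)<2^{-n}$ and $d(z,x)<2^{-n-1}$ combine into $\hat{d}(x,z)<2^{-n}$. In the converse, the $d$-balls centred at a $\hat{d}$-dense set do form a basis.

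The paper gives no argument of its own for this statement; it only cites de Brecht's Proposition 13, and yours is the standard proof. One thing worth noticing is that your first direction is non-effective, since it must decide whether $W_{i,n}\neq\emptyset$ and then pick a point. This is why, in Section 3, the paper builds a countable $\hat{d}$-dense set $A$ directly into the code $(A,d)$ and invokes this proposition only to justify that coding classically.
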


\subsection{Countably presented frames}
In this subsection, we examine a specific class of spaces originating from pointless topology, particularly focusing on the property of being ``spatial'' and its relationship with quasi-Polish spaces.

\begin{defi}[frames]
A complete lattice $\tau$ is called a \textit{frame} if it satisfies the following infinite distribution law for every $u \in \tau$ and every family $(v_i)_{i \in I} \subseteq \tau$:
\[
u \land \bigvee_{i \in I} v_i = \bigvee_{i \in I} (u \land v_i).
\]
\end{defi}
With every topological space $(X,\tau)$ (where $\tau$ denotes the family of open sets on $X$), we can associate the frame $\tau$. Conversely, a frame $\sigma$ is \textit{spatial} if there exists a topological space $(X,\tau)$ whose frame $\tau$ is isomorphic to $\sigma$ as a frame.

\begin{defi}[countably presented frames]
Let $G$ be a countable set of generators, and let $R \subseteq \langle G \rangle \times \langle G \rangle$ be a countable binary relation. Here, $\langle G \rangle$ denotes the free frame generated by $G$, whose elements are represented in the form $\bigvee_{F \in \mathcal{F}} \bigwedge_{g \in F} g$, where $\mathcal{F}$ is a family of finite subsets of $G$.

A \textit{countably presented frame} $\langle G; R \rangle$ is a frame consisting of a set of generators $G$, a countable binary relation $R \subseteq  \langle G\rangle\times \langle G\rangle$, and the preorder $\prec$ defined as follows.
\end{defi}

\begin{defi}[congruence preorder]
A binary relation $\prec_m$ on $\langle G\rangle$ is called a \textit{congruence preorder} containing $R$ if it satisfies the following conditions for all $a, b, c \in \langle G\rangle$:
\begin{enumerate}
\item $a\leq b\Rightarrow a\prec_m b$, $(a,b)\in R\Rightarrow a\prec_m b$
\item $a\prec_m b$, $b\prec_m c\Rightarrow a\prec_m c$
\end{enumerate} 
In particular, we denote the least congruence preorder by $\prec$. For two expressions $a, b$ in the countably presented frame $\langle G; R \rangle$, the relation $a \prec b$ holds if and only if $a \prec_m b$ holds for every congruence preorder $\prec_m$.
\end{defi}
Each congruence preorder $\prec_m$ corresponds to a frame homomorphism $\phi: \langle G\rangle \to H$ that satisfies $\phi(u) \leq_H \phi(v)$ for all $(u, v) \in R$. The countably presented frame $\langle G; R \rangle$ is defined as the universal object satisfying these requirements.
\[
\xymatrix{
\langle G\rangle \ar[r]^{\iota} \ar[rd]_{\phi} & \langle G;R\rangle \ar@{-->}[d]^{\exists! \tilde{\phi}} \\
& H
}
\]
We can also define $\prec$ as the minimal transitive closure which contains $R$ and $\leq$. 

Since the set $\{0, 1\}$ with the order $0 < 1$ forms a frame, we can consider a specific class of congruence preorders induced by frame homomorphisms $\xi: \langle G; R \rangle \to \{0, 1\}$. By defining the relation $x \in g \iff \xi(g) = 1$ for all $g\in\langle G; R \rangle$, each such homomorphism corresponds to a point $x$ in the topological space whose lattice of open sets is given by $\langle G; R \rangle$. For this frame, we define a topological space $X$ where the set of points consists of all frame homomorphisms $\xi: \langle G; R \rangle \to \{0, 1\}$. The topology on $X$ is generated by the subbasis: 
\[
\{ N_g : g \in G \}\text{ where }N_g=\{ \xi : \xi(g) = 1 \}.
\] Under this construction, an arbitrary open set $a \in \langle G; R \rangle$, expressed in the form $a = \bigvee_{F \in \mathcal{F}} \bigwedge_{g \in F} g$, corresponds to the open set: 
\[
U_a=\bigcup_{F \in \mathcal{F}} \bigcap_{g \in F} N_g.
\] To verify whether the frame $\langle G; R \rangle$ is spatial, it suffices to check whether the lattice of open sets of this constructed space $X$ is isomorphic to the original frame $\langle G; R \rangle$.

\begin{defi}[Spatial]
A countably presented frame $\langle G; R \rangle$ is said to be spatial if $U_a \subseteq U_b \iff a \prec b$ holds for all expressions $a$ and $b$. 
\end{defi}

\subsection{Equivalence}
Next, we demonstrate that these different characterizations are equivalent to one another: (1) quasi-Polish spaces, (2) UF spaces, (3) NP spaces, (4) $\mathbf{\Pi}_2^0$ subspaces of $\mathcal{P}(\mathbb{N})$, and (5) sober spaces associated with countably presented frames.

\begin{prop}\label{1-UF(P)toP(N)}
\begin{enumerate}
 \item Every countably based UF space is homeomorphic to a $\mathbf{\Pi}^0_2$ subspace of $\mc P(\N)$.
 \item Every $\mathbf{\Pi}^0_2$ subspace of $\mc P(\N)$ is homeomorphic to a countably based UF space.
 \item Every $\mathbf{\Pi}^0_2$-subspace of a countably based UF space is homeomorphic to a countably based UF space.
\end{enumerate}
\end{prop}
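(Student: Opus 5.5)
We prove the three parts in order; part 3 follows from the first two.

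\textbf{Part 1.} Let $P=\{p_i:i\in\N\}$ be countable. By Proposition~\ref{1-typicalization}(1) we may assume $P$ is handy, so $\UF(P)=\NP(P)$. Consider the embedding $\iota:\UF(P)\to\mc P(\N)$, $\iota(F)=\{i:p_i\in F\}$. It is continuous and open onto its image, since $\iota(N_{p_i})=\iota[\UF(P)]\cap\{A:i\in A\}$, and it is injective. So it suffices to show that the image, namely the set of $A\subseteq\N$ coding non-principal filters, is $\mathbf{\Pi}^0_2$. We write the three filter conditions on $A$ as intersections of sets of the form $U\cup C$ with $U$ open and $C$ closed.
\begin{itemize}
\item Upward closure: for each pair $p_i\le p_j$, require $i\notin A$ or $j\in A$. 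The set $\{A:i\notin A\}$ is closed and $\{A: j\in A\}$ is open.
\item Directedness: for each $i,j$, require $\{i,j\}\not\subseteq A$ or $\exists k\,(p_k\le p_i,p_j \wedge k\in A)$. The first alternative is closed and the second is open.
\item Non-principality: for each $i$, require $i\notin A$ or $\exists k\,(p_k<p_i\wedge k\in A)$. Again this is closed union open.
\end{itemize}
The intersection of all these countably many sets is $\mathbf{\Pi}^0_2$. Non-principality is exactly what handiness gives us in place of unboundedness, and unboundedness itself is not visibly $\mathbf{\Pi}^0_2$. This is why we pass to a handy poset first.

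\textbf{Part 2.} Let $Y=\bigcap_i(U_i\cup A_i)\subseteq\mc P(\N)$ with $U_i$ open and $A_i$ closed. Write $U_i=\bigcup_{s\in S_i}N_s$ for a set $S_i$ of finite sets. The plan is to build a poset whose elements are finite approximations carrying ``commitments''. An element is a pair $(s,n)$ with $s\in\mc P_{\fin}(\N)$, which records positive information about a point, together with a stage $n$. We require that for each $i<n$, either $s\supseteq t$ for some $t\in S_i$ (so $U_i$ is witnessed), or the element records that $U_i$ has not yet been entered.

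The difficulty is that $A_i$ is closed, so its membership is witnessed only ``in the limit'' by negative information, which filters cannot express directly. So instead we use the idea from de Brecht's proof that $\mathbf{\Pi}^0_2$ subsets of $\mc P(\N)$ are quasi-Polish. A point lies in $Y$ iff for each $i$, either it enters $U_i$, or it avoids $U_i$ forever. We encode this with decreasing sequences: a filter must keep descending, and each descent must either witness $U_i$ or postpone it. A filter that postpones $U_i$ infinitely often while actually lying in $U_i$ is ruled out by ordering. The ordering is chosen so that $(s,n)\le(s',n')$ requires $s\supseteq s'$, $n\ge n'$, and that any $U_i$-witness already present in $s'$ be retained.

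We then verify that $\NP$ of this poset maps onto $Y$ via $F\mapsto\bigcup\{s:(s,n)\in F\}$, and that this map is a homeomorphism. Finally we apply Corollary~\ref{1-NP-typicalization}, or Proposition~\ref{1-typicalization}(3), to convert $\NP$ into $\UF$.

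An alternative route is to use Proposition~\ref{1-pi2-quasipolish} to get a Smyth-complete quasi-metric $d'$ on $Y$. Then take $P$ to be the set of pairs (a point from a countable $\hat d'$-dense set, a rational radius), as in Proposition~\ref{prop:hat-d-dense}, ordered by formal ball inclusion $d'(x,y)+r\le r'$. Smyth-completeness gives that unbounded filters of formal balls converge to unique points. We plan to use this quasi-metric route, since it reduces to checking the standard formal-ball correspondence. This step is the main obstacle: we must show surjectivity, meaning every unbounded filter of balls determines a left-Cauchy sequence whose limit lies in $Y$, and that the limit map is open.

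\textbf{Part 3.} By Part 1, a countably based UF space is homeomorphic to a $\mathbf{\Pi}^0_2$ subspace $Z$ of $\mc P(\N)$. A $\mathbf{\Pi}^0_2$ subspace of $Z$ is again $\mathbf{\Pi}^0_2$ in $\mc P(\N)$: intersect its defining open and closed sets, which are relative to $Z$, with the defining sets of $Z$, extending each relatively open or closed set to $\mc P(\N)$. Then Part 2 applies.
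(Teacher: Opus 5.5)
Parts 1 and 3 are correct. In Part 1 your embedding is the paper's own map. Under the identification of filters on $\mc P_{\fin}(\N)$ with subsets of $\N$, the paper's $\hat g(F)=\upcl\{g(p_n):p_n\in F\}$ is just $\{i:p_i\in F\}$. You describe the image more economically, writing non-principality clause by clause as closed${}\cup{}$open. The paper instead uses open sets $U_n$ built from strictly descending chains $\sigma$, which is the shape it reuses for the $\RCAo$ version with filters as sequences. Two small points:
\begin{enumerate}
\item $A=\emptyset$ satisfies your three clauses vacuously, so add the open clause $A\neq\emptyset$.
\item Contrary to your comment, unboundedness \emph{is} visibly $G_\delta$, namely $\bigcap_{r\in P}\bigcup\{N_{\{k\}}:\lnot(r<p_k)\}$, so the passage to a handy poset is not actually needed.
\end{enumerate}
Part 3 is the paper's argument.

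Part 2 is where you genuinely depart from the paper. The paper argues directly. For $X=\bigcap_n(A_n\cup B_n)$ it takes $P'=\{(n,q): q\in\mc P_{\fin}(\N),\ q\in\bigcap_{i\le n}(A_i\cup B_i)\}$, ordered by $(n,q)<(m,r)$ iff $n>m$ and $q\supseteq r$, and prunes to the elements that have strictly smaller ones. It then checks that $F\mapsto\bigcup\{q:(n,q)\in F\}$ and $G\mapsto\upcl\{(n,q):q\subseteq G\}$ are inverse homeomorphisms. The key fact is that closed sets of $\mc P(\N)$ are downward closed and open sets are upward closed. So membership in $A_i\cup B_i$ can be tested on finite approximations, and a point that leaves $A_n$ without entering $B_n$ is caught at a finite stage. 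This needs no quasi-metric theory, which is why the paper can afterwards derive ``UF${}={}$quasi-Polish'' from it. It also isolates the one non-effective step, the pruning, which Section 3 calibrates at $\PCAo$.

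Your committed route goes through Proposition~\ref{1-pi2-quasipolish} and then formal balls over a countable $\hat d'$-dense $D\subseteq Y$. It also works and is not circular, provided you prove the formal-ball step directly, as in Theorem~\ref{qPtoUF}. Do not cite the Section 2 theorem ``every quasi-Polish space is a countably based UF space'', because its proof goes through the present proposition. As written, the route needs two repairs.
\begin{enumerate}
\item The order must be strict: $(a,r)<(b,s)$ iff $d'(b,a)+r<s$. With your $\le$, in $\R$ the chain $(2^{-m},2^{-m})_m$ is strictly decreasing. It generates an unbounded filter that is not of the form $\{(a,r):|a-y|<r\}$ for any $y$.
\item The step you left open hinges on showing that unboundedness forces the radii to $0$. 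Suppose a coinitial chain $(a_n,r_n)$ in $F$ has $r_n\downarrow\rho>0$. Then $d'(a_n,a_m)\le r_n-r_m$, so by Smyth-completeness of $d'$ the centres converge to some $x\in Y$ with $d'(a_n,x)\le r_n-\rho$. Any $(c,t)$ with $c\in D$ and $d'(x,c)+t<\rho$ is then a strict lower bound of $F$. Hence $\rho=0$, and the same limit satisfies $F=\{(a,r):d'(a,x)<r\}$.
\end{enumerate}

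Finally, drop your first sketch. The claim ``a point is in $Y$ iff for each $i$ it enters $U_i$ or avoids it forever'' holds for every point and ignores $A_i$. Also, Corollary~\ref{1-NP-typicalization} is itself derived from this proposition, so appealing to it would be circular.
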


\begin{proof}
\begin{enumerate}
\item Let $X$ be a UF space and $P$ be a poset such that $X=\UF(P)$. By Proposition \ref{1-typicalization}, we may assume without loss of generality that $P$ is handy. 
Since $P$ is countable, we enumerate $P$ as $\langle p_{i}:i\in\N \rangle$. Define a function $g:P\rightarrow \mc P_{\fin}(\N)$ by $g(p_{n})=\{i\leq n:p_{n}\leq p_{i}\}$. Then, we define $\hat{g}(F)=\upcl(\{g(p_n):p_n\in F\})$ for each $F\in \UF(P)$. This map $\hat{g}:\UF(P)\rightarrow \mc P(\N)$ satisfies the required conditions.

\begin{itemize}
 \item \textbf{$\hat{g}(F)$ is a filter:} Let $q\in\mc P_{\fin}(\N)$ be an element of $\hat{g}(F)$. If $q\leq q^{\prime}$, then there exists $n\in\N$ such that $p_{n}\in F$ and $g(p_n)\leq q\leq q^{\prime}$, so $q^{\prime}\in\hat{g}(F)$. 
Now, let $q,q^{\prime}\in \hat{g}(F)$. There exists $n,n^{\prime}$ such that $p_n,p_{n^{\prime}}\in F$ with $g(p_n)\leq q$ and $g(p_{n^{\prime}})\leq q^{\prime}$. Since $F$ is an unbounded filter and $P$ is handy, there exists $m\in \N$ such that $p_m\in F$, $p_{m}\leq p_{n},p_{n^{\prime}}$ and $m\geq n,n^{\prime}$, which implies $g(p_m)\leq g(p_n),g(p_{n^{\prime}})$. Thus $\hat{g}(F)$ is a filter.   

 \item \textbf{The image of $\hat{g}$ is $\mathbf{\Pi}_2^0$:} Let $\sigma$ be a finite sequence of $P$. We define a property $\phi(n,\sigma)$ which states that the length of $\sigma$ is $n+1$, $\forall i<n(\sigma(i)>\sigma(i+1))$ and $\forall i<n+1(\sigma(n)\ngeq p_{i})$.
 Let $U_{n}$ be the union of  basic open sets $N_{g(\sigma(n))}$ for all $\sigma$ satisfying $\phi(n,\sigma)$. Let $C$ be the $\mathbf{\Pi}^0_2$ subset of $\mc P(\N)$ defined by 
\[
\bigcap_{i,j}(\bigcup_k\{N_k:p_i,p_j\geq p_k\}\cup\mc P(\N)\backslash (N_i\cap N_j))
\] here $N_k=N_{\{k\}}$ and let $B=\bigcap_{n\in\N} U_{n}\cap C$. Since $P$ is handy, it is clear that $\hat{g}(\UF(P))\subseteq B$. Conversely, for any filter $q\in B$, let $F_q=\{p_{i}:i\in\cup q\}$. If $p_{i}\leq p_{j}$ and $i\in\cup q$, there exists $n$ and $\sigma$ such that $q\in N_{g(\sigma(n))}$, $n\geq i,j$ and $\phi(n,\sigma)$ holds. Then $\sigma(n)\leq p_{i}\leq p_{j}$, so $j\in g(\sigma(n))\subseteq\cup q$. If $i,j\in\cup q$, there is $k\in\cup q$ such that $p_i,p_j\geq p_k$ since $q\in C$. Thus $F_q$ is a filter on $P$. 
Furthermore, $F_q$ is non-principal because $\sigma(k)\ngeq p_k$ for all $k$. 
 \item \textbf{Bijection:} For any $F\in \UF(P)$, let $\hat{F}=\{p_{i}:i\in\cup \hat{g}(F)\}$. It follows that $F=\hat{F}$ for all $F$, showing that $\hat{g}$ is an injection onto $B=\bigcap_{n\in\N} U_{n}\cap C$.
 \item \textbf{Continuous and Open mapping:} Since $\hat{g}$ is induced by $g$, $\hat{g}$ is an open mapping. Furthermore, for $q\in\mc P_{\fin}(\N)$, $\hat{g}^{-1}(N_{q})=\{F\in \UF(P):\forall i\in q(p_{i}\in F)\}$ which is open in $\UF(P)$. This confirms that $\hat{g}$ is continuous.
\end{itemize}

\item  Let $X\subseteq\mc P(\N)$ be a nonempty $\mathbf{\Pi}_2^0$-subspace defined by $X=\bigcap_{n\in\N}(B_n\cup A_n)$, where each $A_n$ is a closed subset and each $B_n$ is an open subset. We define a poset $P^{\prime}:=\{p_{n,q}:n\in\N,q\in\mc P(\N)\text{ is finite, }q\in \bigcap_{i=0}^{n}(B_{i}\cup A_i)\}$. The order on $P^{\prime}$ is defined by $p_{n,q}<p_{m,r}\Leftrightarrow n>m\land q\supseteq r$. Let  $P$ be the union of subposets of $P^{\prime}$ in which every element has a strictly smaller element. Then $P$ is handy. Indeed, every unbounded filter is non-principal since $P$ has no minimal element, and every non-principal filter is unbounded since $p_{n,q}<p_{m,r}$ implies $n<m$. 
We define the following maps, which we will show are continuous and mutually inverse.
\[
f:\UF(P)\rightarrow X, f(F)=\{i\in\N:\exists p_{n,q}\in F (i\in q)\}
\]
\[
g:X\rightarrow \UF(P), g(G)=\upcl\{p_{n,q}\in P:q\subseteq G\text{ and $q$ is finite}\}
\]
 \begin{itemize}
  \item \textbf{Well-definedness:}
Suppose for contradiction that $f(F)\notin X$. Then there exists $n\in\N$ such that $f(F)\notin A_n\cup B_n$. Then every $p_{m,q}\in F$ satisfies $q\notin B_n$.
Since $A_n$ is closed, $f(F)\notin A_n$ implies there is a finite subset $q\subseteq f(F)$ such that $q\notin A_n$. Since $F$ is an unbounded filter, there exists $p_{m,q^{\prime}}\in F$ such that $q\subseteq q^{\prime}$ and $m\geq n$. Since $f(F)\notin B_n$ and $q^{\prime}\subseteq f(F)$, it follows that $q^{\prime}\notin B_n$. Thus $q^{\prime}\notin A_n\cup B_n$ which contradicts the fact that $p_{m,q^{\prime}}\in P$. Therefore, $f(F) \in X$.
Conversely, let $G\in X$. For every $n$, since $G\in \bigcap_{n\in\N}(B_n\cup A_n)$, there exists a finite $q\subseteq G$ such that $p_{n,q}\in P^{\prime}$. Thus $g(G)$ is unbounded and $p_{n,q}\in P$. To show $g(G)$ is a filter, let $p_{m,q},p_{n,q'}\in g(G)$. Since $G\in X$, for some $N>n,m$, there exists a finite $r\subseteq G$ such that $p_{N,r}\in P$ and $G\in B_{i}\to r\in B_{i}$ for each $i\leq N$ . Since $q,q',r\subseteq G\in X$, we have $q\cup q'\cup r\in\bigcap^{N}_{i=0}(A_i\cup B_i)$, $p_{N, q\cup q'\cup r}\in g(G)$ and $p_{N, q\cup q'\cup r}\leq p_{m,q},p_{n,q'}$. Thus $g(G)$ is a filter.   
  \item \textbf{Inverse maps:} Given $G\in X$, it is clear that $f(g(G))\subseteq G$. For the reverse inclusion, let $i\in G$. Since $G\in A_0\cup B_0$, there exists a finite $q\subseteq G$ containing $i$ such that $p_{0,q}\in P$. Specifically, if $G\in A_0$, then $q=\{i\}\in A_0$; if $G\in B_0$, there is a finite $q^{\prime}\subseteq G$ in $B_0$, so $q^{\prime}\cup\{i\}\in B_0$. Thus $p_{0,q}\in g(G)$, so $i\in f(g(G))$.
Now let $F\in \UF(P)$. Clearly $F\subseteq g(f(F))$. Conversely, if $p_{n,q}\in g(f(F))$, then $q\subseteq f(F)$. For each $i\in q$, choose $p_{m_i,q_i}\in F$ such that $i\in q_i$. Since $q$ is finite and $F$ is a filter, there exists $p_{m,q^{\prime}}\in F$ such that $\forall i(p_{m_i,q_i}\geq p_{m,q^{\prime}})$. Then $q\subseteq q^{\prime}$. Since $F$ is unbounded, we can find $p_{N,q^{\prime\prime}}\in F$ such that $N> n$ and $q^{\prime}\subseteq q^{\prime\prime}$. Then $p_{n,q}> p_{N,q^{\prime\prime}}$, and hence $p_{n,q}\in F$ since $F$ is upward closed.
  \item \textbf{Continuity:} The map $f$ is continuous since $f^{-1}(N_i)=\bigcup_{i\in q} N_{p_{n,q}}$. The map $g$ is continuous since $g^{-1}(N_{p_{n,q}})=\{G\in X:q\subseteq G\}$, which is equal to $\bigcap_{i\in q}N_i\cap X$.
 \end{itemize}
The statement is proved.

\item Straightforward consequence of 1 and 2.
\end{enumerate}
\end{proof}

The class of countably based UF spaces coincides exactly with the class of $\mathbf{\Pi}_2^0$ subspaces of $\mc P(\N)$. Hence, we have the following equivalence, which was first pointed out by Tadayuki Honda \cite{honda-unpublished}.

\begin{thm}[Essentially, Honda \cite{honda-unpublished}]
Every countably based UF space is quasi-Polish.
\end{thm}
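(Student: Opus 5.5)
The plan is to combine Proposition \ref{1-UF(P)toP(N)} (part 1) with Proposition \ref{1-pi2-quasipolish}. Let $X=\UF(P)$ for a countable poset $P$. By Proposition \ref{1-UF(P)toP(N)}(1), $X$ is homeomorphic to a $\mathbf{\Pi}^0_2$ subspace $B\subseteq\mc P(\N)$. Since being quasi-Polish (second-countable and Smyth-completely quasi-metrizable) is invariant under homeomorphism, it suffices to show that $B$ is quasi-Polish. The approach is to write $B$ as a $\mathbf{\Pi}^0_2$ subset of the quasi-Polish space $\mc P(\N)$ and then apply Proposition \ref{1-pi2-quasipolish}.

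First I verify that $\mc P(\N)$ itself is quasi-Polish with the quasi-metric $d(F,G)=2^{-j}$, where $j$ is least with $j\in F$ and $j\notin G$ (and $d(F,G)=0$ if $F\subseteq G$).
\begin{itemize}
\item \textbf{Quasi-metric axioms.} $d(F,G)=d(G,F)=0$ forces $F=G$. For the triangle inequality, if $j\in F\setminus H$ is least, then either $j\notin G$, giving $d(F,G)\ge 2^{-j}$, or $j\in G$, giving $d(G,H)\ge 2^{-j}$.
\item \textbf{Compatibility with the Scott topology.} The ball $B_{F,2^{-n}}$ equals $\{G: F\cap\{0,\dots,n\}\subseteq G\}$, which is the basic open set $N_{F\cap[0,n]}$. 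Conversely, $N_p$ is the union of such balls centred at points containing $p$.
\item \textbf{Smyth-completeness.} Given a left-Cauchy sequence $(a_k)$, set $a=\{j: j\in a_k \text{ for cofinitely many } k\}$. Left-Cauchy means that for each $n$, eventually $a_k\cap[0,n]\subseteq a_m\cap[0,n]$ for $m>k$. Hence the sets $a_k\cap[0,n]$ are eventually increasing in finitely many bits, so they stabilize, and their limit is $a\cap[0,n]$. Thus $d(a,a_k)$ and $d(a_k,a)$ tend to $0$.
\item \textbf{Second-countability.} The basis $\{N_p\}$ is countable.
\end{itemize}

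Then Proposition \ref{1-pi2-quasipolish} applies to $B\subseteq\mc P(\N)$ and yields that $B$ is quasi-Polish. Transporting the quasi-metric along the homeomorphism from Proposition \ref{1-UF(P)toP(N)}(1) gives a Smyth-complete compatible quasi-metric on $\UF(P)$, and second-countability comes from the countable basis $\{N_p:p\in P\}$.

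The main obstacle is matching the form of $\mathbf{\Pi}^0_2$ sets. Proposition \ref{1-pi2-quasipolish} uses the representation $\bigcap_i(U_i\cup A_i)$ with $U_i$ open, $A_i$ closed and $U_i\cap A_i=\emptyset$. The set produced in Proposition \ref{1-UF(P)toP(N)}(1) is instead $\bigcap_n U_n\cap C$, with $C=\bigcap_{i,j}(V_{ij}\cup(\mc P(\N)\setminus(N_i\cap N_j)))$, and there the open and closed parts need not be disjoint. To fix this, I will replace each closed part $A$ by $A\setminus U=A\cap(X\setminus U)$, which is still closed; the union $U\cup A$ is unchanged and the two parts are now disjoint. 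Each $U_n$ is taken with $A=\emptyset$. Re-indexing the resulting countable family then gives the required representation.
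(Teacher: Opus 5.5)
Your proposal is correct and is essentially the paper's proof, which simply combines Proposition \ref{1-UF(P)toP(N)} (part 1) with Proposition \ref{1-pi2-quasipolish}. Your check that $\mc P(\N)$ is a Smyth-complete quasi-metric space and your replacement of each closed part $A$ by $A\setminus U$ only make explicit details the paper leaves implicit.
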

\begin{proof}
It follows by Proposition \ref{1-UF(P)toP(N)} and Proposition \ref{1-pi2-quasipolish}.
\end{proof}

\begin{thm}[Essentially, Honda \cite{honda-unpublished}]
Every quasi-Polish space is a countably based UF space. 
\end{thm}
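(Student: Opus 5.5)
The plan is to reduce the statement to Proposition~\ref{1-UF(P)toP(N)}(2). It then suffices to show that every quasi-Polish space $X$ is homeomorphic to a $\mathbf{\Pi}^0_2$ subspace of $\mc P(\N)$. The first step is the embedding already described after the definition of the universal space. Fix a countable basis $\{V_i:i\in\N\}$ of $X$; since $X$ is quasi-metrizable it is $T_0$. The map $\iota(x)=\{i:x\in V_i\}$ is then a topological embedding of $X$ into $\mc P(\N)$, so $\iota(X)$ is a subspace of $\mc P(\N)$ homeomorphic to $X$.

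The second step is to show that $\iota(X)$ is $\mathbf{\Pi}^0_2$ in $\mc P(\N)$. Here I will invoke Proposition~\ref{prop:qPisPi20}, with ambient space $\mc P(\N)$ carrying the quasi-metric $d(F,G)=2^{-j}$ from the definition of the universal space. This quasi-metric is compatible with the Scott topology, and $\mc P(\N)$ is second-countable with basis $\{N_p:p\in\mc P_{\fin}(\N)\}$, so it is a second-countable quasi-metric space. The subspace $\iota(X)$ is quasi-Polish because it is homeomorphic to $X$, and being quasi-Polish is a topological property (second-countability plus existence of some compatible Smyth-complete quasi-metric). Proposition~\ref{prop:qPisPi20} therefore yields that $\iota(X)$ is a $\mathbf{\Pi}^0_2$ subset of $\mc P(\N)$.

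The third step applies Proposition~\ref{1-UF(P)toP(N)}(2) to $\iota(X)$. This gives a countable (handy) poset $P$ with $\UF(P)\cong\iota(X)\cong X$, so $X$ is a countably based UF space. The empty space needs a separate remark, since the construction there assumed $X$ nonempty; for it, take a poset with no unbounded filters, e.g.\ a single point.

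The main obstacle is the second step, which rests entirely on the cited Proposition~\ref{prop:qPisPi20}. If I had to prove it rather than cite it, I would follow de~Brecht. Let $\rho$ be a Smyth-complete quasi-metric on $Y=\iota(X)$, and describe $Y$ inside $\mc P(\N)$ by the condition ``$x$ is the limit of some $\rho$-left-Cauchy net of basic approximations.'' Concretely, I would show that $y\in\mc P(\N)$ lies in $Y$ iff for every $n$ and every basic open $N_p\ni y$ there is a basic open $N_q\ni y$ whose trace on $Y$ is nonempty and has small $\rho$-diameter and is contained in $N_p$. Each such condition has the form (open $\Rightarrow$ open), so the intersection over all $n$ and $p$ is $\mathbf{\Pi}^0_2$. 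Smyth-completeness is exactly what ensures that points satisfying all the conditions are actually in $Y$. Since the paper states Proposition~\ref{prop:qPisPi20} as available, I will simply cite it.
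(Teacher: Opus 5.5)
Your argument is the paper's own proof. The paper cites exactly Proposition~\ref{1-UF(P)toP(N)}, Proposition~\ref{prop:qPisPi20} and the embedding of $X$ into $\mc P(\N)$. Like you, it uses the compatibility of the quasi-metric on $\mc P(\N)$ with the Scott topology to see that $\iota(X)$ is $\mathbf{\Pi}^0_2$, and then applies Proposition~\ref{1-UF(P)toP(N)}(2).

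\textbf{The empty space.} Your example is wrong. A single-point poset $\{p\}$ does have an unbounded filter, namely $\{p\}$ itself, since nothing lies strictly below $p$. So $\UF(\{p\})$ is a one-point space, not the empty space. More generally, every nonempty poset has an unbounded filter, because a maximal filter $F$ is unbounded: if $r<p$ for all $p\in F$, then $\upcl(\{r\})$ would be a filter properly extending $F$. You must use the empty poset instead. This requires filters to be nonempty, as in Mummert's convention and the decreasing-sequence formalization; under a literal reading of the Section~2 definition, the empty set is vacuously an unbounded filter of the empty poset.

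\textbf{Your sketch of Proposition~\ref{prop:qPisPi20}.} It would not work as written. In a non-$T_1$ space, neighbourhoods cannot have small two-sided $\rho$-diameter; for instance, the only neighbourhood of $\emptyset$ in $\mc P(\N)$ is the whole space. The condition would have to be one-sided, of the form $\emptyset\neq N_q\cap Y\subseteq B_\rho(y,2^{-n})$. Since you only cite the proposition, this does not affect your proof.
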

\begin{proof}
It follows by Propositions \ref{1-UF(P)toP(N)}, \ref{prop:qPisPi20} and the properties of $\mc P(\N)$.
\end{proof}
This theorem can be seen as a generalization of Mummert/Stephan \cite[Thm 2.3]{mummert2010topological}. Its second-order arithmetic counterpart, Theorem \ref{qPtoUF}, is obtained by following the approach of their Theorem 2.3.

So every countably based UF space is separable with the associated metric $\hat{d}$. On the contrary, every UF space whose associated metric space $(X,\hat{d})$ is separable is countably based, given that $(X,d)$ is second-countable by Proposition \ref{prop:hat-d-dense}.

\begin{cor}
Every subspace of a UF space which is homeomorphic to a UF space is a $\mathbf{\Pi}_2^0$-subspace.
\end{cor}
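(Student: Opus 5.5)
We plan to derive this corollary by combining the two theorems just established with Proposition \ref{prop:qPisPi20}. Let $Y$ be a subspace of a countably based UF space $X=\UF(P)$, and suppose $Y$ is homeomorphic to a countably based UF space $\UF(Q)$.

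\textbf{Step 1 (quasi-Polishness of $Y$).} By the theorem that every countably based UF space is quasi-Polish, $\UF(Q)$ is quasi-Polish. Being quasi-Polish is a topological property, so it transfers along the homeomorphism, and $Y$ with its relative topology is quasi-Polish as well.

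\textbf{Step 2 (an ambient quasi-metric on $X$).} By the same theorem, $X$ is quasi-Polish; in particular it is second-countable, with basis $\{N_p:p\in P\}$, and quasi-metrizable by some compatible quasi-metric $d$. If one prefers an explicit quasi-metric, we can work through $\mathcal P(\N)$ instead. By Proposition \ref{1-UF(P)toP(N)}(1), $X$ is homeomorphic to a $\mathbf{\Pi}^0_2$ subspace $B\subseteq\mc P(\N)$, so we may restrict the explicit quasi-metric on $\mc P(\N)$ to $B$. Either way, $(X,d)$ is a second-countable quasi-metric space containing $Y$ as a quasi-Polish subspace.

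\textbf{Step 3 (conclusion).} Proposition \ref{prop:qPisPi20} now applies directly and shows that $Y$ is a $\mathbf{\Pi}^0_2$ subset of $X$. If we took the route through $\mc P(\N)$, we argue slightly differently. The image of $Y$ is a quasi-Polish subspace of $\mc P(\N)$, hence $\mathbf{\Pi}^0_2$ in $\mc P(\N)$. Its trace on $B$ is then $\mathbf{\Pi}^0_2$ in $B$, because the relative Borel classes are obtained by intersecting with $B$: a set of the form $\bigcap_i(U_i\cup A_i)$ restricts to $\bigcap_i((U_i\cap B)\cup(A_i\cap B))$. Pulling back along the homeomorphism gives $Y\in\mathbf{\Pi}^0_2(X)$.

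\textbf{Main obstacle.} The only point needing care is Step 2, namely that Proposition \ref{prop:qPisPi20} requires the ambient space to carry an actual quasi-metric. The earlier theorem guarantees this because every countably based UF space is quasi-Polish. We also have to check that the Borel hierarchy is defined purely topologically, as in the definition of Borel sets above, so that it is invariant under homeomorphism and restricts correctly to subspaces. Both verifications are routine.
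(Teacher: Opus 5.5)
Your proposal is correct and matches the paper's intended derivation. The paper states this corollary without a separate proof, immediately after Honda's two theorems: both $X$ and the UF space homeomorphic to $Y$ are quasi-Polish, so $X$ carries a compatible second-countable quasi-metric and Proposition~\ref{prop:qPisPi20} gives $Y\in\mathbf{\Pi}^0_2(X)$. The detour through $\mc P(\N)$ is valid but unnecessary, and your tacit assumption that the UF space homeomorphic to $Y$ is countably based is harmless, since $Y$ inherits second-countability from $X$ and second-countable poset spaces are countably based.
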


\begin{cor}\label{1-NP-typicalization}
Let $P$ be a countable poset.
There exists a handy poset $P^{\prime}$ such that $\UF(P^{\prime})$ is homeomorphic to $\NP(P)$.
\end{cor}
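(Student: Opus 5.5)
The plan is to reduce to Proposition \ref{1-UF(P)toP(N)}.2. It then suffices to show that $\NP(P)$ is homeomorphic to a $\mathbf{\Pi}^0_2$ subspace of $\mc P(\N)$. Enumerate $P=\langle p_i:i\in\N\rangle$. Embed the full filter space $\mathbb{F}(P)$ into $\mc P(\N)$ by $F\mapsto\{i:p_i\in F\}$; this map is continuous, open onto its image, and injective, exactly as in Proposition \ref{1-typicalization}.2 combined with the embedding $\iota$. Then describe the image of $\NP(P)$ by a $\mathbf{\Pi}^0_2$ condition.

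The key steps are as follows.

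\textbf{Step 1: the image of $\mathbb{F}(P)$ is $\mathbf{\Pi}^0_2$.} A set $S\subseteq\N$ codes a filter iff two conditions hold:
\begin{itemize}
\item upward closure: $\forall i,j\,(p_i\le p_j\wedge i\in S\to j\in S)$, an intersection of sets $(\mc P(\N)\setminus N_{\{i\}})\cup N_{\{j\}}$, i.e.\ an intersection of closed-union-open sets;
\item directedness: $\forall i,j\,(i,j\in S\to\exists k\,(p_k\le p_i,p_j\wedge k\in S))$, an intersection of sets $\bigcup_k N_{\{k\}}\cup(\mc P(\N)\setminus N_{\{i,j\}})$.
\end{itemize}
Each condition is of the form $\bigcap_n(U_n\cup A_n)$ with $U_n$ open and $A_n$ closed.

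\textbf{Step 2: non-principality is $\mathbf{\Pi}^0_2$.} The condition $\forall i\,(i\in S\to\exists j\,(p_j<p_i\wedge j\in S))$ is again of the form
\[
\bigcap_i\Bigl(\bigcup_{j:\,p_j<p_i}N_{\{j\}}\cup(\mc P(\N)\setminus N_{\{i\}})\Bigr).
\]
Intersecting the three families gives a $\mathbf{\Pi}^0_2$ set $B$, and the coding map is a homeomorphism of $\NP(P)$ onto $B$.

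\textbf{Step 3: conclude.} Apply Proposition \ref{1-UF(P)toP(N)}.2 to $B$ to obtain a countable poset $P''$ with $\UF(P'')\cong B$. Then apply Proposition \ref{1-typicalization}.1 to $P''$ to obtain a handy $P'$ with $\UF(P')\cong\UF(P'')\cong\NP(P)$. The construction in Proposition \ref{1-UF(P)toP(N)}.2 already produces a handy poset, so this last step may be unnecessary. If $\NP(P)$ is empty, any poset with no unbounded filters works, for example a single point; this case must be handled separately, since Proposition \ref{1-UF(P)toP(N)}.2 assumes a nonempty subspace.

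The main obstacle I expect is verifying that the coding map is a homeomorphism onto its image with respect to the relative topology of $\mc P(\N)$. In particular I must check that $N_{p_i}\cap\NP(P)$ corresponds to $N_{\{i\}}\cap B$, which is immediate. I must also check that the basic sets of $\mc P(\N)$, namely $N_q$ for finite $q$, pull back to finite intersections of the $N_{p_i}$; by directedness these are unions of basic sets. Beyond this, the argument is just bookkeeping of the $\mathbf{\Pi}^0_2$ form (open union closed) required by the definition used in Proposition \ref{1-UF(P)toP(N)}.2.
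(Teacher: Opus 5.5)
Your argument is correct. It has the same skeleton as the paper's proof: realize $\NP(P)$ as a $\mathbf{\Pi}^0_2$ subspace of $\mc P(\N)$, then invoke Proposition~\ref{1-UF(P)toP(N)}.2. The middle step, however, is carried out differently and more simply.

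The paper first replaces $P$ by the transfinitely pruned poset $P_\alpha$ of Proposition~\ref{1-typicalization}.3. It then codes filters via $g(p_n)=\{i\le n:p_n\le p_i\}$, as in Proposition~\ref{1-UF(P)toP(N)}.1. Finally it describes the image as $\bigcap_n U'_n\cap C$, where the $\mathbf{\Delta}^0_2$ sets $U'_n=\bigcup_r D_{n,r}$ are built from descending chains of length $n+1$ and, together with $C$, are meant to capture the filter and non-principality conditions.

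You instead use the plain embedding $\iota$ for the basis $\{N_{p_i}\}$. On non-principal filters this is exactly what $\hat g$ computes, since such a filter contains elements below $p_i$ of arbitrarily large index. You then observe that upward closure, directedness and non-principality are each literally a countable intersection of sets of the form closed $\cup$ open. This avoids the ordinal iteration at this stage: the only non-elementary ingredient left is the pruning hidden inside Proposition~\ref{1-UF(P)toP(N)}.2. It also makes the identification of the image immediate, whereas the paper only sketches $\hat g(\NP(P_\alpha))=B'$. The paper's route buys uniformity with the coding used for UF spaces, but nothing essential for this corollary.

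There is one small slip in your degenerate case. A one-point poset does not have empty UF space: $\{*\}$ has no strict lower bound, so it is an unbounded (principal) filter. Take the empty poset instead. Relatedly, if filters are required to be nonempty, you should also intersect $B$ with the open set $\bigcup_i N_{\{i\}}$, since $S=\emptyset$ satisfies your three conditions vacuously.
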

\begin{proof}
We apply the construction from Proposition \ref{1-typicalization}.3 to obtain $P_{\alpha}$, ensuring that every unbounded filter in $P_{\alpha}$ is non-principal. We define a map $\hat{g}$ from $\NP(P_{\alpha})$ to a subspace of $\mc P(\N)$ induced by the function $g:P\rightarrow\mc P_{\fin}(\N)$ defined as:
\[
g(p_n) :=\{i\leq n:p_n\leq p_i\}
\]
where $\langle p_n:n\in\N\rangle$ is an enumeration of $P_{\alpha}$. As in the proof of Proposition \ref{1-UF(P)toP(N)}.1, one can verify that $\hat{g}$ is an injective, continuous, and open mapping.
Next, we define a property $\psi(n,\sigma,r,p)$ indicating that: the length of $\sigma$ is $n+1$; $\forall i<n(\sigma(i)>\sigma(i+1))$; $\sigma(n)=p$; and $\forall i\in\{0,...,n\}\backslash r\,(p_i\not\leq p)$ where $n\in\N$, $\sigma$ is a finite sequence in $P_{\alpha}$, $r\subseteq\{0,...,n\}$, $p\in P_{\alpha}$. For each $n$ and $r$, there exists a $\mathbf{\Delta}_2^0$-subspace $D_{n,r}$ of $\mc P(\N)$ defined by:
\[
D_{n,r}=(\bigcup_{\exists\sigma\psi(n,\sigma,r,p)}N_{g(p)})\cap\bigcap_{i\in r}M_{g(p_i)}
\] where $M_p$ is a complement of $N_p$.
Let $U_n^{\prime}=\bigcup_{r\subseteq\{0,...,n\}}D_{n,r}$. Each $U_n^{\prime}$ is also a $\mathbf{\Delta}_2^0$ subspace. 
Let $C$ be the $\mathbf{\Pi}^0_2$ subset of $\mc P(\N)$ defined by:
\[
\bigcap_{i,j}(\bigcup_k\{N_k:p_i,p_j\geq p_k\}\cup\mc P(\N)\backslash (N_i\cap N_j)).
\]
We then set $B^{\prime}=\bigcap_{n\in\N}U^{\prime}_n\cap C$. It can be shown that $\hat{g}(\NP(P_{\alpha}))=B^{\prime}$. Indeed, $\hat{g}(F)\in U_n^{\prime}$ for every $n\in\N$ and $F\in \NP(P)$. Conversely, if $q\in B^{\prime}$, then $F=\{p_i:i\in\cup q\}$ is a filter due to the definition of $g$ and the fact that $q\in C$. $F$ is non-principal because if there were some $k\in\N$ such that $p_k\in F$ and $\forall p\in F(p_k\leq p)$, this would contradict the definition of $D_{k,r}$ (specifically, the condition $\forall i\in\{0,...,n\}\backslash r\,(p_i\not\leq p)$).
\end{proof} 

Finally, we will see that countably presented frames form families of open sets of quasi-Polish spaces.
\begin{thm}[Heckmann \cite{heckmann2015spatiality}]
Every countably presented frame $\langle G; R \rangle$ is spatial. That is, there exists a topological space whose frame of open sets is isomorphic to $\langle G; R \rangle$.
\end{thm}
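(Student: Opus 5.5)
Since the paper has just defined spatiality of $\langle G;R\rangle$ concretely as $U_a\subseteq U_b\iff a\prec b$, I will prove exactly this. The direction $a\prec b\Rightarrow U_a\subseteq U_b$ is soundness. Every point $\xi$ is a frame homomorphism into $\{0,1\}$ that respects $R$. Hence the relation ``$\xi(a)\le\xi(b)$ for all points $\xi$'' is a congruence preorder containing $R$. By minimality of $\prec$, $a\prec b$ implies $\xi(a)\le\xi(b)$ for all $\xi$, which is exactly $U_a\subseteq U_b$. All the work lies in the converse: if $a\not\prec b$, I must produce a point $\xi$ with $\xi(a)=1$ and $\xi(b)=0$.

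For the converse I argue in the quotient frame $L=\langle G;R\rangle$, identifying expressions with their $\prec$-classes. Suppose $a\not\le b$ in $L$. Since $a=\bigvee_{F\in\mathcal F}\bigwedge F$, some finite meet $c=\bigwedge F$ of generators satisfies $c\not\le b$. I now build a completely prime filter $\mathcal U\subseteq L$ with $c\in\mathcal U$ and $b\notin\mathcal U$; then $\xi(u)=1\iff u\in\mathcal U$ is the required point. The key input is countability. $L$ is generated under finite meets and arbitrary joins by countably many elements. Moreover, every relation in $R$, and every join condition that must be respected, can be reduced to countably many join conditions of the form ``$\bigvee_k c_k = d$'' with $c_k,d$ finite meets of generators.

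The construction is a Rasiowa--Sikorski / Baire-category style induction. I enumerate these countably many conditions $(\bigvee_k c^{(n)}_k, d^{(n)})$, each listed infinitely often. I build a decreasing sequence $c=e_0\ge e_1\ge\cdots$ of finite meets of generators, maintaining the invariant $e_m\not\le b$. At stage $m$, if the current condition satisfies $e_m\le d^{(n)}$, then $e_m=\bigvee_k(e_m\wedge c^{(n)}_k)$ by distributivity. Since $e_m\not\le b$, some $k$ has $e_m\wedge c^{(n)}_k\not\le b$, and I set $e_{m+1}=e_m\wedge c^{(n)}_k$. Let $\mathcal U=\{u: \exists m\,(e_m\le u)\}$. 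This is a filter avoiding $b$ and containing $c$, hence containing $a$.

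For complete primeness, suppose $\bigvee_i u_i\in\mathcal U$. Write each $u_i$ as a join of finite meets of generators, and use the fact that $e_m\le\bigvee_i u_i$ for some $m$. The corresponding join condition (countably many of the basic finite meets occur, so it is in the enumeration) is eventually handled at a later stage, which puts some $e_{m'}$ below some $c_k\le u_i$.

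The main obstacle is making the enumeration legitimate. I must check that the set of conditions is countable, and that complete primeness for arbitrary joins reduces to joins of basic finite meets appearing in a countable list. Arbitrary joins in $L$ are uncountable in general. My plan is to use that every element of $L$ equals a join of some subset of the countable set $B$ of finite meets of generators. I then take as conditions all pairs $(d,S)$ with $d\in B$, where $S$ is a subset of $B$ arising either from $R$ or from a ``cover'' $d\le\bigvee S$. Where a countable cover $S$ fails to be enumerable, I would instead argue by contradiction via a Baire category argument on the space of filters of $B$. That space is a $\mathbf{\Pi}^0_2$ subspace of $\mathcal P(\N)$ and hence quasi-Polish by Propositions~\ref{1-UF(P)toP(N)} and \ref{1-pi2-quasipolish}. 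I would then show that the points of $L$ form a comeager $\mathbf{\Pi}^0_2$ set in the open set $N_c\setminus{\downarrow}b$, so this set is nonempty.
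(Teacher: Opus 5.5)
Your soundness direction is correct. So is the choice of a disjunct $c$ of $a$ with $c\not\le b$, and so is the refinement step: if $e_m\le d=\bigvee_k c_k$ in $L$, then $e_m=\bigvee_k(e_m\wedge c_k)$, so some $e_m\wedge c_k\not\le b$.

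The gap is in showing that $\mathcal U$ is completely prime, which is the entire content of obtaining a point. You assert that for an arbitrary $\bigvee_i u_i\in\mathcal U$ ``the corresponding join condition is in the enumeration''. This is false. $L$ can have continuum many elements, and there are continuum many covers $e\le\bigvee S$ with $S\subseteq B$, so no countable list contains them all. Your proposed repair, using pairs $(d,S)$ from an arbitrary cover $d\le\bigvee S$, is the same uncountable family. The Baire-category fallback does not escape this either. To call the set of points a comeager $\mathbf{\Pi}^0_2$ set, you must already have written ``being a point'' as a \emph{countable} conjunction of such conditions, and that reduction is exactly what you have not justified. As written, you construct a filter $\mathcal U$ but do not show that it is a point.

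The missing idea is that no join other than those coming from $R$ ever needs to be forced. Read the point off as the set of generators $X=\bigcup_m e_m$, viewing each $e_m$ as a finite subset of $G$. Every $X\subseteq G$ induces a frame homomorphism $\xi_X:\langle G\rangle\to\{0,1\}$, given by $\xi_X(\bigvee_{F\in\mathcal F}\bigwedge F)=1$ iff $F\subseteq X$ for some $F\in\mathcal F$. By the universal property, $\xi_X$ factors through $L$ as soon as $\xi_X(u)\le\xi_X(v)$ for each of the countably many $(u,v)\in R$. This is exactly the description of the point space as $\bigcap_{(u,v)\in R}((\mathcal P(G)\setminus U_u)\cup U_v)$ given right after the statement.

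So enumerate only the pairs $((u,v),f)$ with $(u,v)\in R$ and $f$ a disjunct of $u$. Writing $v=\bigvee_j h_j$, the join $f=\bigvee_j(f\wedge h_j)$ holds in $L$. Apply your step at a stage where $f\subseteq e_m$; this persists once true, and each pair recurs, so the step adds some $h_j$ to $X$. Then:
\begin{itemize}
\item $\xi_X$ respects $R$;
\item $\xi_X(a)=1$, since $c\subseteq X$;
\item $\xi_X(b)=0$, since a disjunct $F\subseteq X$ of $b$ would lie inside some $e_m$ and give $e_m\le b$.
\end{itemize}
Complete primeness of $\mathcal U$ is then automatic, because $\mathcal U=\{u:\tilde\xi_X(u)=1\}$ for the induced point $\tilde\xi_X$ of $L$.

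The paper itself only cites Heckmann for this statement. Its argument for $\vdash$ in Proposition~\ref{proofspatial} has this same shape: it refines only along relations in $R$, extracts the point from an infinite path as a set of generators, and then checks only $R$.
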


The point space can be described as a subset of $\mathcal{P}(G)$ in the following form:
\[
\bigcap_{(u,v) \in R} ((\mathcal{P}(G) \setminus U) \cup V),
\]
where $U$ and $V$ are the open sets corresponding to the relations $(u,v)\in R$. Since this set is a countable intersection of open sets (as $R$ is countable), the point space is homeomorphic to a $\mathbf{\Pi}_2^0$ subspace of $\mathcal{P}(\mathbb{N})$.

\subsection{Some more properties of quasi-Polish spaces}
Here we see properties of the class of quasi-Polish spaces. Some of these properties are direct consequences of the equivalence of representations. 

\begin{prop}[{\cite[Theorem 41]{deBr2013}}]
A $T_0$ second-countable space is quasi-Polish if and only if it has an open continuous surjection from $\N^\N$.
\end{prop}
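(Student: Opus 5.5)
We prove the two directions separately: the forward one through the UF representation, and the converse by showing that the canonical embedding $\iota$ of $X$ into $\mc P(\N)$ has a $\mathbf{\Pi}^0_2$ image.

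\textbf{($\Rightarrow$)} Let $X$ be quasi-Polish. By the theorem above (every quasi-Polish space is a countably based UF space) and Proposition \ref{1-UF(P)toP(N)}.2, we may take $X=\UF(P)$, where $P$ is the handy poset built in the proof of Proposition \ref{1-UF(P)toP(N)}.2. This $P$ has two useful features.
\begin{enumerate}
\item Every element has a strictly smaller one. So we can fix a function $\mathrm{low}:P\to P$ with $\mathrm{low}(p)<p$.
\item $p_{n,q}<p_{m,r}$ forces $n>m$. Hence the first indices strictly increase along any strictly decreasing $\omega$-chain, and such a chain admits no lower bound $p_{k,q}$. So the upward closure of any strictly decreasing $\omega$-chain is an unbounded filter.
\end{enumerate}
Fix $c_{-1}$ to be a formal top element and enumerate $P=\langle p_i\rangle$. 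For $x\in\N^\N$ define a chain $c_k$ by $c_k=p_{x(k)}$ if $p_{x(k)}<c_{k-1}$, and $c_k=\mathrm{low}(c_{k-1})$ otherwise. Set $f(x)=\upcl\{c_k:k\in\N\}\in\UF(P)$.

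We then verify three properties.
\begin{itemize}
\item Continuity: $f^{-1}(N_p)=\{x:\exists k\,(c_k\le p)\}$, and each $c_k$ depends only on $x\rest(k+1)$, so this set is open.
\item Surjectivity: a filter $F\in\UF(P)=\NP(P)$ is countable and directed with no least element. So it has a strictly decreasing cofinal chain, and we let $x$ code that chain.
\item Openness: for a cylinder $[s]$, let $c$ be the last chain element determined by $s$. We claim $f[[s]]=N_c$. The inclusion $\subseteq$ is clear. For $\supseteq$, given $F\ni c$, extend $s$ by codes of a strictly decreasing chain in $F$ that starts below $c$ and is cofinal in $F$. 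The earlier chain elements lie above $c$, so they are in $F$ by upward closure.
\end{itemize}

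\textbf{($\Leftarrow$)} Let $f:\N^\N\to X$ be an open continuous surjection, and let $\{V_i\}$ be a countable basis of $X$. Put $W_s=f[[s]]$, which is open, and embed $X$ by $\iota(x)=\{i:x\in V_i\}$. By the universal-space definition and Proposition \ref{1-pi2-quasipolish}, it suffices to show that $\iota(X)$ is $\mathbf{\Pi}^0_2$ in $\mc P(\N)$.

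For $A\subseteq\N$, call a finite sequence $s$ \emph{$A$-good} if $\bigcap_{i\in F}V_i\subseteq W_s$ for some finite $F\subseteq A$. This is a $\Sigma^0_1$ (open) condition in $A$. We claim $A\in\iota(X)$ iff the following two conditions hold.
\begin{enumerate}
\item For every $A$-good $s$ and every $i\in A$, there is an $A$-good one-step extension $t\supseteq s$ with $W_t\subseteq V_i$. In addition, the empty sequence is $A$-good.
\item For every $A$-good $s$ and every $j$ with $W_s\subseteq V_j$, we have $j\in A$.
\end{enumerate}
Condition (a) has the form "open $\to$ open" for each fixed $s$ and $i$. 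Condition (b) has the form "open $\to$ closed" for each fixed $s$ and $j$, because $j\in A$ is clopen. So the conjunction is a countable intersection of sets of the form $U\cup C$ and is $\mathbf{\Pi}^0_2$.

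\emph{Necessity.} Let $A=\iota(y)$ and pick $x\in f^{-1}(y)$.
\begin{itemize}
\item For (a): if $s$ is $A$-good via $F$, then $y\in\bigcap_{i\in F}V_i\subseteq W_s$. So some $x'\in[s]$ maps to $y$. Continuity of $f$ at $x'$ gives an extension $t$ of $s$ along $x'$ with $W_t\subseteq V_i$. Openness of $f$ then makes $t$ $A$-good: $y\in W_t$, so some basic $V_k\ni y$ satisfies $V_k\subseteq W_t$, and $k\in A$.
\item For (b): if $s$ is $A$-good, then $y\in W_s\subseteq V_j$, so $j\in A$.
\end{itemize}

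\emph{Sufficiency.} Build a branch $x$ of $A$-good nodes by dovetailing over $A$, using (a), so that every $i\in A$ is eventually handled. Then:
\begin{itemize}
\item $A\subseteq\iota(f(x))$, since $f(x)\in W_{x\rest n}\subseteq V_i$ for the stage $n$ at which $i$ was handled.
\item $\iota(f(x))\subseteq A$: if $f(x)\in V_j$, continuity gives $n$ with $W_{x\rest n}\subseteq V_j$, and (b) yields $j\in A$.
\end{itemize}

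The main obstacle is this converse direction, specifically choosing the combinatorial characterization of $\iota(X)$ so that it is genuinely $\mathbf{\Pi}^0_2$ while both inclusions go through. The openness of $f$ is used exactly to make the good extension in (a) exist.
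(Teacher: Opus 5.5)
The paper does not prove this proposition; it only cites \cite[Theorem 41]{deBr2013}. So your argument has to stand on its own.

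Your ($\Rightarrow$) direction is correct. The poset of Proposition~\ref{1-UF(P)toP(N)}.2 is pruned, and $p_{n,q}<p_{m,r}$ forces $n>m$. So every strictly decreasing $\omega$-chain generates an unbounded filter, and your coding map is continuous, onto, and satisfies $f[[s]]=N_c$. It does tacitly need $X\neq\emptyset$: the empty space is quasi-Polish but is not an image of $\N^\N$, so the statement itself should say ``nonempty''.

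The gap is in ($\Leftarrow$). The strategy is right, but the characterization of $\iota(X)$ is false as you state it.

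\textbf{Condition (a) is not necessary as written.} It asks for a \emph{one-step} extension $t$, and points need not satisfy that. Take $X=\N^\N$, $f=\mathrm{id}$, and let $V_i$ range over all cylinders. For any $y$, the empty sequence $s=\langle\rangle$ is $\iota(y)$-good, and $[y\rest 2]=V_i$ for some $i\in\iota(y)$. Yet no $[\langle m\rangle]$ is contained in $[y\rest 2]$. So the set you define is disjoint from $\iota(X)$.

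Your necessity argument actually produces $t=x'\rest n$ for large $n$. So (a) must ask for a \emph{proper} extension $t\supsetneq s$ of arbitrary length.

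\textbf{Weakening to $t\supseteq s$ breaks sufficiency.} In the same example, fix $\sigma$ and let $A=\{i: V_i=[\tau],\ \tau\subseteq\sigma\}$. The $A$-good nodes are exactly the $s\subseteq\sigma$. Then $t=\sigma$ witnesses the weak form of (a), and (b) holds. But $A\notin\iota(X)$, since every $\iota(y)$ contains an index of $[y\rest(|\sigma|+1)]$. Your branch construction would stall at $\sigma$.

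\textbf{The empty set is a spurious solution.} Suppose no $V_j$ equals $X$. Then $A=\emptyset$ satisfies (a) vacuously. It also satisfies (b), because the only $\emptyset$-good nodes have $W_s=X$. But $\emptyset\notin\iota(X)$, since the $V_i$ cover $X$. The fix is to add the open condition $A\neq\emptyset$, or to put $X$ into the basis. The dovetailing must also revisit indices, so that infinitely many proper extensions are made even when $A$ is finite.

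\textbf{The clopen claim is wrong but harmless.} $\{A:j\in A\}=N_{\{j\}}$ is open but \emph{not} closed in the Scott topology. So (b) has the form closed $\cup$ open rather than closed. That is still the right shape for a $\mathbf{\Pi}^0_2$ set.

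With these corrections, your necessity and sufficiency arguments go through essentially verbatim. The appeal to Proposition~\ref{1-pi2-quasipolish} then completes the proof.
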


\begin{prop}[{\cite[Theorem 5.2]{mummert2010topological}}]
Every poset space has the strong Choquet property.
\end{prop}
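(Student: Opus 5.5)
We prove that every poset space $\mathbb{F}(P)$, and every subspace $\UF(P)$, $\MF(P)$, $\NP(P)$ with the relative topology, has the strong Choquet property. In the strong Choquet game, player I plays pairs $(x_n,U_n)$ with $x_n\in U_n$ open and $U_n\subseteq V_{n-1}$. Player II answers with open $V_n$ such that $x_n\in V_n\subseteq U_n$. Player II wins if $\bigcap_n V_n\neq\emptyset$. The plan is to give II an explicit winning strategy built from the basic open sets $N_p$ and the filter structure of the points.

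\textbf{Strategy for II.} Suppose I plays $(F_n,U_n)$, where $F_n$ is a filter in the space and $U_n$ is open with $F_n\in U_n$. Player II chooses $p_n\in F_n$ with $N_{p_n}\subseteq U_n$; this is possible because the $N_p$ form a basis. II also requires $p_n\leq p_{n-1}$. This can be arranged: $F_n\in U_n\subseteq V_{n-1}=N_{p_{n-1}}$, so $p_{n-1}\in F_n$. Since $F_n$ is a filter, II can take a common lower bound in $F_n$ of $p_{n-1}$ and the basic element found for $U_n$. Upward closure of $F_n$ then keeps $N_{p_n}\subseteq U_n$. II plays $V_n=N_{p_n}$.

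\textbf{Finding a point in the intersection.} The result of play is a descending chain $p_0\geq p_1\geq\cdots$. Let $F=\upcl\{p_n:n\in\N\}$. This is a filter: it is upward closed by construction, and any two of its elements lie above a common $p_n$. Since $p_n\in F$ for every $n$, we get $F\in\bigcap_n N_{p_n}$, so $\mathbb{F}(P)$ is done.

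\textbf{Restricted spaces.} For $\UF$, $\NP$ and $\MF$, the main obstacle is that $F$ must lie in the relevant subspace. For these we strengthen II's choices, using the points $F_n$ supplied by I.
\begin{itemize}
\item For $\NP(P)$, II picks $p_n$ strictly below $p_{n-1}$ inside $F_n$. This is possible because $F_n$ is non-principal and $p_{n-1}\in F_n$. Then every element of $F$ has a strictly smaller element in $F$.
\item For $\UF(P)$, fix an enumeration $\langle q_k\rangle$ of $P$. At stage $n$, if $q_n$ is strictly below every element of $F_n$, then $F_n$ is bounded, which is impossible. So there is $r\in F_n$ with $q_n\not<r$. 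II chooses $p_n\leq r$ in $F_n$. Then $q_n\not<p_n$, so $q_n$ is not a strict lower bound of $F$, and $F$ is unbounded.
\item For $\MF(P)$, at stage $n$, if $q_n\notin F_n$, maximality gives $s\in F_n$ with $s\bot q_n$, and II goes below $s$. If $q_n\in F_n$, II goes below $q_n$. In both cases the limit $F$ either contains $q_n$ or contains an element incompatible with it. By the characterization of maximality in the definition of $\MF(P)$, $F$ is maximal.
\end{itemize}
In each case $F$ lies in the subspace and in every $V_n$, so II wins. The delicate point is checking that these extra requirements are compatible with the requirement $p_n\leq p_{n-1}$ and $N_{p_n}\subseteq U_n$. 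All of them are satisfied by taking lower bounds inside the single filter $F_n$, because $p_{n-1}$, the chosen witness, and the basic neighbourhood element all belong to $F_n$.
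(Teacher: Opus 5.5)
Your strategy works for $\mathbb{F}(P)$ and for $\NP(P)$; strictness survives passing to a further lower bound inside $F_n$. It also works for $\UF(P)$ when $P$ is countable, which your enumeration $\langle q_k\rangle$ needs and which matches the paper's standing convention.

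The step that fails is the first case of the $\MF$ bullet. From ``$F_n$ is maximal and $q_n\notin F_n$'' you take some $s\in F_n$ with $s\bot q_n$. The paper asserts that maximality is equivalent to this incompatibility condition, but only one direction holds: a filter with the incompatibility property is maximal, not conversely.

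For a counterexample, let $P=\{p_n:n\in\N\}\cup\{q_n:n\in\N\}\cup\{r\}$ with $p_0>p_1>\cdots$, $q_n\leq p_m$ iff $m\leq n$, $q_n\leq r$ for all $n$, and no further relations. Then $F=\{p_n:n\in\N\}$ is a maximal filter:
\begin{itemize}
\item no filter contains both $q_k$ and $p_{k+1}$, since the only element below $q_k$ is $q_k$ itself and $q_k\not\leq p_{k+1}$;
\item every common lower bound of $r$ and $p_0$ is some $q_k$, so no filter extending $F$ contains $r$.
\end{itemize}
Yet $r\notin F$ is compatible with every $p_n$ via $q_n$. If I plays such an $F_n$ at a stage where $q_n=r$, your instruction is undefined. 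So the bullet proves the strong Choquet property for the space of filters satisfying the incompatibility condition. It does not prove it for the space of filters not properly contained in another filter.

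The paper gives no proof of its own, only the citation. The repair keeps your descending chain but changes the final point. Extend the filter $\upcl\{p_n:n\in\N\}$ to a maximal filter $G$ by Zorn's lemma, as the paper does in the next proposition. Then $G\in N_{p_n}$ for every $n$. Moreover $G$ is unbounded: if $r<p$ for all $p\in G$, then $\upcl\{r\}$ is a filter containing $G$, so $r\in G$ and hence $r<r$.

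This single step handles $\UF(P)$ and $\MF(P)$ for posets of any cardinality, but it is non-constructive. It also does not work for $\NP(P)$, since a maximal extension of a strictly descending chain may be principal. Your diagonal constructions are the complementary tool. They are choice-free for countable $P$ and produce the witness directly as a decreasing sequence, the representation of filters the paper adopts in its second-order arithmetic section. They are also the right argument for $\NP(P)$ and for the incompatibility version of maximality.
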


\begin{prop}
Let $X$ be a UF space. For every point $x \in X$, there exists a point $y \in X$ such that $x \in \overline{\{y\}}$ and $\{y\}$ is a $G_\delta$ subset.
\end{prop}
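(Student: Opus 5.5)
Let $X=\UF(P)$ for a countable poset $P$. By Proposition \ref{1-typicalization}.1 we may replace $P$ by a handy poset, so that $X=\NP(P)=\UF(P)$. Fix a point $x\in X$, i.e.\ an unbounded (and non-principal) filter $F$. The plan is to find a filter $G\supseteq F$ that is again unbounded and that is determined by a countable amount of ``positive'' information. The inclusion $F\subseteq G$ gives $x\in\overline{\{y\}}$ for the corresponding point $y$. Indeed, every basic neighbourhood $N_p$ of $x$ has $p\in F\subseteq G$, hence contains $y$. So $x$ lies in the closure of $\{y\}$, which is exactly the specialization order in poset spaces.

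\textbf{Choice of $y$.} Take $G$ to be a maximal unbounded filter extending $F$, built in $\omega$ steps rather than by Zorn's lemma. Enumerate $P=\langle p_i:i\in\N\rangle$. Build a descending sequence $q_0>q_1>\cdots$ in $P$ with $G=\upcl\{q_n:n\in\N\}$, starting from $F$. Since $F$ is unbounded and non-principal, pick a strictly descending sequence $r_0>r_1>\cdots$ in $F$ that is coinitial in $F$ (possible because $F$ is countable and directed). Then interleave two kinds of stage. At stage $2i$, if $p_i$ is compatible with the current $q_n$ in a way that still admits an infinite strictly descending continuation, choose $q_{n+1}\le q_n,p_i$ strictly below; otherwise keep $q_n$ and step down strictly. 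At the other stages, also go strictly below the next $r_k$, so that $F\subseteq G$. The step ``admits an infinite strictly descending continuation'' is exactly where handiness is used: every element of $P$ has something strictly below it. So any $q\le q_n,p_i$ can be pushed down further, and $G$ is non-principal, hence unbounded. The resulting $G$ is a filter because it is generated by a chain.

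\textbf{$\{y\}$ is $G_\delta$.} We have
\[
\{y\}=\bigcap_{n} N_{q_n}\cap\bigcap_{i:\,p_i\notin G}(\text{open set avoiding }p_i).
\]
The first intersection gives all filters $H\supseteq G$. By the maximality built into the construction, any unbounded $H\supseteq G$ equals $G$. If $p_i\in H\setminus G$, then at stage $2i$ the element $p_i$ was rejected. But $H$ contains $q_n$ and $p_i$, hence a common lower bound, and being unbounded and non-principal, $H$ yields an infinite strictly descending continuation. That contradicts the rejection. Hence $\{y\}=\bigcap_n N_{q_n}$, a countable intersection of open sets.

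\textbf{Main obstacle.} The hard step is formulating the rejection criterion at stage $2i$ so that it is correct. ``There is an infinite descending chain below $q_n,p_i$ inside some unbounded filter'' is trivially true in a handy poset, so the criterion must instead be just ``$p_i$ and $q_n$ are compatible''. The argument needs that compatibility alone allows continuing, which handiness supplies, and that rejection means incompatibility, which forces $p_i\notin H$ for any filter $H\ni q_n$. I would first check this simplified criterion carefully. With it, $G$ is in fact a maximal filter, and the $G_\delta$ description above follows directly.
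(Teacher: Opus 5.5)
Your overall strategy is exactly the paper's: extend $F$ to a maximal filter $G$, get $x\in\overline{\{y\}}$ from $F\subseteq G$, and write $\{y\}=\bigcap_{p\in G}N_p$. The paper obtains $G$ in one line from Zorn's lemma, and your closure and $G_\delta$ arguments are fine once such a $G$ is available. The gap is in your explicit replacement for Zorn.

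The acceptance test at stage $2i$ (compatibility of $p_i$ with the current $q_n$) is local, whereas the requirement $F\subseteq G$ is global. Once you pick $q_{n+1}\le q_n,p_i$, nothing guarantees that $q_{n+1}$ is still compatible with the later $r_k$, so the odd stages may be impossible. Concretely, let $P=\{r_k:k\in\N\}\cup\{s_k:k\in\N\}$ with $r_0>r_1>\cdots$, $s_0>s_1>\cdots$, $s_j<r_0$ for all $j$, and no other relations. This poset is handy. Its unbounded filters are $F=\{r_k:k\in\N\}$ and $\{s_k:k\in\N\}\cup\{r_0\}$, and $F$ is already maximal.

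Take $q_0=r_0$ and $p_0=s_0$. Your criterion accepts $s_0$, since it is compatible with $r_0$, and this forces $q_1=s_j$ for some $j$. But $s_j\bot r_1$, so no later stage can go below $r_1$. The construction either halts or produces $G=\{s_k:k\in\N\}\cup\{r_0\}\not\supseteq F$, whereas the right answer was $G=F$. The simplification proposed in your last paragraph, reducing the criterion to plain compatibility, is precisely what fails.

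The correct test is whether some filter contains $F$ together with $q_n$ and $p_i$. This depends on all of $F$ and cannot be reduced to compatibility of two elements.

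The cleanest countable construction is as follows. Put $G_0=F$. Let $G_{i+1}$ be some filter containing $G_i\cup\{p_i\}$ if one exists, and $G_{i+1}=G_i$ otherwise. Set $G=\bigcup_i G_i$. This is a filter containing $F$, and it is maximal: if a filter $H\supseteq G$ contains $p_i$, then $H$ witnesses that stage $i$ succeeded, so $p_i\in G$.

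Handiness is then unnecessary, because a maximal filter is automatically unbounded. If $r<p$ for all $p\in G$, then $\upcl\{r\}$ is a filter containing $G$, hence equal to $G$ by maximality, giving $r<r$. With this $G$, your identification $\{y\}=\bigcap_{p\in G}N_p$ (a countable intersection, since $P$ is countable) goes through unchanged.
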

\begin{proof}
For every filter $F$, there is a maximal filter $G$ such that $F\subseteq G$ by Zorn's lemma.
\end{proof}

The next proposition answers Question 4.5 of \cite{mummert2010topological}.
Indeed, it is a direct consequence of the fact that the product of two quasi-Polish spaces is quasi-Polish and Proposition~\ref{1-UF(P)toP(N)}, but here we give a direct proof.
\begin{prop}\label{1-product-UF}
For every pair of countable posets $P,Q$, there exists a countable poset $R$ such that $\UF(P)\times \UF(Q)$ is homeomorphic to $\UF(R)$. Similarly, for every countable sequence of countable posets $\langle P_n:n\in\N\rangle$, there exists a countable poset $R$ such that $\prod_{n\in\N}\UF(P_n)$ is homeomorphic to $\UF(R)$. 
\end{prop}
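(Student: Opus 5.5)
First reduce to handy posets. By Proposition \ref{1-typicalization}.1 we may assume $P$, $Q$ and every $P_n$ are handy, so unbounded filters coincide with non-principal ones. For two factors, take $R=P\times Q$ with the product preorder $(p,q)\leq(p',q')\iff p\leq p'\land q\leq q'$. Define $h(F,G)=F\times G$ with inverse $H\mapsto(\pi_1[H],\pi_2[H])$, both projections being taken in the preorder sense.

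We check the following in order.
\begin{enumerate}
\item $F\times G$ is upward closed and directed whenever $F$ and $G$ are filters.
\item $F\times G$ is unbounded whenever $F$ and $G$ are. Suppose $(r,s)<(p,q)$ for all $(p,q)\in F\times G$. Then $r\le p$ and $s\le q$ for all such pairs, and strictness must fail in neither coordinate. Since $F$ is non-principal, $r<p$ for some $p\in F$ below any given element. Handiness in each coordinate then rules out a bound in both coordinates. The subtle point is that a strict product inequality only requires strictness in one coordinate. If $r$ bounds $F$ but $s\in G$, pick $s'\in G$ with $s'<s$. Then $(r,s)\not\le(p,s')$, which gives a contradiction.
\item For an unbounded filter $H$ on $R$, the projections are filters, and $H=\pi_1[H]\times\pi_2[H]$. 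This uses directedness of $H$ for the inclusion $\supseteq$.
\item The projections are unbounded.
\end{enumerate}
Continuity and openness of $h$ are immediate, since $h(N_p\times N_q)=N_{(p,q)}$, and the sets $N_{(p,q)}$ form a basis of $\UF(R)$.

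The expected obstacle is the unboundedness bookkeeping in step 4. A bound $r<p$ for all $p\in\pi_1[H]$ must be turned into a bound on $H$. We try $(r,s)$ with $s\in\pi_2[H]$ and use that $H$ is non-principal. If this fails, fall back on Proposition \ref{1-typicalization}.2 style padding: replace $R$ by $\{(p,q,n)\}$ with a level counter decreasing strictly, as in Proposition \ref{1-UF(P)toP(N)}.2. There, unboundedness is equivalent to reaching every level and is checked coordinatewise.

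For countable products the factors cannot all be moved at once, since basic open sets of the product involve only finitely many coordinates. So we let $R$ consist of tuples $(n,p_0,\dots,p_{n-1})$ with $p_i\in P_i$. The order is $(n,\vec p)\le(m,\vec q)$ iff $n\ge m$ and $p_i\le q_i$ for $i<m$, where the strict part requires $n>m$ in the level-counter version. A sequence of unbounded filters $(F_i)$ maps to $\{(n,\vec p):p_i\in F_i\ \forall i<n\}$. The inverse takes the $i$-th coordinate projection of $H$ over tuples of length greater than $i$.

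The verification then repeats the finite case. The level counter gives unboundedness for free, exactly as in Proposition \ref{1-typicalization}.2. Each coordinate projection is unbounded because the factors are handy and, for any $p\in F_i$, a strictly smaller element of $F_i$ appears in a longer tuple. The basic open $N_{(n,\vec p)}$ corresponds to the cylinder $\prod_{i<n}N_{p_i}\times\prod_{i\ge n}\UF(P_i)$. Hence $h$ is a homeomorphism.
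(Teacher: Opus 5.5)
\textbf{Step 4 is false for your order, and neither the primary route nor the fallback repairs it.} With the product preorder on $R=P\times Q$, a strict inequality needs strictness in only one coordinate. So an unbounded filter on $R$ can be unbounded through one coordinate while being bounded, even principal, in the other.

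\emph{Finite product.} Let $P=Q$ be the chain $0>1>2>\cdots$, i.e.\ $\N$ with the reversed order. This poset is handy, and its only (nonempty) unbounded filter is $\N$, so $\UF(P)\times\UF(Q)$ is a single point. But for every $k$, the set $H_k=\{0,\dots,k\}\times\N$ is a filter on $R$ with the following properties:
\begin{enumerate}
\item It is non-principal, since each $(a,b)\in H_k$ lies strictly above $(a,b+1)\in H_k$.
\item It is unbounded, since a strict lower bound $(r,s)$ would need $s\le_Q b$ for every $b\in\N$.
\item Its projection $\pi_1[H_k]=\{0,\dots,k\}$ is bounded by $k+1$.
\end{enumerate}
Hence $\UF(R)$ is infinite, so it is not even homeomorphic to $\UF(P)\times\UF(Q)$, and $H\mapsto(\pi_1[H],\pi_2[H])$ does not land in $\UF(P)\times\UF(Q)$.

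\emph{Level-counter fallback.} This goes in the wrong direction. A strictly decreasing counter makes every filter that reaches all levels unbounded, whatever its coordinates do. That is exactly why Proposition~\ref{1-typicalization}.2 yields $\mathbb{F}(P)\cong\UF(P')$. The padded poset therefore represents essentially $\mathbb{F}(P)\times\mathbb{F}(Q)$. For instance, for fixed $p,q$ the filter generated by the elements $(p,q,n)$, $n\in\N$, has principal projections.

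\emph{Countable product.} Your construction has the same defect. The chain $(n,(c_0,\dots,c_{n-1}))$, $n\in\N$, is strictly decreasing in your order. It generates an unbounded filter whose $i$-th projection is the principal filter $\upcl\{c_i\}$, which is not in $\UF(P_i)$ when $P_i$ is handy. Nothing forces a strictly smaller element of $F_i$ to appear in a longer tuple.

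\textbf{The fix.} The missing idea, which is the paper's choice, is to demand strict decrease in every coordinate. Put $(p,q)<_R(p',q')$ iff $p<_Pp'$ and $q<_Qq'$, with $\le_R$ the reflexive closure. For countable products, put $(n,\vec p)<_R(m,\vec q)$ iff $n>m$ and $p_i<q_i$ for all $i<m$. With handy, hence pruned, factors everything goes through:
\begin{enumerate}
\item $F\times G$ is directed because $F$ and $G$ are non-principal.
\item $F\times G$ is unbounded because a strict bound $(r,s)$ would give $r<p$ for all $p\in F$.
\item Conversely, an unbounded $H$ on $R$ has no least element, since pruned factors give a strict lower bound for any least element. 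So some element below each $x\in H$ is strictly below $x$, i.e.\ $H$ is non-principal.
\item Passing to strictly smaller elements of $H$ shows that the projections are upward closed and non-principal, hence unbounded by handiness, and that $H=\pi_1[H]\times\pi_2[H]$.
\end{enumerate}
The same argument works for the countable product. Your correspondence between basic open sets is unaffected.
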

\begin{proof}
We may assume without loss of generality that $P,Q$ are handy posets. Let $P=\langle p_{i}:i\in\N \rangle$ and $Q=\langle q_{j}:j\in\N \rangle$. We define $R=P\times Q$ with the product order: 
\[
(p,q)<(p^{\prime},q^{\prime})\Leftrightarrow p<p^{\prime}\land q<q^{\prime}.
\] This $R$ satisfies the requirements. Indeed, if $F$ and $G$ are unbounded filters of $P$ and $Q$, respectively, then $f(F,G):=\{(p,q)\in R:p\in F\land q\in G\}$ is an unbounded filter of $R$. It is straightforward to verify that $f$ is an embedding (a homeomorphism onto its image). Conversely, for any unbounded filter $H$ of $R$, let $g_{1}(H):=\{p\in P:\exists q\in Q\,(p,q)\in H\}$ and $g_{2}(H):=\{q\in Q:\exists p\in P\,(p,q)\in H\}$. These sets $g_{1}(H),g_{2}(H)$ are unbounded filters of $P$ and $Q$ respectively, and we have $f(g_{1}(H),g_{2}(H))=H$, which shows $f$ is surjective. The case for the countable product $\prod_{i\in\N}\UF(P_i)$ follows a similar argument. If necessary, we first adjoin a maximal element $1_i$ to each $P_i$ by defining $P_i^{\prime}=P_i\cup \{1_i\}$ such that $p\leq 1_i$ for all $p\in P_i$. Specifically, we can take $R$ to be the set of sequences $(p_i)_{i\in\N}$ such that $p_i\in P_i$ and $p_i=1_i$ (the maximal element) for all but finitely many $i$, ordered component-wise.
The case for a sequence of posets follows by a similar diagonal construction of $R:=\bigcup_{n\in\N}(P_0\times\cdots\times P_n)$.
\end{proof}

\begin{cor}
There exists a UF space that is Hausdorff but not regular.
\end{cor}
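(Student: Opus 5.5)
The plan is to obtain such a space as the product of two copies of a suitable Hausdorff UF space, using Proposition~\ref{1-product-UF} to see that the product is again a UF space. A classical fact makes this natural: a product of Hausdorff spaces is Hausdorff, but the product of a non-regular Hausdorff space with anything nonempty is non-regular. A direct product of nice spaces will not produce the counterexample, though, because a product of regular spaces is regular. So I would instead build the example directly as a $\mathbf{\Pi}^0_2$ subspace of a known Polish-type space and then invoke Proposition~\ref{1-UF(P)toP(N)}.2 (every $\mathbf{\Pi}^0_2$ subspace of $\mc P(\N)$ is a countably based UF space).

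Concretely, I would take a second-countable Hausdorff topology that is strictly finer than a Polish topology by countably many closed sets. The standard choice is the real line (or $2^\N$) with the topology generated by the usual open sets together with the complement of a countable set $K$, for instance $K=\{1/n:n\geq 1\}$ converging to $0$. This space $X_K$ is second countable and Hausdorff, since it refines a Hausdorff topology. It is not regular: $K$ is closed in $X_K$, $0\notin K$, and every neighbourhood of $0$ meets every neighbourhood of $K$, because any usual neighbourhood of $0$ contains infinitely many points $1/n$ and any open set around $1/n$ contains usual neighbourhoods of those points.

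It then remains to show that $X_K$ is a UF space. I would embed $X_K$ into $\mc P(\N)$ via the map $\iota$ associated with a countable basis consisting of rational intervals $I$ and sets $I\setminus K$. The image is determined by the image of the Polish space $\R$ (which is $\mathbf{\Pi}^0_2$, by Proposition~\ref{prop:qPisPi20}) together with a $\mathbf{\Pi}^0_2$ coherence condition stating that an index for $I\setminus K$ is present iff the index for $I$ is present and the point lies outside $K$. The point lying outside $K$ corresponds to a countable intersection of conditions of the form ``the point is not $1/n$''. Each such condition is open in the Polish coordinate: it is expressed by membership in some basic interval avoiding $1/n$, which is $\mathbf{\Sigma}^0_1$ in the indices. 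Hence each biconditional is a Boolean combination of $\mathbf{\Sigma}^0_1$/$\mathbf{\Pi}^0_2$ pieces that is still $\mathbf{\Pi}^0_2$.

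Alternatively, and more cleanly, $X_K$ is quasi-Polish, being a $\mathbf{\Delta}^0_2$-refinement of a Polish topology. By the results of de Brecht, adding countably many $\mathbf{\Delta}^0_2$ sets as open sets preserves quasi-Polishness. The theorem ``every quasi-Polish space is a countably based UF space'' then finishes the argument. The main obstacle is verifying this $\mathbf{\Pi}^0_2$ / quasi-Polish claim carefully, specifically checking that the coherence condition for the indices of $I\setminus K$ is really $\mathbf{\Pi}^0_2$ and not merely $\mathbf{\Delta}^0_3$. I would first try the direct quasi-metric route: define $d'(x,y)=|x-y|+[x\notin K,\,y\in K]$ and check Smyth-completeness, using that $K\cup\{0\}$ is closed in $\R$.
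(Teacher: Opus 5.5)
Your second route is exactly the paper's proof: $\R$ with the topology generated by the usual open sets together with $\R\setminus K$, for $K=\{1/n:n\geq 1\}$, is quasi-Polish by de Brecht's lemma on adjoining a $\mathbf{\Delta}^0_2$ set as an open set, and hence is a countably based UF space by Proposition~\ref{1-UF(P)toP(N)}. Your Hausdorff and non-regularity arguments are also the paper's. The product plan in your first paragraph plays no role and should be dropped. One wording fix in the non-regularity argument: a neighbourhood of $0$ need not contain any $1/n$. What it does contain is some $(-\varepsilon,\varepsilon)\setminus K$. This set meets every usual interval around $1/n$ once $1/n<\varepsilon$, because $K$ is countable.

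Two points in your last paragraph need correcting.

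First, the obstacle you flag is settled by one observation. The set $K=(K\cup\{0\})\cap(\R\setminus\{0\})$ is the intersection of a closed set and an open set, so it is $\mathbf{\Delta}^0_2$. This is precisely the hypothesis de Brecht's lemma needs. In your route via $\iota$, it also makes the negative occurrence ``$x\in K$'' in the coherence biconditional $\mathbf{\Pi}^0_2$, so the image really is $\mathbf{\Pi}^0_2$ and not merely $\mathbf{\Delta}^0_3$. This genuinely depends on the choice of $K$, not just on $K$ being countable. For $K=\Q$ the refined space is still Hausdorff and non-regular, but it contains $\Q$ with its usual topology as a closed subspace, so it is not quasi-Polish.

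Second, the quasi-metric $d'(x,y)=|x-y|+[x\notin K,\,y\in K]$ is not Smyth-complete, so the check you propose would fail. We have $d'(1/n,1/m)=|1/n-1/m|$, so $(1/n)_n$ is left-Cauchy, even $\hat{d}'$-Cauchy. Yet the only candidate limit is $0$, and $d'(0,1/n)=1+1/n$. Indeed $(1/n)_n$ has no limit in $X_K$ at all. So any compatible Smyth-complete quasi-metric must prevent this sequence from being left-Cauchy. That is exactly what the reciprocal-distance terms in the proof of Proposition~\ref{1-pi2-quasipolish} accomplish, and running that construction on a $\mathbf{\Pi}^0_2$ presentation of $X_K$ would give a correct quasi-metric. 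It is simpler, though, to cite de Brecht's lemma as the paper does.
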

\begin{proof}
Let $\R$ be the set of real numbers equipped with the usual topology. We define $K=\{\frac{1}{n}:n\in\N\}$ and let $B=\R\backslash K$. Note that $B$ is not closed in the usual topology but is a $\mathbf{\Delta}_{2}^{0}$ set.
Since $\R$ is a quasi-Polish space, the topology generated by the usual open sets together with the set $B$ also yields a quasi-Polish space \cite[lem71]{deBr2013}. This resulting topology is known as the K-topology.
In the K-topology, the space remains Hausdorff because it is finer than the usual Euclidean topology. However, it is not regular: the set $K$ is closed in this topology, but the point $0$ and the closed set $K$ cannot be separated by disjoint open sets. Since every second countable quasi-Polish space is homeomorphic to a UF space by Proposition \ref{1-UF(P)toP(N)}, there exists a UF space with these properties.
\end{proof}
This corollary resolves Open Problem 2.3.8 of \cite{mummert2005reverse}.

\section{Formalization in second-order arithmetic}
In this section, we formalize the results of the previous section within the framework of second-order arithmetic. For details on the subsystems of second-order arithmetic, we refer the reader to the standard textbook by Simpson\cite{SOSOA}.
We use the following subsystems of second-order arithmetic in this paper.
$\RCAo$ is the base theory, which consists of the basic axioms of first-order arithmetic, $\Delta_1^0$-comprehension, and $\Sigma_1^0$-induction. $\ACAo$ extends $\RCAo$ by adding arithmetical comprehension. $\PCAo$ consists of $\RCAo$ plus $\Pi_1^1$-comprehension.

Our analysis primarily concerns second-countable $T_0$ spaces, for which we develop codes for Borel sets and continuous maps. We will use codes that generalize the codes of Polish spaces introduced by Simpson \cite{SOSOA}. First, let us define the codes of open sets.

 For a second-countable $T_0$ space $X$ with a fixed basis code $W\subseteq\N$, a code for an open set is defined as a subset $U \subseteq \mathbb{N} \times W$.
Given the membership relation $x\in N_w$ for each $x\in X$ and $w\in W$ (indicating that $x$ belongs to the basic open set $N_w$ coded by $w$), the code $U$ represents the open set $O_U:=\bigcup_{(n,w)\in U}N_w=\{ x \in X : \exists (n, w) \in U \,(x \in N_w) \}$.

As in the previous section, a subspace represented by an open set code (or the empty set) is called a \textit{$\mathbf{\Sigma}_1^0$ subspace}. From this, Borel sets are naturally defined.

\begin{definition}[Borel sets of finite ranks]\label{defi:Borel-sets}
Let $X$ be a second-countable $T_0$ space with a countable basis code $W = \langle w_i : i \in \mathbb{N} \rangle$. 
\begin{enumerate}
    \item The Borel code for the empty set is defined as $\{(1, 1, \langle\rangle)\}$.
    \item For an open set $U \subseteq \mathbb{N} \times W$, its Borel code is $(1, 1, \langle w_i : i \in \mathbb{N} \rangle)$, where $\langle w_i : i \in \mathbb{N} \rangle$ is an enumeration of $\{w\in W:\exists n((n,w)\in U)\}$.
    \item For $1 < k < \omega$, a \textit{$\mathbf{\Sigma}_k^0$ subspace} $Y = \bigcup_{i \in \mathbb{N}} (B_i \setminus B^{\prime}_i)$ (where $B_i, B^{\prime}_i$ are $\mathbf{\Sigma}_{k-1}^0$ subspaces coded by $(1, k-1, \langle v_{i,m} \rangle)$ and $(1, k-1, \langle w_{i,n} \rangle)$ respectively) is coded as:
    \[
    (1, k, \langle ((1, v_{i,m}), (0, w_{i,m})) : i, m \in \mathbb{N} \rangle).
    \]
    \item The \textit{$\mathbf{\Pi}_k^0$ subspace} $X \setminus Y$, the complement of $Y$, is coded as:
    \[
    (0, k, \langle ((0, v_{i,m}), (1, w_{i,m})) : i, m \in \mathbb{N} \rangle).
    \]
\end{enumerate}
For example, a $\mathbf{\Pi}_2^0$ subspace $\bigcap_{i \in \mathbb{N}} (A_i \cup B_i)$ (where $B_i = \bigcup_{n \in \mathbb{N}} N_{v_{i,n}}$ is an open set coded by $(1,1,\langle v_{i,n}:n\in\N\rangle)$ and $A_i = X \setminus \bigcup_{n \in \mathbb{N}} N_{w_{i,n}}$ is a closed set coded by $(0,1,\langle w_{i,n}:n\in\N\rangle)$) is coded by the sequence:
\[
(0, 2,\langle (0, w_{i,n}), (1, v_{i,n}) : i, n \in \mathbb{N} \rangle) \subseteq \{0\} \times (\{1\}\times W \times \{0\} \times W) \times \mathbb{N}.
\]
\end{definition}

\begin{defi}[Continuous map]
Let $X$ and $Y$ be countably based spaces, and let $Z$ and $W$ denote the sets of codes for their respective basic open sets. A code for a continuous map $\phi$ from $X$ to $Y$ is a set $\Phi\subseteq\N\times W\times Z$. For $w\in W$, we define the code of the inverse image $\phi^{-1}(N_w)$ to be the set  $\{(n,z)\in \N\times Z:\exists m<n\, (m,w,z)\in\Phi\}$. Consequently, for a code $U\subseteq\N\times W$ of an open set in $Y$, the inverse image $\phi^{-1}(O_U)$ is given by:
\begin{align*}
&\bigcup_{(n,w,z)\in\Phi,(m,w)\in U}\phi^{-1}(N_z)\\
=&\{x\in X:\exists n,m\,\exists z\in Z\,\exists w\in W(x\in N_z\land(n,w,z)\in\Phi\land (m,w)\in U)\}.
\end{align*} 
For $x\in X,y\in Y$, $\phi(x)=y$ if: 
\[
\forall w\in W(\exists n\exists z\in Z(x\in N_u\land (n,w,z)\in\Phi)\leftrightarrow (y\in N_w)).
\]
 A point $x\in X$ is in the domain of $\Phi$ (denoted $x\in \text{dom}(\Phi)$) if $\exists y\in Y\,\forall w\in W(y\in N_w\leftrightarrow x\in \phi^{-1}(N_w))$. In the case of $T_0$ spaces, the condition $(y \notin N_w) \to \neg (x \in N_z \land \exists (n, w, z) \in \Phi)$ cannot be omitted. Consequently, $\text{dom}(\phi)$ is not necessarily a countable intersection of open sets, but is in general a $\mathbf{\Pi}_2^0$ subspace.

A continuous function $\phi:X\rightarrow Y$ is said to be total if its code $\Phi$ for $\phi$ satisfies $\forall x(x\in X\rightarrow x\in \text{dom}(\Phi))$.
\end{defi}
When $X$ and $Y$ are Polish spaces, the definition of codes for continuous maps introduced above is equivalent (within $\RCAo$) to the one given by Simpson \cite{SOSOA}.

\subsection{Poset spaces}
In what follows, we will work within $\RCAo$. Hence we will mainly deal with countable objects. 
Within $\RCAo$, a (countable) poset is defined as a pair $(P,<_P)$ where $P\subseteq\N$ and $<_P$ is a preorder on $P$.
%
\begin{defi}[filter as a set] 
\begin{itemize}
\item A subset $F$ of $P$ is called a filter if it satisfies the following conditions:
\begin{enumerate}
\renewcommand{\labelenumi}{(\roman{enumi})}
 \item$\forall p,p^{\prime}\in P((p\in F\land p\leq_P p^{\prime})\rightarrow p^{\prime}\in F)$
 \item
$\forall p,p^{\prime}\in P((p,p^{\prime}\in F)\rightarrow\exists r\in F(p,p^{\prime}\geq_P r))$
\end{enumerate}

\item A filter $F$ is unbounded if $\lnot\exists r\in P(\forall p\in F(r<_Pp))$.

\item A filter $F$ is maximal if it is not a proper subset of any other filter. This is equivalent to the condition: $\forall r\in P((r\notin F)\rightarrow\exists p\in F(r\bot p))$, where $r\bot p$ means that $r$ and $p$ are incompatible (i.e., they have no common lower bound in $P$).

\item A filter $F$ is non-principal if $\forall p\in F\,\exists p^{\prime}\in F(p>_Pp^{\prime})$

\item We abbreviate the statement that a set $x$ is a filter on $P$ as $x\in\mathbb{F}(P_{\mr set})$. Similarly, we denote unbounded, maximal, and non-principal filters as $x\in\UF(P_{\mr set}),x\in\MF(P_{\mr set})$, and $x\in\NP(P_{\mr set})$ respectively.
\item A \textit{code of an open set} $U$ in $\mathbb{F}(P_{\mr set})$ is defined as a set $U\subseteq\N\times P$. A point (i.e., a filter) $x\in\mathbb{F}(P_{\mr set})$ belongs to $U$, denoted by $x\in U$, if $\exists n\exists p((n,p)\in U\land p\in x)$. 
Then we define Borel sets of finite ranks as in Definition~\ref{defi:Borel-sets}.
\item For a subset $K$ of a poset $P$, the \textit{upward closure} of $K$, denoted $\upcl(K)$, is the set $\{p\in P:\exists q\in K(q\leq p)\}$. (Note that $\upcl(K)$ may not exist as a set within $\RCAo$.) 
\end{itemize}
\end{defi}

While the previous definition treats a filter as a set, the following proposition demonstrates that decreasing sequences are often preferable to filters within the framework of $\RCAo$.

\begin{prop}\label{existence-upcl}
The following statement is equivalent to $\ACAo$ over $\RCAo$: ``For every subset $Q$ of a poset $P$, the upward closure of $Q$ exists''.
\end{prop}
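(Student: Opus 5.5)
The forward direction is immediate. Given a poset $P$ and a subset $Q\subseteq P$, the set
\[
\upcl(Q)=\{p\in P:\exists q\in Q(q\leq_P p)\}
\]
is defined by a $\Sigma^0_1$ formula with parameters $P,\leq_P,Q$. Hence it exists by arithmetical comprehension in $\ACAo$.

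For the reverse direction, work in $\RCAo$ and assume that every upward closure exists. It is well known (Simpson \cite{SOSOA}) that $\ACAo$ is equivalent over $\RCAo$ to the statement that the range of every one-to-one function $f:\N\to\N$ exists. So fix an injective $f$; we encode its range as an upward closure.

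Define the poset $P=\N\times\{0,1\}$ (coded as a subset of $\N$), with the order generated by
\[
(s,0)\leq_P (n,1)\iff f(s)=n,
\]
together with reflexivity. Since no element of the form $(n,1)$ lies below anything other than itself, this relation is trivially transitive. It is $\Delta^0_1$, so $P$ exists in $\RCAo$.

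Let $Q=\{(s,0):s\in\N\}$, which also exists by $\Delta^0_1$-comprehension. By hypothesis $\upcl(Q)$ exists, and
\[
(n,1)\in\upcl(Q)\iff \exists s\,(f(s)=n).
\]
Therefore $\{n:(n,1)\in\upcl(Q)\}$, which exists by $\Delta^0_1$-comprehension relative to $\upcl(Q)$, is exactly the range of $f$. This gives $\ACAo$.

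No step here is a real obstacle. The only care needed is to check that $\leq_P$ really is a preorder that is decidable from $f$, which holds because the only nontrivial comparisons go from level $0$ to level $1$, so transitivity cannot fail.
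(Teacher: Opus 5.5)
Your proof is correct and is essentially the paper's argument: you reduce to the existence of ranges of injective functions and realize $\mathrm{rng}(f)$ as the level-$1$ part of the upward closure of the bottom copy $Q$ of $\N$ in a two-level poset. The only difference is that the paper also orders the bottom copy as a descending chain ($(0,m)>(0,n)$ for $m<n$), which this statement does not need. Your version, with no comparabilities inside level $0$, makes transitivity trivial, whereas the paper's stated relation has to be read as its (still decidable) transitive closure to be a preorder.
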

\begin{proof}
(cf.\ \cite[Theorem 4.1.2]{mummert2005reverse} for an alternative proof)
We first observe that $\ACAo$ implies the statement. Indeed, if $P$ is a poset and $Q\subseteq P$ then the upward closure of $Q$ can be formed using arithmetical comprehension.

Conversely, we show that the statement implies $\ACAo$.
It is well-known that $\ACAo$ is equivalent to the assertion that the range of every injective function $f:\N\to\N$ exists as a set. Let $f:\N\to\N$ be an injective function. We define a poset $P$ on the set ($\{0,1\}\times\N$) with the following partial order:
\[
\begin{cases}
(0,m)>(0,n)&\text{ if and only if }m<n\\
(1,n)>(0,m)&\text{ if and only if }f(m)=n.
\end{cases}
\]
Now, let $Q=\{(0,m):m\in\N\}$. By our assumption, the upward closure $F=\upcl(Q)$ exists as a set. Then the set $F^{\prime}=\{n:(1,n)\in F\}$ is exactly the range of $f$. Indeed, $(1,n)\in F$ if and only if there exists some $(0,m)\in Q$ such that $(1,n)>(0,m)$, which by definition occurs if and only if $n$ is in the range of $f$. Thus, the range of $f$ exists, which implies $\ACAo$.
\end{proof}

Conversely, from a filter as a set, it is easier to obtain one represented a decreasing sequence.  
Indeed, every $\Sigma^{0}_{1}$-definable filter can be described by a decreasing sequence in the following sense.
\begin{prop}
The following is provable in $\RCAo$. Let $P$ be a countable poset and $f:\N\to P$ be a function such that $F=f(\N)$ forms a filter on $P$. Then there exists a decreasing sequence $(a_n)_{n\in\N}$ such that $F=\upcl((a_n)_n)$.
\end{prop}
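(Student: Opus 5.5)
We will build the decreasing sequence by primitive recursion along the enumeration $f$. We do not need to decide membership in $F$, which would require $\ACAo$ by Proposition~\ref{existence-upcl}. The point is that $F=f(\N)$ is enumerated, and a filter lets us merge finitely many enumerated elements into one common lower bound that is again in $F$. Finding such a bound is a search over the enumeration, and it always terminates.

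The construction runs as follows. Set $a_0=f(0)$. Given $a_n$, which by induction lies in $F$, search for the least $k$ such that $f(k)\leq_P a_n$ and $f(k)\leq_P f(n+1)$, and put $a_{n+1}=f(k)$. The search succeeds because $a_n,f(n+1)\in F$. Clause (ii) of the filter definition gives some $r\in F$ below both, and $r=f(k)$ for some $k$ because $F=f(\N)$. The relation $\leq_P$ is decidable from the code of $P$, so the search is a $\Delta^0_1$ (indeed total recursive) procedure. Hence the sequence $(a_n)_n$ exists in $\RCAo$ by primitive recursion, or by $\Delta^0_1$-comprehension on the graph. The inductive invariant ``$a_n\in F$'' is really ``$\exists k\,(f(k)=a_n)$'', and it is built into the construction since each $a_n$ is a value of $f$. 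So no induction beyond $\Sigma^0_1$ is needed. The sequence is decreasing, meaning $a_{n+1}\leq_P a_n$ for all $n$. If the intended notion is strict decrease, I would adjust the search to require $f(k)<_P a_n$ where possible, but the statement only concerns the upward closure, so weak decrease is what I aim for.

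Next I verify $F=\upcl((a_n)_n)$ as an equality of $\Sigma^0_1$-definable collections, since $\upcl$ need not exist as a set. The inclusion $\supseteq$ holds because each $a_n\in F$ and $F$ is upward closed by clause (i). For $\subseteq$, take $p\in F$, say $p=f(m)$. If $m=0$ then $p=a_0$. If $m=n+1$ then $a_{m}\leq_P f(m)=p$ by construction, so $p\in\upcl((a_n)_n)$.

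The only delicate point is ensuring the search is genuinely total within $\RCAo$. This amounts to a single application of clause (ii) to two elements that are already known to be in the range of $f$, together with the equality $F=f(\N)$ to turn the witness $r$ into an index $k$. Because the invariant is trivial, I expect no real obstacle. I would still check that the whole recursion is formalized as the minimization of a total computable function, so that $\Sigma^0_1$-induction (or $\Delta^0_1$-comprehension) suffices.
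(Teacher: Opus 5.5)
Your proposal is correct and follows essentially the same route as the paper: define $a_{n+1}$ by primitive recursion as the value of $f$ at the least index lying below $a_n$ and the relevant enumerated elements, then check both inclusions of $F=\upcl((a_n)_n)$. The only cosmetic difference is that the paper requires the new value to be $\leq_P f(i)$ for all $i\le n$ (giving $a_{i+1}\leq_P f(i)$), while you require only $\leq_P f(n+1)$ (giving $a_m\leq_P f(m)$), and the two coincide by transitivity.
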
 
\begin{proof}
Let $P\subseteq\N$ be a countable poset and let $f:\N\to P$ be such that $F=f(\N)$ forms a filter on $P$. We construct the sequence $(a_n)_{n\in\N}$ by primitive recursion. Let $a_{0}=f(0)$. For the inductive step, we define $a_{n+1}=f(j)$ where $j\in\N$ is the least number such that  $(a_n\geq _P f(j))\land \A i\le n(f(j)\leq_{P} f(i))$. By this construction, the inclusion $\upcl((a_n)_n)\subseteq F$ is immediate. Conversely, since $F$ is a filter, for any $p=f(i)\in F$, we have $a_{i+1}\leq_P p$, which implies $F\subseteq\upcl((a_n)_n)$.
\end{proof}

Hence, it may be more natural to consider poset spaces consist of filters given by decreasing sequences.
\begin{defi}[filter as a decreasing sequence] 
\begin{itemize}
\item A decreasing sequence $F=\langle p_{i}: i\in\N\rangle$ of $P$ is called a filter if $\upcl(F)=\upcl(\{p_{i}: i\in\N\})$ forms a filiter as a set. Then, $F=\langle p_i:i\in\N\rangle$ is equal to $F'=\langle q_j:j\in\N\rangle$ as filters if and only if $\forall i\exists j (p_i\geq_P q_j)$ and $\forall j\exists i (q_j\geq_P p_i)$. 
\item A filter $F$ is unbounded (resp.~maximal, non-principle) if $\upcl(F)$ is unbouded (resp.~maximal, non-principle).
\item We abbreviate the statement that $F$ is a filter on $P$ as $F\in\mathbb{F}(P_{\mr seq})$. Similarly, we denote unbounded, maximal, and non-principal filters as $F\in\UF(P_{\mr seq}),F\in\MF(P_{\mr seq})$, and $F\in\NP(P_{\mr seq})$ respectively.
\item A \textit{code of an open set} $U$ in $\mathbb{F}(P_{\mr seq})$ is defined as a set $U\subseteq\N\times P$. A point $x=\langle p_{i}: i\in\N\rangle\in\mathbb{F}(P_{\mr seq})$ belongs to $U$, denoted by $x\in U$, if $\exists n\exists i(n,p_{i})\in U$. 
Then we define Borel sets of finite ranks as in Definition~\ref{defi:Borel-sets}.
\item A poset $P$ is said to be \textit{handy} if $\UF(P_{\mr seq})=\NP(P_{\mr seq})$ and $\UF(P_{\mr seq})$ is a $G_{\delta}$ subset of $\mathbb{F}(P_{\mr seq})$ (there is a sequence $(U_{n})_{n}$ of open sets of $\mathbb{F}(P_{\mr seq})$ such that $\bigcap_{n\in\mathbb{N}}U_{n}=\UF(P_{\mr seq})$). The equality $\UF(P_{\mr seq})=\NP(P_{\mr seq})$ means that a filter in $P$ is unbounded if and only if it is non-principal.
\item A poset $P$ is said to be \textit{pruned} if every $p\in P$ has an element $q\in P$ such that $p>_Pq$. This is clearly equivalent to the condition $\UF(P_{\mr seq})\subseteq\NP(P_{\mr seq})$ within $\RCAo$.
\end{itemize}
Note that while every handy poset is pruned, the converse does not necessarily hold; that is, not every pruned poset is handy.
\end{defi}

The choice of representation for filters is intrinsically linked to their complexity.

\begin{prop}\label{UFfml}
Let $P$ be a countable poset
and let $\phi(x)$ be a $\mathbf{\Sigma}_n^0$-formula for $n\in\omega$ such that $(x=y\land\phi(x))\rightarrow\phi(y)$ for all $x,y\in\UF(P_{\mr seq})$ (resp. $\UF(P_{\mr set})$.
The following complexity bounds hold within $\RCAo$, depending on the representation of $x$:
\begin{enumerate}
\item If $x \in \UF(P_{\mr set})$ is coded as a set, then there exists a $\mathbf{\Sigma}_{n+1}^0$-subset $U \subseteq \UF(P_{\mr set})$ such that $x \in U \leftrightarrow\phi(x)$.
\item If $x \in \UF(P_{\mr seq})$ is coded as a decreasing sequence $\langle \sigma_n : n \in \mathbb{N} \rangle$ of $P$, 
 then there exists a $\mathbf{\Sigma}_{n}^0$-subset $U \subseteq \UF(P_{\mr seq})$ such that $x \in U \leftrightarrow\phi(x)$.
\end{enumerate}
\end{prop}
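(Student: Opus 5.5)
The plan is to proceed by induction on $n$, after first fixing what a $\mathbf{\Sigma}^0_n$-formula in the variable $x$ means for each representation. When $x$ is a set, an atomic formula can query ``$p\in x$'' and also ``$p\notin x$''. When $x$ is a decreasing sequence $\langle\sigma_k\rangle$, the atomic queries are about the values $\sigma_k$, i.e.\ statements of the form ``$\sigma_k=p$''. I will take $\phi$ to be in prenex normal form over a bounded matrix whose atomic subformulas concern finite initial segments of $x$ (the characteristic function of $x$ up to some length, or $\langle\sigma_0,\dots,\sigma_k\rangle$). Invariance of $\phi$ under the equality relation on filters is what lets us replace reference to a particular code by reference to basic open sets of the filter space. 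I will also fix that $\mathbf{\Sigma}_0^0$ is read as the base level sitting inside $\mathbf{\Sigma}^0_1$ (clopen in the code space), so that the base case is stated for bounded formulas.

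\textbf{Sequence case (2).} For the base case, let $\phi$ be $\Sigma^0_1$, say $\exists m\,\theta(x\rest k_m,m)$. Put into $U$ the pairs $(\langle m,k\rangle, \sigma_{k'})$ for each finite decreasing string $\tau=\langle\sigma_0,\dots,\sigma_{k'}\rangle$ with $\theta(\tau,m)$; this set exists by $\Delta^0_1$-comprehension. We then show $x\in U\leftrightarrow\phi(x)$. For the direction ``$\rightarrow$'', suppose $x\ni\sigma_{k'}$ (as a filter) for such a witness string $\tau$. We build a sequence $y$ extending $\tau$ that is equal to $x$ as a filter: prepend $\tau$ to the tail of $x$ lying below $\sigma_{k'}$, using that $x$ is a filter to interleave common lower bounds. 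Then $\phi(y)$ holds, and invariance gives $\phi(x)$. For ``$\leftarrow$'', $\phi(x)$ is witnessed by an initial segment of $x$ itself, whose last entry lies in $x$. The unboundedness of $y$ follows from that of $x$. For higher $n$, write $\phi=\exists m\neg\psi_m$ with $\psi_m$ a $\Sigma^0_{n-1}$ formula. Induction gives codes $V_m$ uniformly in $m$, so $U=\bigcup_m(\UF\setminus V_m)$ is $\mathbf{\Sigma}^0_n$. To get the codes $B_i\setminus B'_i$ required by Definition~\ref{defi:Borel-sets}, take $B_i$ to be the whole space. The one point to check is that the induction is uniform, i.e.\ that the construction is primitive recursive in the index of $\phi$. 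The invariance hypothesis for the subformulas $\psi_m$ is not given, so I will instead carry the induction on invariant subformulas. To do this, replace each $\psi_m$ by the invariant version ``for all $y$ equal to $x$'', which costs nothing because equality of sequences is arithmetic of low complexity.

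\textbf{Set case (1).} Here the negative atomic query ``$p\notin x$'' is not open in $\mathbb{F}(P_{\mr set})$; it is closed. So even a $\Sigma^0_0$ formula can only be placed at level $\mathbf{\Sigma}^0_2$, being a finite Boolean combination of basic opens and their complements, hence a difference of opens. This is the source of the extra quantifier. For the base step, a $\Sigma^0_1$ formula $\exists m\,\theta(x\rest k_m)$ becomes $\bigcup_{m,s}(N_{s^+}\setminus N'_{s^-})$. Here $s$ ranges over finite $0$/$1$-strings satisfying $\theta$, $N_{s^+}$ is the basic open for the positive entries of $s$, and $N'_{s^-}$ is the union of $N_p$ for the negative entries of $s$. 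This set is $\mathbf{\Sigma}^0_2$. The induction step then proceeds exactly as in the sequence case.

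\textbf{Main obstacle.} I expect the main obstacle to be the base step in the sequence case: verifying inside $\RCAo$ that membership of $\sigma_{k'}$ in the filter really yields a code equal to $x$ that begins with $\tau$. Concretely, I would define $y_j$ as the first element of the $x$-sequence lying below both $\sigma_{k'}$ and the previous entry of $y$. This element exists because $x$ is a decreasing filter sequence through $\sigma_{k'}$, and the search is a $\Sigma^0_1$ minimization, so $y$ exists in $\RCAo$.
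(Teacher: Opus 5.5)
Your base case for (2) is the paper's argument: Kleene normal form and $U=\bigcup\{N_{\tau(\mathrm{last})}:\tau\text{ decreasing},\ \theta(\tau)\}$. You also justify, more carefully than the paper does, why $x\in N_{\tau(\mathrm{last})}$ yields a representative of $x$ that begins with $\tau$.

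The concrete recipe in your last paragraph is off, though. ``The first element of $x$ lying below the previous entry of $y$'' just returns that entry again, so $y$ becomes constant and need not generate $x$. Instead take $\tau^\frown\langle x_j:j\geq j_0\rangle$ with $x_{j_0}\leq_P\tau(\mathrm{last})$. This is decreasing because $x$ is, and no interleaving is needed.

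For (1) you argue directly, using that negative membership queries are closed, rather than reducing to (2) at level $n+1$ via $\phi(\upcl(x))$ as the paper does. That is fine. Since a set code is canonical, every subformula is automatically invariant there, so that induction goes through with no invariance repair.

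The genuine gap is the induction step for (2). The formula $\forall y\,(y=x\to\psi_m(y))$ quantifies over sequences $y$, i.e.\ over sets, so it is $\Pi^1_1$. The fact that the relation $y=x$ itself is $\Pi^0_2$ does not help, and the induction hypothesis for $\Sigma^0_{n-1}$ formulas cannot be applied to this formula. Without that repair, $\{x:\neg\psi_m(x)\}$ is not a union of $=$-classes, so it does not determine a subset of $\UF(P_{\mr seq})$ at all. There is no purely syntactic way around this; you need a Baire-category (forcing) argument on the fibre of representatives of a point.

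For $n=2$, write $\phi(x)\equiv\exists m\,\forall k\,\theta(x[k],m)$, with $\neg\theta(\cdot,m)$ preserved under extension, and put
\[
U=\bigcup_{m,\tau}\Bigl(N_{\tau(\mathrm{last})}\setminus\bigcup\{N_{\sigma(\mathrm{last})}:\sigma\supseteq\tau\text{ decreasing},\ \neg\theta(\sigma,m)\}\Bigr),
\]
with $\tau$ ranging over decreasing strings. Then argue both directions.
\begin{enumerate}
\item Suppose $x$ lies in the $(m,\tau)$-piece. Then the representative $\tau^\frown(\text{tail of }x)$ has no initial segment $\sigma$ with $\neg\theta(\sigma,m)$: by persistence such a $\sigma$ could be taken to extend $\tau$, and its last entry lies in the filter $x$. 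So this representative satisfies $\phi$, and by invariance so does $x$.
\item Suppose $\phi(x)$ holds but $x\notin U$. Then for every $m$, each decreasing $\tau$ with $x\in N_{\tau(\mathrm{last})}$ has an extension $\sigma$ with $\neg\theta(\sigma,m)$ and $x\in N_{\sigma(\mathrm{last})}$. A finite-extension construction computable in $x$ then produces $y=x$ with $\neg\phi(y)$, contradicting invariance. To keep $y$ cofinal, after handling $m$ append an entry of $x$ lying below $\sigma(\mathrm{last})$ and below $x_m$.
\end{enumerate}

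For larger $n$ this argument has to be iterated, in the style of Vaught transforms, and verifying that iteration in $\RCAo$ is the actual content of the induction. The paper writes out only $n=1$ and asserts the rest by induction, so it does not supply this step either. Your worry about invariance is well placed, but the repair you propose does not close the gap.
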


\begin{proof}
Since $\phi(\upcl(x))$ is a $\mathbf{\Sigma}_{n+1}^0$-formula for a decreasing sequence $x\in \UF(\mr seq)$, the first case follows immediately from the second.
In the second case,  we proceed by induction on $n\in\omega$, presenting the case for $n=1$.
When $x$ is coded by a decreasing sequence $x=\langle\sigma_k:k\in\N\rangle$, the $\Sigma_1^0$-formula $\phi(x)$ is equivalent to $\exists m\,\theta(x[m])$ by Kleene's Normal Form Theorem (cf.~\cite[II.2.7]{SOSOA}), where $\theta(\tau)$ is a $\Sigma_0^0$-formula. Here, $x[m]= \langle \sigma_0, \dots, \sigma_{m-1} \rangle$ denotes the finite initial segment of $x$ of length $m$.
We define $U$ as the open set encoded by the collection of finite sequences $\tau\in P^{<\N}$ satisfying $\theta(\tau)$. 
The condition $\theta(x[m])$ is equivalent to saying that $x$ belongs to the basic open set $N_{x(m-1)}$. In this representation, the existential quantifier $\exists m$ corresponds exactly to taking a countable union of basic open sets. Therefore, $U$ is a $\mathbf{\Sigma}^{0}_{1}$-subset (an open set) satisfying $x \in U \leftrightarrow\phi(x)$.
\end{proof}
Even in the first case, if $x$ occurs only positively in $\phi(x)$ (meaning that $\phi$ relies on information of the form $p\in x$, but not $p\notin x$), we can still draw the same conclusion as in the second case. This proposition is related to Proposition \ref{qPfml}.

From this point forward, we represent filters as decreasing sequences rather than sets and we write $\mathbb{F}(P)$, $\UF(P)$, $\MF(P)$, and $\NP(P)$ for $\mathbb{F}(P_{\mr seq})$, $\UF(P_{\mr seq})$, $\MF(P_{\mr seq})$, and $\NP(P_{\mr seq})$, thereby avoiding the unnecessary invocation of $\ACAo$. Nevertheless, in contexts where  $\ACAo$ is already assumed, or where filters can be treated as sets within $\RCAo$ without risk of confusion, we may employ the set notation for the sake of simplicity.
Specifically, we say that a sequence $\langle q_i:i\in\N\rangle$ in $P$ is a strictly decreasing sequence if $q_{i+1}<_Pq_i$ for all $i\in\N$. Any such sequence generates a filter $F=\{p\in P:\exists i(q_i\leq_P p)\}$, which is necessarily non-principal. In a handy poset, every unbounded filter contains (and is generated by) such a strictly decreasing sequence.

\begin{rem}
The definition of handyness presented here may appear stronger than the one in Section 2. However, this modification is intended solely to adapt the theory to the subsystems of second-order arithmetic. In fact, $\UF(P)$ is always a $G_{\delta}$ subset of $\mathbb{F}(P)$ in the standard topological sense.

\end{rem}

\begin{prop}[Handyness in $\RCAo$; cf. Proposition \ref{1-typicalization}]\label{typicalization}

The following are provable in $\RCAo$:
\begin{enumerate}
 \item For every countable poset $P\subseteq\N$, there exists a handy poset $P^{\prime}$ such that $\UF(P)$ is homeomorphic to $\UF(P^{\prime})$.
 \item For every countable poset $P\subseteq\N$, there exists a handy poset $P^{\prime}$ such that $\mathbb{F}(P)$ is homeomorphic to $\UF(P^{\prime})$.
\end{enumerate}
\end{prop}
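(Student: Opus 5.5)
We will carry out the two constructions from Proposition~\ref{1-typicalization}.1 and .2 inside $\RCAo$, now treating points as decreasing sequences rather than sets. Fix an enumeration $\langle p_i:i\in\N\rangle$ of $P$.

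For part 1, the first step is to define $P'=\{(p_i,n):\forall m<n\,(p_m\not<_P p_i)\}$. This set exists by $\Delta^0_1$-comprehension, since the condition is bounded. The order is $(p_i,n)<(p_j,m)\Leftrightarrow p_i\le_P p_j\land n>m$.

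The second step is to define the maps between the two spaces on sequences, where no upward closures are needed.
\begin{itemize}
\item Given an unbounded decreasing sequence $\langle a_k\rangle$ in $P$, we produce a decreasing sequence in $P'$. At stage $k$ we search for some $a_l$ with $l>k$ and $a_l\not\ge_P p_m$ for all $m\le k$, and output $(a_l,k+1)$. The search terminates by unboundedness, and it is a $\Delta^0_1$ search given the sequence.
\item Conversely, a decreasing sequence $\langle(q_k,n_k)\rangle$ in $P'$ is sent to $\langle q_k\rangle$.
\end{itemize}
Both maps have codes as continuous maps in the sense of the paper: the preimage of $N_{(p_i,n)}$ is $\bigcup\{N_{p_j}: p_j\le p_i,\ (p_j,n+1)\in P'\}$, and the preimage of $N_{p_i}$ is $\bigcup_n N_{(p_i,n)}$. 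Each code is $\Sigma^0_1$ in the parameters and hence exists by $\Delta^0_1$-comprehension after the usual padding by the extra $\N$-coordinate. Checking that the maps are mutually inverse up to filter equality ($\forall i\exists j$ comparisons) repeats the classical argument.

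The third step is handyness of $P'$ in the strengthened sense. We need $\UF(P')=\NP(P')$ together with a $G_\delta$ presentation of $\UF(P')$ inside $\mathbb{F}(P')$. I would take $U_n=\bigcup\{N_{(p_i,m)}:(p_i,m)\in P',\ m\ge n\}$. A filter lies in all $U_n$ exactly when it contains elements of arbitrarily high second coordinate. The classical argument then shows this is equivalent both to being unbounded and to being non-principal. This equivalence, proved by arithmetic reasoning on a given sequence, is the main obstacle: one must show, without forming $\upcl$ as a set, that a non-principal filter in $P'$ has unbounded second coordinates. I would argue directly. A strictly decreasing chain $(q,n)>(q',n')>\cdots$ forces $n<n'<\cdots$, so the coordinates go to infinity. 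Conversely, unbounded second coordinates rule out any lower bound $r$ below all elements, since such an $r=(p_j,m)$ would satisfy $m>n$ for every $n$ occurring. Thus both directions are $\Sigma^0_1/\Pi^0_1$ statements about the sequence and are provable with $\Sigma^0_1$-induction.

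Part 2 follows the same pattern with $P'=\{(p_i,n):i\le n\}$. The key point is that every filter is sent to one with unbounded second coordinate, by tagging $a_k$ with level $\max(k,i)$. The inverse takes first coordinates, and handyness uses the same $U_n$. The one extra check is that an arbitrary (possibly principal) decreasing sequence in $P$ maps to a strictly decreasing one in $P'$. This holds because the order only requires $p_i\le p_j$ and $n>m$, so repeating the same $p$ at increasing levels is allowed.
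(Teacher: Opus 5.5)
Your route is the paper's: the same $P'$ and order in both parts, the same level-based $G_\delta$ presentation of $\UF(P')$, and the inverse checks deferred to the classical argument. Writing the forward maps as explicit transformations of decreasing sequences is a sensible addition, since totality of a code in $\RCAo$ requires actually producing the image point. However, two of those explicit formulas fail as written.

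\textbf{Part 1 search.} The condition $a_l\not\ge_P p_m$ is stronger than what $(a_l,k+1)\in P'$ requires, namely $p_m\not<_P a_l$. Take an unbounded filter with a least element, e.g.\ $P=\{p_0\}$ and $a_k=p_0$. No $a_l$ satisfies $a_l\not\ge_P p_0$, so the search diverges. It diverges exactly on the principal unbounded filters the construction is meant to absorb. With $p_m\not<_P a_l$ the search does terminate: the property persists to all later (smaller) $a_{l'}$, and unboundedness gives a witness for each $m\le k$.

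\textbf{Part 2 tags.} The tags $\max(k,i_k)$ (where $a_k=p_{i_k}$) need not increase. For $a_0=p_{100}$ and $a_1=p_0<_P p_{100}$ you output $(p_{100},100)$ followed by $(p_0,1)$, which are incomparable in $P'$. So the output is not a decreasing sequence, and the ``extra check'' you state is false. Use strictly increasing tags such as $\ell_0=i_0$, $\ell_{k+1}=\max(\ell_k+1,i_{k+1})$.

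\textbf{Missing direction in step 3.} You prove non-principal $\Rightarrow$ arbitrarily high levels $\Rightarrow$ unbounded. The direction that carries the content of handiness is unbounded $\Rightarrow$ arbitrarily high levels. Your inverse checks also use it silently: to see that the first coordinates of an unbounded filter on $P'$ form an unbounded filter on $P$, you need elements $(q,n)$ with $n>h$, which give $p_h\not<_P q$. This direction rests on $P'$ being pruned, which you never verify.
\begin{itemize}
\item In part 1, suppose $(p,n)\in P'$ but $(p,n+1)\notin P'$. Then $p_n<_P p$. Also $(p_n,n+1)\in P'$, because any $p_h<_P p_n$ with $h<n$ would give $p_h<_P p$. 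Hence $(p_n,n+1)<(p,n)$.
\item In part 2, simply $(p_i,n+1)<(p_i,n)$.
\end{itemize}
Given prunedness, a decreasing sequence in $P'$ with bounded levels is eventually constant, with a least element $(p,L)$. The level-$(L+1)$ element below $(p,L)$ is then strictly below the whole filter, contradicting unboundedness. This is the paper's remark that every $(p,n)$ has a strictly smaller element. With it and the two fixes above, your proposal is a correct and somewhat more explicit version of the paper's proof.
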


\begin{proof}
We reason within $\RCAo$. 
\begin{enumerate}
\item Let $P^{\prime}=\{(p,n)\in P\times\N :\forall q<_{\N}n(q\nless_P p)\}$, where $\leq_{\N}$ denotes the usual order on $\N$. The order on $P^{\prime}$ is defined by ${<_{P^{\prime}}}:=\{((p,n),(q,m))\in P^{\prime}\times P^{\prime}:p\leq_P q\land n>_{\N} m\}$. These sets are definable by $\Sigma^{0}_{0}$-comprehension. This poset $P^{\prime}$ is handy. Indeed, for every element $(p, n)$, there exists a smaller element $(p, n)>_{P^{\prime}}(p, n+1)\lor(p, n)>_{P^{\prime}}(q, n+1)$ where $n<_{\N}q$, and $\forall n\exists p((p, n)\in F)$ if $F$ is a non-principal filter. Moreover, the sequence of open sets $(U_{n})_{n}$, where $U_n:=\bigcup\{N_{s}:p\in P, s=(p,n)\in P^{\prime}\}$ satisfies $\bigcap_{n\in\N}U_{n}=\UF(P)$. We define a code for a continuous map $\Phi:\UF(P)\rightarrow \UF(P^{\prime})$ and its inverse $\Psi:\UF(P^{\prime})\rightarrow \UF(P)$ as follows in $\RCAo$:
\[
\Phi=\{(0,(p,n),p):(p,n)\in P^{\prime},p\in P\}
\]
\[
\Psi=\{(0,p,(p,n)):(p,n)\in P^{\prime},p\in P\}
\]
As in the proof of Proposition \ref{1-typicalization}.1, these codes define total continuous functions that are inverses of each other, establishing $\UF(P)\cong\UF(P^{\prime})$.

\item Let $P^{\prime}=\{(p,n)\in P\times\N :p\leq_{\N} n\}$ with the order ${<_{P^{\prime}}}:=\{((p,n),(q,m))\in P^{\prime}\times P^{\prime}:p\leq_P q\land n>_{\N} m\}$. These sets are definable by $\Sigma^{0}_{0}$-comprehension. This $P^{\prime}$ is handy because any filter $F$ in $P^{\prime}$ is unbounded if and only if $\forall n\in\N \,\exists p\in P(p,n)\in F$. We define the codes for the homeomorphisms $\Phi:\mathbb{F}(P)\rightarrow \UF(P^{\prime})$ and $\Psi:\UF(P^{\prime})\rightarrow \mathbb{F}(P)$ as follows:
\[
\Phi=\{(0,(p,n),p):(p,n)\in P^{\prime},p\in P\}
\]
\[
\Psi=\{(0,p,(p,n)):(p,n)\in P^{\prime},p\in P\}
\]
It can be verified within $\RCAo$ that $\Phi$ and $\Psi$ are total and mutually inverse, following the logic of Proposition \ref{1-typicalization}.2.
\end{enumerate}
\end{proof}

\begin{prop}[NP space]\label{NPtoNPUF}
Within $\RCAo$, the following hold for every countable poset $P$.
\begin{enumerate}
    \item There exists a countable poset $P^{\prime}$ such that $\NP(P)\cong\NP(P^{\prime})\cap\UF(P^{\prime})$.
    \item Conversely, there exists a countable poset $P^{\prime\prime}$ such that $\NP(P)\cap\UF(P)\cong\NP(P^{\prime\prime})$. 
\end{enumerate}
\end{prop}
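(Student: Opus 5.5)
For part 1 I will adapt the ``indexing by $\N$'' trick of Proposition~\ref{typicalization}.2, but make it preserve non-principality. Enumerate $P$ by its elements as natural numbers. Put
\[
P^{\prime}=\{(p,n)\in P\times\N : p\leq_{\N} n\},
\]
and order it by $(p,n)\leq_{P^{\prime}}(q,m)$ iff $p\leq_P q$ and $n\geq_{\N} m$. This set is $\Sigma^0_0$-definable, so it exists in $\RCAo$.

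The candidate maps are the following.
\begin{itemize}
\item $\Phi:\NP(P)\to\NP(P^{\prime})\cap\UF(P^{\prime})$. A strictly decreasing sequence $\langle q_i\rangle$ generating $F\in\NP(P)$ is sent to the sequence $\langle (q_i,\max(q_0,\dots,q_i,i))\rangle$. In code form this is $\Phi=\{(0,(p,n),p):(p,n)\in P^{\prime}\}$, exactly as in Proposition~\ref{typicalization}.
\item $\Psi$ in the other direction is given by the projection $(p,n)\mapsto p$, coded by $\{(0,p,(p,n))\}$.
\end{itemize}

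The verifications I plan to carry out are these.
\begin{enumerate}
\item The image sequence is strictly decreasing in $P^{\prime}$, because the $P$-coordinates strictly decrease. It is therefore non-principal. It is unbounded because its second coordinates go to infinity, while any lower bound $(r,k)$ would need $k$ larger than all of them.
\item Conversely, take $G$ in $\NP(P^{\prime})\cap\UF(P^{\prime})$, given by a decreasing sequence $\langle (p_i,n_i)\rangle$. Its projection $\langle p_i\rangle$ is a decreasing sequence in $P$ generating a filter.
\item That filter must be non-principal. Suppose instead some $p$ in it were below every member. Using that $G$ is unbounded (so the $n_i$ are unbounded), I can check that $G$ contains all $(p,n)$ with $n\geq p$ and some $(p,k)$ belonging to $G$. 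I then compare with non-principality of $G$. Here an element $(p^{\prime},n^{\prime})<(p,k)$ in $G$ with $p^{\prime}\equiv p$ is allowed, so non-principality of $G$ alone does not force a strict drop in $P$.
\end{enumerate}

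This third step is the main obstacle: the naive construction admits the extra points $\{(p,n):n\geq k\}$ coming from principal filters. To block them I will strengthen the order, declaring $(p,n)<_{P^{\prime}}(q,m)$ iff $p<_P q$ and $n>m$, with $\leq$ its reflexive closure. Strict decrease in $P^{\prime}$ then forces strict decrease in $P$. The projected sequence is then strictly decreasing, hence generates a non-principal filter. Unboundedness of the image under $\Phi$ still follows from the second coordinates, and directedness holds because strict lower bounds always exist inside a non-principal filter.

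All the conditions involved are $\Sigma^0_0$ in the sequences, so checking that $\Phi$ and $\Psi$ are total, mutually inverse, and compatible with the basic open sets $N_{(p,n)}$ and $N_p$ is routine in $\RCAo$.

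For part 2, set $P^{\prime\prime}$ to be the poset $P^{\prime}$ built from $P$ in Proposition~\ref{typicalization}.1, which is handy and satisfies $\UF(P)\cong\UF(P^{\prime\prime})$ via $\Phi$. I need that this homeomorphism restricts to $\NP(P)\cap\UF(P)\cong\NP(P^{\prime\prime})$.
\begin{itemize}
\item By handiness, $\NP(P^{\prime\prime})=\UF(P^{\prime\prime})$.
\item $\Phi$ must carry $\UF(P)\cap\NP(P)$ onto all of $\UF(P^{\prime\prime})$. If that fails, I will instead apply the part 1 construction with the strict order to $P$ while also demanding unboundedness. Concretely, I use pairs $(p,n)$ with $\forall q<_{\N}n\,(q\not<_P p)$ and the strict order. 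Then a strictly decreasing sequence in this poset automatically projects to a strictly decreasing, unbounded sequence in $P$. Its non-principal filters correspond exactly to $\NP\UF(P)$, by the same coordinatewise verification as above.
\end{itemize}
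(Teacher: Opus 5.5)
Your final constructions are essentially the paper's. In part 1, your strict order $(p,n)<_{P'}(q,m)\iff p<_P q\wedge n>m$ is the paper's device. The paper writes $p'>_{P'}q'\iff p>_Pq\wedge p<_{\N}q$, using the code of $p$ itself as the index instead of a separate coordinate. Your fallback for part 2 is the paper's $P''=\{(n,p):\forall q\le n\,(p\not\geq_P q)\}$ with the same kind of strict order, except that you use ``$p$ is not strictly above $q$'' where the paper uses ``$p$ is not above $q$''. Either version works: along a strictly decreasing chain $(p_i,n_i)$ with $n_i\to\infty$, no $r$ can be strictly below every $p_i$.

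There are, however, things to fix.

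\textbf{The primary plan for part 2 fails.} It does not just possibly fail; it fails whenever $P$ has a minimal element. The filters in $\UF(P)\setminus\NP(P)$ are exactly the principal filters $\upcl\{r\}$ with $r$ minimal. The map of Proposition~\ref{typicalization}.1 is a bijection from all of $\UF(P)$ onto $\UF(P'')=\NP(P'')$, so its restriction to $\NP\UF(P)$ misses the images of these filters. For $P=\{p\}$ you get $\NP\UF(P)=\emptyset$, while $(p,0)>(p,1)>\cdots$ is a point of $\NP(P'')$. So the fallback is the actual proof.

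\textbf{Drop the side condition $p\le_{\N}n$ in the fallback.} Together with $\forall q<_{\N}n\,(q\not<_P p)$, it can leave members of some $F\in\NP\UF(P)$ with no admissible index at all. For example, take a chain $c_0>c_1>\cdots$ with $c_i$ coded by $2i+1$, plus elements $d_i$ coded by $2i$ lying strictly below exactly $c_0,\dots,c_i$. Then no pair $(c_i,n)$ is admissible.

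\textbf{The forward direction of part 2 needs its own argument.} It is not covered by ``the same coordinatewise verification as above''. Directedness of the image needs, for each $N$, some $r\in F$ strictly below two given members with $(r,N)$ admissible. This is exactly where unboundedness of $F$ enters. Each $q<N$ fails to be strictly below some member of $F$, and by $\Sigma^0_1$-induction a single later member of the generating sequence works for all $q<N$ at once.

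\textbf{A small slip in part 1.} $\max(q_0,\dots,q_i,i)$ need not be strictly increasing, so the displayed sequence is not decreasing for your strict order. Use e.g.\ $\max(q_0,\dots,q_i)+i$; the code $\{(0,(p,n),p)\}$ is unaffected.
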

\begin{proof}
For the first part, let $P^{\prime}=\{p^{\prime}:p\in P\}$ be a poset equipped with the order $p^{\prime}>_{P^{\prime}}q^{\prime}\Leftrightarrow p>_Pq\land p<_{\N}q$, where $\leq_{\N}$ denotes the usual order on $\N$. Under this construction, every non-principal filter $F\in\NP(P)$ naturally corresponds to an element in $\NP(P^{\prime})$. Furthermore, since the order requires the index to increase as the elements decrease, every non-principal filter in $P^{\prime}$ is necessarily unbounded, thus belonging to $\UF(P^{\prime})$. 
For the second part, we define the poset $P^{\prime\prime}$ as follows:
\[
P^{\prime\prime}=\{(n,p)\in\N\times P:\forall q\leq n(p\not\geq_P q)\},
\] with the order $(n,p)>_{P^{\prime\prime}}(m,q)\Leftrightarrow p>_Pq\land m>_{\N}n$. For any $F\in\NP(P)\cap\UF(P)$, the mapping $f(F)=\{(n,p)\in P^{\prime\prime}:p\in F\}$ yields an element of $\NP(P^{\prime\prime})$. Conversely, for every $G\in\NP(P^{\prime\prime})$, the set $g(G)=\{p\in P:(0,p)\in G\}$ is in $\NP(P)\cap\UF(P)$.
The technical details of these proofs follow the same method as those discussed in Proposition \ref{typicalization}.
\end{proof}
Consequently, by appropriately reconstructing the underlying poset as shown above, we may safely assume that $\NP(P)\subseteq\UF(P)$ without loss of generality.

\begin{defi}[Universal UF space]

Let $\mc P_{\fin}(\N)$ be the collection of all finite subsets of $\N$, represented by the set of all strictly increasing sequences $\sigma\in\text{Seq}$ (i.e., $\forall n<\lh(\sigma)-1(\sigma(n)<\sigma(n+1))$). We view $\mc P_{\fin}(\N)$ as a poset ordered by reverse inclusion: $t\geq_P s\Leftrightarrow t\subseteq s$. 
We consider $\mc P(\N)$ as a quasi-Polish space equipped with the quasi-metric $d:\mc P(\N)\times\mc P(\N)\to[0,\infty)$ defined by:
\[
d(F,G)=
\begin{cases}
2^{-n}&n=\text{min}\{k\in\N :k\in F\land k\notin G\}\\
0&F\subseteq G
\end{cases}
\]
where $F,G\subseteq \N$. Under this quasi-metric, $\mc P(\N)$ is homeomorphic to the space of filters $\mathbb{F}(\mc P_{\fin}(\N))$.
\end{defi}
\begin{rem}
Since filters are defined as decreasing sequences of basic open sets, points in $\mc P(\N)$ cannot be simply denoted by their characteristic functions $\xi:\N\to\{0,1\}$. Such a $\xi$ is compatible with a filter as a set. Specifically, we denote a point $x\in\mc P(\N)$ as a sequence $x=\langle q_i\in\mc P_{\fin}(\N):i\in\N\rangle$ satisfying the following conditions for all $i<j$:
\begin{enumerate}
\item $q_i\subseteq\{0,\cdots,i\}$
\item $q_i\subseteq q_j$
\end{enumerate}
This representation forms an effective left-Cauchy sequence with respect to the underlying quasi-metric. However, it is important to note that such a sequence is not generally an effective $\hat{d}$-Cauchy sequence.
\end{rem}

From now on, we establish the equivalence between the representations of UF spaces and $\mathbf{\Pi}_2^0$ spaces, while also addressing NP spaces and NPUF spaces. The logical strengths of switching these representations fall into two distinct levels, $\RCAo$ and $\PCAo$.

\begin{prop}\label{UFtoPi20}
The following statement is provable in $\RCAo$:
Every UF space is homeomorphic to a $\mathbf{\Pi}_{2}^{0}$-subspace of $\mc P(\N)$.
\end{prop}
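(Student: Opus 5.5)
We follow the proof of Proposition \ref{1-UF(P)toP(N)}.1 and check that each step can be carried out in $\RCAo$ with filters represented as decreasing sequences.

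\textbf{Reduction to a handy poset.} By Proposition \ref{typicalization}.1 we may assume that $P$ is handy. Then $\UF(P)=\NP(P)$, and there is a sequence of open sets $(U_n)_n$ of $\mathbb{F}(P)$ with $\bigcap_n U_n=\UF(P)$. Every point of $\UF(P)$ is given by a strictly decreasing sequence $\langle \sigma_k:k\in\N\rangle$, and these sequences are what the codes act on.

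\textbf{The embedding.} Enumerate $P=\langle p_n:n\in\N\rangle$ and set $g(p_n)=\{i\leq n: p_n\leq_P p_i\}\in\mc P_{\fin}(\N)$. This is $\Sigma^0_0$-definable. Define the continuous code $\Phi\subseteq \N\times\mc P_{\fin}(\N)\times P$ by
\[
\Phi=\{(0,s,p_n): s\subseteq g(p_n)\}.
\]
On sequences, a filter $\langle\sigma_k\rangle_k$ is sent to the increasing sequence of finite sets $\bigcup_{j\le k} g(\sigma_j)$, suitably truncated to $\{0,\dots,k\}$ to fit the point format of $\mc P(\N)$ in the Remark above. This output is computable from the input, so it exists by $\Delta^0_1$-comprehension.

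We need the image point to satisfy $i\in\hat g(F)\iff p_i\in\upcl(F)$. The forward direction is immediate. For the converse, suppose $\sigma_k\le p_i$. By handiness and strict decrease, some $\sigma_m$ with $m$ large has index $\geq i$, so $i\in g(\sigma_m)$. This is exactly where handiness is used.

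The inverse code $\Psi$ sends the basic set $N_{\{i\}}$ to $N_{p_i}$. It is total on the image, and $\Psi\circ\Phi$ is the identity on $\UF(P)$ by the equivalence $i\in\hat g(F)\iff p_i\in\upcl(F)$. Hence $\Phi$ is injective, continuous and open onto its image.

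\textbf{The $\mathbf{\Pi}^0_2$ code for the image.} Code the image as $B=C\cap\bigcap_n V_n$, where:
\begin{itemize}
\item $C=\bigcap_{i,j}\bigl((\mc P(\N)\setminus(N_{\{i\}}\cap N_{\{j\}}))\cup\bigcup\{N_{\{k\}}:p_k\le p_i,p_j\}\bigr)$ expresses downward directedness;
\item $V_n$ is the union of $N_{g(\sigma(n))}$ over all strictly decreasing finite sequences $\sigma$ of length $n+1$ whose last element is not $\ge p_l$ for any $l\le n$, as in $\phi(n,\sigma)$ in Proposition \ref{1-UF(P)toP(N)}.1.
\end{itemize}
All of these are $\Sigma^0_0$-definable families of basic sets, so the code exists in $\RCAo$.

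\textbf{Main obstacle: surjectivity onto $B$.} Let $x\in B$. We must produce a strictly decreasing sequence $\langle\sigma_k\rangle$ in $P$ with $\Phi(\langle\sigma_k\rangle)=x$, and we cannot use $\upcl$ as a set, by Proposition \ref{existence-upcl}. Since membership $i\in x$ is $\Sigma^0_1$ in the code of $x$, the plan is to build the sequence by primitive recursion. At stage $k$, search for the least pair consisting of
\begin{itemize}
\item a witness $\sigma$ showing $x\in V_{k}$, and
\item using $C$, a common lower bound in $x$ of the previous element and $p_k$, whenever $k\in x$ has appeared.
\end{itemize}
The search terminates because $x\in B$. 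We then need two verifications. First, the resulting sequence is strictly decreasing. Second, it generates exactly $\{p_i:i\in x\}$; this uses upward closure of $\{p_i:i\in x\}$, which follows from the $V_n$ condition, since $\sigma(n)\le p_i\le p_j$ forces $j\in g(\sigma(n))$. Both steps need only $\Sigma^0_1$-induction. Unboundedness then follows from non-principality via handiness.

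I expect this recursion, namely interleaving the $V_n$-witnesses with the $C$-witnesses so that a single decreasing sequence results, to be the delicate point. If it does not go through directly, the fallback is to pass first to a non-principal filter via Corollary \ref{1-NP-typicalization}-style sets.
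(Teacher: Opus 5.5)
You follow the paper's construction closely: handy $P$, $g(p_n)=\{i\le n:p_n\le_P p_i\}$, and the image coded as $B=C\cap\bigcap_n V_n$. In doing so you inherit the one step that does not hold. That step is the claim that $\{p_i:i\in x\}$ is upward closed for $x\in B$ because ``$\sigma(n)\le p_i\le p_j$ forces $j\in g(\sigma(n))$''. Nothing places the witness $\sigma(n)$ for $x\in V_n$ below $p_i$. Condition $C$ only yields some $k\in x$ with $p_k\le_P p_i,\sigma(n)$, and nothing says $g(p_k)\subseteq x$. The paper's proof makes the same unjustified move: ``then $\sigma(n)\le p\le q$''.

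In fact $B$ can be strictly larger than the image. Let $P=\{e,d\}\cup\{c_k,r_k:k\in\N\}$ be the partial order generated by $c_0>c_1>\cdots$, $r_0>r_1>\cdots$, $c_j>r_k$ for $j\le k$, $d>r_k$ for all $k$, and $e>d$. Its unbounded filters are $\{c_k:k\in\N\}$ and $P$, both non-principal. Also $\UF(P)=\bigcap_n\bigcup_{k\ge n}(N_{c_k}\cup N_{r_k})$, so $P$ is handy. Enumerate $e=p_0$, $d=p_1$, $c_k=p_{2k+2}$, $r_k=p_{2k+3}$; then the image of $\hat g$ is $\{\{2,4,6,\ldots\},\N\}$. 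Now take $x=\N\setminus\{0\}$:
\begin{itemize}
\item $x\in C$, since any two elements of $P\setminus\{e\}$ lie above some $r_m$.
\item $x\in V_n$ for every $n$: use $\sigma=\langle c_k,\ldots,c_{k+n}\rangle$ with $2k+2>n$. Everything below $c_{k+n}$ has index $>n$, and $g(c_{k+n})$ consists of positive even numbers.
\end{itemize}
But $d$ belongs to $\{p_i:i\in x\}$ and $e$ does not, so no decreasing sequence generates this set. Your surjectivity step therefore fails for this $x$.

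The repair is cheap and stays in $\RCAo$. Add to the code the $\Sigma^0_0$-definable family of clauses $(\mc P(\N)\setminus N_{\{j\}})\cup N_{\{i\}}$ for all $i,j$ with $p_j\le_P p_i$. This says exactly that $\{p_i:i\in x\}$ is upward closed, and $\hat g(F)$ satisfies it.

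With this clause, the recursion you flagged as delicate becomes routine. At stage $k+1$:
\begin{enumerate}
\item Search through pairs (candidate, stage) against the approximations $x_s$ for a $V_{k+1}$-witness $w$. Its index lies in $x$, since $n\in g(p_n)$.
\item Apply $C$ finitely often to obtain $\tau_{k+1}$ with index in $x$, $\tau_{k+1}\le_P\tau_k,w$, and $\tau_{k+1}\le_P p_i$ for all $i\in x_k$.
\end{enumerate}
You must choose $\tau_{k+1}$ below the witness $w$; your description leaves this open. Then $p_l\not\le_P\tau_{k+1}$ for all $l\le k+1$. This gives unboundedness directly and forces the index of $\tau_{k+1}$ to exceed $k+1$.

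Each $i\in x$ enters some $x_k$, so $\tau_{k'}\le_P p_i$ for all $k'>k$. The index bound then gives $i\in g(\tau_{k'})$ once $k'\ge i$. Conversely, $\tau_k\le_P p_i$ implies $i\in x$ by the new clause, so $\hat g(\langle\tau_k\rangle_k)=x$. Strict decrease at every step is not needed.
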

\begin{proof}
We reason within $\RCAo$.
Let $X$ be a UF space and $P\subseteq\N$ be a handy poset such that $X=\UF(P)$.
Define a mapping $g:P\rightarrow \mc P_{\fin}(\N)$ by $g(p)=\{q\in P:q\leq_{\N} p\land p\leq_P q\}$ with $\Sigma_{0}^{0}$-comprehension, where $\leq_{\N}$ denotes the usual order on $\N$. 

For an unbounded filter $F=\langle q_i:i\in\N\rangle\in\UF(P)$, we define its image under $\hat{g}$ as the sequence of $\mc P_{\fin}(\N)$:
\[
\hat{g}(F):=\langle g(q_i):i\in\N\rangle
\]
This $\hat{g}(F)$ is definable in $\RCAo$. We verify that this $\hat{g}$ satisfies the required conditions:

 \begin{itemize}
  \item $\hat{g}(F)$ \textbf{is a filter} in $\mc P_{\fin}(\N)$: By the definition of $g$, $\hat{g}(F)$ is a decreasing sequence.

\item \textbf{The image of $\hat{g}$ is }$\mathbf{\Pi}_{2}^{0}$: We define a formula $\phi(n,\sigma)$ for $n\in\N$ and $\sigma\in \mathbf{Seq}_P$ stating that the length of $\sigma$ is $n+1$, $\forall i<n(\sigma(i)>_P\sigma(i+1))$ and $\forall p<n+1(\sigma(n)\ngeq_P p)$.
 Let $U_{n}=\bigcup\{N_{g(\sigma(n))}:\phi(n,\sigma)\}$ be a sequence of open sets in $\mc P(\N)$. Furthermore, let $C\subseteq\mc P(\N)$ be the $\mathbf{\Pi}_{2}^{0}$ set consisting of points $x$ that satisfy the filter axioms:
\begin{align*}
C&=\bigcap_{l\notin P}\{x\in\mc P(\N):l\notin x\}\cap&&\bigcap_{p,q\in P}\{x\in\mc P(\N):(p\in x\land q\in x)\\
&&&\to\exists r(r\in x\land r\in P\land r\leq_P p\land r\leq_P q)\}\\
&=\bigcap_{l\notin P}(\mc P(\N)\backslash N_{l})\cap\bigcap_{p,q\in P}&&(\bigcup\{N_r:p,q\geq_P r\}\cup(\mc P(\N)\backslash N_{p,q})).
\end{align*} Let $B=\bigcap_{n\in\N} U_{n}\cap C$. Since $P$ is handy, $\hat{g}(\UF(P))\subseteq B$. Conversely, for any set $x\in B$, the set $F_x=\{p\in P:p\in x\}$ is an unbounded filter in $P$. Indeed, if $p\leq_P q$ and $p\in x$, there are $n$ and $\sigma$ such that $x\in N_{g(\sigma(n))}$, $n\geq p,q$ and $\phi(n,\sigma)$, then $\sigma(n)\leq p\leq q$, $q\in g(\sigma(n))$ so $q\in x$. 
If $p,q\in x$, $p,q\in P$ and there is $r\in x$ such that $p,q\geq_P r$ since $x\in C$. Thus $F_x$ is a filter on $P$. 
The condition $x\in\bigcap_{n} U_n$ ensures that, for each $n$, there is a finite sequence $\sigma$ with $\phi(n,\sigma)$ and $\sigma(n)\in F_x$. Hence, for each $n$, $F_x$ contains an element which is not above any $p<n+1$, and therefore $F_x$ is non-principal (and thus unbounded).

  \item $\hat{g}$ \textbf{is a bijection onto its image}: For any $F\in \UF(P)$, let $\hat{F}=\{p\in P:p\in\bigcup \hat{g}(F)\}$. By the definition of $g$, $p\in\bigcup\hat{g}(F)\Leftrightarrow p\in F$, hence $F=\hat{F}$. This shows that $\hat{g}$ is injective and its image is exactly $B$.
  \item $\hat{g}$ \textbf{is a homeomorphism}: $\hat{g}$ is continuous because the pre-image of a basic open set $N_q\subseteq\mc P(\N)$ (where $q\in\mc P_{\fin}(\N)$) is given by $\hat{g}^{-1}(N_{q})=\{F\in \UF(P):\forall p\in q(p\in F)\}$ which is an open set in $\UF(P)$. Moreover, $\hat{g}$ is an open mapping since $\hat{g}$ is induced by $g$.
\end{itemize}
\end{proof}

\begin{prop}\label{P(N)-to-UF(P)}
Within $\PCAo$, every $\mathbf{\Pi}_{2}^{0}$-subspace of $\mc P(\N)$ is homeomorphic to a UF space over a handy poset.
\end{prop}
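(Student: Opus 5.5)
I will formalize the construction of Proposition \ref{1-UF(P)toP(N)}.2 inside $\PCAo$. Let $X=\bigcap_{n\in\N}(B_n\cup A_n)$ be given by a $\mathbf{\Pi}_2^0$ code, with $B_n$ open and $A_n$ closed in $\mc P(\N)$. Define
\[
P^{\prime}=\{p_{n,q}:n\in\N,\ q\in\mc P_{\fin}(\N),\ q\in\textstyle\bigcap_{i\le n}(B_i\cup A_i)\},
\]
where the finite set $q$ is identified with its filter in $\mc P(\N)$. The order is $p_{n,q}<p_{m,r}$ iff $n>m\land q\supseteq r$. Membership $q\in B_i$ is $\Sigma^0_1$ and $q\in A_i$ is $\Pi^0_1$, so $P^{\prime}$ exists by arithmetical comprehension, which is available in $\PCAo$.

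The next step is to obtain $P$, the largest subposet of $P^{\prime}$ in which every element has a strictly smaller element. This corresponds to the transfinite pruning in Proposition \ref{1-typicalization}.3. I would avoid the ordinal iteration by a $\Pi^1_1$ definition:
\[
p\in P\iff \exists\,\text{infinite strictly decreasing sequence }\langle s_k\rangle\text{ in }P^{\prime}\text{ with }s_0=p.
\]
This is $\Sigma^1_1$, so $P$ exists by $\Pi^1_1$-comprehension, applied to the complement. I must check that $P$ is pruned. If $p=s_0\in P$, then $s_1<p$ and $s_1\in P$ witnessed by the tail. I must also check that $P$ is handy in the formal sense. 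An unbounded filter on a pruned poset is non-principal. Conversely, a non-principal filter is unbounded because descending in $P$ forces the index $n$ to increase. The $G_\delta$ requirement holds with $U_n=\bigcup\{N_{p_{m,q}}:m\ge n\}$.

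Then I will define the maps as codes of continuous functions. Take $f:\UF(P)\to X$, sending a decreasing sequence $\langle p_{n_k,q_k}\rangle$ to $\langle q_k\rangle$ after rearranging into the canonical form $q_i\subseteq\{0,\dots,i\}$. Take $g:X\to\UF(P)$, sending $G=\langle r_i\rangle$ to a decreasing sequence $p_{N_k,s_k}$ with $s_k\subseteq G$. The well-definedness, inverse, and continuity arguments are those of Proposition \ref{1-UF(P)toP(N)}.2, repeated with decreasing sequences in place of filters as sets, and they use only arithmetical reasoning.

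The main obstacle is that $g(G)$ must be produced as an actual decreasing sequence in $P$, not merely in $P^{\prime}$. Concretely, given $G\in X$ and $N$, we need finite $s\subseteq G$ with $p_{N,s}\in P^{\prime}$ and $s$ extending previous choices. Such an $s$ is found by searching, using that $G\in B_i$ is witnessed by a finite subset and that $A_i$ is closed. Here I would choose $s\subseteq G$ so that for each $i\le N$ either $s\in B_i$ or $G\in A_i$; since $A_i$ is closed, every finite $s\subseteq G$ then lies in $A_i$ whenever $G\in A_i$. Deciding which case holds requires $\ACAo$, since $G\in B_i$ is $\Sigma^0_1$ relative to $G$. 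Membership in $P$ then follows, because the whole construction continues forever and witnesses a strictly decreasing sequence below each chosen element. Uniformity of the code for $g$ also needs care: the code must not depend on case decisions about $G$. I would handle this by letting $g$ map basic sets $N_s$, for finite $s$, to $N_{p_{n,s}}$ whenever $p_{n,s}\in P$, so that the code is simply $\{(0,p_{n,s},s):p_{n,s}\in P\}$, which exists since $P$ exists. Totality is then checked using the argument above, and this is where $\Pi^1_1$-comprehension, through the existence of $P$, is genuinely used.
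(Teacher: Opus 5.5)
Your proposal is correct and follows the paper's own proof. You use the same preliminary poset $P'$ and the same $\Sigma^1_1$-defined subposet $P$ of elements that start an infinite descending chain, obtained in $\PCAo$. Your maps $f,g$ are those of Proposition~\ref{1-UF(P)toP(N)}.2. Beyond the paper, you build $g(G)$ as an actual decreasing sequence in $P$ and give the uniform code $\{(0,p_{n,s},s):p_{n,s}\in P\}$. You also note that $P'$ itself needs arithmetical comprehension. These are details the paper leaves as ``straightforward''.
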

\begin{proof}
We reason within $\PCAo$.
This proof is a refinement of the previous proposition to ensure the overtness of the poset.
Let $X\subseteq\mc P(\N)$ be a non-empty $\mathbf{\Pi}_2^0$-subspace $\bigcap_{n\in\N}(A_n\cup B_n)$ of $\mc P(\N)$ where each $A_n$ is a closed set and $B_n$ is an open set. We first define a preliminary poset $P^{\prime}$ by: 
\[
P^{\prime}:=\{(n,q)\in\N\times\mc P_{\fin}(\N):q\in \bigcap_{i=0}^{n}A_i\cup B_i\}.
\]
The order on $P^{\prime}$ is defined as $(n,q)<_{P^{\prime}}(m,r)\Leftrightarrow n>m\land q\supseteq r$ which is available via $\Sigma_{0}^{0}$-comprehension. By $\PCAo$, we can define a subposet $P\subseteq P^{\prime}$ consisting of elements from which an infinite descending path originates. The defining condition is $\Sigma^1_1$, and hence the set exists in $\PCAo$:
\begin{align*}
P=\{(n,q)\in P^{\prime}:&\exists h:\N\rightarrow\mc P_{\fin}(\N)\\
&(\forall m\in\N (h(m)\in\bigcap_{i=0}^{m}A_i\cup B_i\land h(m)\subseteq h(m+1))\land h(n)=q)\}.
\end{align*}
By construction, $P$ is a handy poset, as every element in $P$ has a successor in $P$ by the definition of infinite paths.
We define the following mappings to establish the homeomorphism:
\begin{itemize}
\item $f:\UF(P)\rightarrow X$ is defined by $f(F)=\{i\in\N:\exists (n,q)\in F (i\in q)\}=\{i\in\N:p_{0,\{i\}}\in F\}$. $\Sigma_{0}^{0}$-comprehension in $\RCAo$ suffices to ensure $f$ is a well-defined continuous map.
\item $g:X\rightarrow \UF(P)$ is defined by $g(G)=\upcl\{(n,q)\in P:q\subseteq G\}$. For any $G\in X$, since $G\in\bigcap_{n\in\N}(A_n\cup B_n)$ for all $n$, the collection of its finite approximations forms an unbounded descending sequence in $P$.
\end{itemize}
It is straightforward to verify that $f$ and $g$ are continuous and are inverses of each other. This completes the proof.
\end{proof}

Working with the intersection of NP and UF spaces allows this proposition to be proved even within the framework of $\RCAo$.

\begin{prop}
Within $\RCAo$, every $\mathbf{\Pi}_{2}^{0}$-subspace of $\mc P(\N)$ is homeomorphic to $\NP\UF(P):=\NP(P)\cap\UF(P)$, for some countable poset $P$.
\end{prop}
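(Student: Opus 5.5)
We reuse the poset from the proof of Proposition \ref{P(N)-to-UF(P)} but skip the $\Pi^1_1$-comprehension step that prunes it. Let $X=\bigcap_{n\in\N}(A_n\cup B_n)\subseteq\mc P(\N)$ be a nonempty $\mathbf{\Pi}^0_2$-subspace. Here $A_n$ is closed, coded by basic sets $N_{w_{n,k}}$ whose union is its complement, and $B_n=\bigcup_k N_{v_{n,k}}$. The membership ``$q\in A_i\cup B_i$'' for finite $q$ is not decidable, so we build the finite stages into the poset. Let $P$ consist of triples $(n,s,q)$ with $q\in\mc P_{\fin}(\N)$ and $s\ge n$ a stage such that for every $i\le n$ one of two things holds. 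Either $q\in N_{v_{i,k}}$ for some $k<s$ (the $B_i$ witness is found), or $q\notin N_{w_{i,k}}$ for all $k<s$ and $q\cap N_{w_{i,k}}$-style conflicts do not appear before stage $s$ (the $A_i$ guess is not yet refuted). The order is $(n,s,q)<(m,t,r)$ iff $n>m$, $s>t$, $q\supseteq r$, and each $B_i$-commitment made at $(m,t,r)$ is kept. This is $\Delta^0_1$, so $P$ exists in $\RCAo$. Since the first coordinate strictly decreases along $<$, every non-principal filter contains elements of unboundedly large first coordinate and is therefore unbounded. Hence $\NP\UF(P)=\NP(P)$, and we must still intersect with $\UF$ only to discard degenerate principal filters, exactly as in Proposition \ref{NPtoNPUF}.

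The maps are the same as in Proposition \ref{P(N)-to-UF(P)}. For a filter $F=\langle (n_j,s_j,q_j):j\rangle\in\NP\UF(P)$ put $f(F)=\bigcup_j q_j$, given as the increasing sequence of the $q_j$ trimmed to $\{0,\dots,j\}$ as in the remark on points of $\mc P(\N)$. For $G\in X$ given as $\langle q_i\rangle$, let $g(G)$ be the decreasing sequence $(j,s_j,q'_j)$, where $q'_j\subseteq G$ is a finite approximation chosen by a $\Delta^0_1$ search. The search goes as follows. For each $i\le j$, look simultaneously for a $B_i$-witness $N_{v_{i,k}}\ni G$ and keep the $A_i$ option open. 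Because $G\in A_i\cup B_i$, any finite $q\subseteq G$ never meets a $w_{i,k}$ refuting $A_i$ unless $G\in B_i$, in which case a $B_i$-witness is eventually found. So by increasing $s$ the search always succeeds at each level. Continuity of $f,g$ and the identities $f\circ g=\mathrm{id}$ and $g\circ f=\mathrm{id}$ (as filters, via the equality of decreasing sequences) are checked exactly as in Proposition \ref{1-UF(P)toP(N)}.2.

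The main obstacle is well-definedness of $f$: showing $f(F)\in X$ for a non-principal filter $F$. We must rule out a filter that keeps the $A_i$ option alive at every finite stage while $f(F)\notin A_i\cup B_i$. The argument is as follows. If $f(F)\notin A_i$, some $w_{i,k}$ is contained in some $q_j$. Every element of $F$ below that one has stage $>k$ and therefore cannot be using the $A_i$ option. So it must carry a $B_i$-commitment $q\in N_{v_{i,k'}}$, and then $f(F)\in B_i$. Non-principality is what forces the stages to go to infinity along $F$. If the stage coordinate alone does not enforce this, we will add the requirement that $<$ increase $s$ strictly, which is already built into the order above. This only needs $\Sigma^0_1$-induction, so the whole argument stays in $\RCAo$.
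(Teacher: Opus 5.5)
Your proposal is correct and is essentially the paper's proof. Both use a poset of triples (level, stage, finite set) cut out by the bounded condition that an $A_i$-refutation found below the stage forces a $B_i$-witness below the stage, with $f(F)=\bigcup q$, $g(G)$ built from finite approximations of $G$, and $f(F)\in X$ obtained by letting non-principality push the indices to infinity. Your choice to make the level coordinate strictly increase along $<$ matters: the paper's order only asks $i\geq j$ there, and the strict increase is what guarantees that a non-principal filter eventually checks every constraint, so keep it. When writing out $g$, state explicitly that $q'_j$ contains $q'_{j-1}$ and the $j$-th approximation of $G$, and that $s_j>s_{j-1}$, so that $g(G)$ is decreasing and $f(g(G))=G$.
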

\begin{proof}
We reason within $\RCAo$.
Let $X\subseteq\mc P(\N)$ be a nonempty $\mathbf{\Pi}_2^0$-subspace defined by $X=\bigcap_{n\in\N}(A_n\cup B_n)$, where each $A_n$ is a closed set coded as $(0,1,\langle q_{n,k}:k\in\N\rangle)$ and $B_n$ is an open set coded as $(1,1,\langle  r_{n,h}:h\in\N\rangle)$ for all $n\in\N$. Define a poset $P^{\prime}$ as follows:
\[
P^{\prime}=\{(i,l,q)\subseteq\N\times\N\times\mc P_{\fin}(\N):\forall(n,k)<i\,\exists h<l(q_{n,k}\subseteq q\rightarrow r_{n,h}\subseteq q)\}.
\] The order on $P^{\prime}$ is defined by $(i,l,q)<_{P^{\prime}}(j,k,r)\Leftrightarrow l>k\land q\supset r\land i\geq j$. This set $P^{\prime}$ exists by $\Sigma_{0}^{0}$-comprehension. We define two mappings to establish the homeomorphism:
\begin{enumerate}
\item $f:\NP\UF(P^{\prime})\rightarrow X$ is defined by $f(F)=\langle \{j\in\N:\exists (i,l,q)\,\exists k<m (p^{\prime}_k=(i,l,q)\land j\in q)\}:m\in\N\rangle$ for all $F=\langle p^{\prime}_m:m\in\N\rangle$. In the set form, 
\[
f(F)=\{j\in\N:\exists (i,l,q)\in F (j\in q)\}=\{i\in\N:(0,0,\{j\})\in F\}.
\]
For any filter $F\in\NP\UF(P)$, the non-principality ensures that the second components $l$ of elements $(i,l,q)$ are arbitrarily large. Thus, the union $\bigcup q$ represents a unique element $G\in\mc P(\N)$ that satisfies $G\in\bigcap_{n\in\N}(A_n\cup B_n)=X$.
\item $g:X\rightarrow \NP\UF(P^{\prime})$ is defined by $g(G)=\langle(i,l_i,q_i)\in P^{\prime}:i\in\N\rangle$ such that $(\forall m<i \exists m^{\prime}(r_m\subseteq q_i\subseteq r_{m^{\prime}}))$ for all $G=\langle r_m:m\in\N\rangle$.
Since $G$ is in $X$, there is a $m^{\prime},l_i,q_i$ such that $(i,l_i,q_i)\in P^{\prime}$ and $q_i\subseteq r_{m^{\prime}}$. 
In the set form, 
\[
g(G)=\{(i,l_i,q_i)\in P^{\prime}:G\cap\{0,\cdots, i\} \subseteq q_i\subseteq G\}.
\]
 This ensures that $g(G)$ is both a non-principal and an unbounded filter.
\end{enumerate}
The existence of $f$ and $g$ as continuous codes is guaranteed by $\RCAo$, as $g(G)$ is  a descending sequence generated by $G$. Thus, $X\cong \NP\UF(P^{\prime})$.
\end{proof}

Since a sequence $(q_i)_{i\in\N}$ belongs to $\NP(P)\cap\UF(P)$ if and only if $\forall n\exists k(p_n\not\leq_P q_k)$, the space $\NP(P)\cap\UF(P)$ can be naturally embedded as a $\mathbf{\Pi}_2^0$ subspace of $\mc P(\N)$.

By taking a pruned subposet of $P$, we can show that $\NP\UF(P)$ is itself a UF space.  
\begin{prop}\label{NPUFtoUF}
For every countable poset $P$, the subspace $\NP\UF(P):=\NP(P)\cap\UF(P)$ is homeomorphic to a UF space over a subposet $P^{\prime}\subseteq P$ within $\PCAo$.
\end{prop}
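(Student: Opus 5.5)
The plan is to take for $P'$ the largest pruned subposet of $P$. This is the set of $p\in P$ from which an infinite strictly descending sequence in $P$ starts:
\[
P'=\{p\in P:\exists h:\N\to P\,(h(0)=p\land\forall m\,(h(m+1)<_P h(m)))\}.
\]
This is the second-order analogue of the transfinite iteration in Proposition~\ref{1-typicalization}.3, and it replaces that iteration by a single set. Its definition is $\Sigma^1_1$, so $P'$ exists by $\PCAo$. This is the only place where $\PCAo$ is used, as in Proposition~\ref{P(N)-to-UF(P)}. $P'$ is pruned: if $h$ witnesses $p\in P'$, then $m\mapsto h(m+1)$ witnesses $h(1)\in P'$ and $h(1)<_P p$. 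Hence $\UF(P')\subseteq\NP(P')$.

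Next I define the homeomorphism. Let $F=\langle q_i:i\in\N\rangle\in\NP\UF(P)$. Using non-principality, I first pass (in $\RCAo$, by searching the sequence) to a strictly decreasing cofinal subsequence $\langle q_{i_k}\rangle$ generating the same filter. Each $q_{i_k}$ begins an infinite strictly descending sequence, so each lies in $P'$. Thus $\Phi(F):=\langle q_{i_k}:k\in\N\rangle$ is a decreasing sequence in $P'$, and it generates a filter on $P'$: the common lower bounds exist already inside the sequence.

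I then check that $\Phi(F)$ is unbounded in $P'$. Suppose $r\in P'$ satisfies $r<_{P'}q_{i_k}$ for all $k$. Since $<_{P'}$ is the restriction of $<_P$, we get $r<_P q_i$ for all $i$ (using cofinality), which contradicts unboundedness of $F$ in $P$.

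For the inverse, let $G\in\UF(P')$ and set $\Psi(G):=G$, viewed as a sequence in $P$.
\begin{itemize}
\item It is a decreasing sequence in $P$, and its upward closure in $P$ is a filter, because lower bounds found in $P'$ are also lower bounds in $P$.
\item It is non-principal: $G$ is non-principal in $P'$ since $P'$ is pruned, and strict descent in $P'$ is strict descent in $P$.
\item It is unbounded in $P$. Suppose some $r\in P$ lies strictly below every element of $G$, where $G=\langle g_j\rangle$ is chosen strictly decreasing. I would like to conclude $r\in P'$, which contradicts unboundedness in $P'$.
\end{itemize}

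The last point is the main obstacle. A lower bound $r$ of a strictly descending sequence need not itself begin a descending chain. My first attempt is to not require $r\in P'$ at all. Instead I work with $P''=\{(p,n)\}$ as in Proposition~\ref{NPtoNPUF}.2, where the index $n$ forces unboundedness via ``not above any $q\le n$''. I would prune this modified poset instead; the statement only asks for a subposet, so I may first replace $P$ by an isomorphic poset on which being in $\NP$ already forces unboundedness.

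Concretely, the plan is to first apply Proposition~\ref{NPtoNPUF}.2 to obtain $P''$ with $\NP\UF(P)\cong\NP(P'')$, and then prune $P''$ using $\PCAo$. In $P''$ any non-principal filter automatically escapes every fixed element, because indices grow. So a filter on the pruned poset that is unbounded there is non-principal, and it is unbounded in $P''$ by the escaping-indices argument, not by membership of a lower bound. If the literal requirement $P'\subseteq P$ matters, I would note that the elements of $P''$ are tagged copies of elements of $P$ and identify them accordingly. Finally, continuity of $\Phi$ and $\Psi$ is immediate: both are induced by the identity on basic opens, $N_p\mapsto N_p$ for $p\in P'$, with $N_p=\emptyset$ in $\NP\UF(P)$ for $p\notin P'$. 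The code sets for these maps are $\Delta^0_1$ in $P'$.
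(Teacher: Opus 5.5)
Your first attempt is exactly the paper's proof. The paper takes $P'\subseteq P$ to be the set of elements from which an infinite strictly descending sequence starts, gets it by $\Sigma^1_1$-comprehension, and then asserts without further argument that $\UF(P')$ represents $\NP\UF(P)$. The obstacle you isolate is genuine, and for this $P'$ it is fatal, not just technical. Let $P=\{a_n:n\in\N\}\cup\{r\}$ with $a_0>_P a_1>_P\cdots$ and $r<_P a_n$ for every $n$. Then $P'=\{a_n:n\in\N\}$, and $\UF(P')$ is a single point: the filter generated by all the $a_n$, which has no strict lower bound inside $P'$. On the other hand $\NP\UF(P)=\emptyset$, because the only non-principal filter on $P$ is bounded by $r$. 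So neither your naive version nor the paper's argument proves the proposition with that choice of $P'$, and you were right to abandon it.

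Your repair is sound, but it proves something weaker than the statement. Take $P''$ from Proposition~\ref{NPtoNPUF}.2 and let $Q\subseteq P''$ be its pruned part, again obtained by $\Sigma^1_1$-comprehension.
\begin{itemize}
\item Every filter in $\NP(P'')$ is generated by a strictly descending sequence lying in $Q$. Its trace on $Q$ is unbounded in $Q$, because strict descent in $P''$ forces the index to increase.
\item Conversely, every element of $\UF(Q)$ is non-principal, since $Q$ is pruned. Non-principality is all that $\NP(P'')$ requires, so no lower bound outside $Q$ can interfere.
\item The same index argument shows that $Q$ is handy in the paper's sense.
\end{itemize}
Hence you obtain $\NP\UF(P)\cong\UF(Q)$ for a handy poset $Q$. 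That is all the two corollaries citing this proposition need.

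What you do not obtain is the clause ``$P'\subseteq P$'', because $Q$ is a subposet of $P''$, not of $P$. Your remark about identifying tagged copies does not close this gap. Distinct tags $(n,p)$ and $(m,p)$ are distinct points of $P''$, and the order on $P''$ is not the restriction of $<_P$, since it adds the index condition. Collapsing tags therefore changes the poset and hence the space. Likewise $P''$ is not isomorphic to $P$; only $\NP(P'')\cong\NP\UF(P)$ holds.

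As it stands, you should either state and prove the result for an arbitrary (handy) poset, or give a genuinely different construction that lands inside $P$ itself.
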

\begin{proof}
We reason within $\PCAo$.
Let $P^{\prime}\subseteq P$ be the subposet defined by: 
\[
p\in P^{\prime}\leftrightarrow(\exists f:\N\rightarrow P)(\forall n\in\N(f(n)>_Pf(n+1))\land f(0)=p)
\] 
Since the existence of an infinite descending sequence is a $\Sigma_1^1(P)$ condition, the set $P^{\prime}$ exists by $\Sigma^1_1$-comprehension and has the required property. By construction, every element in $P^{\prime}$ is the starting point of at least one non-principal (and thus unbounded) filter. Therefore, the poset space over $P^{\prime}$ represents the intended intersection of non-principal and unbounded filters, satisfying the proposition.

\end{proof}

\begin{cor}
Within $\PCAo$, for every countable poset $P$, the non-principal space $\NP(P)$ is homeomorphic to a UF space over some handy poset $Q$.
\end{cor}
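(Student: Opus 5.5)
The plan is to compose two transitions that are already available. We use Proposition~\ref{NPtoNPUF} (provable in $\RCAo$) to replace $\NP(P)$ by an intersection space $\NP\UF$. We then use Proposition~\ref{NPUFtoUF} (provable in $\PCAo$) to turn that intersection into a genuine UF space. Finally, Proposition~\ref{typicalization} (provable in $\RCAo$) makes the underlying poset handy. Homeomorphisms compose, so the result follows, provided we check that composing codes of continuous maps is available in $\RCAo$.

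The steps, reasoning in $\PCAo$ throughout, are these. First, given a countable poset $P$, apply Proposition~\ref{NPtoNPUF}.1 to obtain $P'$ with $\NP(P)\cong\NP(P')\cap\UF(P')$. The order on $P'$ forces the natural-number index to increase along strictly decreasing chains. Second, apply Proposition~\ref{NPUFtoUF} to $P'$. This gives, via $\Sigma^1_1$-comprehension, the subposet $P''\subseteq P'$ of elements starting an infinite strictly descending sequence, with $\NP\UF(P')\cong\UF(P'')$. Third, apply Proposition~\ref{typicalization}.1 to $P''$ to get a handy $Q$ with $\UF(P'')\cong\UF(Q)$. Fourth, compose the three codes. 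Given codes $\Phi\subseteq\N\times W\times Z$ and $\Psi$, the composite code is $\{(\langle n,m\rangle,w,z):\exists v\,((n,w,v)\in\Psi\wedge (m,v,z)\in\Phi)\}$ up to the obvious bookkeeping. Since the existential over $v$ can be folded into the first coordinate, this set is $\Delta^0_1$-definable and exists in $\RCAo$. Totality and the mutual-inverse property transfer directly.

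I expect the main obstacle to be the second step, where we must check that $\UF(P'')$ really coincides with $\NP\UF(P')$ when filters are represented as decreasing sequences. The concern is that a filter in $\NP\UF(P')$ is a decreasing sequence in $P'$ that need not lie in $P''$. I would first replace it by an equivalent strictly decreasing subsequence, which exists by non-principality using $\Sigma^0_1$-search. Every term of that subsequence is the head of an infinite strictly descending sequence, namely the tail itself, so it belongs to $P''$. For the converse direction, an unbounded filter of $P''$ generates a filter of $P'$ that is non-principal because $P''$ is pruned. It is unbounded in $P'$ because, by the index condition built into $P'$, any strict lower bound in $P'$ would have index larger than every index in the filter, and those indices are unbounded. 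A rank argument is therefore unnecessary, since the index bookkeeping in $P'$ replaces the transfinite construction of Proposition~\ref{1-typicalization}.3.
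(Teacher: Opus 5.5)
Your proposal is correct and follows the paper's route: the paper simply composes Proposition~\ref{NPtoNPUF}.1 with Proposition~\ref{NPUFtoUF}, and your extra appeal to Proposition~\ref{typicalization}.1 for handiness and your explicit composition of codes only fill in details the paper leaves implicit. Your direct check of the second step is worth keeping. It is the index condition built into $P'$, not pruning alone, that keeps unbounded filters of the pruned subposet unbounded in $P'$, and the paper's proof of Proposition~\ref{NPUFtoUF} does not address this point for general posets.
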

\begin{proof}
The result follows directly from Proposition \ref{NPtoNPUF} and Proposition \ref{NPUFtoUF}. 
\end{proof}

Conversely, we shall now show that the existence of such homeomorphisms (or the properties discussed above) implies $\PCAo$ over $\RCAo$. 
The axiom system $\PCAo$ is employed to determine whether a basic open set is empty. This highlights the necessity of the overtness assumption, as discussed in \cite{BKS2024}.

\begin{prop}\label{equi-overt-PCAo}
The following statements are equivalent over $\RCAo$:
\begin{enumerate}
\item $\PCAo$.
\item Every $\mathbf{\Pi}_{2}^{0}$-subspace $Y$ of  $\mc P(\N)$ is homeomorphic to a UF space. Specifically, there exists a countable poset $P$ and codes for total continuous functions $f:Y\rightarrow \UF(P)$, $g:\UF(P)\rightarrow Y$ such that $f(g(F))=F$ and $g(f(y))=y$ for all $F\in \UF(P),y\in Y$. 
\end{enumerate}
\end{prop}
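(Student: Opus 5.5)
The direction (1)$\Rightarrow$(2) is Proposition \ref{P(N)-to-UF(P)}: within $\PCAo$ the poset $P$ built there, together with the maps $f,g$, are codes for mutually inverse total continuous functions. So the work is in (2)$\Rightarrow$(1). We reason in $\RCAo$ plus (2). We use the standard fact that $\PCAo$ is equivalent over $\RCAo$ to the statement: for every sequence of trees $\langle T_k:k\in\N\rangle$ with $T_k\subseteq\N^{<\N}$, the set $\{k: T_k \text{ has an infinite path}\}$ exists. The plan is to encode ``$T_k$ has a path'' as ``a certain basic open set of a $\mathbf{\Pi}^0_2$ subspace is nonempty''. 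We then read this off from the poset $P$ supplied by (2), using that in a UF space a basic set $N_p$ is nonempty exactly when $p$ can be followed by an unbounded filter. This is the overtness phenomenon mentioned before the statement.

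\textbf{Construction.} Fix a recursive coding of the pairs $(k,\sigma)$, with $\sigma\in\N^{<\N}$, as natural numbers. Let $Y\subseteq\mc P(\N)$ consist of those $x$ such that:
\begin{itemize}
\item[(a)] $x$ is a chain: for any two codes in $x$, one extends the other and both have the same first coordinate $k$, and $x$ is closed under initial segments;
\item[(b)] for every $n$, either $x=\emptyset$ or $x$ contains some $(k,\sigma)$ with $\lh(\sigma)\ge n$ and $\sigma\in T_k$.
\end{itemize}
Condition (a) is an intersection of clopen-type conditions of the form (closed $\cup$ open), so it is $\mathbf{\Pi}^0_2$. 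Condition (b) is a countable intersection of open sets, once we handle the empty point by unioning with the closed set $\{\emptyset\}$ in each conjunct. So $Y$ is a $\mathbf{\Pi}^0_2$ subspace, and its code is definable in $\RCAo$.

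The nonempty points of $Y$ correspond exactly to infinite paths through some $T_k$. Hence $N_{\{(k,\langle\rangle)\}}\cap Y\neq\emptyset$ if and only if $T_k$ has an infinite path.

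\textbf{Reading off the set.} Apply (2) to obtain $P$, $f\colon Y\to\UF(P)$ and $g\colon\UF(P)\to Y$. The preimage $g^{-1}(N_{\{(k,\langle\rangle)\}})$ is given by the code of $g$ as a c.e.\ union of basic sets $N_p$, namely over those $p$ with $(n,(k,\langle\rangle),p)\in\Phi_g$. So $T_k$ has a path iff some such $p$ lies in some unbounded filter. Both this condition and its negation must now be made $\Delta^0_1$ relative to the data; this is the main obstacle.

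Nonemptiness of $N_p$ in $\UF(P)$ is itself $\Sigma^1_1$ in general. To get around this, use $f$ in the other direction. For each $p$, the totality of $f$ and $g$ and the identity $g(f(y))=y$ should force the following: every $p\in P$ that enumerates into $g$'s preimage of a basic set with $(k,\langle\rangle)$ actually yields a point. The plan is to avoid this issue by first replacing $P$ with a poset in which every element lies in an unbounded filter.

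My first attempt is the following. Using $f$, define $P^\ast=\{p\in P:\exists$ a basic $q$ with $(n,p,q)\in\Phi_f$ for some $n\}$. This set is $\Sigma^0_1$ and hence not obviously a set, so instead I would look for a trick where $Y$ is modified so that each $T_k$ contributes a separate clopen piece. The idea is that homeomorphism preserves emptiness of pieces $Y_k$, and the code of $f$ restricted to the piece $Y_k$ must send every point of $Y_k$ somewhere. Then $\{k:Y_k\neq\emptyset\}$ becomes $\Sigma^0_1$ via $g$, since some $p$ maps into $N_{(k,\langle\rangle)}$ and $p$ is realized. Its complement should become $\Sigma^0_1$ via the handy structure, since in a UF space an element $p$ with no unbounded filter through it is detected finitely by pruning.

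If this two-sided $\Sigma^0_1$ description cannot be secured directly, the fallback is to derive $\ACAo$ first. For this, embed the range of an injection into $Y$ as isolated points, as in Proposition \ref{existence-upcl}. Then, under $\ACAo$, $\{p:N_p\neq\emptyset\}$ is at least arithmetical in the pruned part, and we obtain $\Sigma^1_1$ separation of the desired set by arithmetical comprehension applied to the transferred description. Either way, the crux is to show that the poset given by (2) determines nonemptiness of basic sets effectively, which is precisely the overtness content of the statement.
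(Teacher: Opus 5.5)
There is a genuine gap, and it is exactly at the step you flag as the crux. Your setup is sound and parallels the paper's, which uses sets $D_\sigma$, $E_j$ in place of your chain conditions. You get $Y\cap N_{\{(k,\langle\rangle)\}}\neq\emptyset$ iff $T_k$ has a path, and you correctly pull back along the code of $g$. But you then assert that nonemptiness of $N_p$ in $\UF(P)$ is $\Sigma^1_1$ ``in general'' and never settle it. The $P^\ast$ attempt is abandoned. The claim that a $p$ with no unbounded filter through it is ``detected finitely by pruning'' is unjustified, and so is the fallback claim that $\{p:N_p\neq\emptyset\}$ is ``arithmetical in the pruned part''. The $\ACAo$ step (``embed the range as isolated points'') is not carried out and would face the same question. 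As written, (2)$\Rightarrow$(1) is not proved.

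The missing fact is that every basic set $N_p$ ($p\in P$) of a UF space is nonempty. The paper relies on this when it reads $X$ off the code of the homeomorphism by arithmetical comprehension, and it is even provable in $\RCAo$. Enumerate $P$ as $\langle q_n:n\in\N\rangle$ and set $c_0=p$. Put $c_{n+1}=q_n$ if $q_n<_P c_n$, and $c_{n+1}=c_n$ otherwise. Then $\langle c_n\rangle$ is a decreasing sequence whose upward closure contains $p$. It is unbounded: a strict lower bound $r=q_m$ of all $c_n$ would have been chosen at stage $m$, giving $r<_P c_{m+1}=r$. Consequently, $T_k$ has a path iff $\exists n\,\exists p\,((n,\{(k,\langle\rangle)\},p)\in\Phi_g)$; use $g(f(y))=y$ for one direction and totality of $g$ for the other. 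This condition is $\Sigma^0_1$ in $\Phi_g$, so you need no description of the complement, only $\ACAo$ to form the set. $\ACAo$ itself follows from (2) by the same device. For an injection $\phi$, take $T_k=\{\sigma:\forall i<\lh(\sigma)\,\phi(i)\neq k\}$, which has a path iff $k\notin\mathsf{rng}(\phi)$. Then $\N\setminus\mathsf{rng}(\phi)$ is $\Sigma^0_1$ via $\Phi_g$ and $\Pi^0_1$ by definition, so it exists by $\Delta^0_1$-comprehension. The paper reaches $\ACAo$ by a separate argument, but its final $\PCAo$ step is exactly this $\Sigma^0_1$ reading of the code.
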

\begin{proof}
While the implication $(1)\Rightarrow(2)$ has already been established, we show the converse $(2)\Rightarrow(1)$.

We first show that $(2)\Rightarrow(1)$ implies $\ACAo$ over $\RCAo$.
Let $\phi:\N\to\N$ be a given injective function. To prove $\ACAo$, it is sufficient to show that the range of $\phi$, $\mathsf{rng}(\phi)$, exists as a set. We define a poset $P$ as follows:
\[
(n,m)\in P\Leftrightarrow \forall i<n(\phi(i)\neq m)
\]
equipped with the ordering $(n,m)>_P(n^{\prime},m^{\prime})\Leftrightarrow n=n^{\prime}\land m<m^{\prime}$. By Proposition \ref{NPtoNPUF}, there exists a countable poset $P^{\prime}$ such that $\NP(P)\cong\NP(P^{\prime})\cap\UF(P^{\prime})$. Since $\NP(P^{\prime})\cap\UF(P^{\prime})$ can be embedded as a $\mathbf{\Pi}_2^0$ subset of $\mc P(\N)$, it follows that $\NP(P)$ is homeomorphic to a $\mathbf{\Pi}_2^0$ subspace of $\mc P(\N)$.  Assuming $(2)$, there exists a countable handy poset $Q$ and a homeomorphism $\psi:\NP(P)\cong\UF(Q)$. Let $\Psi$ be the code for the continuous function $\psi$. We can then define the set $X$ using the information provided by the homeomorphism: 
\[
X:=\{i\in\N:\exists k\exists n\exists q\in Q( (k,q,(n,i))\in\Psi)\}=\{i\in\N:\forall m(\phi(m)\neq i)\}.
\]
Since $X=\N\setminus\mathsf{rng}(\phi)$ is defined via a formula available in this context, $\Delta_1^0$-comprehension ensures the existence of $\mathsf{rng}(\phi)$. Thus, the equivalence $(2)\Rightarrow(1)$ implies $\ACAo$.
To show $\PCAo$ over $\ACAo$, it suffices to show that for every sequence of trees $\langle T_n:n\in\N\rangle$ where each $T_n\in \N^{<\N}$, the set $X:=\{n\in\N:T_n\text{ has an infinite path}\}$ exists, assuming statement $(2)$. 
Given $\langle T_n:n\in\N\rangle$, let $T$ be a tree defined by $\forall n(\sigma\in T_n\leftrightarrow n\text{\textasciicircum}\sigma\in T)$ via $\Sigma_0^0$-comprehension. We construct a $\mathbf{\Pi}_2^0$-subspace $Y$ of $\mc P(\N)$ that encodes the paths of $T$. Let $N_{(i,j)}$ denote the basic open set in $\mc P(\N)$ coded by $\{(0,(i,j))\}=\{(0,k):k=(i,j)\}$, and let $M_{(i,j)}$ be its complement. We define closed sets $D_{\sigma}$ and open sets $E_j$ as follows:
\[
D_{\sigma}=
\begin{cases}
\mc P(\N)&\sigma\in T\\
\bigcup\{M_{(i,j)}:j<\lh(\sigma)\land\sigma(j)=i\}&\sigma\notin T\\
\end{cases}
\]
for each $\sigma\in\N^{<\N}$.
\[
E_j=\bigcup_{i\in\N}N_{(i,j)}
\]
for each $j\in\N$. Let $Y:=\bigcap_{\sigma\in\N^{<\N}}D_{\sigma}\cap\bigcap_{j\in\N}E_j$. Note that $Y$ is a $\mathbf{\Pi}_2^0$ set.
We claim that $T_n$ has an infinite path if and only if $Y\cap N_{(n,0)}$ is nonempty. If $x$ is an infinite path of $T_n$, then $F_n:=\{(n,0)\}\cup\{(i,j+1):x(j)=i\}$ is an element of $Y\cap N_{(n,0)}$. Conversely, if there exists $F\in Y\cap N_{(n,0)}$, we can define a sequence $y:\N\rightarrow\N$ by $y(0)=n$ and $y(j)=i$ the least $i$ such that $(i,j)\in F$ for $j>0$. The condition $\forall j(F\in E_j)$ ensures $y(j)$ is always defined, and $\forall\sigma(F\in D_{\sigma})$ ensures that every initial segment of $y$ is in $T$. Thus $y$ is an infinite path of $T$ starting with $y(0)=n$. So $y^{\prime}(j)=y(j+1)$ is an infinite path of $T_n$. The claim is proved. 
By statement $(2)$, there exists a poset $P$ and a homeomorphism $f:\UF(P)\rightarrow Y$. Since $f$ is a continuous code, the pre-image $f^{-1}(N_{(n,0)})$ is an open set in $\UF(P)$, which can be represented as a union of basic open sets $\cup_{p\in I_n}N_p$ for some $\Sigma_1^0(f)$ set of indices $I_n\subseteq P$.
Since $\UF(P)$ is a poset space, $N_p\neq\emptyset$ is equivalent to saying that $p$ is a ``valid'' element of the poset (not leading to an empty filter). Thus, we can define:
\[
X:=\{n\in\N:\exists p\in P(p\in f^{-1}(N_{(n,0)}))\}
\] by arithmetical comprehension. Since the existence of $X$ for any sequence of trees implies $\PCAo$, and the proof is complete.
\end{proof}


\begin{cor}
The following statements are equivalent over $\RCAo$:
\begin{enumerate}
\item $\PCAo$.
\item For every countable poset $P$, there exists a poset $P^{\prime}$ such that the intersection $\UF(P)\cap\NP(P)$ is homeomorphic to $\UF(P^{\prime})$.
\item For every countable poset $P$, there exists a poset $P^{\prime}$ such that the space $\NP(P)$ is homeomorphic to $\UF(P^{\prime})$.
\end{enumerate}
\end{cor}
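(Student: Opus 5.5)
The plan is to establish the cycle $(1)\Rightarrow(2)\Rightarrow(3)\Rightarrow(1)$ over $\RCAo$, with $(3)\Rightarrow(2)$ as a side implication.

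For $(1)\Rightarrow(2)$, Proposition~\ref{NPUFtoUF} applies directly. One routine point needs checking inside $\PCAo$: the subposet $P'$ of elements starting an infinite strictly descending sequence gives $\UF(P')\cong\NP\UF(P)$. The maps are inclusion and upward closure. A non-principal unbounded filter on $P$ is generated by a strictly decreasing sequence, and every element of that sequence lies in $P'$. The only obstacle here is that $\NP\UF$ refers to unboundedness in $P$ rather than in $P'$. To avoid it, I would first replace $P$ by the poset $P''$ of Proposition~\ref{NPtoNPUF}(2), so that non-principality already forces unboundedness.

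For $(2)\Rightarrow(3)$, Proposition~\ref{NPtoNPUF}(1) gives, in $\RCAo$, a poset $P'$ with $\NP(P)\cong\NP(P')\cap\UF(P')$. Applying (2) to $P'$ yields some $P''$ with $\NP\UF(P')\cong\UF(P'')$, and composing the two continuous codes (which is available in $\RCAo$) proves (3). Symmetrically, $(3)\Rightarrow(2)$ follows from Proposition~\ref{NPtoNPUF}(2), since $\NP\UF(P)\cong\NP(P'')\cong\UF(Q)$.

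For $(3)\Rightarrow(1)$, I would reuse the proof of Proposition~\ref{equi-overt-PCAo}, which runs its coding entirely through $\NP(P)$ spaces.

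First, $\ACAo$ holds. The $\ACAo$ step there already started from an $\NP(P)$ space for an injection $\phi$ and used only a homeomorphism $\NP(P)\cong\UF(Q)$, which is exactly what (3) supplies.

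Second, given a sequence of trees $\langle T_n\rangle$, I would build a poset whose non-principal filters code infinite paths. Let $T$ be the combined tree, ordered by extension so that $\sigma>\tau$ iff $\sigma\subsetneq\tau$. Then $\NP(T)$ consists of the infinite paths, and the basic open set $N_{\langle n\rangle}$ is nonempty iff $T_n$ has a path.

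Third, (3) gives $h:\NP(T)\to\UF(Q)$ with inverse code $g$. Then $h(N_{\langle n\rangle})=g^{-1}(N_{\langle n\rangle})$ is a union of basic sets $N_q$ with $q$ ranging over a $\Sigma^0_1$ set.

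The main obstacle is deciding nonemptiness of $N_q$ in $\UF(Q)$ arithmetically, since unbounded filters need not exist above an arbitrary $q$. I would first pass via Proposition~\ref{typicalization} to a handy $Q$. I expect this still to be insufficient, because existence of an unbounded filter containing $q$ is genuinely $\Sigma^1_1$. So instead I would use the code $h$ itself. For each $q$, and for arbitrarily large $m$, there should exist $q'\le q$ such that $g$ maps $N_{q'}$ into some $N_\tau$ with $|\tau|\ge m$ and $\tau\in T$. This is an arithmetical condition, and I would try to prove that it characterizes $N_q\neq\emptyset$ by building a descending sequence that yields a point.

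If that fails, I fall back on the cleaner route through (2): apply (2) to the $\NP\UF$ space that represents the $\mathbf{\Pi}^0_2$ set $Y$ from Proposition~\ref{equi-overt-PCAo}, and use that proposition's argument verbatim.
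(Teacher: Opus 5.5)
Your implications $(1)\Rightarrow(2)\Rightarrow(3)$ and $(3)\Rightarrow(2)$ are sound. Your fallback for $(3)\Rightarrow(1)$ is exactly how the corollary is meant to follow from the paper's results; the paper gives no separate proof. That fallback writes the $\mathbf{\Pi}^0_2$ set $Y$ as an $\NP\UF$ space in $\RCAo$, converts it to a UF space by (2), and applies Proposition~\ref{equi-overt-PCAo}.

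Your caution in $(1)\Rightarrow(2)$ is also justified. As literally stated, Proposition~\ref{NPUFtoUF} fails when a point outside the pruned part lies below an infinite chain. For example, $r<_P s_i$ for all $i$ with $s_0>_P s_1>_P\cdots$ gives $\NP\UF(P)=\emptyset$ but $\UF(P')\neq\emptyset$. Passing first through the poset of Proposition~\ref{NPtoNPUF}(2) is the right repair.

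What is wrong is your diagnosis of the obstacle in the direct route. Existence of an unbounded filter through a given $q\in Q$ is not $\Sigma^1_1$; it always holds, provably in $\RCAo$. Enumerate $Q$ as $\langle r_k:k\in\N\rangle$ and let $q_0=q$. Let $q_{k+1}=r_k$ if $r_k<_Q q_k$, and $q_{k+1}=q_k$ otherwise. If some $r_k$ were strictly below every $q_j$, then $r_k<_Q q_k$, so $q_{k+1}=r_k$ and $r_k<_Q r_k$, which is impossible.

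Thus every basic open set of a UF space is nonempty: UF spaces are overt. This is precisely the step ``$N_p\neq\emptyset$ iff $p$ is valid'' in the proof of Proposition~\ref{equi-overt-PCAo}. It is also what lets that proof's $\ACAo$ step define $\N\setminus\mathsf{rng}(\phi)$ by a $\Sigma^0_1$ formula. So the fallback does not avoid your obstacle; it relies on the same fact.

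With this fact, your direct argument finishes at once. Let $G$ code $g:\UF(Q)\to\NP(T)$. Then $T_n$ has a path iff $\exists m\,\exists q\in Q\,((m,\langle n\rangle,q)\in G)$. One direction comes from $g(h(y))=y$ for a path $y$. The other comes from applying the total map $g$ to an unbounded filter through $q$. This is a $\Sigma^0_1$ condition, so the set exists by $\ACAo$, and your ``arbitrarily long $\tau$'' criterion is unnecessary.

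The $\Sigma^1_1$ content lies in nonemptiness of basic sets of the non-overt space $\NP(T)$. Hypothesis (3) is what moves that question into an overt space.

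If you import the paper's $\ACAo$ step, two corrections are needed. As printed, the order on $P$ should compare $(n,m)$ with $(n',m)$ for a fixed $m$, so that the chain through $(0,m)$ is infinite iff $m\notin\mathsf{rng}(\phi)$. The complement of the range must then be read from the code of the inverse map $\UF(Q)\to\NP(P)$.
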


\begin{cor}
The following statements are equivalent over $\RCAo$:
\begin{enumerate}
\item $\PCAo$.
\item Every $\mathbf{\Pi}_2^0$-subspace of a UF space is itself homeomorphic to a UF space.
\end{enumerate}
\end{cor}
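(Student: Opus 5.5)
We prove the two implications separately.

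\emph{From (1) to (2).} Assume $\PCAo$ and let $Y$ be a $\mathbf{\Pi}_2^0$-subspace of a UF space $\UF(P)$. We may take $P$ handy by Proposition \ref{typicalization}. By Proposition \ref{UFtoPi20} there is a homeomorphism $\hat g$ from $\UF(P)$ onto a $\mathbf{\Pi}_2^0$-subspace $B\subseteq\mc P(\N)$, given by total continuous codes in both directions.

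We transport the code of $Y$ along $\hat g$. Preimages of open sets under a continuous code are open sets uniformly computable from the codes, so the $\mathbf{\Pi}_2^0$ code of $Y$ pulls back under $\hat g^{-1}$ to a $\mathbf{\Pi}_2^0$ code of $\hat g(Y)$ relative to $B$. Intersecting the open and closed components with the defining components of $B$ produces a single $\mathbf{\Pi}_2^0$ code in $\mc P(\N)$ for $\hat g(Y)$. The point needing care is that a closed set relative to $B$ must be written as $B\cap A$ with $A$ closed in $\mc P(\N)$; this works because relative open codes are literally unions of basic open sets of $\mc P(\N)$.

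Next, Proposition \ref{P(N)-to-UF(P)} (in $\PCAo$) gives a handy poset $Q$ with $\hat g(Y)\cong\UF(Q)$. Composing continuous codes, which is available in $\RCAo$, shows $Y\cong\UF(Q)$.

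\emph{From (2) to (1).} Let $Y$ be any $\mathbf{\Pi}_2^0$-subspace of $\mc P(\N)$. We observe that $\mc P(\N)$ is itself a UF space in $\RCAo$: it is $\mathbb{F}(\mc P_{\fin}(\N))$, and Proposition \ref{typicalization}.2 makes this homeomorphic to $\UF(P')$ for a handy $P'$. Transporting the code of $Y$ along this homeomorphism as in the first direction, $Y$ becomes a $\mathbf{\Pi}_2^0$-subspace of the UF space $\UF(P')$. Statement (2) then makes $Y$ homeomorphic to a UF space, and composing the homeomorphisms gives the second clause of Proposition \ref{equi-overt-PCAo}, hence $\PCAo$.

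The expected obstacle is the code-transport lemma in $\RCAo$: the image or preimage of a $\mathbf{\Pi}_2^0$ set under a homeomorphism between subspaces, where one space is a subspace of $\mc P(\N)$ carrying the relative topology, is again a $\mathbf{\Pi}_2^0$ set, with codes computed uniformly. I would prove this first. For open sets, take preimages under the continuous code. For closed sets, complement the pulled-back open code relative to the ambient space; this is legitimate because both maps are total and mutually inverse on the subspaces.
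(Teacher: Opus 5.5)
Your proof is correct and follows the paper's route for (1)$\Rightarrow$(2): embed the UF space into $\mc P(\N)$ by Proposition \ref{UFtoPi20}, note that a $\mathbf{\Pi}_2^0$-subspace of a $\mathbf{\Pi}_2^0$-subspace is again $\mathbf{\Pi}_2^0$ (your code-transport discussion spells this step out), and apply Proposition \ref{P(N)-to-UF(P)}. The paper leaves (2)$\Rightarrow$(1) implicit, and your argument for it is the natural one: view $\mc P(\N)$ as $\UF(P')$ via Proposition \ref{typicalization}.2, transport $Y$, apply (2), and invoke Proposition \ref{equi-overt-PCAo}.
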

\begin{proof}
Let $X$ be a UF space and $Y\subseteq X$ be a $\mathbf{\Pi}_2^0$-subspace. By Proposition \ref{UFtoPi20}, $X$ is homeomorphic to a $\mathbf{\Pi}_2^0$-subspace of $\mc P(\N)$. Since a $\mathbf{\Pi}_2^0$-subspace of a $\mathbf{\Pi}_2^0$-subspace is again a $\mathbf{\Pi}_2^0$-subspace, $Y$ is homeomorphic to a $\mathbf{\Pi}_2^0$-subspace of $\mc P(\N)$. By Proposition \ref{P(N)-to-UF(P)}, any such subspace is homeomorphic to a UF space within $\PCAo$.
\end{proof}

\subsection{Quasi-Polish spaces}
Next, we define quasi-Polish spaces within the framework of second-order arithmetic, which yields a natural generalization of the classical definition of Polish spaces \cite{SOSOA}. Moreover, by taking the collection of finite subsets $\mc P_{\fin}(\N)$ as a countable dense subset of points in the universal space $\mc P(\N)$, the universal space itself becomes definable in this manner. Consequently, this framework also subsumes the standard construction of the universal space.

\begin{defi}[quasi-Polish spaces]
The following definitions are formulated within $\RCAo$. A \textit{code for a second-countable quasi-metric space} $X_A$ is a tuple $(A,d)$ where $A\subseteq\N$ is a nonempty set and $d:A\times A\rightarrow\R$ is a function satisfying:
\begin{enumerate}
\item[(i)] $\forall a,b\in A(d(a,b)\geq0)$.
\item[(ii)] $\forall a,b,c\in A(d(a,b)+d(b,c)\geq d(a,c))$.
\end{enumerate}
In other words, $d$ is a pseudo-quasi-metric on $A$. 
A point of $X_A$ is a sequence $x=\langle a_n:n\in\N\rangle$ of elements in $A$ that is \textit{$\hat{d}$-Cauchy} with respect to the symmetrized pseudo-metric $\hat{d}(a,b)=\text{max}\{d(a,b),d(b,a)\}$; specifically, $\forall m,n (m>n\rightarrow \hat{d}(a_n,a_m)<2^{-n})$. We write $x\in X_A$ as an abbreviation for ``$x$ is a point of $X_A$''.
For $x=\langle a_n:n\in\N\rangle,y=\langle b_n:n\in\N\rangle\in X_A$, $d(x,y)$ is defined as a real number $(d(a_{n+1},b_{n+1}))_n$. 
 
A sequence $\langle a_n:n\in\N\rangle$ in $A$ is an \textit{effective left-Cauchy sequence} if $\forall n>0\forall m>n(d(a_{n},a_{m})<2^{-n})$. 
We say $X_A$ is a \textit{Smyth-complete quasi-metric space} if every effective left-Cauchy sequence $\langle a_n:n\in\N\rangle$ in $A$ has a point $x=\langle b_n:n\in\N\rangle$ such that $\forall n\, d(a_{n+1},b_{n+1})<2^{-n}$ and $\lim_n d(b_{n},a_{n})=0$. Two points $x$ and $y$ in $X_A$ are considered equal ($x=y$) if $d(x,y)=d(y,x)=0$.


A code for an open subset $U$ of $X_A$ is a set of triples $U\subseteq\N\times A\times\Q_+$.   
A point $y\in X_A$ is an element of $U$ (denoted by $y\in U$) if there exist $n,a,r$ with $(n,a,r)\in U$ such that $d(a,y)<r$. 
In particular, we write $B_d(a,r)$ for the open set represented by the singleton code $\{(0,a,r)\}$, which corresponds to the basic open set coded by $(a,r)$ within the fixed basis code $A\times\Q_+$. Then we define Borel sets of finite ranks as in Definition \ref{defi:Borel-sets}.
Similarly, for a point $x=\langle a_n:n\in\N\rangle\in X_A$ and $r\in\Q_+$, $B_d(x,r)$ represents the set of triples $\{(n,a,l)\in\N\times A\times\Q_+:d(a_{n},a)+l+2^{-n}<r\}$. 

Moreover, we write $B_{\hat{d}}(x,r)$ for the class of points $y\in X_A$ satisfying $d(y,x),d(x,y)<r$. Note that $B_{\hat{d}}(x,r)$ is not an open set in general; it is a $\mathbf{\Sigma}^{0}_{2}$-set (Proposition~\ref{prop:d-hat-ball}).
\end{defi}
There is an alternative formalization of quasi-Polish spaces where points are represented as effective left-Cauchy sequences instead of effective $\hat{d}$-Cauchy sequences.
\begin{rem}
A quasi-metric space $(X,d)$ is called \textit{bicomplete} if the symmetrized metric space $(X,\hat{d})$ is a complete metric space. Hence, in the above definition, we gave a code $X_A$ for a quasi-metric space which is second-countable and bicomplete. However, be aware that a bicomplete quasi-metric space may not be Smyth-complete.

On the other hand, if $(X,d)$ is a second-countable Smyth-complete quasi-metric space, then there exists a countable set $D$ which is dense as a metric space $(X,\hat{d})$ by Proposition~\ref{prop:hat-d-dense}.
Moreover, it is straightforward to see that every Smyth-complete quasi-metric space is bicomplete.
Hence, any quasi-Polish space can be described by the above definition.
\end{rem}

\begin{prop}[{\cite[Proposition 12]{deBr2013}}, {\cite{kunzi1983strongly}}]\label{prop:d-hat-ball}
Let $X_A$ be a quasi-metric space with a code $(A,d)$. Every $\hat{d}$-open ball in $X_A$ is a $\mathbf{\Sigma}_2^0$-subset of $X_A$ with respect to the quasi-metric topology. 
\end{prop}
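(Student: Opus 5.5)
The plan is to write the $\hat{d}$-ball $B_{\hat d}(x,r)=\{y: d(x,y)<r \land d(y,x)<r\}$ as the intersection of two pieces and treat each separately. The set $\{y: d(x,y)<r\}$ is open: it is exactly the ball $B_d(x,r)$, coded as in the definition by $\{(n,a,l): d(a_n,a)+l+2^{-n}<r\}$. The set $\{y : d(y,x)<r\}$ is the problematic one, since in a quasi-metric space this ``reverse ball'' need not be open. I will show it is a countable union of closed sets, hence $\mathbf{\Sigma}^0_2$. The intersection of an open set with a $\mathbf{\Sigma}^0_2$ set is again $\mathbf{\Sigma}^0_2$: writing the second set as $\bigcup_i (X_A\setminus C_i)$ with $C_i$ open, the intersection is $\bigcup_i (O\setminus C_i)$, which already has the required form $\bigcup_i(B_i\setminus B'_i)$.

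For the reverse ball, the key observation is that for fixed $x$ and rational $s$, the set $K_s=\{y: d(y,x)\le s\}$ is $d$-closed. Its complement is $\{y : d(y,x)>s\}$. If $d(y,x)>s+2\varepsilon$, then the triangle inequality gives $d(y,x)\le d(y,z)+d(z,x)$, and for $z$ with $d(y,z)<\varepsilon$ we get $d(z,x)\ge d(y,x)-d(y,z)>s+\varepsilon$. So this complement is open.

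To make this concrete, I will code the complement as the open set with code $\{(n,a,l)\in\N\times A\times\Q_+ : d(a,a_n)>s+l+2^{-n}\}$, where $x=\langle a_n\rangle$. I then need to check both inclusions against this code:
\begin{itemize}
\item If $d(a,y)<l$ and $d(a,a_n)>s+l+2^{-n}$, then $d(y,x)\ge d(a,x)-d(a,y)>s$, using $|d(a,x)-d(a,a_n)|\le 2^{-n}$ from the Cauchy condition.
\item Conversely, if $d(y,x)>s$, I pick $n$ large, an element $a=b_m$ of $y$'s sequence, and $l$ rational so that $d(b_m,y)<l$ is small; the triangle inequality then gives $d(b_m,a_n)>s+l+2^{-n}$.
\end{itemize}
With this, $\{y: d(y,x)<r\}=\bigcup_{s\in\Q,\,s<r}K_s$. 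Writing each $K_s$ as $X_A\setminus O_s$ with $O_s$ open gives the $\mathbf{\Sigma}^0_2$ code $\bigcup_s (X_A\setminus O_s)$.

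The main obstacle I expect is making the real-number comparisons effective in $\RCAo$. Strict inequalities between reals such as $d(a,a_n)>q$ are $\Sigma^0_1$, so the code above is $\Sigma^0_1$-definable rather than directly a set. I would handle this in the usual way: enumerate the $\Sigma^0_1$ set by a function, and use the first component $n$ of the code triples as the witness index, which is exactly why codes are subsets of $\N\times A\times\Q_+$. With the codes in place, everything reduces to the triangle inequality together with the approximation $|d(a,x)-d(a,a_n)|\le 2^{-n}$ and the analogous bound on the second coordinate, which follow from $\hat d$-Cauchyness.
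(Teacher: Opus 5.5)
Your proof is correct and uses the same decomposition as the paper: $B_{\hat d}(x,r)=B_d(x,r)\cap\{y:d(y,x)<r\}$, with the reverse ball built from complements of $d$-balls $B_d(b,l)$ for which $d(b,x)-l$ exceeds a threshold. Your version, $\bigcup_{s<r}\bigcap_{d(b,x)-l>s}(X_A\setminus B_d(b,l))$, in fact has the right quantifier structure, whereas the paper's one-line formula is a single union $\bigcup_{d(b,a)-s\ge r}(X_A\setminus B_d(b,s))$, which as written is too large: in $\mc P(\N)$ with $a=\emptyset$ and $r=1/2$, taking $b=\{0,5\}$ and small $s$ shows it contains $\{0\}$, although $\hat d(\emptyset,\{0\})=1$.
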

\begin{proof}
Let $B_{\hat{d}}(a,r)$ be a $\hat{d}$-open ball where $a\in A, r\in\Q_+$. 
\begin{align*}
B_{\hat{d}}(a,r)=B_{d}(a,r)\cap\bigcup_{b\in A,s\in\Q_+,d(b,a)-s\geq r} (X_A\setminus B_{d}(b,s))
\end{align*}
\end{proof}

This proof can be formalized within $\RCAo$. Consequently, this proposition implies the following one.

\begin{prop}\label{qPfml}
Let $X_A$ be a quasi-metric space with a code $(A,d)$. Let $\phi(x)$ be a $\Sigma_n^0$-formula for $n\in\omega$ such that $x=y\land\phi(x)$ implies $\phi(y)$.
Within $\RCAo$, the following holds:
\begin{enumerate}
\item There exists a $\mathbf{\Sigma}^{0}_{n+1}$-subset $U\subseteq X_A$ such that $(x\in U)\leftrightarrow\phi(x)$ for all $x\in X_A$.
\item If $X_A$ is Smyth-complete and $x=y$ as an effective left-Cauchy sequence (not necessarily effective $\hat{d}$-Cauchy) and $\phi(x)$ implies $\phi(y)$, then for any $\Sigma^{0}_{n}$-formula $\phi(x)$, there exists a $\mathbf{\Sigma}^{0}_{n}$-subset $U\subseteq X_A$ such that $(x\in U)\leftrightarrow\phi(x)$.
\end{enumerate}
(cf. \cite[Lem II.5.7]{SOSOA}, Proposition \ref{UFfml})
\end{prop}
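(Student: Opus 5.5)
Part 1 follows the pattern of Proposition \ref{UFfml} (and \cite[Lem II.5.7]{SOSOA}): induct on $n$, with the base case $n=1$ carrying the substance. For a point $x=\langle a_k:k\in\N\rangle\in X_A$, Kleene normal form rewrites a $\Sigma^0_1$ formula $\phi(x)$ as $\exists m\,\theta(x[m])$ with $\theta$ bounded. The prefix $x[m]$ constrains $x$ to the set $\{y\in X_A:\hat d(a_{m-1},y)\le 2^{-(m-1)}\}$, which is a $\hat d$-ball rather than a $d$-ball. So I will set
\[
U=\bigcup\{B_{\hat d}(a_{m-1},2^{-m+2}) : \theta(\langle a_0,\dots,a_{m-1}\rangle)\text{ and } \hat d(a_i,a_j)<2^{-i}\text{ for } i<j<m\}.
\]
By Proposition \ref{prop:d-hat-ball}, each $\hat d$-ball is $\mathbf\Sigma^0_2$ with a code computed uniformly from $(a,r)$. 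A countable union of such codes is again $\mathbf\Sigma^0_2$, and $\RCAo$ proves this by $\Delta^0_1$-comprehension on the index set.

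For the base case I must show $x\in U\leftrightarrow\phi(x)$.
\begin{itemize}
\item[($\Leftarrow$)] This direction is immediate: if $\phi(x)$ holds, some prefix $x[m]$ satisfies $\theta$, and $x$ lies in the corresponding ball.
\item[($\Rightarrow$)] This is the main obstacle. From $y\in B_{\hat d}(a_{m-1},r)$ I need a representative $y'=y$ that begins with the prefix $\langle a_0,\dots,a_{m-1}\rangle$ and is still an effective $\hat d$-Cauchy sequence. Then $\phi(y')$ holds, and extensionality gives $\phi(y)$.
\end{itemize}
I expect this to need the usual re-indexing trick: shrink the radii, for example by using balls $B_{\hat d}(a_{m-1},2^{-m})$ and shifting indices. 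The slack then lets me splice the prefix onto a tail of $y$ while respecting the $2^{-n}$ moduli. I also have to check that this splicing is uniform enough to be done in $\RCAo$.

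The inductive step goes from $\Sigma^0_{n+1}=\exists\Pi^0_n$ by writing $\phi(x)\leftrightarrow\exists k\,\neg\psi(x,k)$. The induction hypothesis gives $\mathbf\Sigma^0_{n}$ sets $V_k$ with $x\in V_k\leftrightarrow\psi(x,k)$, uniformly in $k$. Then $U=\bigcup_k (X_A\setminus V_k)$ is $\mathbf\Sigma^0_{n+1}$ in the sense of Definition \ref{defi:Borel-sets}. The uniformity in $k$ is built into the construction, since the base case is uniform in parameters.

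For part 2 the base case changes. Under Smyth-completeness, a point can be represented by an effective left-Cauchy sequence, and such a prefix constrains $y$ only to a $d$-ball $B_d(a_{m-1},2^{-m+2})$, which is open. The converse direction ($\Rightarrow$) now proceeds as follows.
\begin{itemize}
\item Given $y$ in that $d$-ball, build an effective left-Cauchy sequence that starts with the prefix and continues through a $\hat d$-Cauchy representative of $y$. The triangle inequality for $d$ keeps it left-Cauchy once radii are halved.
\item Smyth-completeness then yields a point equal to $y$.
\item The extensionality hypothesis with respect to left-Cauchy representations transfers $\phi$ to $y$.
\end{itemize}
The inductive step is the same as in part 1, giving $\mathbf\Sigma^0_n$ in place of $\mathbf\Sigma^0_{n+1}$.
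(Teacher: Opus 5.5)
Your base case is the paper's argument. For part 1, the paper takes the $\hat d$-open set given by Simpson's Lemma II.5.7 for the complete separable metric space coded by $(A,\hat d)$, and converts each $\hat d$-ball into a $\mathbf{\Sigma}^0_2$ code via Proposition~\ref{prop:d-hat-ball}. For part 2 it uses $d$-balls and splices the prefix onto a tail of $y$ using Smyth-completeness, as you do. Two details need fixing there:
\begin{itemize}
\item Once you shrink the radius so that the splice in $(\Rightarrow)$ goes through, $(\Leftarrow)$ is no longer immediate. You must first use extensionality to pass to a sped-up representative such as $\langle a_{k+2}\rangle_k$, whose prefixes satisfy the tighter Cauchy condition and whose last entry lies in the smaller ball. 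This is what the paper's condition $d(a_i,a_j)_{m+1}\le 2^{-i-1}$ and radius $2^{-m-1}$ encode.
\item Strict real inequalities like $\hat d(a_i,a_j)<2^{-i}$ are only $\Sigma^0_1$. State the index condition via rational approximations before appealing to $\Delta^0_1$-comprehension.
\end{itemize}

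The genuine gap is the inductive step. Your induction hypothesis, like the proposition, applies only to formulas that respect equality of points. Writing an extensional $\phi(x)$ as $\exists k\,\neg\psi(x,k)$ gives no reason for $\psi(\cdot,k)$ to be extensional: $\phi(x)\equiv\exists k\,\neg(a_0\neq k)$ is extensional, while $\psi(x,k)\equiv a_0\neq k$ is not. Worse, membership in any Borel code is extensional, since it is built from conditions $d(a,x)<r$ that depend only on the point. So for a non-extensional $\psi(\cdot,k)$ there is no $V_k$ with $x\in V_k\leftrightarrow\psi(x,k)$ at all, and $\bigcup_k(X_A\setminus V_k)$ cannot be formed. (In part 1 your indices are also off by one: there the hypothesis yields $\mathbf{\Sigma}^0_{n+1}$ sets, not $\mathbf{\Sigma}^0_n$ sets.)

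A natural repair is to replace $x$ by a representative computed from an extensional oracle, such as $\{(a,r):d(a,x)<r\}$ (or its $\hat d$-analogue in part 1), so that every subformula becomes extensional. But this queries the oracle negatively as well as positively, and, as in Proposition~\ref{UFfml}(1), that costs at least one extra level, so it does not give the stated bounds. What is missing is an argument that handles alternating quantifiers over non-extensional matrices. One candidate is a Baire-category (Vaught-transform) argument on the set of names of a point, of the kind used to show that open continuous surjections reflect Borel classes. The paper only writes out $n=1$ and says ``by induction'', so you cannot borrow this step from it either.
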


\begin{proof}
We proceed by induction on $n\in\omega$. Here, we present the proof for the base case $n=1$.
Let $x=\langle a_n:n\in\N\rangle$ be in $X_A$. By Kleene's Normal Form Theorem  \cite[ II.2.7]{SOSOA}, any $\Sigma_1^0$-formula $\phi(x)$ can be written in the form $\exists m\theta(x[m])$ where $\theta(\sigma)$ is a $\Sigma_0^0$-formula and $x[m]=\langle a_0,\cdots a_{m-1}\rangle$ is the initial segment of $x$. We define $U$ to be the $\hat{d}$-open set encoded by the collection of triples $(n,a,r)$ such that there exists a finite sequence $\sigma=\langle a_0,\cdots a_{m-1}\rangle$ satisfying $\theta(\sigma)$, with $a=a_m,r=2^{-m-1}$ and $(\forall i\leq m)(\forall j\leq m)(i<j\rightarrow d(a_i,a_j)_{m+1}\leq 2^{-i-1})$. As established in the previous proposition, every $\hat{d}$-open ball is a $\Sigma_2^0$-subset. Since $(X_A,\hat{d})$ is a separable complete metric space, the open set $U$ as defined above satisfies $(x\in U)\leftrightarrow\phi(x)$ according to \cite[II.5.7]{SOSOA}. Thus, $U$ is a $\mathbf{\Sigma}_2^0$-subset of $X_A$ that satisfies $(x\in U)\leftrightarrow\phi(x)$.

Now, suppose $X_A$ is a Smyth-complete quasi-metric space, and let $x$ be an effective left-Cauchy sequence (which is not necessarily effective $\hat{d}$-Cauchy). We define $U$ as a $d$-open set consisting of triples $(n,a,r)$ where there exists a finite sequence $\sigma=\langle a_0,\cdots a_{m-1}\rangle$ satisfying $\theta(\sigma)$, with $a=a_m,r=2^{-m-1}$ and $(\forall i\leq m)(\forall j\leq m)(i<j\rightarrow d(a_i,a_j)_{m+1}\leq 2^{-i-1})$. If $y=\langle b_k:k\in\N\rangle \in U$, then there exists an initial segment $\sigma=\langle a_0,\cdots a_{m-1}\rangle$ such that $\theta(\sigma)$ holds and $d(a_m,y)<2^{-m-1}$. By the Smyth-completeness of $X_A$ with respect to left-Cauchy sequences, the concatenated sequence $z=\sigma\string^\langle b_n:m\leq n\rangle$ is an effective left-Cauchy sequence that converges to a point in $X_A$ such that $z=y$ in the quasi-metric sense.
The converse follows similarly, completing the proof for $n=1$.
\end{proof}

We now proceed to prove the equivalence between quasi-Polish spaces and $\mathbf{\Pi}_2^0$ subspaces of $\mc P(\N)$.

\begin{thm}\label{qPtoUF}
Within $\RCAo$,
every second-countable Smyth-complete quasi-metric space is homeomorphic to a UF space.
\end{thm}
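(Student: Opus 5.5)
We follow the approach of Mummert/Stephan \cite[Thm 2.3]{mummert2010topological}: from the code $(A,d)$ we build directly a countable poset whose unbounded filters correspond to points of $X_A$. Let
\[
P=\{(a,r)\in A\times\Q_+\},
\]
so each element names the formal ball $B_d(a,r)$. The intended order is formal-ball inclusion, but that is not decidable in $\RCAo$ since $d$ is real-valued. We therefore use a strict, computable approximation: $(b,s)<_P(a,r)$ iff $d(a,b)+s<r$, where the strict inequality of reals is witnessed at a finite stage, say $d(a,b)_k+s+2^{-k+1}<r$ for some $k\le\langle a,b,r,s\rangle$. We then close under reflexivity, and check transitivity from the triangle inequality with some slack (shrinking the bound by a factor if needed). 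This ordering exists by $\Delta^0_1$-comprehension.

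Next we define the two maps by codes and verify their properties in $\RCAo$. The map $\Phi\colon X_A\to\UF(P)$ sends $x=\langle a_n\rangle$ to the decreasing sequence $\langle (a_{n+k_n},2^{-n})\rangle$ (with suitable index shifts), which is strictly decreasing in $P$ because $\hat d(a_n,a_m)<2^{-n}$. Its upward closure is a filter: given $(a,r),(b,s)$ containing $x$ with slack, some later ball lies below both. It is unbounded since the radii tend to $0$. The inverse $\Psi\colon\UF(P)\to X_A$ sends an unbounded filter, represented as a decreasing sequence $\langle (a_i,r_i)\rangle$, to its centers. Unboundedness forces $r_i\to0$: if the radii stayed above some $\epsilon$, then $(a_i,\epsilon/2)$-type elements would give a strict lower bound. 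Because $d(a_i,a_j)<r_i$, the centers form a left-Cauchy sequence, and after thinning so that $r_i<2^{-i}$ they form an effective left-Cauchy sequence. Smyth-completeness then yields a point $y\in X_A$. Continuity of both maps is immediate from the codes: $\Phi^{-1}(N_{(a,r)})=B_d(a,r)$ and $\Psi^{-1}(B_d(a,r))=\bigcup\{N_{(b,s)}:(b,s)\le_P(a,r)\}$. The identities $\Psi\Phi=\mathrm{id}$ and $\Phi\Psi=\mathrm{id}$ follow from the convergence clauses $d(a_{n+1},b_{n+1})<2^{-n}$ and $\lim d(b_n,a_n)=0$.

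The main obstacle is making $\Psi$ a genuine code. The thinning of an unbounded filter to a sequence with radii below $2^{-i}$ must be done uniformly by primitive recursion, with no search that requires $\ACAo$. Moreover, the Smyth-completeness axiom asserts only the existence of a limit point; it gives no function producing it. So we cannot take the limit and must instead code $\Psi$ through open sets. Concretely, $\Psi$ is defined by $(n,(b,s),(a,r))\in\Psi$ iff $(a,r)<_P(b,s)$. The obligations are then to check in $\RCAo$ that every unbounded filter lies in $\mathrm{dom}(\Psi)$, using Smyth-completeness for the existence of the witness $y$, and that the $T_0$ condition holds: $y\notin B_d(b,s)$ implies that no element of the filter is below $(b,s)$. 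This last verification uses the second clause $\lim d(b_n,a_n)=0$ of the completeness definition, together with the exact choice of strict order, and it is where we expect to tune the slack constants in the definition of $<_P$.
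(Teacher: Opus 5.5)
Your construction is the paper's: the formal-ball poset on $A\times\Q_+$ with the same strict order, $x\mapsto\langle(a_n,2^{-n})\rangle$, and an inverse obtained from Smyth-completeness. There is, however, a genuine gap at the step ``unboundedness forces $r_i\to 0$.'' Your justification does not work. For $(a_i,\epsilon/2)$ to lie below $(a_j,r_j)$ with $j>i$, you need the \emph{backward} distance $d(a_j,a_i)$ to be small, but a descending chain of formal balls only controls forward distances, $d(a_i,a_j)<r_i-r_j$. Classically one repairs this by first taking the $\hat d$-limit $x$ of the centers (via Smyth-completeness) and then using a center $\hat d$-close to $x$. In $\RCAo$, however, the completeness axiom applies only to \emph{effective} left-Cauchy sequences. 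When $r_i$ decreases to a positive limit with no modulus of convergence, you cannot thin the centers to such a sequence.

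The claim is in fact false in $\mathsf{REC}$. Take a computable strictly decreasing sequence of rationals $r_n\in(\frac14,\frac12)$ with non-computable limit. Let $A=\N$, with $d(m,n)=r_m-r_n$ for $m<n$, $d(m,n)=1$ for $m>n$, and $d(n,n)=0$. This is a quasi-metric, and the points of $X_A$ are exactly the constant sequences. In $\mathsf{REC}$ the space is Smyth-complete: an effective left-Cauchy sequence is non-decreasing from index $1$ on, and if it were not eventually constant it would compute $\lim r_n$. Now consider $\langle (j,r_j+2^{-j-2}):j\in\N\rangle$. It is a computable strictly decreasing chain, and its filter is unbounded: a lower bound $(b,s)$ would need $d(j,b)+s<1$ for $j>b$, whereas $d(j,b)=1$. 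Yet all its radii stay above $\frac14$. So it equals $\Phi(x)$ for no point $x$, $\Phi$ is not onto $\UF(P)$, and your $\Psi$ cannot be total. The paper's surjectivity step, which chooses the least $k$ with $r_k<2^{-n-1}$, tacitly makes the same assumption, so this is not something you can borrow from it.

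The fix is to build the shrinking of radii into the poset, in the spirit of the handy posets of Proposition~\ref{typicalization}. Use elements $(a,n)$ naming $B_d(a,2^{-n})$. Set $(b,m)<_P(a,n)$ iff $m>n$ and $d(a,b)_m+2^{-m+2}\le 2^{-n}$, where $d(a,b)_m$ is a $2^{-m}$-approximation.
\begin{itemize}
\item A descending chain with bounded indices is eventually constant, hence principal, hence bounded (since $(a,n+3)<_P(a,n)$). So every unbounded filter has indices tending to infinity.
\item The thinning of the centers to an effective left-Cauchy sequence is then primitive recursive, and Smyth-completeness applies.
\item Tying the precision to the finer index $m$ also makes the order transitive.
\end{itemize}
Your order is not transitive. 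With the witness bound $k\le\langle a,b,r,s\rangle$, a chain $(c,t)<_P(c,s)<_P(a,r)$ with $s$ just above $t$ can be certified because $s$ has a large code. Meanwhile $(c,t)<_P(a,r)$, whose gap $r-t-d(a,c)$ is positive but below $2^{-\langle a,c,r,t\rangle}$, is not certified. With the modified poset, the remaining verifications (well-definedness, $\Phi^{-1}(N_{(a,n)})=B_d(a,2^{-n})$, and the two inverse identities) go through essentially as you and the paper describe.
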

\begin{proof}
Let $(X_A,d)$ be a (second-countable) Smyth-complete quasi-metric space with the code $(A,d)$.
We define a poset $P=A\times\Q_+$ with the order $(a,r)<_P(b,s)\Leftrightarrow s-d(b,a)>r$. Note that $(b,s)$ is ``larger'' (less precise) than $(a,r)$. We define the mapping $\phi:X_A\rightarrow \UF(P)$ within $\RCAo$ as follows:
\[
\phi(x)=\langle (a_i,2^{-i+1}):i\in\N\rangle
\]
where $x=(a_n)_{n\in\N}$ is a representative $\hat{d}$-Cauchy sequence in $X_A$. We show that $\phi$ is a homeomorphism from $X_A$ to $\UF(P)$.  
\begin{itemize}
 \item \textbf{Well-definedness}: Let $x=(a_n)_n=(b_n)_n$ in $X_A$. For any $i$, $\exists j(d(a_i,x)+d(x,b_j)+2^{-j+1}<2^{-i+1})$, since $d(x,b_j)$ converges to $0$. Thus, $(a_i,2^{-i+1})>_P(b_j,2^{-j+1})$, which implies $\phi((a_n)_n)\subseteq\phi((b_n)_n)$. The reverse inclusion follows by symmetry.
Furthermore, $\phi(x)$ is an unbounded filter. The filter property follows from the Cauchy property of $(a_i)_i$. To see unboundedness, note that for any $(b,s)\in P$ there is $i$ such that $(b,s)\not<_P (a_i,2^{-i+1})$.
 \item \textbf{Continuity}: A code for $\phi$ as a continuous function is given by the set $U:=\{(n,(b,s),a,r)\in\N\times P\times A\times\Q_+:(a,r)\leq_P(b,s)\}$. Within $\RCAo$, this defines $\phi$ as a continuous mapping from $X_A$ to $\UF(P)$.
 \item \textbf{Injectivity}: $\phi$ is injective. Suppose $\phi(x)=\phi(y)$ for $x=(a_i)$ and $y=(b_j)$. For any $n$, there exists $m$ such that $( a_n,2^{-n+1})>_P( b_m,2^{-m+1})$, implying $d(a_n,b_m)<2^{-n+1}$. $d(a_n,b_m)$ converges to $0$ since $\forall l,k(d(a_l,b_k)\leq d(a_l,a_n)+d(a_n,b_m)+d(b_m,b_k))$ and similarly $d(b_n,a_m)$ converges to $0$. Thus, $\hat{d}(x,y)=0$, so $x=y$ in $X_A$.

 \item \textbf{Surjectivity}: Let $F=\langle(a_i,r_i):i\in\N\rangle\in\UF(P)$. Within $\RCAo$, we construct a sequence $Y=\langle(b_i,s_i):i\in\N\rangle$ in $P$ satisfying the following:
 \begin{enumerate}
  \item $(b_0,s_0)=(a_0,r_0)$.
  \item $(b_{n+1},s_{n+1})=(a_k,r_k)$ for the least $k$ such that $(b_n,s_n),(a_{n+1},r_{n+1})\geq_P(a_k,r_k)\land r_k<2^{-n-1}$.
 \end{enumerate}
Since $s_n$ converges to $0$, $(b_i)_i$ is an effective left-Cauchy sequence in $A$. By Smyth-completeness, it converges to some point $x=(c_k)_k\in X_A$. It remains to verify that $\phi(x)=F$, equivalently,
\[
(a,r)\in F\leftrightarrow\exists k\,((a,r)\geq_P(c_k,2^{-k+1})).
\]
If $(a,r)\in F$, then $(a,r)=(a_i,r_i)$ for some $i$, and hence $(a,r)\geq_P(b_i,s_i)$. Since $d(b_i,x)<s_i$, there exist $K\in\N$ and $l\in\Q_+$ such that $s_i-d(b_i,c_n)>l$ for all $n>K$. Choose $j>K$ such that $2^{-j+1}<l$. Then $(c_j,2^{-j+1})<_P(b_i,s_i)$, and hence $(a,r)\geq_P(c_j,2^{-j+1})$.
Conversely, suppose that $(a,r)\geq_P(c_k,2^{-k+1})$ for some $k$. Since $s_m\to0$ and $(b_m)_m$ converges to $x$, we can choose $m$ such that $s_m<2^{-k}$ and $(b_m,s_m)<_P(c_k,2^{-k+1})$. Since $(b_m,s_m)\in F$ and $F$ is upward closed, it follows that $(a,r)\in F$.
 \item \textbf{Openness}: $\phi$ is an open mapping because the image of a basic open set $B_d(a,r)$ in $X_A$ corresponds to the basic open set $N_{(a,r)}=\{F\in\UF(P):(a,r)\in F\}$ in the Scott topology of $\UF(P)$.
 
\end{itemize}
\end{proof}

\begin{cor}
Within $\RCAo$,
every second-countable Smyth-complete quasi-metric space is homeomorphic to a $\mathbf{\Pi}_2^0$ subspace of $\mc P(\N)$.
\end{cor}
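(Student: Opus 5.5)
The plan is to compose two results already proved in $\RCAo$. Theorem \ref{qPtoUF} gives a second-countable Smyth-complete quasi-metric space $X_A$ together with a countable poset $P=A\times\Q_+$ and a homeomorphism $\phi:X_A\to\UF(P)$, both coded within $\RCAo$. Proposition \ref{UFtoPi20} then gives a homeomorphism $\hat g$ from a UF space onto a $\mathbf{\Pi}_2^0$ subspace $B\subseteq\mc P(\N)$. The composite $\hat g\circ\phi$ is the desired homeomorphism onto $B$.

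There is one gap to close. Proposition \ref{UFtoPi20} is stated for $\UF(P)$ with $P$ handy, but the poset $P$ produced in Theorem \ref{qPtoUF} need not be handy. So the first step is to apply Proposition \ref{typicalization}.1. This yields, in $\RCAo$, a handy poset $P'$ and mutually inverse continuous codes $\Phi,\Psi$ between $\UF(P)$ and $\UF(P')$. The chain of maps is therefore
\[
X_A\xrightarrow{\ \phi\ }\UF(P)\xrightarrow{\ \Phi\ }\UF(P')\xrightarrow{\ \hat g\ }B\subseteq\mc P(\N).
\]

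The second step is to check, in $\RCAo$, that homeomorphisms given by continuous codes compose. Given codes for $\psi_1:X\to Y$ and $\psi_2:Y\to Z$, we form a code for $\psi_2\circ\psi_1$. It consists of all $(\langle n,m\rangle,w,z)$ such that there is a basic code $v$ of $Y$ with $(m,w,v)$ in the code of $\psi_2$ and $(n,v,z)$ in the code of $\psi_1$. This set is $\Delta^0_0$ in the given codes, so it exists by $\Delta^0_1$-comprehension.

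We then need four facts about the composite. Totality follows because each $\psi_1(x)$ is a point of $Y$, on which $\psi_2$ is total. The identity $(\psi_2\circ\psi_1)(x)=\psi_2(\psi_1(x))$ follows by unfolding the definition of $\phi(x)=y$ for codes. Inverse pairs compose to inverse pairs, and openness passes through the composition.

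The main obstacle is making this composition of codes rigorous in $\RCAo$. The trouble is that the membership relation ``$x\in N_w$'' differs from one representation to the next: filters in $\UF(P)$ are given as decreasing sequences, points of $X_A$ as $\hat d$-Cauchy sequences, and points of $\mc P(\N)$ as sequences of finite sets. Each of these relations is $\Sigma^0_1$, so the equivalences needed are $\Sigma^0_1$/$\Pi^0_1$ statements that do not require comprehension. I would nonetheless verify them explicitly.

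If handling the general composition becomes awkward, there is a fallback that avoids the middle step. One can directly write down the code of $\hat g\circ\Phi\circ\phi$: send the basic ball $(a,r)$ to $N_{g((a,r),n)}$ for each $(a,r,n)\in P'$. Continuity and openness can then be checked directly, as in the proof of Proposition \ref{UFtoPi20}.
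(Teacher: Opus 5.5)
Your proposal is correct and follows the paper's route: the paper proves the corollary by composing Theorem~\ref{qPtoUF} with Proposition~\ref{UFtoPi20}, and your explicit passage to a handy poset via Proposition~\ref{typicalization} is redundant but harmless, because Proposition~\ref{UFtoPi20} is stated for arbitrary UF spaces and its proof already does that step. The only slip is in your composite code: ``there is a basic code $v$'' is an unbounded existential, so fold $v$ into the first coordinate by taking all $(\langle n,m,v\rangle,w,z)$ with $(m,w,v)$ in the code of $\psi_2$ and $(n,v,z)$ in the code of $\psi_1$, which makes the set genuinely $\Delta^0_0$.
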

\begin{proof}
The result follows directly from Theorem \ref{UFtoPi20} and Theorem \ref{qPtoUF}.
\end{proof}

To show the converse, we first recall the following theorems.
Within $\RCAo$, let $X$ be a separable complete metric space. A closed subset $Y\subseteq X$ is called \textit{separably closed} if it is the closure of a countable subset $S\subseteq X$ (i.e., $Y=\bar{S}$, the class of limits of Cauchy sequences from $S=\langle x_{i}\in X: i\in\N\rangle$).
\begin{thm}[{\cite[Thm 2.12]{brown1990notions}}]
Over $\RCAo$, the statement ``every closed subset of a separable complete metric space is a separably closed subset'' is equivalent to $\PCAo$.
\end{thm}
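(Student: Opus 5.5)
The statement is the Brown result: over $\RCAo$, ``every closed subset of a separable complete metric space is separably closed'' is equivalent to $\PCAo$. I will prove the two directions separately. Here ``closed'' means the complement of an open set coded as in Simpson, and ``separably closed'' means the closure of a sequence of points.

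\textbf{Forward direction ($\PCAo$ implies the statement).} Let $C = X \setminus U$, where $U$ is coded by a sequence of rational balls $B(a_n,r_n)$ and $X$ is the completion of a countable set $A$. The plan is to single out the basic balls that actually meet $C$.

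For each $a\in A$ and $q\in\Q_+$, the assertion ``$B(a,q)\cap C\neq\emptyset$'' is $\Sigma^1_1$. It says there exists a Cauchy sequence $x$ with $d(a,x)<q$ and $x\notin U$, and the condition $x\notin U$ is arithmetical. So $\PCAo$, which proves $\Sigma^1_1$-comprehension, gives the set
\[ I=\{(a,q): B(a,q)\cap C\neq\emptyset\}. \]
Using $\Sigma^1_1$-choice (available in $\PCAo$, e.g.\ via $\Sigma^1_1$-$\mathsf{AC}_0$), pick for each $(a,q)\in I$ a point $x_{a,q}\in C\cap B(a,q)$. Let $S$ be the resulting sequence of points.

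Clearly $\bar S\subseteq C$, since $C$ is closed and every $x_{a,q}$ lies in $C$. Conversely, take $y\in C$ and $k\in\N$. Choose $a\in A$ with $d(a,y)<2^{-k}$. Then $(a,2^{-k})\in I$, so $x_{a,2^{-k}}$ exists and is within $2^{-k+1}$ of $y$. This yields a Cauchy sequence from $S$ converging to $y$, so $C=\bar S$.

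\textbf{Reverse direction (the statement implies $\PCAo$).} I first extract $\ACAo$, then bootstrap to $\PCAo$.

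To get $\ACAo$, take an injection $f$ and, in $\N^\N$ or $2^\N$, consider the closed set $C=\{$ points $\langle n\rangle^\frown 0^\infty$ such that $n\notin\mathsf{rng}(f)\}$. Its complement is open, because $n\in\mathsf{rng}(f)$ is $\Sigma^0_1$. Apply the statement to get a dense sequence $\langle x_i\rangle$ in $C$. Then $n\notin\mathsf{rng}(f)$ holds iff some $x_i$ has first coordinate $n$, since $C$ is discrete here. That condition is $\Sigma^0_1$ in the sequence, so $\Delta^0_1$-comprehension gives $\mathsf{rng}(f)$.

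For $\PCAo$ over $\ACAo$, it suffices to decide which trees in a sequence $\langle T_n\rangle$ have paths. Let $C\subseteq\N^\N$ be the closed set of all $n^\frown x$ with $x\in[T_n]$. Applying the statement gives a sequence $\langle x_i\rangle$ with $\bar{\{x_i\}}=C$. Then $T_n$ has a path iff some $x_i$ begins with $n$. This holds because $C\cap N_{\langle n\rangle}$ is nonempty exactly when the dense sequence meets this open set. That condition is arithmetical, so $\ACAo$ gives the set $\{n: T_n \text{ has a path}\}$.

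\textbf{Main obstacle.} The delicate point is the choice step in the forward direction, namely selecting witnesses uniformly across all $(a,q)\in I$. I would handle it with $\Sigma^1_1$-choice, or alternatively, inside $\PCAo$, by taking leftmost paths of the relevant trees, which are obtainable with $\Pi^1_1$-comprehension via hyperjumps.
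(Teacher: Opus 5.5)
Your proof is correct and matches the approach the paper relies on. The paper only cites Brown for this theorem, but its analogue Proposition~\ref{analyseparable} proves the forward direction exactly as you do: $\Sigma^1_1$-comprehension gives the set of basic balls meeting the closed set, and $\Sigma^1_1$-$\mathsf{AC}_0$ picks witnesses. Its reversals, e.g.\ Proposition~\ref{equi-overt-PCAo}, use your two-stage template: first $\ACAo$ from the range of an injection, then path-existence for a sequence of trees.

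One detail to tidy: separably closed sets are closures of $\N$-indexed sequences, so the statement concerns nonempty closed sets. Make your reversal sets nonempty outright, e.g.\ by adding one fixed point or one extra tree with a path. In $\RCAo$ you cannot decide in advance whether $f$ is onto or whether some $T_n$ has a path.
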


Since every closed and separably closed subset is homeomorphic to a separable complete metric space within $\RCAo$, it is natural to attempt a similar definition of ``separable'' subsets within the context of second-countable quasi-metric spaces. However, several fundamental differences between metric and quasi-metric structures must be addressed:
\begin{itemize}
\item \textbf{Incomplete Convergence}: In a bicomplete quasi-metric space, not every left-Cauchy sequence necessarily converges.
\item \textbf{Deficit of Closedness:} In a second-countable Smyth-complete quasi-metric space $X$, the set $Y\subseteq X$ consisting of the limits of left-Cauchy sequences from a countable subset $S\subseteq X$ is not necessarily a closed subset in the classical sense. In general, such a set corresponds to a $\mathbf{\Pi}_2^0$ subset.
\item \textbf{Failure of Reconstruction from Density}: Unlike the metric case, a dense subset of a quasi-metric space does not always allow the original space to be recovered by taking the set of all convergence points. For example, the singleton $\{\N\}$ is a dense subset of $\mc P(\N)$ under the appropriate quasi-metric, yet it fails to represent the full complexity of the space.
\end{itemize} 

We generalize the notions of closed and separably closed subsets to the quasi-metric setting via the following definitions.

\begin{defi}[Separable]
The following is formulated within $\RCAo$. Let $(X_A,d)$ be a quasi-metric space. A subset $B\subseteq X_A$ is said to be \textit{separable} if there exists a sequence of points $\langle x_i:i\in\N\rangle$ in $B$ such that every $x\in B$ is the limit of some subsequence consisting of elements from $\langle x_i:i\in\N\rangle$. Specifically, for each $x\in B$, there exists a sequence $\langle y_n\rangle\subseteq\{x_i:i\in\N\}$ such that $\text{lim}_{n\to\infty}\hat{d}(y_n,x)=0$. Note that $B$ is not required to be $\mathbf{\Pi}_2^0$; we only assume it is a definable subset.
\end{defi}

Furthermore, we introduce Smyth-separable and $\hat{d}$-separably closed subsets as generalizations of the metric notion of a ``completion of a countable subset''.
 
\begin{defi}[Smyth-separable and $\hat{d}$-separably closed]
 Let $(X,d)$ be a second-countable quasi-metric space. The following definitions are formulated within $\RCAo$:
\begin{enumerate}
\item A \textit{Smyth-separable subset} $S$ is coded by a sequence of points $\langle s_n:n\in\N\rangle$ in $X$. A point $x\in X$ belongs to $S$ (denoted by $x\in S$) if: 
\begin{align*}
\exists(x_n)_{n\in\N}&(\forall n\exists k(x_n=s_k)\land(x_n)_n\text{ is effective left-Cauchy }\\
&\land\forall n(d(x_n,x)<2^{-n+1})\land \text{lim}_{n}d(x,x_n)=0).
\end{align*} If $(X,d)$ is Smyth-complete, the condition reduces to the existence of an effective left-Cauchy subsequence $(x_n)_{n\in\N}$ such that 
\[
\forall n\exists k(x_n=s_k)\land\forall n(d(x_n,x)<2^{-n+1}).
\] 
\item A \textit{$\hat{d}$-separably closed subset} $C$ is coded by a sequence of points $\langle c_n : n \in \mathbb{N} \rangle$ in $X$. A point $x \in X$ belongs to $C$ if for every $r \in \mathbb{Q}_+$, there exists some $n$ such that $\hat{d}(x, c_n) < r$.
\end{enumerate}
Within $\RCAo$, every $\hat{d}$-separably closed set is a Smyth-separable subset.
Over $\ACAo$, a set is a Smyth-separable subset if and only if it is a $\hat{d}$-separably closed subset.  
Furthermore, every Smyth-separable subset ($\hat{d}$-separably closed subset) is a $\mathbf{\Pi}_2^0$-subset of $(X,d)$, since every $\hat{d}$-open ball is a $\mathbf{\Sigma}_2^0$-subset.
\end{defi}

We then have this theorem.

\begin{thm}\label{Pi20toqP}
Within $\PCAo$, every $\mathbf{\Pi}_2^0$ subset of $\mc P(\N)$ is homeomorphic to a second-countable Smyth-complete quasi-metric space.
\end{thm}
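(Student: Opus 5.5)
Within $\PCAo$, I would route the proof through UF spaces. Given a $\mathbf{\Pi}_2^0$ subset $X=\bigcap_n(A_n\cup B_n)$ of $\mc P(\N)$, Proposition~\ref{P(N)-to-UF(P)} (which uses $\PCAo$) gives a handy poset $P$ and a homeomorphism $X\cong\UF(P)$. The handy poset there consists of pairs $(n,q)$ from which an infinite descending path through $P$ originates. It therefore suffices to show that, for such a pruned poset $P$, the space $\UF(P)$ is homeomorphic to a code $(A,d)$ of a Smyth-complete quasi-metric space.

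The countable set $A$ will consist of canonical representatives of unbounded filters through each element of $P$. For each $p\in P$, $\PCAo$ (indeed $\Sigma^1_1$-choice, available in $\PCAo$) lets us choose a strictly descending sequence $h_p$ starting at $p$. We let $a_p\in\UF(P)$ be the filter it generates, and set $A=P$ with $d(p,p')$ computed from these points. The natural quasi-metric on $\UF(P)\subseteq\mc P(\N)$, viewed via Proposition~\ref{UFtoPi20}, is the one on $\mc P(\N)$:
\[
d(F,G)=2^{-\min\{k:k\in F,\ k\notin G\}}.
\]
This metric is not Smyth-complete on the subspace. Following the proof of Proposition~\ref{1-pi2-quasipolish}, I would add correction terms $d_i$ for the $\mathbf{\Pi}_2^0$ conditions. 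A cleaner alternative, which I would try first, is a poset-intrinsic quasi-metric. Enumerate $P=\langle p_k\rangle$ and put
\[
d(F,G)=\sum_k 2^{-k}\,[\,p_k\in F\wedge p_k\notin G\,].
\]
To force Smyth-completeness, add a term that penalizes failure of unboundedness. Concretely, I would take $d'(F,G)=d(F,G)+\sum_n 2^{-n}\delta_n(F,G)$, where $\delta_n(F,G)$ measures how deep $G$ descends relative to $F$ at the $n$-th handiness level, i.e. the $G_\delta$ sets $U_n$ in the definition of handy.

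The substantive steps, in order, are these.

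\begin{enumerate}
\item Compute $d'(a_p,a_{p'})$ as a real for $p,p'\in A$. Each indicator $[p_k\in a_p]$ is $\Sigma^0_1$ relative to $h_p$, so we have to obtain the reals via arithmetical comprehension, which is available in $\PCAo$.
\item Define mutually inverse continuous codes between $X_A$ and $\UF(P)$. A $\hat d$-Cauchy sequence of representatives $a_{q_i}$ determines a filter, namely the elements eventually belonging to all of them. Conversely, a filter $F=\langle q_i\rangle$ maps to $\langle a_{q_{m_i}}\rangle$ for suitable $m_i$ chosen so that the sequence is $\hat d$-Cauchy.
\item Check that basic balls correspond to the sets $N_p$, which is where the construction of $d$ from membership is used.
\item Prove Smyth-completeness.
\end{enumerate}

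Smyth-completeness is the main obstacle. An effective left-Cauchy sequence $\langle a_{q_n}\rangle$ only guarantees that memberships stabilize upward: $p_k\in a_{q_n}$ for large $n$ implies $p_k\in a_{q_m}$ for larger $m$, up to precision. The union $F=\bigcup_n a_{q_n}$ must then be shown to be an unbounded filter, and a $\hat d$-Cauchy sequence of representatives converging to it must be produced. I would handle this by choosing the $h_p$ coherently, extending descending paths so that left-Cauchy behaviour at scale $2^{-n}$ forces the $n$-th handiness level to be reached. The $\delta_n$ terms are designed to guarantee exactly this, mirroring the boundedness-of-$1/d(x_k,F_n)$ argument in Proposition~\ref{1-pi2-quasipolish}. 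The limit point itself is obtained by taking $F$ as a set (via $\ACAo$) and picking representatives $a_{r_i}$ with $r_i\in F$ deep enough, which uses $\PCAo$ again to select paths inside $F$.
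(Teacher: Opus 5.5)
Your plan has the right overall shape: use $\PCAo$ to choose countably many points, then repair the quasi-metric until it is Smyth-complete. However, both load-bearing steps fail as stated.

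\textbf{The countable set $A$ is not $\hat d$-dense.} Points of $X_A$ are $\hat d$-Cauchy sequences from $A$, so $A$ must be dense for the symmetrized metric $\hat d$; density in the quasi-metric topology is not enough. Choosing one filter $a_p$ through each $p\in P$ along an arbitrary descending path $h_p$ does not give this. For $X=\mc P(\N)$ you may take $h_{(n,q)}(j)=(n+j,\,q\cup\{0,\dots,n+j\})$ for $j\geq 1$, so that every $a_p$ is the filter corresponding to $\N$.
\begin{itemize}
\item This family is dense in the topology; it is exactly the paper's warning that $\{\N\}$ is dense in $\mc P(\N)$.
\item Yet $X_A$ collapses to a single point.
\item In your step~2, the sequence $\langle a_{q_{m_i}}\rangle$ is constantly the point $\N$ rather than converging to $F$.
\end{itemize}
What you need is a point in every nonempty basic $\hat d$-ball, i.e.\ for every finite pattern of membership \emph{and non-membership}. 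The condition ``$B_{\hat d}(b,s)$ meets $X$'' is $\Sigma^1_1$. So the paper (Proposition~\ref{analyseparable}) uses $\Pi^1_1$-comprehension to form the set of such balls and $\Sigma^1_1$-choice to pick a point in each.

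\textbf{Smyth-completeness is not established.} This is the real content of the theorem, and it rests on terms $\delta_n$ that you never define. Moreover, corrections that only look at the unboundedness levels $U_n$ cannot suffice: inside $\mc P(P)$, the directedness clauses of the filter definition are further $\mathbf{\Pi}^0_2$ conditions. Here is a counterexample.
\begin{itemize}
\item Let $P$ contain an element $p$, a chain $e_0>e_1>\cdots$, and for each $j$ a chain $c^j_0>c^j_1>\cdots$ with $c^j_i\leq p$ and $c^j_i\leq e_i$.
\item Enumerate $P$ so that every $c^j_i$ comes after stage $j$. This $P$ is handy, and every element starts an infinite descending path.
\item The filters generated by the chains $c^j$ then form a $\hat d$-Cauchy sequence for your $d$.
\item Its coordinatewise limit is $\{p\}\cup\{e_i:i\in\N\}$. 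This set lies in every $U_n$, since it contains the unbounded filter $\{e_i:i\in\N\}$, but it is not a filter.
\end{itemize}

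The paper avoids both the UF detour and the design of a new metric. It stays in $\mc P(\N)$, whose quasi-metric is already Smyth-complete. It then applies de Brecht's $d'=d+\sum_i d_i$ from Proposition~\ref{1-pi2-quasipolish}, which is formalized in $\RCAo$ as Proposition~\ref{sepPi20qP}. This uses a reciprocal-distance term $d_i$ for \emph{every} clause of $X=\bigcap_i(A_i\cup B_i)$, and takes the $\hat d$-dense sequence from Proposition~\ref{analyseparable} as the countable set. Your first-mentioned option, de Brecht's correction terms on $X\subseteq\mc P(\N)$, combined with a $\hat d$-dense choice of points, is exactly that proof.
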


The proof follows directly from the two propositions presented below.

\begin{prop}\label{analyseparable}
Within $\PCAo$, every analytic subset of a quasi-metric space is separable.
This is a generalization of the theorem for closed subsets of metric spaces \cite[Thm 2.10]{brown1990notions}.
\end{prop}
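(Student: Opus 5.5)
The plan is to imitate the proof of \cite[Thm 2.10]{brown1990notions} for closed subsets of complete separable metric spaces. The idea is to use $\Pi^1_1$-comprehension (equivalently $\Sigma^1_1$-comprehension) to decide which basic $\hat{d}$-balls meet the analytic set, and then to pick a point in each nonempty one.

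\textbf{Setting up.} Let $(X_A,d)$ be a code for a quasi-metric space. Let $B\subseteq X_A$ be analytic, i.e.\ $B=\{x\in X_A:\exists f\in\N^\N\,\theta(x,f)\}$ with $\theta$ arithmetical (or, in the coded form, the image of a continuous map from a closed subset of $\N^\N$). For each pair $(a,r)\in A\times\Q_+$, consider the statement $\exists x\in X_A\,(x\in B\land \hat{d}(a,x)<r)$. A point $x$ is itself a sequence in $A$, and $\hat{d}(a,x)<r$ is arithmetical in $x$ by Proposition~\ref{prop:d-hat-ball}. Hence this statement is $\Sigma^1_1$. By $\Sigma^1_1$-comprehension, available in $\PCAo$, the set
\[
I:=\{(a,r)\in A\times\Q_+:\exists x\in B\,(\hat{d}(a,x)<r)\}
\]
exists.

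\textbf{Choosing witnesses.} For each $(a,r)\in I$ we need a point $x_{(a,r)}\in B$ with $\hat{d}(a,x_{(a,r)})<r$, and these points must be collected into one sequence. I would use $\Sigma^1_1$-choice, which is provable in $\PCAo$ (Simpson V.8.3, or $\Sigma^1_1$-AC$_0$ already in $\mathsf{ATR_0}$). The formula $\exists (x,f)\,(\theta(x,f)\land\hat{d}(a,x)<r)$ is $\Sigma^1_1$, so choice yields a sequence $\langle (x_{(a,r)},f_{(a,r)}):(a,r)\in I\rangle$. Alternatively, one can take the leftmost path of the associated $\Sigma^1_1$ tree via $\Pi^1_1$-comprehension, which avoids any appeal to choice. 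Enumerate the resulting points as $\langle x_i:i\in\N\rangle$; every $x_i$ lies in $B$. If $B$ is empty there is nothing to prove, so we may assume $I\neq\emptyset$.

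\textbf{Density.} Let $x=\langle a_n\rangle\in B$. By the $\hat{d}$-Cauchy condition, $\hat{d}(a_n,x)\le 2^{-n}$, so $\hat{d}(a_n,x)<2^{-n+1}$ and therefore $(a_n,2^{-n+1})\in I$. The witness $y_n:=x_{(a_n,2^{-n+1})}$ then satisfies
\[
\hat{d}(y_n,x)\le\hat{d}(y_n,a_n)+\hat{d}(a_n,x)<2^{-n+2}.
\]
Hence $\lim_n\hat{d}(y_n,x)=0$, and $x$ is a limit of points from $\langle x_i\rangle$, which is exactly the separability of $B$.

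\textbf{Main obstacle.} The difficult step will be the precise formalization of ``analytic'' and of the witness selection. One must check that quantifying over points of $X_A$, which are sequences equal only modulo $\hat{d}$, keeps the membership condition $\Sigma^1_1$ and invariant under equality. One must also handle the fact that the triangle inequality for $\hat{d}$ holds on representatives only up to the $2^{-n}$ approximation errors. I would deal with this by working throughout with rational approximations $d(a_m,b_m)$ and a uniform error margin.
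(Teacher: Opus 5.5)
Your proposal is correct and follows essentially the same route as the paper. Both form the set of pairs $(b,s)\in A\times\Q_+$ whose $\hat{d}$-ball meets the analytic set by $\Pi^1_1$ (equivalently $\Sigma^1_1$) comprehension, and then use $\Sigma^1_1$-$\mathsf{AC}$ in $\PCAo$ to pick a witness point of the set in each such ball. Your explicit density check, using the pairs $(a_n,2^{-n+1})$ and the triangle inequality for $\hat{d}$, simply spells out the step the paper states in one line.
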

\begin{proof}
Let $(X_A,d)$ be a quasi-metric space and $K\subseteq X_A$ be an analytic subset. We define a set $B$ using $\Pi_1^1$-comprehension as follows:
\[
\{(b,s)\in A\times\Q_+:\exists x\in X_A(\hat{d}(b,x)<s\land x\in K)\}.
\]
Here, $\hat{d}$ denotes the symmetrized metric. Since $B$ is a countable set, we can enumerate its elements as $\langle(b_n,s_n):n\in\N\rangle$. 
For each $n\in\N$, the condition $\exists x\in X_A(\hat{d}(b,x)<s\land x\in K)$ is a $\Sigma_1^1$ formula. By $\Sigma_1^1$-$\mathsf{AC}$ (which is a consequence of $\PCAo$), there exists a sequence of points $\langle x_n:n\in\N\rangle$ in $X_A$ such that for each $n$, $\hat{d}(b_n,x_n)<s_n$ and $x_n\in K$.
$\{x_n:n\in\N\}$ is dense in $K$, proving that $K$ is separable.
\end{proof}
\begin{prop}\label{sepPi20qP}
The following is provable within $\RCAo$. Let $(X,d)$ be a second-countable Smyth-complete quasi-metric space. Suppose $Y\subseteq X$ is a separable $\mathbf{\Pi}_2^0$ subset that can be represented as an intersection of the form $Y=\bigcap_{i\in\N}(A_i\cup B_i)$ where for each $i\in\N$, $A_i$ and $B_i$ are disjoint.
Then, there exists a quasi-metric $d^{\prime}$ on $Y$ such that $(Y,d^{\prime})$ is homeomorphic to $(Y,d|Y)$ and $(Y,d^{\prime})$ is a second-countable Smyth-complete quasi-metric space.

\end{prop}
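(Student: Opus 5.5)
The plan is to formalize de Brecht's construction from Proposition~\ref{1-pi2-quasipolish} inside $\RCAo$, using the separability of $Y$ to supply the countable code set. Fix a dense sequence $\langle y_k:k\in\N\rangle$ in $Y$ witnessing separability. The new code will be $(A',d')$ with $A'=\N$ indexing the points $y_k$. Write $U_i=B_i$ for the open part, $A_i$ for the closed part (with the convention that $A_i$ is the complement of a coded open set $O_i$), and $F_i=X\setminus U_i$. Since $U_i$ is coded by a sequence of basic balls $B_d(a,r)$, I replace the non-effective quantity $d(x,F_i)$ by the lower-semicontinuous substitute
\[
\rho_i(x)=\sup\{\,r-d(a,x): (n,a,r)\in U_i,\ d(a,x)<r\,\}\wedge 1 ,
\]
which is positive exactly on $U_i$. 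This is a left-c.e. real computable from $x$, and it satisfies $\rho_i(y)\ge\rho_i(x)-d(x,y)$. I then set
\[
d_i(x,y)=
\begin{cases}
\min\{2^{-i},\max\{0,1/\rho_i(y)-1/\rho_i(x)\}\} & x,y\in U_i,\\
2^{-i} & x\in U_i,\ y\in A_i,\\
0 & \text{otherwise},
\end{cases}
\]
and $d'=d+\sum_i d_i$. The key point is that, restricted to the dense points $y_k$, membership in $U_i$ versus $A_i$ is not decidable, but the values $d'(y_k,y_l)$ only need to be real numbers (as sequences of rationals). I obtain them by approximating each $d_i$ with $\min\{2^{-i},\cdot\}$ truncation, so that the tail $\sum_{i>N}d_i\le 2^{-N}$ lets us compute $d'$ to precision $2^{-N}$ from finitely many $d_i$.

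\textbf{Main obstacle: computing $d_i(y_k,y_l)$ to a given precision.} This is the step I expect to be hardest. Disjointness of $A_i$ and $B_i$ lets me define the case split, and $Y\subseteq A_i\cup B_i$ guarantees one case holds. Deciding \emph{which} case holds is only $\Sigma^0_1$ on one side, however, so I would first try to avoid the decision. The idea is to note that $d_i(x,y)$ equals the limit, as $s\to\infty$, of stage-$s$ approximations using $\rho_{i,s}$, where points not yet seen to enter $U_i$ are treated as being in $A_i$. This gives each $d_i(y_k,y_l)$ as a limit of a sequence, not a fast Cauchy sequence. If that fails to produce a genuine $\RCAo$ real, I would fall back on a cruder fix. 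The $2^{-i}$-jump term can be replaced by $2^{-i}\cdot\min\{1,\rho_i(x)\cdot 2^{s}\}$-type continuous approximations, which are monotone left-c.e., and the $\RCAo$ notion of a pseudo-quasi-metric code can be taken to allow $d'$ to be given as a sequence of reals obtained uniformly. Either way, the triangle inequality holds summandwise, exactly as in the classical proof.

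\textbf{Remaining steps.} Once $d'$ is coded, I would verify three things in the same order as in Proposition~\ref{1-pi2-quasipolish}.

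\begin{enumerate}
\item \emph{Well-defined points.} Points of $(Y,d')$ are $\hat d'$-Cauchy sequences of the $y_k$. Each such sequence is $\hat d$-Cauchy, hence is a point of $X$, and lies in $Y$ by the completeness argument below. Conversely, each $x\in Y$ is a $\hat d$-limit of $y_k$'s by separability. The $\delta$-estimate from the compatibility bullet then upgrades this to a $\hat d'$-limit, using $\rho_i(y)\ge\rho_i(x)-d(x,y)$ in place of $d(y,F_i)\ge d(x,F_i)-\delta$.
\item \emph{Homeomorphism.} The identity map is coded in both directions. Balls of $d'$ are $d$-open by the same $\delta$-estimate, and $d'\ge d$ gives the other inclusion. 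Both facts are $\Sigma^0_1$ statements about codes and so are formalizable in $\RCAo$.
\item \emph{Smyth-completeness.} Given an effective $d'$-left-Cauchy sequence, it is $d$-left-Cauchy, so it has a limit $x$ in $X$ by the Smyth-completeness of $X$. To show $x\in A_i\cup B_i$, suppose $x\notin A_i$. Then $x\in O_i$, so cofinitely many terms lie in $O_i$ and hence outside $A_i$, hence in $U_i$. Boundedness of $1/\rho_i$ along the tail then gives $\rho_i(x)>0$. This argument is a single instance per $i$, with no induction beyond $\Sigma^0_1$.
\end{enumerate}

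Finally, convergence in $d'$ follows from the tail bound $\sum_{i>N}d_i\le 2^{-N}$ together with convergence of the finitely many $d_i$ for $i\le N$.
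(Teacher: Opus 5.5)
Your route is the one the paper intends: its proof is only a pointer to the classical argument of Proposition~\ref{1-pi2-quasipolish}, and it addresses neither point below. However, two steps of your proposal do not go through.

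\textbf{First, the given dense sequence cannot serve as the new point set $A'$.} In step~1 you upgrade $\hat d$-convergence $y_k\to x$ to $\hat d'$-convergence using the $\delta$-estimate. That estimate only bounds $d'(x,y)$. In the reverse direction, $d_i(y,x)=2^{-i}$ whenever $y\in U_i$ and $x\in A_i$, however close $y$ is to $x$. Concretely, let $X=[0,1]$ with the usual metric, $A_0=\{0\}$, $B_0=(0,1]$, and $A_i=X$, $B_i=\emptyset$ for $i>0$. Then $Y=[0,1]$; take the dense sequence $\Q\cap(0,1]$. For every dense point $y$ we get $\hat d'(y,0)\ge d_0(y,0)=1$. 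Moreover $\hat d'(y,z)\ge\min\{1,|1/\rho_0(y)-1/\rho_0(z)|\}$ and $\rho_0(y)\le y$. Hence $1/\rho_0$ stays bounded along every $\hat d'$-Cauchy sequence of dense points, and none of them converges to $0$. Your $(A',d')$ therefore codes a copy of $(0,1]$, not $Y$. What the construction needs is a sequence that is $\hat d$-dense in every set $Y\cap\bigcap_{i\in S}A_i$ with $S$ finite; this is equivalent to $\hat d'$-density. Separability of $Y$ alone does not provide such a sequence, so it must be assumed or produced separately. In the setting of Theorem~\ref{Pi20toqP} you could, for instance, run the argument of Proposition~\ref{analyseparable} on each of these sets.

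\textbf{Second, the code $d'$ is never actually obtained.} In $\RCAo$, $d'$ has to be a sequence of reals $d'(y_k,y_l)$, each given as a fast-converging sequence of rationals, and this is exactly the obstacle you leave open. Your $\rho_i$, being a supremum, need not exist as a real in $\RCAo$, since the least-upper-bound principle is equivalent to $\ACAo$. That part is repairable: use the weighted sum $\rho_i(x)=\sum_n 2^{-n}\min\{1,\max\{0,r_n-d(a_n,x)\}\}$ over an enumeration of the balls $(a_n,r_n)$ in the code of $U_i$. This is uniformly a real, is positive exactly on $U_i$, and satisfies $\rho_i(y)\ge\rho_i(x)-2d(x,y)$.

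The serious problem is the case split. $d_i(y_k,y_l)$ does not depend continuously on $(\rho_i(y_k),\rho_i(y_l))$ near $(0,0)$. Computing it requires knowing whether $y_k\in U_i$: when $y_l\in A_i$, the value is $0$ or $2^{-i}$ according to this. That is a $\Sigma^0_1$ property which need not be decidable. Your fallbacks do not fix this:
\begin{itemize}
\item The stage-wise limit has no modulus of convergence, and $\RCAo$ cannot convert it into a real; the same holds for left-c.e.\ approximations.
\item Smoothing the jump term breaks the triangle inequality. If $y\in A_i$ and $x,z\in U_i$ with $1/\rho_i(z)\ge 1/\rho_i(x)+2^{-i}$, then $d_i(x,z)=2^{-i}$ and $d_i(y,z)=0$, which forces $d_i(x,y)=2^{-i}$.
\end{itemize}
In $\ACAo$ the case split and the suprema simply exist, but over $\RCAo$ a genuinely new idea is needed at this step.
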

\begin{proof}
The argument proceeds identically to the proof of Proposition \ref{1-pi2-quasipolish} in Section 2. By applying the same estimation techniques for the reciprocal distance terms, we conclude that $(Y,d^{\prime})$ is a Smyth-complete quasi-metric space.
\end{proof}

We now proceed to derive $\PCAo$ from the assertion that these spaces are homeomorphic to quasi-Polish spaces.

\begin{cor}
Over $\RCAo$, the statement ``every closed subset of a second-countable Smyth-complete quasi-metric space is a $\hat{d}$-separably closed subset'' is equivalent to $\PCAo$.
\end{cor}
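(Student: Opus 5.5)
The forward direction is the easy one. Assume $\PCAo$, let $(X_A,d)$ be a second-countable Smyth-complete quasi-metric space, and let $C\subseteq X_A$ be closed, given as the complement of an open code $U$.

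\begin{enumerate}
\item Closed sets are analytic, since membership in $C$ is a $\Pi^0_1$ property of the sequence. So Proposition~\ref{analyseparable} applies.
\item Following its proof, form by $\Pi^1_1$-comprehension the set $B=\{(b,s)\in A\times\Q_+:\exists x\in C\,\hat d(b,x)<s\}$.
\item Apply $\Sigma^1_1$-$\mathsf{AC}$ to choose, for each $(b_n,s_n)\in B$, a point $c_n\in C$ with $\hat d(b_n,c_n)<s_n$.
\item Check that $\langle c_n\rangle$ codes a $\hat d$-separably closed set equal to $C$.
\end{enumerate}

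For the inclusion $C\subseteq\{x:\forall r\,\exists n\,\hat d(x,c_n)<r\}$: if $x=\langle a_k\rangle\in C$, then for small $r$ we have $(a_k,r/2)\in B$ with $x$ as witness, so some $c_n$ satisfies $\hat d(x,c_n)<r$.

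For the reverse inclusion, suppose every $\hat d$-ball around $x$ meets $\{c_n\}$. Since $C$ is closed in the $d$-topology, it is also $\hat d$-closed, because $\hat d$ refines $d$. Points of $X_A$ are $\hat d$-Cauchy, so $x$ is a $\hat d$-limit of points of $C$. If $x$ were in $U$, say $d(a,x)<r$ for some $(m,a,r)\in U$, then any $c_n$ with $\hat d(x,c_n)$ small enough would satisfy $d(a,c_n)<r$, so $c_n\in U$. That contradicts $c_n\in C$. All of this is provable in $\RCAo$.

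The reverse direction, deriving $\PCAo$, proceeds in two steps.

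First I would get $\ACAo$. I would use $\mc P(\N)$, which is a second-countable Smyth-complete quasi-metric space with countable dense set $\mc P_{\fin}(\N)$. Given an injection $f$, I would take the closed set $C=\{x:\forall m\,(x\cap\{\ldots\})\ldots\}$ chosen so that a sequence of points $\hat d$-dense in $C$ lets one decide $\mathrm{rng}(f)$ in a $\Delta^0_1$ way. A simple choice is $C=\bigcap_m\{x: f(m)\notin x\}$ intersected with a $\Pi^0_1$ condition that forces density witnesses. More directly, $\ACAo$ already follows from the harder argument below if I encode $\Sigma^0_1$ questions as trees that are ill-founded iff the question holds.

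Second, I would get $\PCAo$ by reusing the tree coding from Proposition~\ref{equi-overt-PCAo}. Given a sequence $\langle T_n\rangle$, let $T$ be the combined tree. Take $X=\N^\N$ as a complete metric space, which is a fortiori Smyth-complete quasi-metric with $d$ symmetric. Take $C=[T]$, which is closed. By hypothesis there is a sequence $\langle c_k\rangle$ whose $\hat d$-separable closure is $C$. Every $c_k$ then lies in $C$: it is within $\hat d$-distance $0$ of itself, so $c_k$ is in the coded set, which equals $C$. Hence $T_n$ has a path iff $\exists k\,(c_k(0)=n)$. This is $\Sigma^0_1$ in $\langle c_k\rangle$ and is obtainable by $\ACAo$, so $\{n:T_n\text{ ill-founded}\}$ exists.

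The main obstacle is making $\ACAo$ available in this final step without circularity, and being careful that membership of the $c_k$ in $C$ really follows from the definition of the code. The latter fails if $C$ is empty, so I would handle the nonempty case by adjoining a dummy path. For $\ACAo$, I would apply the same trick to trees $T_n$ with a single path iff $n\in\mathrm{rng}(f)$, for example paths $\langle m,0,0,\ldots\rangle$ with $f(m)=n$. Then $\mathrm{rng}(f)=\{n:\exists k\,c_k(0)\text{ codes }(n,m)\text{ with }f(m)=n\}$. I can make this $\Delta^0_1$ by letting the first coordinate $c_k(0)=\langle n,m\rangle$ directly witness $f(m)=n$, so that $n\in\mathrm{rng}(f)\leftrightarrow\exists k$ with $c_k(0)$ a witness. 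Using the complement tree as well gives a $\Pi^0_1$ definition, so $\Delta^0_1$-comprehension yields $\mathrm{rng}(f)$.
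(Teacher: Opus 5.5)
Your argument is correct and follows the paper's structure. The reversal specializes the hypothesis to complete separable metric spaces, viewed as quasi-metric spaces with $d=\hat d$. The forward direction is the dense-sequence construction using $\Pi^1_1$-comprehension and $\Sigma^1_1$-choice. The only difference is that the paper cites Brown's Theorem 2.12 for both directions (passing to $(A,\hat d)$ for the forward one), while you re-run Brown's argument directly: in $X_A$ for the forward direction, and with trees in $\N^\N$ for the reversal.

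In your $\ACAo$ step, the $\mc P(\N)$ sketch is unfinished, and the remark that $\ACAo$ "already follows from the harder argument" is circular as stated, since that argument itself uses $\ACAo$. Your final complement-tree device suffices on its own. It gives $\N\setminus\mathrm{rng}(f)=\{n:\exists k\,(c'_k(0)=n)\}$, up to the shift for the dummy path. Together with the $\Sigma^0_1$ definition of $\mathrm{rng}(f)$ from $f$, this makes $\mathrm{rng}(f)$ $\Delta^0_1$, so the first tree is not needed.
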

\begin{proof}
It is known that the statement ``every closed subset of a separable complete metric space is a separably closed subset'' is equivalent to $\PCAo$ over $\RCAo$ (cf. Brown \cite[Thm 2.12]{brown1990notions}). Since any separable complete metric space is naturally a second-countable Smyth-complete quasi-metric space (where $d=\hat{d}$), the claim for the quasi-metric case directly implies $\PCAo$.
Conversely, we show the claim holds within $\PCAo$. Let $X$ be a second-countable Smyth-complete quasi-metric space with code $(A,d)$, and let $Y$ be the associated separable complete metric space with code $(A,\hat{d})$. Any $d$-closed subset of $X$ is also a $\hat{d}$-closed subset of $Y$, as the $d$-topology is coarser than the $\hat{d}$-topology.
Under $\PCAo$, every $\hat{d}$-closed subset of $Y$ possesses a code $\langle s_n:n\in\N\rangle$ for a $\hat{d}$-separably closed subset. Since $\hat{d}$-convergence implies $d$-convergence, this sequence $\langle s_n\rangle$ also serves as a code for a $\hat{d}$-separably closed subset in $X$. Thus, the claim is provable within $\PCAo$.
\end{proof} 

By restricting our focus to $\mc P(\N)$, we can say the following.
\begin{cor}
Over $\RCAo$, the statement ``every $\mathbf{\Pi}_2^0$ subset of $\mc P(\N)$ is separable'' implies $\PCAo$.
\end{cor}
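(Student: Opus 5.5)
We reduce the statement to the tree coding from the proof of Proposition~\ref{equi-overt-PCAo}. It is well known that $\PCAo$ is equivalent over $\RCAo$ to the assertion that for every sequence of trees $\langle T_n:n\in\N\rangle$ the set $\{n:T_n\text{ has an infinite path}\}$ exists. We first secure $\ACAo$ and then prove this assertion.

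\textbf{Step 1: obtaining $\ACAo$.} Let $\phi:\N\to\N$ be injective. We consider the $\mathbf{\Pi}_2^0$ (in fact $\mathbf{\Pi}^0_1$-in-the-code) subset
\[
Y_\phi=\bigcap_{m}\Bigl(\mc P(\N)\setminus N_{\{\langle m,\phi(m)\rangle\}}\Bigr)\cap\bigcap_{i}\Bigl(\bigcup_{m}N_{\{\langle m,i\rangle\}}\cup\bigl(\mc P(\N)\setminus N_{\{\langle 0,i\rangle'\}}\bigr)\Bigr),
\]
where the primed coordinates are a fresh copy used as a flag. Any simple variant will do, provided a point containing the flag for $i$ must also contain a witness coordinate, and the witness coordinates are forbidden exactly at pairs $(m,\phi(m))$. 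Then $i\notin\mathsf{rng}(\phi)$ iff $Y_\phi\cap N_{\{\text{flag}_i\}}\neq\emptyset$. Separability gives a sequence $\langle x_k\rangle$ in $Y_\phi$ that is $\hat d$-dense in $Y_\phi$. Since $N_{\{\text{flag}_i\}}$ is $d$-open, hence $\hat d$-open, it meets $Y_\phi$ iff some $x_k$ lies in it. That condition is $\Sigma^0_1$, which makes $\N\setminus\mathsf{rng}(\phi)$ $\Sigma^0_1$. Together with $\mathsf{rng}(\phi)$ being $\Sigma^0_1$, $\Delta^0_1$-comprehension yields $\mathsf{rng}(\phi)$. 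If this encoding is fiddly, the fallback is to note that the $\Pi^1_1$ argument below already yields $\ACAo$ directly, since the tree instance with well-founded trees of height $1$ decides ranges of functions.

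\textbf{Step 2: from separability to $\PCAo$.} Working in $\ACAo$, given $\langle T_n\rangle$ we form the combined tree $T$ and the $\mathbf{\Pi}_2^0$ set $Y=\bigcap_\sigma D_\sigma\cap\bigcap_j E_j$ exactly as in Proposition~\ref{equi-overt-PCAo}. By the claim proved there, $T_n$ has an infinite path iff $Y\cap N_{(n,0)}\neq\emptyset$. Applying the hypothesis gives a sequence $\langle x_k:k\in\N\rangle$ of points of $Y$ that is $\hat d$-dense in $Y$. If $F\in Y\cap N_{(n,0)}$, some $x_k$ is $\hat d$-close enough to $F$, in particular $d(F,x_k)<2^{-(n,0)}$. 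This forces $(n,0)\in x_k$, so $x_k\in N_{(n,0)}$. Hence
\[
\{n: T_n\text{ has an infinite path}\}=\{n:\exists k\,((n,0)\in x_k)\},
\]
and the right-hand side exists by arithmetical comprehension, because membership in a point of $\mc P(\N)$, coded as an increasing sequence of finite sets, is $\Sigma^0_1$.

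\textbf{Main obstacle.} The delicate step is the passage from $\hat d$-density to meeting a basic open set. We must check in $\RCAo$ that, with the point representation of $\mc P(\N)$ and the quasi-metric $d(F,G)=2^{-\min(F\setminus G)}$, the bound $\hat d(F,x_k)<2^{-j}$ implies that $F$ and $x_k$ agree below $j$. This follows from the definition of $d$ with some care about how reals are computed from approximations, which is why we choose $j=(n,0)+1$.
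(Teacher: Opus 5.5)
Your overall route is the paper's: the paper proves this corollary only by pointing to the tree coding in Proposition~\ref{equi-overt-PCAo}. Your Step~2 carries that coding out correctly. The $\hat d$-dense sequence replaces the homeomorphism code, and $d(F,x_k)<2^{-(n,0)}$ does force $(n,0)\in x_k$. Hence $\{n:T_n \text{ has a path}\}=\{n:\exists k\,((n,0)\in x_k)\}$ exists in $\ACAo$. One small repair is needed there. $Y$ is empty if no $T_n$ has a path, and separability as defined requires a sequence of points of $Y$. So add one tree with a path, e.g.\ $\N^{<\N}$, to the list.

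The genuine problem is Step~1: your $Y_\phi$ does not satisfy the equivalence you claim. The flag for $i$ only forces \emph{some} coordinate $\langle m,i\rangle$, while only the single coordinate $\langle m_0,i\rangle$ with $\phi(m_0)=i$ is forbidden. So for $i=\phi(m_0)$, the point $\{\langle 0,i\rangle',\langle m_0+1,i\rangle\}$ lies in $Y_\phi\cap N_{\{\langle 0,i\rangle'\}}$, since injectivity gives $\phi(m_0+1)\neq i$. Thus every flagged piece is nonempty, and the dense sequence tells you nothing about $\mathsf{rng}(\phi)$.

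The flag must force \emph{every} witness. Use $\bigcap_{i,m}\bigl(N_{\{\langle m,i\rangle\}}\cup(\mc P(\N)\setminus N_{\{\langle 0,i\rangle'\}})\bigr)$ instead of your union over $m$. This is still $\mathbf{\Pi}^0_2$ and still contains $\emptyset$. Now $Y_\phi\cap N_{\{\langle 0,i\rangle'\}}\neq\emptyset$ iff $\forall m\,\phi(m)\neq i$, witnessed by the computable point $\{\langle 0,i\rangle'\}\cup\{\langle m,i\rangle:m\in\N\}$. With that change your $\Delta^0_1$-comprehension argument goes through.

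Your fallback does not rescue the step as stated, for two reasons. Trees of height $1$ are all well-founded, so they encode nothing. Also, Step~2 ends with arithmetical comprehension, which you do not yet have. What works is $T_i=\{0^k:\forall m<k\,\phi(m)\neq i\}$, which has a path iff $i\notin\mathsf{rng}(\phi)$. Run Step~2 in $\RCAo$ and close it off with $\Delta^0_1$-comprehension; this is just the corrected Step~1 in another form.
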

\begin{proof}
The argument is identical to the proof of Proposition \ref{equi-overt-PCAo}.
\end{proof}

\begin{prop}
Within $\ACAo$, the following assertions hold for the universal quasi-metric space $\mc P(\N)$.
\begin{enumerate}
\item Every closed subset of $\mc P(\N)$ is separable.
\item Every $G_{\delta}$ subset of $\mc P(\N)$ is separable.
\item Every $\mathbf{\Sigma}_2^0$ subset of $\mc P(\N)$ is separable.
\end{enumerate}
\end{prop}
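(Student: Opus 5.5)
The plan is to use $\ACAo$ to decide, for each finite set $q\in\mc P_{\fin}(\N)$, whether a basic open set (or a finite Boolean combination of basic open sets) meets the given subset. Then I will pick, effectively relative to that information, a witness point for each such nonempty piece. The key observation is that in $\mc P(\N)$ the points $q$ itself ($q$ finite) and $\N\setminus r$ are canonical. So for the three classes considered, nonemptiness questions reduce to arithmetical rather than $\Sigma^1_1$ statements, because a witness can be chosen as the least or greatest element in a suitable sense.

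For (1), let $C=\mc P(\N)\setminus\bigcup_k N_{v_k}$ be closed. Closed sets in $\mc P(\N)$ are downward closed under $\subseteq$. Moreover, $x\in C$ iff every finite $q\subseteq x$ lies in $C$, and membership of a finite $q$ in $C$ is $\Pi^0_1$, namely $\forall k\,(v_k\not\subseteq q)$. By $\ACAo$, form $S=\{q\in\mc P_{\fin}(\N): q\in C\}$ and enumerate it as $\langle x_i\rangle$, each finite set being a point of $\mc P(\N)$. Given $x\in C$, take $y_n=x\cap\{0,\dots,n\}\in S$. Then $\hat d(y_n,x)\le 2^{-n-1}$, since $y_n\subseteq x$ and the two sets agree below $n+1$. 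So $S$ is dense in the sense of the definition.

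For (2), let $G=\bigcap_n U_n$ with $U_n$ open. Open sets are upward closed. Points of $G$ need not be approximated by finite sets inside $G$, so I will use points of the form $q\cup(\N\setminus\{0,\dots,m\})$ for finite $q\subseteq\{0,\dots,m\}$, i.e.\ $\N$ modified on an initial segment. Such a point lies in $G$ iff it lies in each $U_n$, and this is arithmetical ($\Pi^0_2$ in the codes), so $\ACAo$ gives the set $S$ of such pairs $(q,m)$. Given $x\in G$, the point $y_m=(x\cap\{0,\dots,m\})\cup(\N\setminus\{0,\dots,m\})$ contains $x$ and agrees with $x$ below $m+1$. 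Since it contains $x$, it lies in every upward closed $U_n$, hence in $G$. And $\hat d(y_m,x)\le2^{-m-1}$. The one thing to check is that these points are legitimate elements of $\mc P(\N)$ in the decreasing-sequence representation; this is fine, since the sets are computable.

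For (3), let $Y=\bigcup_i(B_i\setminus B_i')$ with $B_i,B'_i$ open. Then each piece $B_i\setminus B_i'$ is the intersection of an open (upward closed) set and a closed (downward closed) set. For $x$ in that piece, the approximants should sandwich: $y_{m}$ agrees with $x$ below $m+1$ and must stay in both. Here lies the main obstacle. Neither pure truncation nor co-truncation works in general, since truncation may leave $B_i$ and co-truncation may enter $B_i'$. My fix is as follows. Since $x\in B_i$, some finite $p\subseteq x$ witnesses $x\in N_p\subseteq B_i$. Then for $m\ge\max p$, the truncation $x\cap\{0,\dots,m\}$ contains $p$, so it is in $B_i$, and it is a subset of $x$, so it stays in the closed set $\mc P(\N)\setminus B_i'$. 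Hence the finite approximations eventually lie in the piece. So I take $S$ to be the arithmetically defined set of finite $q$ lying in some $B_i\setminus B_i'$ (a $\Sigma^0_2$ condition, available in $\ACAo$). Density then follows as in (1), using only tail-end approximants $m\ge\max p$.
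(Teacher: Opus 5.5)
Your proposal is correct and takes essentially the paper's route: finite sets lying in the closed set for (1); points $q\cup(\N\setminus\{0,\dots,m\})$ (the paper's $\sigma$ followed by infinitely many $1$'s) for the $G_\delta$ case; and, for the $\mathbf{\Sigma}^0_2$ case, finite sets lying in some piece $B_i\setminus B_i'$, with each family formed by arithmetical comprehension. Your explicit density checks supply details the paper leaves implicit: open sets are upward closed, closed sets are downward closed, and you only use truncations beyond $\max p$.
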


\begin{proof}
We work within $\ACAo$ and provide the construction for each case:
\begin{enumerate}
\item Let $(0,1,\langle c_i:i\in\N\rangle)$ where $c_i\in\mc P_{\fin}(\N)$ for all $i\in\N$ be a code for a closed set $K\subseteq\mc P(\N)$. We define the set of ``surviving'' finite sequences as $D:=\{\sigma\in\mc P_{\fin}(\N):\forall i(c_i\not\subseteq\sigma)\}$, which is definable by arithmetical comprehension. Since every point in $K$ must avoid all forbidden basic open sets coded by $c_i$, the set of all infinite extensions of sequences in $D$ that do not hit $\mc P(\N)\setminus K$ forms a dense subset of $K$.
\item Let $(0,2,\langle (0,\emptyset),(1,g_{i,m}):i,m\in\N\rangle)$ where $g_{i,m}\in\mc P_{\fin}(\N)$ for all $i,m\in\N$ be a code for a $G_{\delta}$ set, representing the intersection of open sets $U_i=\bigcup_{m\in\N}N_{g_{i,m}}$. We define the set of consistent finite sequences as
 $D:=\{\sigma\in2^{<\N}:\forall i \exists m \forall n\in g_{i,m}(n\in\sigma\lor \lh(\sigma)\leq n)\}$ by arithmetical comprehension. For each $\sigma \in D$, we can effectively construct a point $x_\sigma = \sigma \text{\textasciicircum} \langle 1, 1, \dots \rangle$.  This $x_\sigma$ satisfies the $G_\delta$ condition. The collection of such points $\{x_\sigma : \sigma \in D\}$ constitutes the required dense sequence.
\item Let $(1,2,\langle(1,u_{i,m}),(0,v_{i,m}):i,m\in\N\rangle)$ be a code for a $\mathbf{\Sigma}_2^0$ set, represented as a union of sets $B_i\cap C_i$ where $B_i$ are open and $C_i$ are closed. Specifically, $B_i=\bigcup_{m\in\N} N_{u_{i,m}}$, $C_i=X\setminus \bigcup_{m\in\N} N_{v_{i,m}}$. By arithmetical comprehension, we can define the set of points $D:=\{\sigma\in\mc P(\N):\exists i,m\forall n((\sigma\in N_{u_{i,m}})\land(\sigma\notin N_{v_{i,n}}))\}$. Since each $B_i \cap C_i$ is an intersection of an open set and a closed set, and $\ACAo$ allows us to pick witnesses for the existence of $i$, the union of the dense sequences for each $X_i$ (whenever $X_i \neq \emptyset$) yields a dense sequence for the entire $\mathbf{\Sigma}_2^0$ set.
\end{enumerate}
\end{proof}

Finally, we establish that the assertion ``every UF space is homeomorphic to a quasi-Polish space'' also derives $\PCAo$.

\begin{thm}
The following statements are equivalent over $\RCAo$.
\begin{enumerate}
\item $\PCAo$.
\item Every UF space is homeomorphic to a second-countable Smyth-complete quasi-metric space.
\end{enumerate}
\end{thm}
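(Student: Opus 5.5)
The forward direction $(1)\Rightarrow(2)$ follows by combining earlier results. Given a UF space $\UF(P)$, I first apply Proposition~\ref{UFtoPi20} (provable in $\RCAo$) to obtain a homeomorphism onto a $\mathbf{\Pi}_2^0$ subspace $Y\subseteq\mc P(\N)$. Theorem~\ref{Pi20toqP}, proved in $\PCAo$, then shows $Y$ is homeomorphic to a second-countable Smyth-complete quasi-metric space. Spelled out, Proposition~\ref{analyseparable} makes $Y$ (which is analytic, being $\mathbf{\Pi}_2^0$) separable, and Proposition~\ref{sepPi20qP} produces the Smyth-complete quasi-metric $d'$. Composing the two homeomorphisms only needs composition of continuous codes, which $\RCAo$ handles.

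For $(2)\Rightarrow(1)$ I will reduce to Proposition~\ref{equi-overt-PCAo}. Assume (2) and let $Y\subseteq\mc P(\N)$ be a $\mathbf{\Pi}_2^0$ subspace. I want to show $Y$ is homeomorphic to a UF space, and then Proposition~\ref{equi-overt-PCAo} gives $\PCAo$. The route is as follows. By the $\RCAo$ result preceding Proposition~\ref{NPUFtoUF}, $Y\cong\NP\UF(P)$ for some countable $P$. I also need $Y$ to sit as a UF space to which (2) applies, and here I would rather not pass through UF at all. So instead I exploit Theorem~\ref{qPtoUF} ($\RCAo$: quasi-Polish $\Rightarrow$ UF) together with Proposition~\ref{UFtoPi20}. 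The cleanest plan is to mimic directly the tree argument of Proposition~\ref{equi-overt-PCAo}.

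Given a sequence of trees $\langle T_n\rangle$, I will build a UF space $\UF(Q)$, in $\RCAo$, whose points code paths through $T$ with first entry $n$. Take $Q$ to be the nodes of $T$ (plus a top element) ordered by reverse extension, with the index-raising trick of Proposition~\ref{typicalization} so that unbounded filters correspond exactly to infinite paths. By (2), there is a Smyth-complete code $(A,d)$ and a homeomorphism $h:\UF(Q)\to X_A$. Then $T_n$ has a path iff $h(N_{\langle n\rangle})\neq\emptyset$. Since $h^{-1}$ is a continuous code, this set is a union of $d$-balls $B_d(a,r)$ given $\Sigma^0_1$ in the codes. The crux is that in a quasi-metric space \emph{coded by its points} $A$, every basic ball $B_d(a,r)$ with $r>0$ is nonempty, because the constant sequence at $a$ is a point. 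Hence $T_n$ has a path iff $\exists(k,a,r)$ in the pulled-back code of $h(N_{\langle n\rangle})$, an arithmetical condition. So $\{n: T_n \text{ has a path}\}$ exists by $\ACAo$, which yields $\PCAo$.

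As in Proposition~\ref{equi-overt-PCAo}, I will first obtain $\ACAo$ from (2) by a simpler instance: a poset built from an injection $\phi$, whose nonempty basic sets detect $\mathsf{rng}(\phi)$, with the same ``points of $A$ witness nonemptiness'' argument. The main obstacle is making the nonemptiness transfer rigorous. I must check that the code of $h$ maps a nonempty open set to an open code all of whose listed balls actually meet the image, and conversely that a listed ball really lies in the image of $N_{\langle n\rangle}$. The second inclusion needs care with the definition of continuous codes in $T_0$ spaces (the domain condition), and I expect to use that $h$ is total and bijective with inverse code to argue that each point $a\in A$ pulls back to a filter in $N_{\langle n\rangle}$.
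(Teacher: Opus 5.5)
Your direction $(1)\Rightarrow(2)$ is correct and is the paper's argument (Proposition~\ref{UFtoPi20}, then Propositions~\ref{analyseparable} and~\ref{sepPi20qP}).

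\textbf{The gap in $(2)\Rightarrow(1)$.} The direction $(2)\Rightarrow(1)$ fails at its first step, because the poset $Q$ you need cannot exist. Take $Q$ to be the nodes of $T$ under reverse extension. Then $\UF(Q)$ consists of the infinite paths together with the principal filters $\upcl(\{\sigma\})$ of the leaves $\sigma$. The tricks of Proposition~\ref{typicalization} do not remove these leaf points: part 1 gives $\UF(Q')\cong\UF(Q)$, and part 2 even turns every filter of $Q$ into a point.

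No other $\RCAo$-construction can remove them either, because UF spaces are already overt. Classically every $q\in Q$ lies in a maximal filter $F$. A maximal filter is unbounded: if $r$ were strictly below all of $F$, then $\upcl(\{r\})$ would properly extend $F$. So $N_q\cap\UF(Q)\neq\emptyset$ for every $q\in Q$, and an open set of $\UF(Q)$ is nonempty iff its code is nonempty.

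Now work in the full second-order model, where (2) holds. There your key equivalence ``$T_n$ has a path iff $h(N_{\langle n\rangle})\neq\emptyset$'' would say ``$T_n$ has a path iff $\langle n\rangle\in Q$''. That condition is computable from the trees, so the equivalence fails for suitable computable sequences of trees. The same objection applies to your planned $\ACAo$ step via an injection. Put differently, the fact you single out as the crux (every ball $B_d(a,r)$ contains the point $a$) is something UF spaces already enjoy, so your argument never genuinely uses (2).

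\textbf{The missing idea.} What (2) actually supplies, and what the paper exploits, is that the elements of $A$ are genuine points \emph{dense for the symmetrized metric} $\hat d$. Such points can witness nonemptiness of sets that are $\hat d$-open but not $d$-open.

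The paper places the leaves of $T_i$ above a single descending chain (the $a_n$ of $P_i$). Then every principal filter is bounded, and $\UF(P_i)=[T_i]\cup\{F_\infty\}$ with exactly one spurious point $F_\infty$.

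The set $\UF(P_i)\setminus\{F_\infty\}$ is in general not open: if every node of a path has a leaf child, $F_\infty$ lies in every neighbourhood of that path. However, its preimage in $X_A$ is an open set minus a single point, hence $\hat d$-open. So $T_i$ has a path iff some element of $A$ is sent into $\UF(P_i)\setminus\{F_\infty\}$. In $\ACAo$ the image filter of that element can be computed as a set and compared with $F_\infty$, so the set of such $i$ is arithmetical.

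The paper's $\ACAo$ step works the same way, using $A'=\{x\in A:\hat d(a_0,x)>0\}$, where $a_0\in A$ is the image of the spurious filter. This $\hat d$-density of genuine points is exactly what UF spaces lack without $\PCAo$.
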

\begin{proof}
This proof follows the same strategy as \cite[Thm 2.12]{brown1990notions}.
$(1)\Rightarrow(2)$: By Corollary \ref{UFtoPi20}, every UF space is homeomorphic to a $\mathbf{\Pi}_2^0$ subset of $\mc P(\N)$. Since $\mc P(\N)$ is a second-countable Smyth-complete quasi-metric space, and it is provable in $\PCAo$ that every separable $\mathbf{\Pi}_2^0$ subset of a second-countable Smyth-complete quasi-metric space is a second-countable Smyth-complete quasi-metric space by Propositions \ref{analyseparable} and \ref{sepPi20qP}, the result follows.

$(2)\Rightarrow(1)$: We show that $(2)$ implies $\PCAo$ over $\RCAo$.  
First, we establish $\ACAo$ by showing that the range of any injective function $f:\N\to\N$ exists. 

 We encode the sequence in $2^{<\N}$ as a sequence $(\sigma_n)_n$ such that for each $n$, the finite string $\sigma_n$ is defined by $\sigma_n(n)=1$ and $\sigma_n(m)=0$ for all $m<n$. We define a tree $T$ as follows:
\[
T:=\{\sigma\in 2^{<\N}:\forall m<\lh(\sigma)\,\exists k<\lh(\sigma_{f(m)})(\sigma(k)\neq\sigma_{f(m)}(k))\}
\]
We then define a poset $P=T\cup\{s_n:n\in\N\}$
with the order generated by:
\begin{align*}
\sigma>_P\tau\quad &\text{if } \sigma\prec\tau\\
\sigma>_Ps_n \quad&\text{if } \lh(\sigma)<n\land\forall k<n(\sigma\string^\langle k\rangle\notin P)\\
s_n>_Ps_m\quad&\text{if } n<m
\end{align*}
By assumption $(2)$, $\UF(P)$ is homeomorphic to a second-countable Smyth-complete quasi-metric space $X$ with code $(A,d)$; let $\phi:\UF(P)\cong X$ denote this homeomorphism.
 Let $A=\langle a_i:i\in\N\rangle$. Without loss of generality, we may assume that $a_0=\phi((s_n)_{n\in\N})$. Within $\RCAo$, we define a subset $A^{\prime}\subseteq A$ as:
\[
A^{\prime}=\{x\in A:\hat{d}(a_0,x)>0\}.
\] This $A^{\prime}$ is a $\hat{d}$-dense set of $X\backslash N_{s_0}$ which is homeomorphic to the body of the tree $[T]$. We denote $A^{\prime}=\langle x_k:k\in\N\rangle$. Let $B=\{n\in\N:\sigma_n\in [T]\}=\{n\in\N:\exists k(\sigma_n\subseteq x_k)\}$. By $\Delta_1^0$-comprehension, $B$ exists within $\RCAo$. We have $\forall n(n\notin B\leftrightarrow\exists m(f(m)=n))$ so $\ACAo$ holds.
 
Next, we establish $\PCAo$ over $\ACAo$. Let $(T_i)_{i\in\N}$ be a given sequence of trees. For each $i$, we define a poset $P_i:=T_i\cup\{a_n:n\in\N\}$
with the order generated by:
\begin{align*}
\sigma>_{P_i}\tau \quad &\text{if }  \sigma\prec\tau\\
\sigma>_{P_i}a_n\quad &\text{if }  \lh(\sigma)<n\land\forall k(\sigma\string^\langle k\rangle\notin P_i)\\
a_n>_{P_i}a_m\quad&\text{if } n<m
\end{align*}
Let $P:=\bigsqcup_{i\in\N}P_i$ be the disjoint union of these posets. By assumption $(2)$, $\UF(P)$ is homeomorphic to a second-countable Smyth-complete quasi-metric space $X_A$ represented by the code $(A,d)$. Let $\phi:X_A\to\UF(P)$ be the homeomorphism. We define the set $K=\{i\in\N:\exists j(x_j\in A\land \phi(x_j)\in\UF(P_i)\setminus \{F_{\infty}\}\}$ where $F_{\infty}=\upcl((a_n)_n)$ within $\ACAo$. We can verify that if $T_i$ has no infinite path, the only unbounded filter in $\UF(P_i)$ is $F_{\infty}$. Conversely, any infinite path $x$ of $T_i$ corresponds to an unbounded filter in $\UF(P_i)$ that is distinct from $F_{\infty}$.
Since $X_A$ is a second-countable Smyth-complete quasi-metric space, the existence of such points $x$ (and thus the existence of paths in $T_i$) can be determined using the dense set $A$ and the completeness of $d$. Specifically, $K=\{i\in\N:T_i\text{ has an infinite path}\}$. Since the existence of $K$ is guaranteed by the properties of $X$ in $\RCAo$, $\PCAo$ is proved.
\end{proof}

As a consequence of the equivalence between UF spaces and quasi-Polish spaces, the statement that the class of countably based UF spaces is closed under countable products is readily obtained. However, a direct proof is available within $\ACAo$. 
\begin{prop}[Formalized version of Proposition \ref{1-product-UF}] The following statement is provable within $\ACAo$.
For any pair of countable posets $P$ and $Q$, there exists a countable poset $R$ such that $\UF(P)\times \UF(Q)$ is homeomorphic to $\UF(R)$. Similarly, for any sequence of countable posets $\langle P_n:n\in\N\rangle$, there exists a countable poset $R$ such that $\prod_{n\in\N}\UF(P_n)\cong\UF(R)$.
\end{prop}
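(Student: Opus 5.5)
Within $\ACAo$ I would follow the construction of Proposition~\ref{1-product-UF}, using arithmetical comprehension only to pass to handy posets and to form upward closures. First apply Proposition~\ref{typicalization}.1 to replace $P$ and $Q$ by handy posets, keeping the notation $P,Q$; this already works in $\RCAo$. Put $R=P\times Q$ with $(p,q)\leq_R(p',q')\Leftrightarrow p\leq_P p'\land q\leq_Q q'$; strictness in both coordinates is not needed once $P,Q$ are handy. Define $f(F,G)$ for $F=\langle p_i\rangle\in\UF(P)$, $G=\langle q_i\rangle\in\UF(Q)$ as the decreasing sequence $\langle(p_i,q_i):i\in\N\rangle$. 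It is a filter because $\upcl$ of it is $\upcl(F)\times\upcl(G)$, and compatible pairs are handled coordinatewise. It is unbounded because any strict lower bound $(r,s)$ of all $(p_i,q_i)$ would make $r$ a lower bound of $F$ and $s$ of $G$, and handiness then gives a strict bound in one coordinate. This needs care, so I would instead argue via non-principality: in a handy poset $F$ and $G$ are non-principal, hence so is their product, and I must then check that non-principal filters of $R$ are unbounded.

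For the inverse, given $H=\langle(p_i,q_i)\rangle\in\UF(R)$, set $g(H)=(\langle p_i\rangle,\langle q_i\rangle)$. The main point is that each projection is an unbounded filter. Suppose $r\in P$ satisfies $r<_P p_i$ for all $i$. Then I would use unboundedness of $H$ against the candidate $(r,s)$, where $s$ comes from a suitable lower bound in $Q$. Such an $s$ need not exist, and this is the \textbf{main obstacle}: the product order may admit unbounded filters of $R$ whose projections are bounded. I would resolve this by using the handy presentation of Proposition~\ref{typicalization}.1, where elements carry a level $n$ and unboundedness is equivalent to containing elements of every level. I then redefine $R$ to consist of pairs $((p,n),(q,n))$ at a common level. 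Then $H$ is unbounded iff it meets every level, and that holds iff both projections meet every level, iff both projections are unbounded. This equivalence is arithmetical, which is where $\ACAo$ enters. The codes for $f$ and $g$ are then of the simple form $\{(0,(p,q),p)\}$ etc., as in Proposition~\ref{typicalization}, and the identities $g\circ f=\mathrm{id}$, $f\circ g=\mathrm{id}$ reduce to equality of the generated upward closures, which $\ACAo$ can form as sets (Proposition~\ref{existence-upcl}).

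For countable products, I would adjoin a top $1_n$ to each level-indexed handy $P_n$. Let $R$ be the set of finite tuples $(x_0,\dots,x_k)$ with $x_j$ at level $k$ in $P_j$ for $j\le k$ (the other coordinates implicitly $1$), ordered coordinatewise, with longer tuples lying below. Unboundedness of $H$ again means meeting every level, which forces each coordinate projection to be unbounded. Here $\ACAo$ is used to form the sequence of projections $\langle\upcl(\pi_n(H)):n\in\N\rangle$ uniformly. Continuity and openness are checked on basic sets: $N_{(x_0,\dots,x_k)}$ corresponds to $\bigcap_{j\le k}\pi_j^{-1}(N_{x_j})$, a basic open set of the product topology.
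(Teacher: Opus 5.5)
Your final construction, which uses common-level pairs built from the level-indexed posets of Proposition~\ref{typicalization}.1, is correct. It is a different route from the paper's. Your opening remark that ``strictness in both coordinates is not needed once $P,Q$ are handy'' is false, and the obstacle you then found is exactly the counterexample. Under the coordinatewise order $\leq_P\times\leq_Q$, take any bounded (e.g.\ principal) filter $F'$ of $P$ and any $G\in\UF(Q)$. Then $F'\times G$ is an unbounded filter of $R$, because in a handy poset an unbounded filter has no lower bound at all. So $\UF(R)$ is too big.

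The paper avoids this without levels by taking the strict product order $(p,q)<_R(p',q')\Leftrightarrow p<_Pp'\land q<_Qq'$. Then $R$ is pruned, so every $H\in\UF(R)$ is non-principal and is generated by a sequence strictly decreasing in both coordinates. Each projection is therefore non-principal, hence unbounded by handiness of $P$ and $Q$.

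Your common-level restriction gets the same effect implicitly: between pairs of the same level, any strict comparison is strict in both coordinates, since the level must increase. It also turns unboundedness into the explicit condition ``meets every level''. That makes surjectivity transparent, and likewise the synchronization of coordinates in the countable product, where the paper uses the tuple-length condition $n<k$ in the same role. The paper's version is shorter and works for any handy presentation; yours gives a more concrete description of $\UF(R)$.

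One small point: the equivalence ``meets every level iff unbounded'' is simply provable and is not where comprehension is used. $\ACAo$ enters only where projections or upward closures are formed as sets, as the paper does with $g_1(H),g_2(H)$.
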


\begin{proof}
We can safely assume within $\RCAo$ that $P\subseteq\N$ and $Q\subseteq\N$ are countable handy posets. Let $R=P\times Q$ be the poset equipped with the product order: $(p,q)<_R(p^{\prime},q^{\prime})\Leftrightarrow p<_Pp^{\prime}\land q<_Qq^{\prime}$. We define the continuous mappings between these spaces by their formal codes. Let $\Phi$ be the code for $f:\UF(P)\times\UF(Q)\to\UF(R)$, and let $\Psi_1$, $\Psi_2$ be the codes for the projections $g_1:\UF(R)\to\UF(P)$ and $g_2:\UF(R)\to\UF(Q)$, respectively:
\begin{align*}
\Phi&=\{(m,(p,q),(p,q)):m\in\N,p\in P\land q\in Q\}\\
\Psi_1&=\{(m,p,(p,q)):m\in\N,p\in P\land q\in Q\}\\
\Psi_2&=\{(m,q,(p,q)):m\in\N,p\in P\land q\in Q\}
\end{align*}
For any unbounded filters $F\in\UF(P)$ and $G\in\UF(Q)$, the set $f(F,G):=\{(p,q)\in R:p\in F\land q\in G\}$ is an unbounded filter of $R$. The continuity of $f$ is verified by observing that for any open set $O_U\subseteq\UF(R)$ coded by $U\subseteq\N\times P\times Q$, the code of the preimage $f^{-1}O_(U)$ is $\{((m,p),(m,q))\in\N\times P\times\N\times Q:\exists(m,(p,q))\in U\}$ can be constructed via $\Sigma_0^0$-comprehension. Thus, $f$ is a total continuous function.
The injectivity of $f$ follows directly from its definition. To show surjectivity, let $H\in\UF(R)$ be an unbounded filter. We have $g_{1}(H)=\{p\in P:\exists q\in Q(p,q)\in H\}$ and $g_{2}(H)=\{q\in Q:\exists p\in P(p,q)\in H\}$. These $g_{1}(H)$ and $g_{2}(H)$ are unbounded filters on $P$ and $Q$, respectively. 
Since $H$ is a filter on the product $P\times Q$, it is easily verified that $f(g_1(H),g_2(H))=H$. The continuity of $g_1$ and $g_2$ is similarly established by checking the preimages of open sets in $\UF(P)$ and $\UF(Q)$.
Therefore, $f$ is a homeomorphism between $\UF(P)\times\UF(Q)$ and $\UF(R)$, completing the proof.
The case for a sequence of posets follows by a similar diagonal construction of $R:=\bigcup_{n\in\N}(P_0\times\cdots\times P_n)$ with an ordering as follows: 
\[
(p_{0},\cdots, p_{n})>(p^{\prime}_{0},\cdots,p^{\prime}_{k})\Leftrightarrow\forall i<n (p_{i}>_{P_i}p^{\prime}_{i})\land n<k.
\]
\end{proof} 

\subsection{Countably presented frames}

Since a general treatment of all frames is beyond the scope of second-order arithmetic, we restrict our study to countably presented frames.

\begin{definition}[Countably presented frames]
The following definitions are formulated within $\mathsf{RCA}_0$.
\begin{itemize}
    \item A \textit{countably presented frame} $\langle G; R \rangle$ is a pair consisting of a set of generators $G = \{ g_i \}_{i \in \mathbb{N}}$ and a set of relations $R = \{ (u_j, v_j) \}_{j \in \mathbb{N}}$, where each $u_j$ and $v_j$ is an expression over $G$.
    
    \item An \textit{expression} $a$ of the frame $\langle G; R \rangle$ is a sequence $\langle f_k \rangle_{k \in \mathbb{N}}$ of finite subsets of $G$ (i.e., $f_k \in \mathcal{P}_{\fin}(G)$). Such an expression $a$ is denoted by:
    \[
   a= \bigvee_{k \in \mathbb{N}} \left( \bigwedge_{g \in f_k} g \right).
    \] We write $a\in \langle G\rangle$ or $a\in\langle G; R \rangle$ to mean that $a$ is an expression of $\langle G; R \rangle$. These $\langle G\rangle$ and $\langle G;R\rangle$ are not formal objects, since they denotes classes.
    \item We define formal operations $\bigvee$ and $\bigwedge$ on $\langle G\rangle$ so as to satisfy the infinite distributive law:
    \[
    a \land \left( \bigvee_{k \in \mathbb{N}} b_k \right) = \bigvee_{k \in \mathbb{N}} (a \land b_k)
    \]
    Then, $\langle G\rangle$ can be regarded as a lattice with arbitrary joins.    
    \item For two expressions $a = \bigvee_{i \in \mathbb{N}} \bigwedge_{g \in p_i} g$ and $b = \bigvee_{j \in \mathbb{N}} \bigwedge_{g \in q_j} g$, the preorder $a \leq b$ is defined by:
    \[
    \forall i \in \mathbb{N}\, \exists j \in \mathbb{N} (p_i \supseteq q_j)
    \]
    (Note: $p_i \supseteq q_j$ as sets of generators indicates that the meet over $p_i$ is smaller than the meet over $q_j$ in the lattice order.)
    
\end{itemize}
\end{definition}

To show that the point sets of countably presented frames are homeomorphic to $\mathbf{\Pi}_2^0$ subspaces of $\mc P(\N)$, we characterize the minimal order relation induced by the frame structure using three distinct formulations, $\prec$, $\subseteq_{\mathrm{pt}}$, and $\vdash$. $\prec$ is the most direct way to represent the least congruence preorder. $\subseteq_{\mathrm{pt}}$ refers only to pointlike congruence preorders. The relation $\vdash$ comes from geometric proof theory \cite{deBrecht-unpublished}. See also Fourman/Grayson \cite{fourman1982formal}.



\begin{defi}[congruence preorder, $\RCAo$]
A set $(Q,\prec_Q)$ generates a \textit{congruence preorder} $\prec_m$ of $\langle G;R\rangle$ if it satisfies the following conditions:
\begin{enumerate}
\item  $Q$ is a countable subset of expressions $\{ w_i:i\in\N \}\subseteq\langle G\rangle$ such that $\mc P_{\fin}(G)\subseteq Q$ and $\mathsf{field}(R)\subseteq Q$. 
\item ${\prec_Q}\subseteq Q\times Q$ is a binary relation on $Q$.
\item $R\subseteq {\prec_Q}$.
\item $q\prec_Q q$ for all $q\in Q$. 
\end{enumerate}
Also, for $(p_i)_{i\in\N}\subseteq\mc P_{\fin}(G)$, we define $\varphi(Q,(p_i)_{i\in\N})$ to be the following condition.
\begin{enumerate}
\item $\forall i(p_i\subseteq p_{i+1})$
\item $\forall c,d\in Q\,\forall i(p_i\leq c\prec_Qd\rightarrow\exists k(p_k\leq d))$
\end{enumerate}
For $a,b\in\langle G;R\rangle $, $a\prec_m b$ means: 
\begin{gather*}
\forall (p_i)_{i\in\N}\subseteq\mc P_{\fin}(G)\,((\varphi(Q,(p_i)_{i\in\N})\land p_0\leq a)\rightarrow\exists i\exists r,l\in Q(p_i\leq r\prec_Ql\leq b))
\end{gather*}
\end{defi}

The congruence preorder $\prec_m$ generated by $Q=\mc P_{\fin}(G)\cup \mathsf{field}(R)$ and ${\prec_Q}=\{(q,q):q\in Q\}\cup R$ is the least congruence preorder; we denote it by $\prec$. 


\begin{lem}
Within $\RCAo$, every congruence preorder $\prec_m$ in the above definition satisfies the conditions of congruence preorder in ordinary mathematics: 
\begin{enumerate}
\item $a\leq b\Rightarrow a\prec_m b$.
\item $a\prec_m b, b\prec_m c\Rightarrow a\prec_m c$.
\end{enumerate}
where $a,b,c\in\langle G\rangle$.
\end{lem}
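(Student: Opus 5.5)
We unfold the definition of $\prec_m$ and argue directly about sequences $(p_i)_{i\in\N}$ satisfying $\varphi(Q,(p_i)_i)$. Throughout, for $p\in\mc P_{\fin}(G)$ and an expression $a=\bigvee_k\bigwedge_{g\in f_k}g$, the relation $p\leq a$ means $\exists k\,(p\supseteq f_k)$. This is a $\Sigma^0_1$ condition. The basic observation we need is that $\leq$ on $\langle G\rangle$ is transitive in the relevant sense: if $p\leq a$ and $a\leq b$, then $p\leq b$. Indeed, from $p\supseteq f_k$ and $f_k\supseteq q_j$ for some $j$, we get $p\supseteq q_j$. This needs only the arithmetic already available in $\RCAo$, with no comprehension.

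For (1), assume $a\leq b$. Let $(p_i)_i$ satisfy $\varphi(Q,(p_i)_i)$ with $p_0\leq a$. We take $i=0$ and $r=l=p_0$. This is legitimate because $\mc P_{\fin}(G)\subseteq Q$ and $\prec_Q$ is reflexive, so $p_0\prec_Q p_0$. Then $p_0\leq r$ holds trivially, and $l=p_0\leq a\leq b$ gives $l\leq b$ by the transitivity observation. This establishes $a\prec_m b$.

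For (2), assume $a\prec_m b$ and $b\prec_m c$. Let $(p_i)_i$ satisfy $\varphi(Q,(p_i)_i)$ with $p_0\leq a$. By $a\prec_m b$ we obtain $i$ and $r,l\in Q$ with $p_i\leq r\prec_Q l\leq b$. Clause 2 of $\varphi$ then gives some $k$ with $p_k\leq l$, and hence $p_k\leq b$.

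The key step is to shift the sequence. Define $p'_j:=p_{k+j}$; this exists by $\Delta^0_1$-comprehension. The shifted sequence is still increasing. It still satisfies clause 2 of $\varphi$, because any hypothesis $p'_j\leq c\prec_Q d$ is an instance for the original sequence, and the resulting $p_{k'}\leq d$ can be pushed to an index $\geq k$ by monotonicity. Monotonicity here means: $p_{k'}\subseteq p_{k''}$ for $k''\geq k'$, and enlarging a finite set preserves $p\leq d$. Since $p'_0=p_k\leq b$, we may apply $b\prec_m c$ to $(p'_j)_j$. This yields $j$ and $r,l\in Q$ with $p_{k+j}\leq r\prec_Q l\leq c$, which is exactly the witness required for $a\prec_m c$.

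The main obstacle is this shifting step. One has to check carefully that $\varphi$ is preserved under taking tails, and that the universal quantifier over sequences in the definition of $\prec_m$ can be instantiated with the tail. Both checks rely on the increasing condition $p_i\subseteq p_{i+1}$, which makes $p_i\leq x$ upward-persistent along the sequence. The remaining verifications are routine bookkeeping with finite sets and need only $\Sigma^0_0$ reasoning inside $\RCAo$.
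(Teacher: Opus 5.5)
Your proof is correct and follows essentially the same route as the paper. For (1) you use the witness $i=0$, $r=l=p_0$ via reflexivity of $\prec_Q$; for (2) you extract $k$ with $p_k\le l\le b$ from clause 2 of $\varphi$ and apply $b\prec_m c$ to the tail $(p_{k+j})_j$. Your explicit check that $\varphi$ is preserved under taking tails, using $p_i\subseteq p_{i+1}$, fills in a step the paper only asserts.
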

\begin{proof}
\begin{enumerate}
\item Let us assume $a\leq b$ for $a,b\in\langle G\rangle$. Then if a sequence $(p_n)_{n\in\N}$ satisfies $p_0\leq a\land\varphi(Q,(p_i)_{i\in\N})$, then $p_0\leq a\leq b$. In particular, $p_0\leq p_0\prec_Qp_0\leq b$, so $a\prec_m b$. 
\item Let us assume $a\prec_m b, b\prec_m c$. Every sequence $(p_n)_{n\in\N}$ such that $p_0\leq a\land\varphi(Q,(p_i)_{i\in\N})$ has $i\in\N$ and $r,l\in Q$ such that $p_i\leq r\prec_Q l\leq b$. Then there is a $k\in\N$ such that $p_k\leq l$ by the assumption of $(p_n)_n$. Then take $(q_m)_{m\in\N}$ by $q_{m}=p_{m+k}$. This $(q_m)_m$ satisfies $q_0\leq b\land\varphi(Q,(q_m)_{m\in\N})$. So $\exists h,f\in Q\,\exists j(q_j\leq h\prec_Q f\leq c)$. So we have $p_{k+j}\leq h\prec_Q f\leq c$.
\end{enumerate}
\end{proof}
Also, this definition yields the expected closure properties: if $a_i\prec_m b$ for all $i\in I$, then $\bigvee_{i\in I}a_i\prec_m b$; and if $a\prec_m b_i$ for all $i\in I$ with $I$ finite, then $a\prec_m \bigwedge_{i\in I}b_i$.

\begin{defi}[point of a frame, $\subseteq_{pt}$]
A point of a frame generated by $(G,R)$ is a decreasing sequence $x=(p_i)_{i\in\N}$ of elements of $\mc P_{\fin}(G)$ such that the relation $\prec_x$ on $Q$ defined by
\[
p\prec_x q \iff \exists k\, p_k\leq p\rightarrow \exists h\, p_h\leq q
\]
is a congruence preorder of $\langle G;R\rangle$, where $Q$ is a countable set of expressions containing $\mc P_{\fin}(G)$ and $\mathsf{field}(R)$.
If $X=\{g\in G:\exists i(p_i\leq g)\}$ exists and $\xi:G\to\{0,1\}$ is the characteristic function of $X$, then we can say that:
\[
a\prec_{x} b\iff \forall i\xi(p_i)=0\lor\exists j\xi(q_j)=1
\]
for any two expressions $a=\langle p_i\in\mc P_{\fin}(G):i\in\N \rangle$ and $b=\langle q_j\in\mc P_{\fin}(G):j\in\N \rangle$. 

In other words, the characteristic function $\xi:G\to\{0,1\}$ of $X$ corresponds to a frame morphism from $\langle G;R\rangle$ to the two-element frame $2=\mc P(1)$. Furthermore, the \textit{pointlike congruence preorder} $a\subseteq_{\mathrm{pt}} b$ is defined to hold if and only if $a\prec_x b$ for every point $x=(p_i)_{i\in\N}$ of $\langle G;R\rangle$. Note that this definition is expressed by a $\Pi_1^1$ formula, as it involves a universal quantifier over the set of all points.
\end{defi}

\begin{defi}[Proof-theoretic formulation of $\vdash$, $\RCAo$]
Let $a=\bigvee p_k$, $b=\bigvee q_j$ be expressions in a countably presented frame $\langle G;R\rangle$.
We define the \textit{deductive relation} $a\vdash b$ to hold if and only if there exists a well-founded tree $T\subseteq\N^\N$ and a sequence of pairs of expressions $\langle (a_\sigma,b_\sigma):\sigma\in T\rangle$ such that for every node $\sigma\in T$, the pair $(a_\sigma,b_\sigma)$ satisfies one of the following conditions:
\begin{enumerate}
\item There is $(u,v)\in R$ such that $(a_\sigma,b_\sigma)=(a_\sigma\land u,a_\sigma\land v)$, or $a_\sigma\leq b_\sigma$ (Base case).
\item There is an expression $c$ such that $(a_{\sigma\string^\langle0\rangle},b_{\sigma\string^\langle0\rangle})=(a_\sigma,c)$ and $(a_{\sigma\string^\langle1\rangle},b_{\sigma\string^\langle1\rangle})=(c,b_\sigma)$ (Cut).
\item If $a_\sigma=\bigvee_{i\in\N}p_i$, then for each $i$ the child $\sigma\string^\langle i\rangle$ belongs to $T$ and $(a_{\sigma\string^\langle i\rangle},b_{\sigma\string^\langle i\rangle})=(p_i,b_\sigma)$ (Left Join-Introduction).
\item If $b_\sigma=\bigvee_{h\in\N}q_h$, then there exist an $h$ and a child $\sigma\string^\langle h\rangle\in T$ such that $(a_{\sigma\string^\langle h\rangle},b_{\sigma\string^\langle h\rangle})=(a_\sigma,q_h)$ (Right Join-Introduction).
\end{enumerate}
The deduction is successful if the root $\langle\rangle \in T$ corresponds to the initial pair $(a,b)$; i.e., $a_{\langle\rangle}=a$ and $b_{\langle\rangle}=b$.

\end{defi}

We have now defined three types of order relations for the frame $\langle G; R \rangle$. For the rest of this section, we shall establish that these are spatial.

\begin{defi}[Spatiality, $\RCAo$]
A countably presented frame $\langle G; R \rangle$ with the preorder $\triangleleft$ is said to be \textit{spatial} if there exists a topological space $X$ and an isomorphism $f:\langle G;R\rangle\to \mc O(X)$ (where $\mc O(X)$ denotes the frame of open sets on $X$,each of whose members is a code for an open set) such that for every pair of expressions $(a,b)$, the following equivalence holds: 
\[
a \triangleleft b\Leftrightarrow \forall x\in X(x\in O_{f(a)}\rightarrow x\in O_{f(b)}).
\]
 where $O_{f(a)}$ and $O_{f(b)}$ denotes the open set coded by $f(a)$ and $f(b)$, respectively.
Equivalently, the frame is spatial if any failure of the deductive relation is witnessed by a point:
\[
a \ntriangleleft b \Leftrightarrow \exists x\in X(x\in O_{f(a)}\land x\notin O_{f(b)}).
\]
\end{defi}

\begin{prop}[$\RCAo$]\label{precspatial}
 Every countably presented frame $\langle G;R\rangle$ with the preorder $\prec$ is spatial with some $\mathbf{\Pi}_2^0$ subspace of $\mc P(\N)$.
\end{prop}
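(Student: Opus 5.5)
We take $X$ to be the point space of the frame, embedded in $\mc P(G)\cong\mc P(\N)$ via the generators. Concretely, let
\[
X=\bigcap_{(u,v)\in R}\bigl((\mc P(\N)\setminus U_u)\cup U_v\bigr),
\]
where for an expression $a=\bigvee_k\bigwedge_{g\in f_k}g$ we set $U_a=\bigcup_k N_{f_k}$, an open set whose code is obtained directly from the sequence $\langle f_k\rangle$ by $\Delta^0_1$-comprehension. Each factor is of the form (closed) $\cup$ (open), so after the standard rewriting into the format of Definition~\ref{defi:Borel-sets} this is a $\mathbf{\Pi}^0_2$ subspace. The map $f$ sends $a$ to the code of $U_a\cap X$. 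A point $x\in X$ is a decreasing sequence of finite subsets of $G$; as in the definition of points of a frame, it corresponds to the frame morphism $\xi$ with $\xi(a)=1$ iff some $p_i\le a$. The definition of $X$ says exactly that $\xi$ respects $R$. Hence points of $X$ and points of $\langle G;R\rangle$ coincide, which we verify in $\RCAo$ by unwinding the definitions.

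Soundness ($a\prec b\Rightarrow U_a\cap X\subseteq U_b\cap X$) comes next. Fix $a\prec b$ and $x\in X\cap U_a$, and let $(p_i)$ be the representing sequence of $x$, chosen with $p_0\le a$ after shifting. We claim $\varphi(Q,(p_i))$ holds for $Q=\mc P_{\fin}(G)\cup\mathsf{field}(R)$ with $\prec_Q=\mathrm{id}\cup R$. The only nontrivial clause is $p_i\le u$ with $(u,v)\in R$; then $x\in U_u$, so $x\in U_v$ because $x\in X$, and therefore some $p_k\le v$. The definition of $a\prec b$ then yields $p_i\le r\prec_Q l\le b$, and the same reasoning applied once more gives $p_k\le b$, i.e. $x\in U_b$. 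Everything here is arithmetic in the parameters and needs no comprehension beyond $\RCAo$.

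The main obstacle is completeness: from $a\not\prec b$ we must produce a point in $U_a\setminus U_b$. Here the paper's definition of $\prec$ is designed to help. The relation $a\prec b$ is a universal statement over sequences $(p_i)$, so its failure directly supplies a sequence $(p_i)$ with $\varphi(Q,(p_i))$, $p_0\le a$, and no $i,r,l$ with $p_i\le r\prec_Q l\le b$. In particular, taking $r=l=p_i$ shows that no $p_i\le b$. We then verify that $(p_i)$, read as an increasing sequence of finite sets, i.e. a decreasing sequence in $\mc P_{\fin}(\N)$, is a point of $\mc P(\N)$ lying in $X$. For each $(u,v)\in R$: if $x\in U_u$, then some $p_i\le u$ (here we must check that membership in $U_u$ means some single $p_i$ refines a disjunct of $u$, which holds because the $p_i$ increase and the disjuncts are finite); clause (2) of $\varphi$ then gives some $p_k\le v$, so $x\in U_v$. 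Thus $x\in X\cap U_a\setminus U_b$. The delicate point to check carefully is the coding mismatch. A point of $\mc P(\N)$ in the paper is a sequence with $q_i\subseteq\{0,\dots,i\}$, so we will re-index $(p_i)$ by a primitive recursive thinning, namely $q_n=p_{i}\cap\{0,\dots,n\}$ for suitable $i$, and confirm that this preserves $x=y$ as filters. Apart from that, the argument stays within $\RCAo$, since no set of the form $\upcl$ is ever formed; all membership statements are $\Sigma^0_1$ in the sequence.
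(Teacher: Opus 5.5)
Your proposal is correct and takes essentially the same route as the paper. It uses the same $X=\bigcap_{(u,v)\in R}\bigl((\mc P(\N)\setminus U_u)\cup U_v\bigr)$; completeness comes from directly unwinding $a\nprec b$ into a witnessing sequence $(p_i)$ with $\varphi(Q,(p_i))$ and $p_0\le a$; and soundness is the contrapositive of the paper's second bullet, namely shifting the point's sequence so that $p_0\le a$ and checking $\varphi$ for $Q=\mc P_{\fin}(G)\cup\mathsf{field}(R)$, $\prec_Q=\mathrm{id}\cup R$. Your explicit re-indexing into the normalized form of points of $\mc P(\N)$ (e.g.\ $q_n=p_n\cap\{0,\dots,n\}$) supplies a detail the paper leaves implicit.
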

\begin{proof}
Let $f:G\to\N$ be a bijection mapping basic generators $g_i$ to natural numbers $i$. This induces an isomorphism of power sets $f:\mc P(G)\to\mc P(\N)$. Define the subspace $X\subseteq\mc P(\N)$ by:
\[
X=\bigcap_{i\in\N,(u_i, v_i)\in R}(O_{f(v_i)}\cup(\mc P(\N)\setminus O_{f(u_i)})).
\] We show that $a\nprec b\Leftrightarrow \exists x\in X(x\in O_{f(a)}\land x\notin O_{f(b)})$ for all expressions $a,b$.
\begin{itemize}
\item $a\nprec b\Rightarrow \exists x\in X(x\in O_{f(a)}\land x\notin O_{f(b)})$: Assume $a\nprec b$. Unwinding the definition of $a\nprec b$, there exists a congruence preorder generated by $(Q,\prec_Q)$ and a sequence $(p_i)_{i\in\N}\subseteq\mc P_{\fin}(G)$ such that  $(p_0\leq a)\land\varphi(Q,(p_i)_{i\in\N})$ and $\forall i\,\forall r,l\in Q\lnot(p_i\leq r\prec_Ql\leq b)$. Indeed, the decreasing sequence $(p_i)_{i\in\N}$ represents a point $y \in X$ and $y\in O_{f(a)}\setminus O_{f(b)}$.
\item $\exists x\in X(x\in O_{f(a)}\land x\notin O_{f(b)})\Rightarrow a\nprec b$: 
Assume there exists $x \in X$ such that $x \in f(a) \land x \notin f(b)$. Let $x$ be represented by the sequence $(q_i)_{i \in \mathbb{N}}$. Let $(Q,\prec_Q)$ be $Q=\mc P_{\fin}(G)\cup \mathsf{field}(R)$ and ${\prec_Q}=\{(q,q):q\in Q\}\cup R$. Since $x\in f(a)$, there exists $k$ such that $q_{k}\leq a$. Let us define $(p_i)_{i\in\N}$ by $p_i=q_{i+k}$. This $(p_i)_{i\in\N}$ satisfies the properties: $(p_0\leq a)$, $\varphi(Q,(p_i)_{i\in\N})$ and $\forall i\,\forall r,l\in Q\lnot(p_i\leq r\prec_Ql\leq b)$.
\end{itemize}
\end{proof}

\begin{cor}[$\RCAo$]
 The following representations of spaces are equivalent. 
\begin{enumerate}
\item A countably presented frame $\langle G;R\rangle$ with $\prec$
\item A countably presented frame $\langle G;R\rangle$ with $\subseteq_{\mathrm{pt}}$
\item A $\mathbf{\Pi}_2^0$ subspace of $\mc P(\N)$ 
\end{enumerate}
\end{cor}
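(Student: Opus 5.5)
We prove the corollary by a cycle of conversions among the three representations, each carried out in $\RCAo$. The plan is (1)$\Rightarrow$(3) via Proposition \ref{precspatial}, then (3)$\Rightarrow$(1) and (3)$\Rightarrow$(2) by an explicit presentation, and finally (2)$\Rightarrow$(3) by showing that $\subseteq_{\mathrm{pt}}$ and $\prec$ agree on every countably presented frame.

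For (1)$\Rightarrow$(3) we simply invoke Proposition \ref{precspatial}. For a presentation $\langle G;R\rangle$, the space
\[
X=\bigcap_{(u,v)\in R}\bigl(O_{f(v)}\cup(\mc P(\N)\setminus O_{f(u)})\bigr)
\]
is a $\mathbf{\Pi}_2^0$ subspace of $\mc P(\N)$, and it witnesses spatiality for $\prec$.

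For (3)$\Rightarrow$(1), start from a $\mathbf{\Pi}_2^0$ subspace $Y=\bigcap_n(A_n\cup B_n)$ with $A_n$ closed and $B_n$ open. Write $A_n=\mc P(\N)\setminus\bigcup_k N_{c_{n,k}}$ and $B_n=\bigcup_h N_{r_{n,h}}$. Take generators $G=\{g_i:i\in\N\}$ and relations
\[
R=\Bigl\{\bigl(\textstyle\bigvee_k\bigwedge_{g\in c_{n,k}}g,\ \bigvee_h\bigwedge_{g\in r_{n,h}}g\bigr):n\in\N\Bigr\}.
\]
Each relation expresses $\mc P(\N)\setminus A_n\subseteq B_n$, which is exactly $A_n\cup B_n=\mc P(\N)$ in the required form. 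This $R$ exists by $\Delta^0_1$-comprehension. The space $X$ that Proposition \ref{precspatial} attaches to $\langle G;R\rangle$ is then literally $Y$, after identifying $G$ with $\N$. So the identity map is the homeomorphism, and spatiality for $\prec$ again comes from Proposition \ref{precspatial}.

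The core of the proof is the equivalence $a\prec b\leftrightarrow a\subseteq_{\mathrm{pt}}b$, which gives (2)$\leftrightarrow$(1) and hence (2)$\leftrightarrow$(3).

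First, $a\prec b\Rightarrow a\subseteq_{\mathrm{pt}} b$. Each point $x=(p_i)_i$ defines a congruence preorder $\prec_x$ containing $R$. The argument of Proposition \ref{precspatial}, in the direction from a point to the failure of $\prec$, applies verbatim: if $x$ lies in $O_a$ but not in $O_b$, then the shifted sequence $(p_{i+k})_i$ satisfies $\varphi(Q,\cdot)$ for the least $(Q,\prec_Q)$ and witnesses $a\nprec b$.

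Conversely, suppose $a\nprec b$. The first half of the proof of Proposition \ref{precspatial} yields a decreasing sequence $(p_i)_i$ with $p_0\leq a$ and $\varphi(Q,(p_i)_i)$ that never lands below $b$. We then check that this sequence is a point in the sense of the definition of $\subseteq_{\mathrm{pt}}$, i.e.\ that $\prec_x$ respects $R$. This is exactly the closure clause in $\varphi$: if $p_i\leq u$ and $(u,v)\in R$, then some $p_k\leq v$. The resulting point lies in $O_a$ and not in $O_b$, so $a\not\subseteq_{\mathrm{pt}}b$.

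Both directions use only the definitions unwound as in Proposition \ref{precspatial}, with no set existence beyond $\Delta^0_1$-comprehension. In particular we never form $\{g:\exists i\,(p_i\leq g)\}$, since we work with sequences throughout.

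The main obstacle is the formal status of $\subseteq_{\mathrm{pt}}$, which is given by a $\Pi^1_1$ formula. We must ensure that spatiality for it is a statement about the same space $X$ rather than a new set-theoretic object. Our plan is to identify the points of $\langle G;R\rangle$ directly with elements of $X$ represented as decreasing sequences, the representation already used for $\mc P(\N)$. Once this is done, the equivalence
\[
a\not\subseteq_{\mathrm{pt}}b\Leftrightarrow\exists x\in X\,(x\in O_{f(a)}\land x\notin O_{f(b)})
\]
becomes a definitional unfolding. One detail needs care here: the closure clause of $\varphi$ must be checked only against $Q=\mc P_{\fin}(G)\cup\mathsf{field}(R)$, so the argument never quantifies over arbitrary expressions.
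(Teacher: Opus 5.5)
Your proposal is correct and follows the paper's route. You get (1)$\Rightarrow$(3) from Proposition~\ref{precspatial}, and the converse by reading off the presentation $\langle\N;R\rangle$ from the $\mathbf{\Pi}^0_2$ code, so that the space attached by that proposition is the given subspace; (2) is handled by redoing the same unwinding. Your explicit check that $a\prec b\leftrightarrow a\subseteq_{\mathrm{pt}}b$ uses the fact that points of the frame are exactly the increasing sequences in $\mc P_{\fin}(G)$ satisfying the $R$-closure clause of $\varphi$, and it is simply the unpacked content of the paper's one-line remark that ``the same proof applies to $\subseteq_{\mathrm{pt}}$.''
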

\begin{proof}
Every countably presented frame with $\prec$ can be written as an open family of a $\mathbf{\Pi}_2^0$ subspace of $\mc P(\N)$ by Proposition~\ref{precspatial}. The same proof applies to $\subseteq_{\mathrm{pt}}$. Conversely, let $X=\bigcap_{i\in\N}(O_{V_i}\cup(\mc P(\N)\setminus O_{U_i}))$ be a $\mathbf{\Pi}_2^0$ subspace of $\mc P(\N)$. The open family of $X$ is clearly presented by a frame of the form $\langle \N;R\rangle$, where $R$ consists of the relations $(u_i,v_i)$ where each $u_i,v_i$ is the expression associated with a code of an open set $U_i,V_i$, respectively. The formalization of this transition follows the same logic as in Proposition \ref{precspatial}, ensuring that the construction remains effective within $\RCAo$.
\end{proof}

\begin{prop}[$\ACAo$]\label{proofspatial}
 Every countably presented frame $\langle G;R\rangle$ with the preorder $\vdash$ is spatial with some $\mathbf{\Pi}_2^0$ subspace of $\mc P(\N)$.
\end{prop}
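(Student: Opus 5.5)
We take the same subspace as in Proposition~\ref{precspatial}, namely
\[
X=\bigcap_{(u,v)\in R}\bigl(O_{f(v)}\cup(\mc P(\N)\setminus O_{f(u)})\bigr)\subseteq\mc P(\N),
\]
with $f$ the bijection $G\to\N$. We then prove, within $\ACAo$, that for all expressions $a,b$,
\[
a\vdash b\iff\forall x\in X\,(x\in O_{f(a)}\rightarrow x\in O_{f(b)}).
\]
The proof splits into soundness (left to right) and completeness (right to left, by contraposition).

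\textbf{Soundness.} Fix a deduction tree $T$ with labels $(a_\sigma,b_\sigma)$ and a point $x\in X$. We claim that every node $\sigma$ is \emph{valid}, i.e.\ $x\in O_{f(a_\sigma)}\rightarrow x\in O_{f(b_\sigma)}$. Suppose not. Using $\ACAo$, form the arithmetical set $S$ of invalid nodes. Each invalid node has an invalid child:
\begin{itemize}
\item A base node is valid, since $x\in X$ handles the $R$-case and $a_\sigma\le b_\sigma$ handles the other.
\item For a Cut node, if both children were valid then $\sigma$ would be valid.
\item For Left Join, $x\in O_{f(a_\sigma)}$ gives some $p_i$ with $x\in N_{p_i}$, and the child $\sigma^\frown\langle i\rangle$ is then invalid.
\item For Right Join, the designated child is invalid whenever $\sigma$ is.
\end{itemize}
Choosing the least invalid child at each step (arithmetical in $x$ and $T$, so available in $\ACAo$) gives an infinite path through $T$, contradicting well-foundedness. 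Hence the root is valid. This direction is essentially arithmetical transfinite induction along $T$, phrased as path-building so that it stays inside $\ACAo$.

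\textbf{Completeness.} Assume $a\nvdash b$, where $a=\bigvee_k p_k$ and $b=\bigvee_j q_j$. First, some $p_k$ has $p_k\nvdash b$; otherwise the deductions for the $p_k$ could be glued by a Left Join step. The catch is that choosing these subproofs uniformly is $\Sigma^1_1$-choice, so we will instead argue using only finite, arithmetical information. The plan is a Henkin/Rasiowa--Sikorski style construction: build an increasing sequence $s_0\subseteq s_1\subseteq\cdots$ of finite sets of generators with $s_0=p_k$, maintaining the invariant $\bigwedge s_n\nvdash b$. At stage $n$, handle the $n$-th relation $(u,v)\in R$. If $\bigwedge s_n\land u$ fails to derive $\bigwedge s_n\land v$ trivially, we use Cut to see that either $s_n\nvdash u$ (do nothing) or $s_n\land v\nvdash b$, and in the latter case we add a disjunct of $v$ via the Left Join rule. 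The limit $x=\bigcup_n s_n$ then lies in $X\cap O_{f(a)}\setminus O_{f(b)}$, because $q_j\le\bigwedge s_n$ would give a base-case proof of $b$.

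\textbf{Main obstacle.} The relation $\vdash$ is $\Sigma^1_1$, so the case splits in the construction above are not arithmetical, and $\ACAo$ cannot decide them directly. To get around this, I will replace ``$c\nvdash d$'' by an arithmetical \emph{consistency} notion. Define $c\vdash_\alpha d$ via finite-height (or $\omega$-step) derivations, which form an arithmetical inductive closure in the geometric-logic style of Fourman/Grayson. First show, by the soundness argument, that $\vdash_\alpha$ proves only true inclusions. Then run the Henkin construction with the invariant ``$s_n\land\neg b$ is not refuted at any finite level of the arithmetical closure''. That invariant can be decided using $\ACAo$ (a fixed number of Turing jumps). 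Finally, check that the constructed point still satisfies every relation and avoids $b$. Whether the finite-level closure suffices to carry out the Left Join split (which needs infinitely many children) is the delicate point; if it does not, I would instead fall back on a $\Pi^0_2$-definable point built directly from the syntax of the failed tree search.
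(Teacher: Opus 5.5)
Your soundness direction is correct and is the paper's argument: use $\ACAo$ to form the set of invalid nodes (i.e.\ the characteristic function of $x$), then descend through invalid children to contradict well-foundedness.

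The gap is in completeness. Your Henkin construction has to decide a non-arithmetical property at two places:
\begin{itemize}
\item at the first step, to pick a disjunct $p_k$ with $p_k\nvdash b$;
\item at every Left Join step, to pick a disjunct $v_j$ with $\bigwedge s_n\land v_j\nvdash b$.
\end{itemize}
The proposed arithmetical surrogate does not repair this, because derivations of finite height are not closed under Left Join. If each $s\land v_j\vdash b$ holds only with heights $n_j$ that are unbounded, the glued derivation has height $\omega$. So ``not refuted at any finite level'' can hold for $s\land v$ while failing for every $s\land v_j$, and the construction gets stuck. The same happens at any fixed ordinal level. Moreover, Cut quantifies over arbitrary expressions $c$, which are sets, so even bounded-height derivability is not obviously arithmetical.

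In fact no arithmetically defined invariant $I$ can work. Suppose $a\nvdash b$ implies $I(a,b)$, and $I(a,b)$ lets the construction output a witness point. Then soundness forces $I$ to coincide with $\nvdash$. By the tree encoding in Proposition~\ref{equi-overt-PCAo}, that relation is $\Sigma^1_1$-complete in general.

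A smaller issue: treating the $n$-th relation only at stage $n$ is not enough. A disjunct of $u$ may enter $s_m$ later, so every relation must be rechecked at all later stages.

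The missing idea is to make no decisions at all. With $\ACAo$, build the canonical proof-search tree rooted at $(a,b)$:
\begin{itemize}
\item apply Left Join over all disjuncts of $a_\sigma$;
\item when $a_\sigma$ is a finite conjunction with $a_\sigma\not\le b_\sigma$, take the least $i<\lh(\sigma)$ with $a_\sigma\le u_i$ and $a_\sigma\not\le v_i$; Cut through $a_\sigma\land v_i$ (left premise closed by the base case for $(u_i,v_i)$), then Left Join over the disjuncts of $v_i$;
\item if no such $i$ exists, pad with a trivial Cut;
\item make nodes with $a_\sigma\le b_\sigma$ leaves.
\end{itemize}
The conditions involved are $\Sigma^0_1$, so the tree exists.

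If the tree were well-founded, it would itself be a deduction of $a\vdash b$. Hence $a\nvdash b$ means it is ill-founded, which in second-order arithmetic literally means an infinite path exists. That path makes all the disjunct choices for you. The union of the finite conjunctions along it is a point with three properties:
\begin{itemize}
\item it lies in $O_{f(a)}$;
\item it satisfies every relation, since a triggered relation stays triggered until applied, and the bound $i<\lh(\sigma)$ ensures it is applied after finitely many steps;
\item it avoids $O_{f(b)}$, since otherwise the path would have reached a leaf.
\end{itemize}
This is what your fallback gestures at. But the point is not ``$\Pi^0_2$-definable'': it is just a path whose existence is the content of ill-foundedness, and $\ACAo$ is used only to form the search tree.
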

\begin{proof}
Let $f:G\to\N$ be a bijection mapping basic generators $g_i$ to natural numbers $i$. This induces an isomorphism of power sets $f:\mc P(G)\to\mc P(\N)$. Define the subspace $X\subseteq\mc P(\N)$ by:
\[
X=\bigcap_{i\in\N,(u_i, v_i)\in R}(O_{f(v_i)}\cup(\mc P(\N)\setminus O_{f(u_i)})).
\] We show that $a\not\vdash b\Leftrightarrow \exists x\in X(x\in O_{f(a)}\land x\notin O_{f(b)})$ for all expressions $a,b$.

\begin{enumerate}
\item $a\not\vdash b\Rightarrow \exists x\in X(x\in O_{f(a)}\land x\notin O_{f(b)})$: Suppose $a\not\vdash b$.
We construct a proof tree $T$. Starting with $(a_{\langle\rangle},b_{\langle\rangle})=(a,b)$, we define the successors based on the deduction rules:
\begin{itemize}
\item If $a_{\sigma}=\bigvee_{i\in\N}p_i$, we branch to $(p_i,b_\sigma)$ for each $i$.
\item If $a_{\sigma}\in\mc P_{\fin}(G)$ and there exists a relation $(u_i,v_i)\in R$ such that $(a_{\sigma}\leq u_i)\land (a_{\sigma}\not\leq v_i)\land i<\lh(\sigma)$, we branch to $(a_\sigma, a_\sigma\land v_i)$ and $(a_\sigma\land v_i,b_\sigma)$ for the least such $i$.
\item Otherwise, we branch to $(a_\sigma,b_\sigma)$ and $(a_\sigma\land g_m,b_\sigma)$ for all $m\in\N$ to ensure the path determines a point.
\end{itemize}
Since $a\not\vdash b$, the tree $T$ is not well-founded. There exists an infinite path
$y$ through $T$. This path induces a point $x=(r_i)_{i\in\N}$, where $r_i=\{n\in\N:n<i\land a_{y[i]}\leq g_n\}$. 
By construction, $x$ satisfies all relations in $R$, implying $x\in X$. Furthermore, $x\in O_{f(a)}\land x\notin O_{f(b)}$, providing the required witness.
\item $\exists x\in X(x\in O_{f(a)}\land x\notin O_{f(b)})\Rightarrow a\not\vdash b$: Assume there exists $x=(r_i)_{i\in\N}$ in $X$ such that $x\in O_{f(a)}\land x\notin O_{f(b)}$. Using $\ACAo$, we define a characteristic function $\xi:G\to\{0,1\}$ representing the point $x$; i.e., for all $n\in\N$, $\xi(g_n)=1$ if $\exists i(f(r_i)\leq g_n)$ and $\xi(g_n)=0$ otherwise. For any proof tree $S$ for $a \vdash b$, we can use $\xi$ to construct an infinite path $y$ through $S$. At each step, suppose $y\rest n=\sigma$ and the corresponding pair is $(a_\sigma,b_\sigma)$. We choose a successor $\sigma\string^\langle m\rangle$ such that the corresponding pair $(a_{\sigma\string^\langle m\rangle},b_{\sigma\string^\langle m\rangle})$ still satisfies $\exists i\,\xi(p_i)=1$ and $\forall j\,\xi(q_j)=0$, where $a_{\sigma\string^\langle m\rangle}=\bigvee_{i\in\N}p_i$ and $b_{\sigma\string^\langle m\rangle}=\bigvee_{j\in\N}q_j$. Such a choice is always possible by induction on $n$ and the properties of points in $X$. Therefore, no such well-founded proof tree exists, and $a \not\vdash b$.
\end{enumerate} 
\end{proof}

The logical strength of the above proposition has not yet been determined.

\begin{question}
Does Proposition \ref{proofspatial} actually require $\ACAo$ over $\RCAo$?
\end{question}

\section*{Summary}
We have established the equivalence of representations for quasi-Polish spaces and have characterized their logical strength within the hierarchy of second-order arithmetic. The following diagram summarizes the implications and the axiomatic requirements for the correspondences between quasi-Polish spaces and their representations.

\begin{equation*}
\begin{array}{c}
    \fbox{Quasi-Polish space} \\
    \mathsf{RCA}_0 \bigg\Downarrow \quad \bigg\Uparrow \mathsf{\Pi}_1^1\text{-}\mathsf{CA}_0 \\
    \fbox{UF space} \\
    \mathsf{RCA}_0 \bigg\Downarrow \quad \bigg\Uparrow \mathsf{\Pi}_1^1\text{-}\mathsf{CA}_0 \\
    \fbox{NP space} \iff \fbox{$\mathbf{\Pi}_2^0\text{-space of } \mathcal{P}(\mathbb{N})$} \iff \fbox{countably presented frame with $\prec$} \\
    {\small (\text{Representations above are equivalent in } \mathsf{RCA}_0)}
\end{array}
\end{equation*}

\section*{Acknowledgements}
We would like to thank Takayuki Kihara and Matthew de Brecht for useful discussions.

This work was partially supported by JSPS KAKENHI grant numbers JP21KK0045, JP23K03193 and 26K00615, as well as by the Research Institute for Mathematical Sciences,
an International Joint Usage/Research Center located in Kyoto University.

\bibliographystyle{plain}
\bibliography{bib-all}

\begin{thebibliography}{10}

\bibitem{benham2024ginsburgSands}
Heidi Benham, Andrew De~Lapo, Damir Dzhafarov, Reed Solomon, and Java~Darleen
  Villano.
\newblock The {G}insburg--{S}ands theorem and computability theory, 2024.

\bibitem{brown1990notions}
Douglas~K Brown.
\newblock Notions of closed subsets of a complete separable metric space in
  weak subsystems of second-order arithmetic.
\newblock {\em Logic and computation (Pittsburgh, PA, 1987)}, 106:39--50, 1990.

\bibitem{deBr2013}
Matthew de~Brecht.
\newblock Quasi-{P}olish spaces.
\newblock {\em Ann. Pure Appl. Logic}, 164(3):356--381, 2013.

\bibitem{de_Brecht_2018}
Matthew de~Brecht.
\newblock A generalization of a theorem of hurewicz for quasi-polish spaces.
\newblock {\em Logical Methods in Computer Science}, Volume 14, Issue 1,
  February 2018.

\bibitem{de2019note}
Matthew de~Brecht.
\newblock A note on the spatiality of localic products of countably based sober
  spaces.
\newblock {\em CCC 2019: Computability, Continuity, Constructivity-from Logic
  to Algorithms}, page~15, 2019.

\bibitem{deBrecht-unpublished}
Matthew de~Brecht.
\newblock Notes on formal geometric proofs.
\newblock 2025.

\bibitem{BKS2024}
Matthew De~Brecht, Takayuki Kihara, and Victor Selivanov.
\newblock Ideal presentations and numberings of some classes of effective
  quasi-{Polish} spaces.
\newblock {\em Computability}, 13(3-4):325--348, 2024.

\bibitem{dorais2011compactCountable}
Fran{\c c}ois~G. Dorais.
\newblock Reverse mathematics of compact countable second-countable spaces,
  2011.

\bibitem{fernandez2023caristi}
D.~Fern\'andez-Duque, P.~Shafer, H.~Towsner, and K.~Yokoyama.
\newblock Metric fixed point theory and partial impredicativity.
\newblock {\em Philos. Trans. Roy. Soc. A}, 381(2248):Paper No. 20220012, 20,
  2023.

\bibitem{fernandez2020ekeland}
David Fern\'andez-Duque, Paul Shafer, and Keita Yokoyama.
\newblock Ekeland's variational principle in weak and strong systems of
  arithmetic.
\newblock {\em Selecta Math. (N.S.)}, 26(5):Paper No. 68, 38, 2020.

\bibitem{fourman1982formal}
M.~P. Fourman and R.~J. Grayson.
\newblock Formal spaces.
\newblock In {\em The {L}. {E}. {J}. {B}rouwer {C}entenary {S}ymposium
  ({N}oordwijkerhout, 1981)}, volume 110 of {\em Stud. Logic Found. Math.},
  pages 107--122. North-Holland, Amsterdam, 1982.

\bibitem{frittaion2016noetherian}
Emanuele Frittaion, Matthew Hendtlass, Alberto Marcone, Paul Shafer, and Jeroen
  Van~der Meeren.
\newblock Reverse mathematics, well-quasi-orders, and {N}oetherian spaces.
\newblock {\em Arch. Math. Logic}, 55(3-4):431--459, 2016.

\bibitem{genovesi2026regularCSCS}
Giorgio~G. Genovesi.
\newblock Reverse mathematics of regular countable second countable spaces.
\newblock {\em J. Symbolic Logic}, 2026.
\newblock Online first.

\bibitem{heckmann2015spatiality}
Reinhold Heckmann.
\newblock Spatiality of countably presentable locales (proved with the baire
  category theorem).
\newblock {\em Mathematical Structures in Computer Science}, 25(7):1607--1625,
  2015.

\bibitem{honda-unpublished}
Tadayuki Honda.
\newblock Unpublished note.
\newblock 2021.

\bibitem{kechris2012classical}
A.~Kechris.
\newblock {\em Classical Descriptive Set Theory}.
\newblock Graduate Texts in Mathematics. Springer New York, 2012.

\bibitem{kunzi1983strongly}
HPA K{\"u}nzi.
\newblock On strongly quasi-metrizable spaces.
\newblock {\em Archiv der Mathematik}, 41(1):57--63, 1983.

\bibitem{mummert2005reverse}
Carl Mummert.
\newblock {\em On the reverse mathematics of general topology}.
\newblock The Pennsylvania State University, 2005.

\bibitem{mummert2006reverseMF}
Carl Mummert.
\newblock Reverse mathematics of {MF} spaces.
\newblock {\em J. Math. Log.}, 6(2):203--232, 2006.

\bibitem{mummert2010topological}
Carl Mummert and Frank Stephan.
\newblock Topological aspects of poset spaces.
\newblock {\em Michigan Mathematical Journal}, 59(1):3--24, 2010.

\bibitem{sanders2025secondCountable}
Sam Sanders.
\newblock Second-countable spaces and reverse mathematics.
\newblock {\em Doc. Math.}, 2025.

\bibitem{SOSOA}
Stephen~G. Simpson.
\newblock {\em Subsystems of second order arithmetic}.
\newblock Perspectives in Mathematical Logic. Springer-Verlag, Berlin, 1999.

\end{thebibliography}
\end{document}